\documentclass[a4paper,12pt,reqno,oneside]{article}
\usepackage[utf8x]{inputenc}
\usepackage[T1]{fontenc}
\usepackage{geometry}
\usepackage{lmodern}

\usepackage{amsmath}
\usepackage{amssymb}
\usepackage{amsthm}
\usepackage[matrix,arrow,cmtip]{xy}
\usepackage{bbm}
\usepackage{enumerate}
\PassOptionsToPackage{hyphens}{url}
\usepackage[bookmarksnumbered,colorlinks,linkcolor=black,citecolor=black,urlcolor=black]{hyperref}
\usepackage[capitalise]{cleveref}
\usepackage[nottoc]{tocbibind}

\newcommand{\bA}{\mathbb{A}}
\newcommand{\bC}{\mathbb{C}}
\newcommand{\bF}{\mathbb{F}}
\newcommand{\bG}{\mathbb{G}}

\newcommand{\bP}{\mathbb{P}}
\newcommand{\bQ}{\mathbb{Q}}
\newcommand{\bR}{\mathbb{R}}
\newcommand{\bS}{\mathbb{S}}
\newcommand{\bZ}{\mathbb{Z}}

\newcommand{\kalg}{k^{\mathrm{alg}}}
\newcommand{\Qalg}{\bQ^{\mathrm{alg}}}
\newcommand{\QalgR}{\Qalg \cap \bR}
\newcommand{\QalgRb}{(\QalgR)}

\newcommand{\cA}{\mathcal{A}}
\newcommand{\cB}{\mathcal{B}}
\newcommand{\cF}{\mathcal{F}}
\newcommand{\cH}{\mathcal{H}}

\newcommand{\cO}{\mathcal{O}}

\newcommand{\cX}{\mathcal{X}}

\newcommand{\Ag}{\mathcal{A}_g}

\newcommand{\Hg}{\mathcal{H}_g}

\newcommand{\fS}{\mathfrak{S}}

\newcommand{\gG}{\mathbf{G}}
\newcommand{\gGL}{\mathbf{GL}}
\newcommand{\gGSp}{\mathbf{GSp}}

\newcommand{\gM}{\mathbf{M}}

\newcommand{\gO}{\mathbf{O}}

\newcommand{\gS}{\mathbf{S}}

\newcommand{\gT}{\mathbf{T}}
\newcommand{\gU}{\mathbf{U}}

\newcommand{\gZ}{\mathbf{Z}}

\newcommand{\rM}{\mathrm{M}}

\DeclareMathOperator{\Aut}{Aut}

\DeclareMathOperator{\disc}{disc}
\DeclareMathOperator{\End}{End}
\DeclareMathOperator{\Gal}{Gal}

\DeclareMathOperator{\Hom}{Hom}

\DeclareMathOperator{\Jac}{Jac}
\DeclareMathOperator{\Lie}{Lie}

\DeclareMathOperator{\Nm}{Nm}

\DeclareMathOperator{\Res}{Res}
\DeclareMathOperator{\Sh}{Sh}
\DeclareMathOperator{\Sing}{Sing}

\DeclareMathOperator{\Stab}{Stab}
\DeclareMathOperator{\Vol}{Vol}

\newcommand{\im}{\operatorname{Im}}
\newcommand{\re}{\operatorname{Re}}

\DeclareMathOperator{\rk}{rk}

\newcommand{\abs}[1]{\left\lvert #1 \right\rvert}

\newcommand{\defterm}[1]{\textbf{#1}}

\newtheorem{lemma}{Lemma}[section]
\newtheorem{proposition}[lemma]{Proposition}
\newtheorem{theorem}[lemma]{Theorem}

\newtheorem{corollary}[lemma]{Corollary}

\newtheorem{conjecture}[lemma]{Conjecture}
\Crefname{conjecture}{Conjecture}{Conjectures} 

\Crefname{claim}{Claim}{Claims}

\newtheorem*{lemma*}{Lemma}
\newtheorem*{proposition*}{Proposition}
\newtheorem*{theorem*}{Theorem}
\newtheorem*{corollary*}{Corollary}
\newtheorem*{claim*}{Claim}
\newtheorem*{questions*}{Questions}

\theoremstyle{definition}

\newcommand{\rH}{\mathrm{H}}

\newcommand{\basis}[1]{\cB^{(#1)}}
\newcommand{\bv}[2]{v^{(#1)}_{#2}}
\newcommand{\barbv}[2]{\bar{v}^{(#1)}_{#2}}

\newcommand{\mx}[1]{
  \ifthenelse{\equal{#1}{zT2}}{A}{
  \ifthenelse{\equal{#1}{T1}}{B}{
  \ifthenelse{\equal{#1}{T3}}{B'}{
  \ifthenelse{\equal{#1}{Z2}}{P}{
  \ifthenelse{\equal{#1}{23}}{Q}{
  \ifthenelse{\equal{#1}{13}}{R}{
  \ifthenelse{\equal{#1}{13'}}{R'}{
  \ifthenelse{\equal{#1}{34V}}{S}{
  \ifthenelse{\equal{#1}{34}}{S}{
  \errmessage{Bad mx #1}
  }}}}}}}}}
}

\usepackage{amscd}

\usepackage{ifthen}
\setcounter{tocdepth}{2}

\newcounter{constant}
\newcommand{\newC}[1]{%
   \refstepcounter{constant} C_{\theconstant}%
   \ifthenelse{\equal{#1}{*}} { } {%
      \label{C:#1}%
   }%
}
\newcommand{\refC}[1]{C_{\ref*{C:#1}}}
\newcommand{\polybound}[1]{\refC{#1-multiplier} H^{\refC{#1-exponent}}}

\newcommand{\seq}[2]{#1_1, \dotsc, #1_{#2}}

\newcommand{\hgG}{\texorpdfstring{\( \gG \)}{G}}

\title{Heights of pre-special points of Shimura varieties}
\author{Christopher Daw \and Martin Orr}

\begin{document}

\maketitle

\begin{abstract}
Let \( s \) be a special point on a Shimura variety, and \( x \) a pre-image of \( s \) in a fixed fundamental set of the associated Hermitian symmetric domain.
We prove that the height of \( x \) is polynomially bounded with respect to the discriminant of the centre of the endomorphism ring of the corresponding \( \bZ \)-Hodge structure.
Our bound is the final step needed to complete a proof of the André--Oort conjecture under the conjectural lower bounds for the sizes of Galois orbits of special points, using a strategy of Pila and Zannier.
\end{abstract}

\renewcommand{\thefootnote}{}
\footnote{Christopher Daw, Institut des Hautes Études Scientifiques, Bures-sur-Yvette, France}
\footnote{Martin Orr, Dept.\ of Mathematics, Imperial College, South Kensington, London, UK}
\footnote{2010 Mathematics Subject Classification: 11G18}

\section{Introduction} \label{sec:introduction}

Our aim in this paper is to prove a bound for the height of a pre-special point in a fundamental set of a Hermitian symmetric domain covering a Shimura variety.
This generalises a theorem of Pila and Tsimerman (\cite{pila-tsimerman:ao-surfaces} Theorem~3.1) concerning the heights of pre-images of CM points in a fundamental set of the Siegel upper half-space.
Our motivation for considering this bound is that it completes a strategy, originating in the work~\cite{pila-zannier} of Pila and Zannier, for a new proof of the André--Oort conjecture under the conjectural lower bounds for the sizes of Galois orbits of special points, known to hold under the Generalised Riemann Hypothesis (GRH).

The following is the precise statement of our primary bound, comparing the height of a pre-special point \( x \) with the discriminant of the centre of the endomorphism ring of the \( \bZ \)-Hodge structure associated with \( x \).
Note that the association of \( \bZ \)-Hodge structures with points of \( X \) depends on the choice of a representation of the group \( \gG \) and of a lattice in this representation -- this is the only purpose of \( \rho \) and \( E_\bZ \) in the theorem.

\begin{theorem} \label{height-disc-r-bound}
Let \( (\gG, X) \) be a Shimura datum with \( \gG \) being an adjoint group.
Let \( \rho \colon \gG \to \gGL(E) \) be a faithful self-dual \( \bQ \)-representation, and fix a lattice \( E_\bZ \subset E \).

Let \( \Gamma \subset \gG(\bQ) \) be a congruence subgroup and let \( \cF \subset X \) be a fundamental set for \( \Gamma \) as in Théorème~13.1 of~\cite{borel:arithmetic-groups}, with respect to a pre-special base point \( x_0 \).

Choose a realisation of \( X \) such that the action of \( \gG(\QalgR) \) on \( X \) is semialgebraic and defined over \( \QalgR \).
Let \( \rH(x) \) denote the multiplicative Weil height of a point in \( X(\Qalg) \) with respect to this realisation.

There are constants \( \newC{height-disc-r-multiplier} \), \( \newC{height-disc-r-exponent} \) (depending on \( \gG \), \( X \), \( \cF \), \( \rho \) and the choice of a realisation for \( X \)) such that
for all pre-special points \( x \in \cF \),
\[ \rH(x) \leq \refC{height-disc-r-multiplier} \, \abs{\disc R_x}^{\refC{height-disc-r-exponent}}, \]
where \( R_x \) is the centre of the endomorphism ring of the \( \bZ \)-Hodge structure \( \rho \circ x \).
\end{theorem}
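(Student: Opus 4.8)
The strategy I would follow is the one pioneered by Pila and Tsimerman (\cite{pila-tsimerman:ao-surfaces}, Theorem~3.1) in the Siegel case, reorganised around the Mumford--Tate torus of $x$. Since $x$ is pre-special and $\rho$ is faithful, the homomorphism $h_x \colon \bS \to \gG_\bR$ defining $x$ factors through $\gT_\bR$ for a unique $\bQ$-torus $\gT = \mathrm{MT}(\rho\circ x) \subseteq \gG$, and $R_x$ is the centre of $\End(\rho\circ x)$, an order in the étale $\bQ$-algebra $L := R_x \otimes \bQ$. The plan has three movements: first, reduce the height bound to a bound on the ``depth of $x$ in the cusp'' of $\cF$; second, bound that depth by a polynomial in $\abs{\disc R_x}$ by controlling the arithmetic complexity of $\gT$; third, assemble.

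\emph{Reduction to cusp depth.} By \cite{borel:arithmetic-groups}, Théorème~13.1, one has $\cF = \bigcup_{j=1}^m c_j \mathfrak{S}$, where $\mathfrak{S} = \omega A_t \cdot x_0$ is a Siegel set attached to a minimal $\bQ$-parabolic of $\gG$ with Langlands decomposition, $\omega$ is compact, $A_t$ is the standard chamber of a maximal $\bQ$-split torus, and the base point $x_0$ is pre-special. Any $x \in \cF$ is then $x = c_j\, u\, a \cdot x_0$ with $j$ in a fixed finite set, $u$ in the compact set $\omega$, and $a \in A_t$. Because the chosen realisation of $X$ is $\QalgR$-semialgebraic and carries a $\QalgR$-semialgebraic $\gG(\QalgR)$-action, $\rH(x)$ is bounded by a fixed power of $\max\bigl(\rH(c_j u a), \rH(x_0), 1\bigr)$; as the $c_j$, the elements $u\in\omega$, and $x_0$ have heights bounded by constants, it remains to bound $\|a\|$ — the operator norm of $a$ on $E$ — by a polynomial in $\abs{\disc R_x}$.

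\emph{Bounding the cusp depth.} The torus $\gT$ satisfies $\gT_\bR \supseteq (c_j u a)\,\gT_0'\,(c_j u a)^{-1}$, where $\gT_0'$ is the real Zariski closure of $h_{x_0}(\bS)$, a \emph{fixed} real torus. On the other hand $\gT$ is defined over $\bQ$, its splitting field $F$ also splits $L$ (over $F$ the $\gT$-action on $E$, hence the endomorphism algebra, becomes diagonal), so $[F:\bQ]$ is bounded in terms of $\dim E$ alone and $\abs{\disc F} \leq \abs{\disc R_x}^{\rO(1)}$. It follows, by the effective description of $\bQ$-tori split by a field of controlled discriminant in the spirit of earlier work on heights of special subvarieties, that $\gT$ is cut out inside $\gG \subseteq \gGL(E)$ by polynomials of bounded degree and height polynomial in $\abs{\disc R_x}$; equivalently (using Minkowski reduction of $R_x$ in $L_\bR$, of covolume comparable to $\abs{\disc R_x}^{1/2}$, and a Siegel-type bound on the regulator of $\gT$), the lattice $E_\bZ$ with its $R_x$-module structure and the arithmetic group $\gT(\bZ)$ admit generators of norm polynomial in $\abs{\disc R_x}$. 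The bound on $\|a\|$ then comes from a quantitative reduction-theory statement: if $a\in A_t$, together with the bounded factors $c_j,u$, conjugates the fixed real torus $\gT_0'$ into a $\bQ$-torus of complexity $\le B$, then $\|a\| \le B^{\rO(1)}$. The mechanism generalises Pila and Tsimerman's observation that a reduced CM point has $\det\Im\tau$ polynomially bounded in the discriminant: a conjugation carrying the fixed reference datum into the low-complexity integral data attached to $\gT$ and $R_x$ can distort the Siegel set only polynomially in the unipotent-radical directions, which forces the $A_t$-component to be polynomially bounded. Combining, $\|a\| \le \abs{\disc R_x}^{\rO(1)}$, and the reduction above yields the theorem.

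\emph{Main obstacle.} The crux is this last reduction-theoretic step: making precise, with polynomial dependence on $B = \abs{\disc R_x}^{\rO(1)}$, the assertion that a pre-special point whose Mumford--Tate torus has complexity $\le B$ cannot lie deep in a cusp of $\cF$. This requires a careful analysis, inside a Siegel set, of the interaction between the fixed parabolic datum defining $\cF$ and the varying $\bQ$-torus $\gT$, and I expect it to absorb the bulk of the work; by contrast, the passage from $\abs{\disc R_x}$ to $\abs{\disc F}$ and to the complexity of $\gT$, and the final assembly via semialgebraicity of the realisation, should be comparatively routine. A minor point to dispatch along the way is the precise relation between $\gT_\bR$ and the conjugate $(c_j u a)\gT_0'(c_j u a)^{-1}$ — that the containment above suffices, or can be upgraded to equality for a suitably chosen pre-special $x_0$.
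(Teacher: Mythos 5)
There is a genuine gap, and it sits at the very first step of your plan. Writing \( x = c_j u a\, x_0 \) with \( u \in \omega \) and \( a \in A_t \) gives you group elements that are merely \emph{real}: \( u \) and \( a \) have no Weil heights, and your claim that ``the elements \( u \in \omega \) \dots have heights bounded by constants'' does not make sense. Even if you succeed in bounding the operator norm \( \lVert a \rVert \) polynomially in \( \abs{\disc R_x} \), all you obtain is that the coordinates of \( x \) in the realisation are polynomially bounded \emph{in absolute value}; the multiplicative Weil height also sees denominators and all Galois conjugates, and an archimedean bound alone says nothing about these. This is precisely the difficulty the paper's proof is organised around: one must produce an element \( g_4 \in \gG(\QalgR) \) (algebraic, not just real) with \( g_4 x_0 = x \) whose \emph{height} is polynomially bounded. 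The paper does this by constructing Galois-compatible, weakly bounded diagonal bases \( \basis{2}, \basis{3} \) via a Minkowski-type bound and Hermitian-form manipulations, then using Chevalley's theorem together with a non-trivial new result (\cref{variety-bound}, that a real affine variety defined over \( \QalgR \) by equations of bounded height has a \( \QalgR \)-point of polynomially bounded height) to get a \( \gG \)-admissible basis \( \basis{4} \) defined over \( \Qalg \); only at the end does the Siegel-set decomposition enter, and the archimedean bounds it yields are converted into height bounds by passing back through the Galois-compatibility and bounded denominators of \( \basis{3} \). None of this machinery has a counterpart in your outline, and without it the passage from ``cusp depth bounded'' to ``height bounded'' fails.

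The second movement also rests on an unproved assertion, namely the ``quantitative reduction-theory statement'' that if \( c_j u a \) conjugates the fixed real torus \( \gT_0' \) into a \( \bQ \)-torus cut out by equations of height \( \leq B \), then \( \lVert a \rVert \leq B^{\rO(1)} \). You correctly flag this as the crux, but it is not a routine lemma: the set of elements conjugating \( \gT_0' \) into a fixed maximal torus is a coset of a (generally non-compact) subgroup, and knowing only the Mumford--Tate torus of \( x \) with its complexity does not obviously pin down the Siegel-set representative; in the paper the bound on the \( A_t \)-component (\cref{diag-lower-bound,b4-abs-bound}) is not deduced from the complexity of \( \gM \) at all, but from lower bounds \( \lVert e^\vee g_4 \rVert^{-1} \) for integral covectors \( e^\vee \in E_\bZ^\vee \), which in turn rely on the weakly bounded, Galois-compatible basis \( \basis{3} \) and the height bound relating \( \basis{3} \) to \( \basis{4} \). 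So both the reduction step and the core lemma of your proposal would need substantial new arguments — essentially the content of sections~\ref{sec:bases} and~\ref{sec:variety-bound} of the paper — before the sketch could be called a proof.
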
 

Our motivation for proving this theorem was to apply it to the André--Oort conjecture.
In order to do this, we prove an additional bound (\cref{mainindex}) comparing the discriminant of \( R_x \) with certain invariants associated with the Mumford--Tate group of the pre-special point.
These invariants are the same invariants which appear in lower bounds for the sizes of Galois orbits of special points.

The upshot of \cref{height-disc-r-bound,mainindex} is that the absence of lower bounds for Galois orbits of special points becomes the only remaining obstacle to an unconditional proof of the André--Oort conjecture.
Specifically, the combination of \cref{height-disc-r-bound,mainindex} together with \cite{PW06}, \cite{pila-tsimerman:ax-lindemann}, \cite{ullmo:applications}, \cite{kuy:ax-lindemann} and the strategy of \cite{pila-zannier} implies the following.

\begin{theorem} \label{andre-oort-conditional}
Let \( (\gG, X) \) be a Shimura datum and \( K \) a compact open subgroup of \(\gG(\bA_f) \).

Assume that there exist positive constants \( \newC{galois-bound-multiplier} \), \( \newC{galois-bound-badprime-base} \), \( \newC{galois-bound-torus-exponent} \), \( \newC{galois-bound-disc-exponent} \) (depending only on \( \gG \), \( X \) and \( K \)) such that, for each pre-special point \( x\in X \), its image \( [x,1] \) in the Shimura variety \( \Sh_K(\gG, X) \) satisfies
\[ \abs{\Gal(\Qalg/L_x) \cdot [x,1]} \geq \refC{galois-bound-multiplier} \, \refC{galois-bound-badprime-base}^{i(\gM)} [K_{\gM}^m:K_{\gM}]^{\refC{galois-bound-torus-exponent}} \abs{\disc L_x}^{\refC{galois-bound-disc-exponent}} \]
(where we use notations (a)--(d) from \cref{main-bound}).

Then the André--Oort conjecture (\cref{andre-oort}) holds for $\Sh_K(\gG,X)$.
\end{theorem}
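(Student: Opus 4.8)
The plan is to insert \cref{height-disc-r-bound,mainindex} into the Pila--Zannier strategy~\cite{pila-zannier} in the form in which it has been developed for Shimura varieties in~\cite{PW06,pila-tsimerman:ax-lindemann,kuy:ax-lindemann,ullmo:applications}: the two theorems provide exactly the arithmetic inputs that were missing, and what remains is an assembly. By the standard reductions (see~\cite{ullmo:applications}) --- replacing \((\gG,X)\) by the subdatum cut out by the generic Mumford--Tate group on \(V\) and then by its adjoint quotient --- one may assume \(\gG\) is adjoint and reduce to showing that an irreducible closed subvariety \(V\) of a connected component \(S\) of \(\Sh_K(\gG,X)\), Hodge-generic in \(S\) and carrying a Zariski-dense set of special points, must equal \(S\). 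I would argue by Noetherian induction on \(\dim V\): the case \(\dim V=0\) is trivial, so assume \(\dim V>0\) and the statement for every proper closed subvariety of \(V\) carrying a Zariski-dense set of special points. Since \(V\) has a Zariski-dense set of algebraic points, it is defined over a number field \(L\).

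Next I would apply \cref{height-disc-r-bound} to the congruence subgroup \(\Gamma\) uniformising \(S\) and a fundamental set \(\cF\), with a faithful self-dual \(\bQ\)-representation \(\rho\) (which exists since \(\gG\) is adjoint, e.g.\ the adjoint representation with the Killing form) and a lattice \(E_\bZ\), choosing the realisation of \(X\) over \(\QalgR\) so that, in addition, the uniformisation \(\pi\colon X\to\Gamma\backslash X\isom S\) restricts on \(\cF\) to a map definable in \(\Ranexp\) (\cite{kuy:ax-lindemann}, or \cite{pila-tsimerman:ax-lindemann} in the Siegel case). Put \(Z:=(\pi|_{\cF})^{-1}(V)\), a definable subset of \(\cF\). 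For a special point \(s=[x,1]\in V\) with pre-special point \(x\in\cF\) and Mumford--Tate group \(\gM\), write \(\Delta:=\refC{galois-bound-badprime-base}^{i(\gM)}[K_{\gM}^m:K_{\gM}]^{\refC{galois-bound-torus-exponent}}\abs{\disc L_x}^{\refC{galois-bound-disc-exponent}}\) for the quantity appearing in the hypothesis. Only finitely many special points of \(V\) have \(\Delta\) below a given bound, so \(V\) has special points with \(\Delta\) arbitrarily large. Replacing \(L_x\) by \(L\cdot L_x\) costs at most the factor \([L:\bQ]\), so the hypothesis furnishes a Galois orbit \(\Gal(\Qalg/(L\cdot L_x))\cdot s\) consisting of at least \(c_0\Delta\) special points of \(V\) (for a constant \(c_0>0\)), each with a pre-special lift \(x'\in\cF\) whose Mumford--Tate group is \(\bQ\)-isomorphic to \(\gM\) and which shares the invariants \(i(\gM)\), \([K_{\gM}^m:K_{\gM}]\) and \(\abs{\disc L_{x'}}=\abs{\disc L_x}\). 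By \cref{mainindex}, \(\abs{\disc R_{x'}}=\abs{\disc R_x}\) is at most a fixed power of \(\Delta\), whence \cref{height-disc-r-bound} gives \(\rH(x')\le C_1\Delta^{C_2}\) with \(C_1,C_2\) independent of \(s\). Finally, the Mumford--Tate group of a pre-special point is a \(\bQ\)-torus of rank \(\le\rk\gG\), so its splitting field has degree bounded in terms of \(\rk\gG\) by Minkowski's bound on finite subgroups of \(\GL_{\rk\gG}(\bZ)\); since the position of a pre-special point in the realisation is governed by the Hodge cocharacter of this torus, the coordinates of every such \(x'\) are algebraic of degree at most a constant \(d_0=d_0(\gG)\).

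Thus \(Z\) contains at least \(c_0\Delta\) algebraic points of degree \(\le d_0\) and multiplicative Weil height \(\le C_1\Delta^{C_2}\). Fixing \(\eps>0\) with \(C_2\eps<1\), the Pila--Wilkie theorem~\cite{PW06} (in the refined form producing blocks from a definable family) bounds by \(c_1(Z,\eps)\,T^{\eps}\) the number of algebraic points of degree \(\le d_0\) and height \(\le T\) in \(Z\) not lying on a member of a fixed definable family of positive-dimensional semialgebraic blocks of \(Z\). Taking \(T=C_1\Delta^{C_2}\), for \(\Delta\) above an explicit threshold \(c_0\Delta\) exceeds this bound, so some lift of a conjugate of \(s\) lies on a block \(B\) of the family; since Galois conjugates of weakly special subvarieties are again weakly special and the conjugates of \(s\) form a single Galois orbit, \(s\) itself therefore lies on a positive-dimensional weakly special subvariety of \(V\), of degree bounded independently of \(s\) because the block family is definable --- here the hyperbolic Ax--Lindemann theorem~\cite{kuy:ax-lindemann,pila-tsimerman:ax-lindemann} is used to identify the Zariski closure of \(\pi(B)\) as weakly special. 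Letting \(s\) range over the special points of \(V\) with \(\Delta\) sufficiently large, which remain Zariski-dense in \(V\), we conclude that \(V\) is covered, up to a proper Zariski-closed subset, by positive-dimensional weakly special subvarieties of bounded degree; the characterisation of weakly special subvarieties underlying~\cite{ullmo:applications} then forces \(V\) to be weakly special, hence special, hence --- being Hodge-generic in \(S\) --- equal to \(S\). This closes the induction and proves the André--Oort conjecture for \(\Sh_K(\gG,X)\).

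The main obstacle of the whole programme --- a polynomial height bound for pre-special points in a fundamental set --- is \cref{height-disc-r-bound} itself, together with the comparison of \cref{mainindex}; granting these, the present combination theorem carries no essential difficulty and is, as sketched above, a matter of importing the Pila--Zannier machinery and checking its hypotheses against our bounds (note in particular that only \emph{some} positive power of \(\Delta\) is needed as a lower bound for the Galois orbits, so the precise exponents produced by \cref{height-disc-r-bound,mainindex} are immaterial). The points still requiring attention are the standard reductions to the adjoint Hodge-generic situation and to a \(V\) stable under a large Galois group, the verification that pre-special lifts have bounded degree so that the bounded-degree Pila--Wilkie count applies, and the passage from ``a positive-dimensional weakly special subvariety through each of a Zariski-dense set of special points'' to ``\(V\) is weakly special''; all of these are already carried out in~\cite{pila-zannier,ullmo:applications,kuy:ax-lindemann,pila-tsimerman:ax-lindemann,PW06}, and no new o-minimality or algebro-geometric input is required.
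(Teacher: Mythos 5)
Your overall route is the paper's own: \cref{andre-oort-conditional} is obtained by feeding \cref{height-disc-r-bound,mainindex} into the standard Pila--Zannier machinery (definability on \( \cF \), Pila--Wilkie with blocks, hyperbolic Ax--Lindemann, bounded degree of pre-special points via Minkowski, Ullmo's characterisation of the weakly special locus), which is exactly the assembly the paper delegates to the cited literature. The problem is in the step where you combine the hypothesised Galois bound with \cref{height-disc-r-bound,mainindex}. You take \( \Delta = \refC{galois-bound-badprime-base}^{i(\gM)}[K_{\gM}^m:K_{\gM}]^{\refC{galois-bound-torus-exponent}}\abs{\disc L_x}^{\refC{galois-bound-disc-exponent}} \) as your complexity parameter and assert (a) that ``by \cref{mainindex}'' one has \( \abs{\disc R_x}\le C\Delta^{C'} \), and (b) that only finitely many special points of \( V \) have \( \Delta \) below a given bound. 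Both assertions are unjustified when \( \refC{galois-bound-badprime-base}<1 \), which the hypothesis permits (and which is the realistic case for the GRH bounds). \Cref{mainindex} does not produce one fixed bound: its bad-prime base \( \refC{index-badprime-base} \) is universally quantified precisely so that it can be chosen \emph{smaller than a suitable power of} \( \refC{galois-bound-badprime-base} \) after the exponents are fixed; without invoking that choice (and you never do), the factor \( \refC{index-badprime-base}^{i(\gM)} \) cannot be absorbed into a power of \( \Delta \), since \( i(\gM) \) is unbounded. Worse, \( \abs{\disc L_x} \) itself need not be polynomially bounded by \( \Delta \): at the up to \( \log_2\abs{\disc L_x} \) primes ramified in \( L_x \) where \( K_{\gM,p} \) is non-maximal, the only available lower bound for the local index is \( 2 \) (the \( \gg p \) bound of Ullmo--Yafaev excludes ramified primes), so if \( \refC{galois-bound-badprime-base}\cdot 2^{\refC{galois-bound-torus-exponent}}<1 \) the factor \( \refC{galois-bound-badprime-base}^{i(\gM)} \) can cancel the growth of \( \abs{\disc L_x}^{\refC{galois-bound-disc-exponent}} \). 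Then neither the height bound \( \rH(x')\le C_1\Delta^{C_2} \) nor the finiteness (b) follows, and your dichotomy ``points with large \( \Delta \) are Zariski-dense'' collapses. This is exactly the quantifier subtlety the paper spends its discussion around \eqref{eqn:pw-bound} and \eqref{eqn:badprime-base-comparison} on, and credits Gao for.

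The repair is the one the paper's statement of \cref{mainindex,main-bound} is engineered for: first fix \( \delta>0 \) small in terms of the exponents \( \refC{index-torus-exponent} \), \( \refC{index-disc-exponent} \), \( \refC{galois-bound-torus-exponent} \), \( \refC{galois-bound-disc-exponent} \) and \( \dim E \), then apply \cref{mainindex} with \( \refC{index-badprime-base}\le\refC{galois-bound-badprime-base}^{1/(2\delta)} \); together with \( \abs{\disc R_x}=[\cO_x:R_x]^2\prod_i\abs{\disc F_i}\le[\cO_x:R_x]^2\abs{\disc L_x}^{\dim E} \) this converts the hypothesis into \( \abs{\Gal(\Qalg/L_x)\cdot[x,1]}\ge c\,\abs{\disc R_x}^{\delta} \) (equivalently, combine with \cref{main-bound}, choosing \( \refC{main-bound-badprime-base}\le\refC{galois-bound-badprime-base}^{C_4} \), to get \( \rH(x)\le C_3\abs{\Gal(\Qalg/L_x)\cdot[x,1]}^{C_4} \) as in the paper). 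With \( \abs{\disc R_x} \) (or the orbit size itself) as the complexity parameter, \cref{height-disc-r-bound} gives the height input directly, and the finiteness of special points of bounded complexity --- which in any case is a nontrivial statement requiring a citation to the expositions you invoke, not the bare ``only finitely many'' you assert for \( \Delta \) --- is available in the standard form. With that substitution, the remainder of your sketch is the same assembly the paper appeals to and goes through.
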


It is known that the Generalised Riemann Hypothesis for CM fields implies the Galois bounds assumed in \cref{andre-oort-conditional} (see~\cite{uy:galois-bound}).
Thus \cref{andre-oort-conditional} gives a new proof of the André--Oort conjecture assuming the GRH for CM fields.
The André--Oort conjecture is already known under the GRH due to the work of Klingler, Ullmo and Yafaev (see~\cite{ky:andre-oort} and~\cite{uy:andre-oort}).

However, \cref{andre-oort-conditional} is stronger than simply ``GRH implies André--Oort'' because it is conceivable that the necessary Galois bounds could be proved without using the GRH.
Tsimerman has recently proved these bounds for the case of the moduli space of principally polarised abelian varieties \( \Ag \) (see~\cite{tsimerman:galois-bounds-ag}).
This represents an advantage of \cref{andre-oort-conditional} over the proof of the André--Oort conjecture under the GRH by Klingler, Ullmo and Yafaev, as their proof depends much more heavily on the GRH.

The \( \Ag \) case of \cref{height-disc-r-bound} was proved by Pila and Tsimerman (see \cite{pila-tsimerman:ao-surfaces} Theorem~3.1).
Earlier known cases of \cref{height-disc-r-bound} included modular curves (see~\cite{P09a}) and Hilbert modular surfaces (see~\cite{DY11}).
The analogous bound for abelian varieties in place of Shimura varieties, used in Pila and Zanner's proof of the Manin--Mumford conjecture, is trivial.

It is not difficult to deduce \cref{height-disc-r-bound} for all Shimura varieties of abelian type from the case of~\( \Ag \),
so the new contribution of this paper is that it holds for Shimura varieties which are not of abelian type.
Our proof works uniformly for all Shimura varieties, but as we explain later in the introduction, moving beyond Shimura varieties of abelian type introduced substantial new difficulties.

\subsection{The Pila--Zannier strategy for proving André--Oort}

We will outline the various ingredients used in Pila and Zannier's strategy for proving the André--Oort conjecture, and how \cref{height-disc-r-bound} fits into this.
There are several expositions available on how this strategy is implemented once one has the necessary ingredients (see, for example, \cite{pila-tsimerman:ao-surfaces} for the case of \( \cA_2 \), the moduli space of principally polarised abelian surfaces, \cite{ullmo:applications} for the case of \( \cA^n_g \) and \cite{daw:andre-oort} for the case of a general Shimura variety).

We begin by recalling the statement of the André--Oort Conjecture.

\begin{conjecture} \label{andre-oort}
Let $S$ be a Shimura variety and let $\Sigma$ be a set of special points in $S$. Every irreducible component of the Zariski closure of $\Sigma$ in $S$ is a special subvariety.
\end{conjecture}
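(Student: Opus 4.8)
We explain how \cref{andre-oort} for a fixed Shimura variety $S = \Sh_K(\gG,X)$ follows from \cref{height-disc-r-bound,mainindex} together with the Galois lower bounds hypothesised in \cref{andre-oort-conditional}, implementing the Pila--Zannier strategy of \cite{pila-zannier}. The argument runs by induction on $\dim S$. Let $V$ be an irreducible component of the Zariski closure of $\Sigma$; replacing $\Sigma$ by the set of all special points lying in this closure (which has the same Zariski closure and is stable under Galois), we may assume $\Sigma$ is Galois-stable. If $V$ is not Hodge-generic in its ambient connected component, then $V$ lies in a proper special subvariety $S' \subsetneq S$ and we conclude by induction applied to $S'$, whose special points contain those of $V$; so assume $V$ is Hodge-generic, and it remains to prove $\dim V = \dim S$, i.e. that $V$ is a connected component of $S$.

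Fix a fundamental set $\cF \subset X$ as in \cref{height-disc-r-bound}, let $\pi \colon X \to \Gamma\bs X$ be the uniformisation of the relevant component, and set $Z = (\pi|_\cF)^{-1}(V)$. By the theorem of Peterzil--Starchenko, refined by Klingler--Ullmo--Yafaev, $\pi|_\cF$ is definable in $\Ranexp$, so $Z$ is a definable set; the pre-image in $\cF$ of a special point of $V$ is a pre-special point lying in $Z(\Qalg)$. We now feed in the two arithmetic inputs. First, \emph{heights are small}: for a special point $s \in V$ with pre-image $x \in \cF$, \cref{height-disc-r-bound} bounds $\rH(x)$ polynomially in $\abs{\disc R_x}$, and \cref{mainindex} bounds $\abs{\disc R_x}$ polynomially in the Mumford--Tate invariants $i(\gM)$, $[K_\gM^m:K_\gM]$, $\abs{\disc L_x}$; writing $\Delta$ for the product of these invariants we obtain $\rH(x) \leq C\,\Delta^{\kappa}$ with $[\bQ(x):\bQ]$ bounded independently of $s$. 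Second, \emph{Galois orbits are large}: the hypothesised bound gives $\abs{\Gal(\Qalg/L_x)\cdot s} \geq c\,\Delta^{\lambda}$ with $\lambda > 0$. Since $i(\gM)$, $[K_\gM^m:K_\gM]$ and $\disc L_x$ depend only on the Mumford--Tate group and its splitting field, they are unchanged under $\Gal(\Qalg/L_x)$; and a finite-index subgroup of this Galois group stabilises the component $V$, so we obtain at least $\gg \Delta^{\lambda}$ pre-special points in $Z(\Qalg)$, each of height $\leq C\Delta^{\kappa}$ and of bounded degree over $\bQ$.

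Next I would apply the Pila--Wilkie counting theorem \cite{PW06}, in the form counting algebraic points of bounded degree: outside the algebraic part $Z^{\alg}$ (the union of the positive-dimensional semialgebraic subsets of $Z$) the number of such points of height $\leq T$ is $O_{\eps}(T^{\eps})$ for every $\eps > 0$. If the complexities $\Delta$ of the special points of $V$ are bounded, then there are only finitely many such points and $V$, their Zariski closure, is a point, hence special. Otherwise, for $\Delta$ large enough the $\gg\Delta^{\lambda}$ points produced above cannot all avoid $Z^{\alg}$, so some pre-special point of $V$ lies on a connected positive-dimensional semialgebraic $B \subseteq Z$. Then functional transcendence finishes the job: by the Ax--Lindemann theorem for Shimura varieties (\cite{pila-tsimerman:ax-lindemann} in the Siegel case, \cite{kuy:ax-lindemann} in general) the Zariski closure of $\pi(B)$ is a positive-dimensional weakly special subvariety of $V$ through a special point. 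Letting $V$ range over Galois conjugates and using that the pre-special points are Zariski-dense in $V$, this yields positive-dimensional weakly special subvarieties of $V$ through a Zariski-dense set of special points; by the reduction of \cite{ullmo:applications} and the characterisation of special subvarieties via the maximal weakly special subvarieties through their special points, $V$ is weakly special, and being Hodge-generic it is a connected component of $S$. This contradicts $\dim V < \dim S$ and closes the induction.

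The main obstacle is the arithmetic matching of the two inequalities: one must arrange that the \emph{same} invariants governing the Galois lower bound govern the height upper bound --- which is exactly what \cref{mainindex} supplies by relating $\disc R_x$ to $i(\gM)$, $[K_\gM^m:K_\gM]$ and $\disc L_x$ --- and that these invariants are Galois-stable, so a single large Galois orbit produces many low-height algebraic points on the \emph{fixed} definable set $Z$. A secondary point is that Pila--Wilkie must be used for points of bounded degree, not merely rational points, and that Ax--Lindemann is needed in full generality for arbitrary Shimura varieties; this is precisely why moving beyond Shimura varieties of abelian type is substantive and why \cref{height-disc-r-bound} in the non-abelian-type case is required.
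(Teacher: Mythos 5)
What you have written is not, and cannot be, a proof of \cref{andre-oort}: in this paper that statement is a \emph{conjecture}, and the paper gives no proof of it. Your argument is explicitly conditional on the hypothesised lower bounds for Galois orbits of special points, which are unproven in general (known unconditionally only for \( \Ag \), and otherwise only under GRH). What you have actually sketched is the derivation of \cref{andre-oort-conditional} --- the conditional statement --- from \cref{height-disc-r-bound,mainindex} together with Pila--Wilkie, definability of the uniformisation on \( \cF \), and hyperbolic Ax--Lindemann. That derivation is not written out in this paper either (it is delegated to the cited literature), but your outline of it is broadly the intended one: reduce to a Hodge-generic component \( V \), produce many bounded-degree, low-height pre-special points in the definable set \( Z = (\pi|_{\cF})^{-1}(V) \) from a single large Galois orbit, invoke Pila--Wilkie to force points into \( Z^{\alg} \), and then use Ax--Lindemann and Ullmo's argument to conclude that \( V \) is special.

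There is, however, one concrete defect in your sketch of the conditional argument. The step where you write \( \Delta \) for ``the product of the invariants'' and claim both \( \rH(x) \leq C\Delta^{\kappa} \) and \( \abs{\Gal(\Qalg/L_x)\cdot s} \geq c\,\Delta^{\lambda} \) does not work as stated: the hypothesised Galois bound contains the factor \( \refC{galois-bound-badprime-base}^{i(\gM)} \), and the constant \( \refC{galois-bound-badprime-base} \) may be smaller than \( 1 \), in which case the Galois orbit is \emph{not} bounded below by a positive power of \( i(\gM) \) or of any product of the raw invariants. This is precisely why \cref{main-bound} is stated with its unusual quantifier structure, in which the base \( \refC{main-bound-badprime-base} \) appearing in the height bound may be chosen freely --- in particular \( \leq \refC{galois-bound-badprime-base}^{\refC{pw-bound-exponent}} \) even when that quantity is less than \( 1 \) --- a point the paper discusses at length after the statement of \cref{main-bound}. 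Your comparison should therefore be routed through a direct inequality \( \rH(x) \leq \refC{pw-bound-multiplier}\, \abs{\Gal(\Qalg/L_x)\cdot[x,1]}^{\refC{pw-bound-exponent}} \), choosing the exponent first and the base afterwards, rather than through a common complexity \( \Delta \).
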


The most appealing feature of the Pila--Zannier strategy is the manner in which it combines a number of independent ingredients to deliver a relatively simple proof of the conjecture.
The ingredients themselves are substantially more complicated and belong to various branches of mathematics.
The primordial result is the so-called Pila--Wilkie counting theorem (see~\cite{PW06}), yielding strong upper bounds on the number of algebraic points of bounded height and degree away from the algebraic part of a set definable in an o-minimal structure.

The fact that Shimura varieties are amenable to tools from o-minimality is due to the second ingredient, which states that, if we write $S:=\Gamma \backslash X^+$, the restriction of the uniformisation map $X^+ \rightarrow S$ to $\cF$ is definable in the o-minimal structure $\bR_{\rm an,exp}$. For the moduli space $\cA_g$ of principally polarised abelian varieties this is a theorem of Peterzil and Starchenko (see~\cite{PS10}). It has since been demonstrated for any Shimura variety in the work~\cite{kuy:ax-lindemann} of Klingler, Ullmo and Yafaev. 

The third ingredient is the so-called hyperbolic Ax-Lindemann-Weierstrass conjecture, which is a statement in functional transcendence regarding the uniformisation map. This was first proved for products of modular curves by Pila in his seminal work~\cite{P11}, then for compact Shimura varieties by Ullmo and Yafaev in~\cite{UY14c} and for $\cA_g$ by Pila and Tsimerman in~\cite{pila-tsimerman:ax-lindemann}.
The case of a general Shimura variety has recently been demonstrated by Klingler, Ullmo and Yafaev (see~\cite{kuy:ax-lindemann}). 

The final two ingredients are arithmetic in nature and serve as opposing forces.
The goal of these two ingredients is to show that there are constants $\newC{pw-bound-multiplier}$ and $\newC{pw-bound-exponent}$, such that for every pre-special point $x \in \cF$ with image $[x,1]$ in the Shimura variety,
\[ \rH(x) \leq \refC{pw-bound-multiplier} \, \abs{\Gal(\Qalg/L_x) \cdot [x,1]}^{\refC{pw-bound-exponent}}. \]
This is broken down into two parts.
The first is the lower bound for the sizes of Galois orbits of special points.
As we discussed previously, this is known unconditionally for $\Ag$ and under the GRH for all Shimura varieties.
Without the GRH, this bound remains an open problem in general.
The second of the final ingredients is the upper bound for the heights of pre-special points in the fundamental domain $\cF$, which is the subject of this paper.
We have already discussed previous work on special cases of these final two ingredients.

Gao has generalised the Pila--Zannier strategy to the mixed André--Oort conjecture (see \cite{Gao13}).
In the note \cite{Gao15}, he shows that \cref{height-disc-r-bound,mainindex} imply the mixed André--Oort conjecture for all mixed Shimura varieties, conditional on the same Galois bounds for pure Shimura varieties as \cref{andre-oort-conditional}.

\subsection{Pre-special points and realisations}

We emphasise that we are talking about the heights of \textit{pre-special} points, namely points belonging to $X^+$, rather than \textit{special points}, which in our terminology are precisely the images of pre-special points in $S$. Recall that $X$ is a $\gG(\bR)$-conjugacy class of morphisms $\bS\rightarrow G_{\bR}$, where $\bS$ is the Deligne torus. An element $x \in X$ is said to be \defterm{pre-special} if it factors through a subtorus of $\gG$ defined over $\bQ$. In the classical setting, where $X^+$ is \textit{realised} as the upper half-plane or the Siegel upper half-space $\cH_g$, points of $X^+$ are often referred to as \defterm{period matrices}. The period matrices corresponding to CM abelian varieties always have algebraic entries. The theorem of Pila and Tsimerman, which we are generalising, bounds the height of the period matrices of CM abelian varieties.

In general, in order to define the height \( \rH(x) \) of a point \( x \in X \), we must choose a \defterm{realisation} \( \cX \) of \( X \). By this we mean an analytic subset of a quasi-projective complex variety, equipped with a transitive action of \( \gG(\bR) \) by holomorphic automorphisms of \( \cX \) and with an isomorphism of \( \gG(\bR) \)-homogeneous spaces \( \cX \to X \) such that, for every \( x_0 \in \cX \), the map 
\[ \gG(\bR) \rightarrow \cX : g \mapsto g \cdot x_0 \]
is semi-algebraic (regarding \( \cX \) as a subset of a real algebraic variety by taking real and imaginary parts of the coordinates).
A morphism of realisations is defined to be a \( \gG(\bR) \)-equivariant biholomorphism commuting with the respective isomorphisms with \( X \).
By \cite{ullmo:applications} Lemme~2.1, every realisation is a semialgebraic set and every morphism of realisations is semialgebraic.

We restrict ourselves to realisations \( \cX \) for which the the action of \( \gG(\QalgR) \) is \defterm{semialgebraic over \( \QalgR \)}.
By this, we mean that:
\begin{enumerate}[(i)]
\item \( \cX \) is an analytic subset of the complex points of a quasi-projective variety defined over \( \Qalg \), and
\item for each \( \Qalg \)-point \( x_0 \in \cX \), the map \( g \mapsto g \cdot x_0 \) is definable in the language of ordered rings with constants \( \QalgR \).
(In other words, the graph of this map is a semialgebraic set which can be defined by equations and inequalities whose coefficients are in \( \QalgR \).)
\end{enumerate}
Note that throughout this paper, \( \Qalg \) denotes the subfield of algebraic numbers in~\( \bC \), so \( \QalgR \) is unambiguously defined.
The Borel realisation is an example of a realisation which is semialgebraic over \( \QalgR \) (see~\cite{UY11}) and any two such realisations are related by an isomorphism which is semialgebraic over \( \QalgR \).

By~\cite{UY11} Proposition~3.7, pre-special points in such a realisation have coordinates in \( \Qalg \).
Alternatively, this fact follows from the fact that each pre-special point in \( X \) is the unique fixed point of some element of \( \gG(\bQ) \) (see \cite{DH14} Theorem~2.3).
Therefore, we can write \( \rH(x) \) for a pre-special point \( x \in X \) to mean its multiplicative Weil height in a chosen realisation \( \cX \).
Henceforth, for any Shimura datum \( (\gG,X) \), we tacitly assume the choice of a realisation for \( X \).

In order for the Pila--Zannier strategy to work, it is necessary that pre-special points in \( \cX \) are not only algebraic but are defined over number fields of uniformly bounded degree.
In the case of the Borel realisation, it follows from the proof of~\cite{UY11} Lemma 3.8 that, given a faithful representation \( \rho \colon \gG \to \gGL_n \), a pre-special point \( x \in \cX \) is defined over the splitting field \( L \) of a maximal torus in \( \gGL_n \) containing the Mumford-Tate group of \( x \).
The rank \( d \) of this torus is of course bounded by \( n \), and the Galois action on its group of characters is given by an embedding
\[ \Gal(L/\bQ) \hookrightarrow \gGL_d(\bZ). \]
Therefore, it follows from a classical result of Minkowksi on finite subgroups of \( \gGL_d(\bZ) \) that the degree \( [L:\bQ] \) is bounded by a constant depending only on \( n \).
The fact that any two realisations semialgebraic over \( \QalgR \) are related by an isomorphism which is semialgebraic over \( \QalgR \) implies that this holds for all such realisations.

\subsection{Precise statement of our final bound and minor remarks}

For convenience, we include here a precise statement of our final bound for heights of pre-special points (i.e.\ the combination of \cref{height-disc-r-bound} and \cref{mainindex}).
The form of the bound in \cref{main-bound} matches the form of the conjectural Galois bounds (as mentioned in \cref{andre-oort-conditional}).
Hence the invariants which appear seem to be the natural invariants of a (pre-)special point of a general Shimura variety.

\begin{theorem} \label{main-bound}
Let \( (\gG, X) \) be a Shimura datum in which \( \gG \) is an adjoint group.

Choose a realisation of \( X \) such that the action of \( \gG(\QalgR) \) on \( X \) is semialgebraic and defined over \( \QalgR \).
Let \( \rH(x) \) denote the multiplicative Weil height of a point in \( X(\Qalg) \) with respect to this realisation.

Let \( K \) be a compact open subgroup of \(\gG(\bA_f) \) and let \( \cF \) be a fundamental set in \( X \) for \( K \cap\gG(\bQ) \), as in Théorème~13.1 of~\cite{borel:arithmetic-groups}, with respect to a pre-special base point \( x_0\). 

There exist constants \( \newC{main-bound-torus-exponent} \), \( \newC{main-bound-disc-exponent} > 0 \) such that for all \( \newC{main-bound-badprime-base} > 0 \), there exists \( \newC{main-bound-multiplier} > 0 \) (where \( \refC{main-bound-torus-exponent} \) and \( \refC{main-bound-disc-exponent} \) depend only on \( \gG \), \( X \), \( \cF \) and the realisation of \( X \), while \( \refC{main-bound-multiplier} \) depends on these data and also \( \refC{main-bound-badprime-base} \)) such that:

For each pre-special point \( x \in \cF \):
\begin{enumerate}[(a)]
\item Let \( \gM \) denote the Mumford--Tate group of \( x \) (which is a torus because \( x \) is pre-special).
\item Let \( K_{\gM} = K \cap \gM(\bA_f) \) and let \( K_{\gM}^m \) be the maximal compact subgroup of \( \gM(\bA_f) \).
\item Let \( i(\gM) \) be the number of primes \( p \) for which \( K \cap \gM(\bQ_p) \) is strictly contained in the maximal compact subgroup of \( \gM(\bQ_p) \).
\item Let \( L_x \) be the splitting field of \( \gM \).
\end{enumerate}

Then
\[ \rH(x) \leq \refC{main-bound-multiplier} \, \refC{main-bound-badprime-base}^{i(\gM)} [K_{\gM}^m:K_{\gM}]^{\refC{main-bound-torus-exponent}} \abs{\disc L_x}^{\refC{main-bound-disc-exponent}}. \]
\end{theorem}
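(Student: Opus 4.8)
The plan is to derive this statement by combining \cref{height-disc-r-bound} with \cref{mainindex}, the auxiliary bound comparing \( \abs{\disc R_x} \) with the torus-theoretic invariants \( i(\gM) \), \( [K_{\gM}^m : K_{\gM}] \) and \( \abs{\disc L_x} \). First I would fix a faithful self-dual \( \bQ \)-representation \( \rho \colon \gG \to \gGL(E) \) together with a lattice \( E_\bZ \subset E \); such a \( \rho \) exists for any reductive \( \gG \) (take a faithful representation and replace it by \( \rho \oplus \rho^\vee \) to make it self-dual), and the hypotheses of \cref{height-disc-r-bound} are then satisfied. Applying \cref{height-disc-r-bound} to the given realisation, the given congruence subgroup \( \Gamma = K \cap \gG(\bQ) \), and the fundamental set \( \cF \), we obtain constants \( \refC{height-disc-r-multiplier} \), \( \refC{height-disc-r-exponent} \), depending only on \( \gG \), \( X \), \( \cF \), \( \rho \) and the realisation, such that for all pre-special \( x \in \cF \),
\[ \rH(x) \leq \refC{height-disc-r-multiplier} \, \abs{\disc R_x}^{\refC{height-disc-r-exponent}}, \]
where \( R_x \) is the centre of the endomorphism ring of the \( \bZ \)-Hodge structure \( \rho \circ x \).

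The next step is to bound \( \abs{\disc R_x} \) in terms of the invariants in (a)--(d). This is exactly the content of \cref{mainindex}: since \( x \) is pre-special, its Mumford--Tate group \( \gM \) is a \( \bQ \)-torus, \( \rho \circ x \) is a \( \bZ \)-Hodge structure of CM type, and \( R_x \) is an order in the splitting-field-related étale \( \bQ \)-algebra attached to \( \gM \). I would invoke \cref{mainindex} to get
\[ \abs{\disc R_x} \leq C' \, c^{\, i(\gM)} \, [K_{\gM}^m : K_{\gM}]^{a} \, \abs{\disc L_x}^{b} \]
for suitable exponents \( a, b > 0 \) depending only on \( \gG \), \( X \) (equivalently on \( \rho \), hence ultimately only on \( \gG \) and \( X \)) and, for each choice of base \( c > 0 \), a multiplier \( C' \) depending on \( c \) as well. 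Here the flexibility in the base of the \( i(\gM) \) term is built into the statement of \cref{mainindex} precisely so that it propagates to \cref{main-bound}; given \( \refC{main-bound-badprime-base} > 0 \) I would choose the base in \cref{mainindex} to be \( \refC{main-bound-badprime-base}^{1/\refC{height-disc-r-exponent}} \) so that after raising to the power \( \refC{height-disc-r-exponent} \) the factor becomes \( \refC{main-bound-badprime-base}^{i(\gM)} \) as required.

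Substituting the second inequality into the first and setting \( \refC{main-bound-torus-exponent} = a \, \refC{height-disc-r-exponent} \), \( \refC{main-bound-disc-exponent} = b \, \refC{height-disc-r-exponent} \), and \( \refC{main-bound-multiplier} = \refC{height-disc-r-multiplier} \, (C')^{\refC{height-disc-r-exponent}} \) yields
\[ \rH(x) \leq \refC{main-bound-multiplier} \, \refC{main-bound-badprime-base}^{i(\gM)} \, [K_{\gM}^m : K_{\gM}]^{\refC{main-bound-torus-exponent}} \, \abs{\disc L_x}^{\refC{main-bound-disc-exponent}}, \]
which is the assertion. One should check that the dependencies match: \( \refC{main-bound-torus-exponent} \) and \( \refC{main-bound-disc-exponent} \) depend only on \( \gG \), \( X \), \( \cF \) and the realisation (the choice of \( \rho \) can be made canonically once \( \gG \) is fixed, or the dependence on \( \rho \) absorbed into that on \( \gG \)), while \( \refC{main-bound-multiplier} \) additionally depends on \( \refC{main-bound-badprime-base} \) through \( C' \). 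I do not expect any genuine obstacle in this deduction itself — it is a bookkeeping exercise chaining two inequalities and tracking exponents — since all the mathematical difficulty has been isolated into \cref{height-disc-r-bound} (the height bound against \( \disc R_x \), the heart of the paper) and \cref{mainindex} (the comparison of \( \disc R_x \) with the torus invariants). The only point requiring mild care is ensuring the congruence subgroup \( \Gamma = K \cap \gG(\bQ) \) appearing here is the one used in \cref{height-disc-r-bound}, and that the fundamental set \( \cF \) is taken with respect to the same pre-special base point \( x_0 \); both are guaranteed by the hypotheses as stated.
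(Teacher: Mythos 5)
Your route is the same as the paper's: quote \cref{height-disc-r-bound} for some choice of faithful self-dual \( \rho \) and lattice \( E_\bZ \), quote \cref{mainindex}, chain the inequalities, and absorb the quantifier issue by choosing the base of the \( i(\gM) \) factor in \cref{mainindex} as a suitable root of \( \refC{main-bound-badprime-base} \). There is, however, one misattribution that leaves a small but genuine gap in the chaining step: \cref{mainindex} does \emph{not} bound \( \abs{\disc R_x} \); it bounds the index \( [\cO_x : R_x] \) of \( R_x \) in the maximal order \( \cO_x \) of \( F_x \). To convert this into the quantity appearing in \cref{height-disc-r-bound} you still need the identity
\[ \disc R_x = [\cO_x : R_x]^2 \disc \cO_x = [\cO_x : R_x]^2 \prod_{i=1}^s \disc F_i, \]
together with the observations that each \( F_i \) is isomorphic to a subfield of the splitting field \( L_x \), so \( \abs{\disc F_i} \leq \abs{\disc L_x} \), and that the number \( s \) of isotypic components is at most \( \dim E \), whence \( \abs{\disc \cO_x} \leq \abs{\disc L_x}^{\dim E} \). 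This is exactly the content of the paper's proof of \cref{main-bound} beyond citing the two theorems, so your claim that the deduction is pure bookkeeping on the two quoted statements is not quite right as written; once this bridge is inserted, your argument goes through, with the base in \cref{mainindex} now chosen as \( \refC{main-bound-badprime-base}^{1/(2\refC{height-disc-r-exponent})} \) to account for the square on \( [\cO_x : R_x] \). Your remarks on the existence of a faithful self-dual representation and on matching \( \Gamma = K \cap \gG(\bQ) \), \( \cF \) and \( x_0 \) with those of \cref{height-disc-r-bound} are fine and agree with the paper.
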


The quantifiers associated with the constants in \cref{main-bound} are complicated.
In particular, why is \( \refC{main-bound-badprime-base} \) universally quantified while the others are existentially quantified?
The reason for this is the need to compare \cref{main-bound} with the Galois bound
\[ \refC{galois-bound-multiplier} \, \refC{galois-bound-badprime-base}^{i(\gM)} [K_{\gM}^m:K_{\gM}]^{\refC{galois-bound-torus-exponent}} \abs{\disc L_x}^{\refC{galois-bound-disc-exponent}} \leq \abs{\Gal(\Qalg/L_x) \cdot [x,1]} \]
from the assumption of \cref{andre-oort-conditional} and end up with a conclusion
\[
\rH(x) \leq \refC{pw-bound-multiplier} \, \abs{\Gal(\Qalg/L_x) \cdot [x,1]}^{\refC{pw-bound-exponent}}.
\tag{*} \label{eqn:pw-bound}
\]

The difficulty in comparing these two bounds is that \( \refC{galois-bound-badprime-base} \) might be less than~\( 1 \).
We will need to choose a large value for \( \refC{pw-bound-exponent} \) in order for the \( [K_{\gM}^m:K_{\gM}] \) and \( \abs{\disc L_x} \) factors on the right hand side of \eqref{eqn:pw-bound} to be larger than the corresponding factors on the left.
In order for the \( i(\gM) \) factors in \eqref{eqn:pw-bound} to work out, we need
\[
\refC{main-bound-badprime-base} \leq \refC{galois-bound-badprime-base}^{\refC{pw-bound-exponent}}.
\tag{\dag} \label{eqn:badprime-base-comparison}
\]
But if \( \refC{galois-bound-badprime-base} < 1 \), then we cannot achieve \eqref{eqn:badprime-base-comparison} by making \( \refC{pw-bound-exponent} \) large.
Instead we need the freedom to choose \( \refC{main-bound-badprime-base} \) in \cref{main-bound}.
Furthermore, our choice of \( \refC{main-bound-badprime-base} \) depends on \( \refC{pw-bound-exponent} \) which in turn depends on \( \refC{main-bound-torus-exponent} \) and \( \refC{main-bound-disc-exponent} \) so the quantifiers of these constants must be ordered as in the statement of the theorem.
Meanwhile, ``there exists \( \refC{main-bound-multiplier} \)'' has to come last due to the proof of \cref{main-bound}.

In the statements of \cref{height-disc-r-bound,main-bound} we assume that \( \gG \) is an adjoint group i.e.\ it has trivial centre.
In the context of the André--Oort conjecture this is entirely inconsequential (see~\cite{EY03} \S2).
The fact that \( \gG \) is adjoint ensures that the Hodge structures induced by any representation of \( \gG \) are of pure weight, which is essential to our proof (indeed, it ensures that these Hodge structures are pure of weight~\( 0 \), but this is not essential).
We have taken advantage of the hypothesis that \( \gG \) is adjoint to make some other, non-essential, simplifications.

Note that it is a trivial observation that \cref{height-disc-r-bound,main-bound} fail if one does not restrict to a fundamental set in \( X \).

\subsection{Comparison with Theorem~3.1 of \cite{pila-tsimerman:ao-surfaces}}

In essence, the proof of \cite{pila-tsimerman:ao-surfaces} Theorem~3.1 has three steps: studying the relationship between polarisations of a CM abelian variety and its endomorphism ring, choosing a suitable symplectic basis and reduction theory for matrices in the Siegel upper half-space \( \Hg \).
(The division between the cases of simple and non-simple abelian varieties which appears in \cite{pila-tsimerman:ao-surfaces} affects only the first of these steps.
It is true that our paper could be greatly simplified if we could ignore the case of non-irreducible isotypic Hodge structures, but given the final structure of our proof, we cannot easily separate it into parts dealing with irreducible and non-irreducible Hodge structures.)

We use polarisations in the same way as \cite{pila-tsimerman:ao-surfaces} (\cref{polarisation-bound}).
We also use reduction theory (section~\ref{ssec:reduction-theory}), although in dealing with general Shimura varieties, our calculations are necessarily less explicit.

The main additional difficulty for general Shimura varieties concerns the second of these steps.
Instead of a symplectic basis, we must find a basis having the property we call \textit{\( \gG \)-admissibility},
which we can describe using a multilinear form~\( \Phi \) on our Hodge structure whose stabiliser is equal to \( \gG \) (the existence of \( \Phi \) is guaranteed by Chevalley's theorem).
In the case of \( \Ag \), \( \gG = \gGSp_{2g} \) and this multilinear form is the standard symplectic form.
(Of course, \( \gGSp_{2g} \) is the group of similitudes of the standard symplectic form, rather than its stabiliser, but this is an unimportant technical difference.)
It is much easier to manipulate symplectic forms than general multilinear forms, and this leads to the most important difficulties in this paper (sections~\ref{ssec:g-admissible-basis} and~\ref{sec:variety-bound}).

It is perhaps worth saying a word about why this issue of \( \gG \)-admissibility is so important.
Throughout the paper, we work with a faithful representation \( \gG \hookrightarrow \gGL_n \) and we embed \( X \subset \Hom(\bS, \gG) \) in a \( \gGL_n(\bC) \)-conjugacy class \( X_{\gGL_n} \) of morphisms \( \bS_\bC \to \gGL_{n,\bC} \).
A basis is said to be \( \gG \)-admissible if it lies in the \( \gG(\bR) \)-orbit of some fixed reference basis.
Thus \( \gG \)-admissibility has two parts: the relevant matrix must be both real and in \( \gG \).
The requirement that this matrix be real is the purpose of section~\ref{sec:variety-bound} while the requirement that it be in \( \gG \) is the core of the difficulties in section~\ref{sec:bases}.

In earlier attempts to prove our theorem, we ignored \( \gG \)-admissibility and attempted to use reduction theory directly in \( \gGL_n(\bR) \) rather than in \( \gG(\bR) \).
This fails because reduction theory in \( \gGL_n(\bR) \) works with the symmetric space \( \gGL_n(\bR)/\bR^\times\gO_n(\bR) \), which is not the same as \( X_{\gGL_n} \).

\subsection{Outline of paper}

The proof of \cref{height-disc-r-bound} is found in section~\ref{sec:bases}.
In order to attach Hodge structures to points of~\( X \), we have to choose a representation of the group \( \gG \).
Of course, the constants we get depend on which representation we choose but the fact that there always exists some representation satisfying the conditions of \cref{height-disc-r-bound} means that this is sufficient for proving \cref{main-bound}.

In section~\ref{sec:variety-bound} we prove the following theorem, saying that an affine variety defined over \( \QalgR \) has a \( \QalgRb \)-point whose height is polynomially bounded relative to the coefficients of polynomials defining the variety.
A version of this with \( \QalgR \) replaced by \( \Qalg \) is straightforward, but we need the version with \( \QalgR \) in section~\ref{sec:bases}.

\begin{theorem} \label{variety-bound-intro}
For all positive integers \( m \), \( n \), \( D \), there are constants \( \newC{point-multiplier} \) and \( \newC{point-exponent} \) depending on \( m \), \( n \) and \( D \) such that:

For every affine algebraic set \( V \subset \bA^n \) defined over \( \QalgR \) by polynomials \( \seq{f}{m} \in \QalgRb[\seq{X}{n}] \) of degree at most \( D \) and height at most \( H \), if \( V(\bR) \) is non-empty, then \( V\QalgRb \) contains a point of height at most \( \refC{point-multiplier} H^{\refC{point-exponent}} \).
\end{theorem}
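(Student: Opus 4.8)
The plan is to reduce to the zero-dimensional case by slicing with generic linear subspaces, and then apply quantitative bounds on the heights of points on zero-dimensional varieties over $\QalgR$. First I would observe that if $V(\bR) \neq \emptyset$ then $V$ has a real point lying on some irreducible component $W$ of $V_{\bC}$ (defined over $\QalgR$, after enlarging the field of definition by a bounded degree — or rather passing to the component, which is defined over a number field whose degree and the height of whose defining equations are polynomially bounded in $H$ by elimination theory, e.g.\ effective primary decomposition). Let $e = \dim W$. Since $W(\bR)$ is non-empty, intersecting $W$ with $e$ affine hyperplanes defined over $\bQ$ with small rational coefficients, chosen so that the intersection with $W(\bR)$ remains non-empty and zero-dimensional, yields a zero-dimensional set $V' \subset W$ defined over $\QalgR$ with at least one real point. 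The existence of such hyperplanes with bounded coefficients follows by a pigeonhole / Bézout-type argument: among hyperplanes $\sum a_i X_i = b$ with $a_i, b$ ranging over a bounded set of rationals, one can successively cut down the dimension while keeping a real point, since the real locus of a positive-dimensional real algebraic set cannot avoid all hyperplanes from a sufficiently rich finite family (this uses that $W(\bR)$, being Zariski-dense in a positive-dimensional component only if it is infinite — but components with finite or empty real locus can be discarded, so one works with a component $W$ such that $W(\bR)$ is Zariski-dense in $W$, which exists whenever $V(\bR)\neq\emptyset$ by the real Nullstellensatz applied component-wise). Here $D$, $n$, $m$, $e$ are all bounded, so the number of hyperplanes needed and the size of their coefficients is controlled.

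The heart of the matter is then the zero-dimensional case: a zero-dimensional $V' \subset \bA^n$ defined over $\QalgR$ by polynomials of bounded degree and of height polynomially bounded in $H$, with $V'(\bR)\neq\emptyset$, contains a point of $V'\QalgRb$ of height polynomially bounded in $H$. For this I would use an arithmetic Bézout / arithmetic Nullstellensatz estimate (in the style of Philippon, or Krick--Pardo--Sombra): every point of $V'(\Qalg)$ has degree over $\QalgR$ bounded in terms of $n$ and $D$ only, and (logarithmic) Weil height bounded by a polynomial in $H$ with exponent depending only on $n$ and $D$. A real point of $V'$ is in particular a point of $V'\QalgRb$ (it is algebraic, being an isolated point cut out by polynomials over $\QalgR$, hence has coordinates in $\Qalg$; and it is real, hence in $\QalgR$). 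So the real point itself furnishes the desired $\QalgRb$-point, and the height bound is exactly the arithmetic Bézout bound.

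The main obstacle I expect is bookkeeping the passage from the given generators $f_1,\dots,f_m$ to a component $W$ with Zariski-dense real locus, and then to the zero-dimensional slice $V'$, while keeping track of the height of a set of defining polynomials for $V'$. Effective elimination theory (e.g.\ a Gr\"obner-basis-free approach via the arithmetic Nullstellensatz, or effective primary decomposition) gives the needed polynomial bounds, but one must be careful that the degree of the number field over which $W$ is defined stays bounded (it does: an irreducible component of a variety defined over $\QalgR$ by degree-$\le D$ equations in $\bA^n$ lies in the Galois orbit of a component, and the whole orbit has size bounded by $n$, $D$), and that the real Nullstellensatz is being applied to the right object (one discards components whose real points are not Zariski-dense; since $V(\bR)=\bigcup_W W(\bR)$ over components $W$, and a component with $W(\bR)$ Zariski-dense exists whenever $V(\bR)\neq\emptyset$ — otherwise all $W(\bR)$ would be contained in proper subvarieties, but that is consistent only if... — actually one should simply take $W$ to be a component \emph{minimal} among those meeting $\bR^n$, on which $W(\bR)$ is then automatically Zariski-dense by minimality and the real Nullstellensatz). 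Once the reduction is in place, no further genuinely hard input beyond the standard arithmetic Bézout estimate is required.
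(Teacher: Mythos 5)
Two steps in your plan fail as stated, and they are exactly where the substance of the theorem lies. First, the reduction to an irreducible component \( W \) with \( W(\bR) \) Zariski-dense is not available: a geometrically irreducible variety over \( \QalgR \) can have real points none of which are Zariski-dense. For example, \( W \subset \bA^2 \) defined by \( (X_2-b)^2 + (X_1-a)^2\bigl((X_1-a)^2+1\bigr) = 0 \), with \( a,b \in \QalgR \) of height about \( H \), is geometrically irreducible of dimension \( 1 \), yet \( W(\bR) = \{(a,b)\} \). So ``take a minimal component meeting \( \bR^n \)'' produces a \( W \) on which your slicing has nothing to slice, and the real Nullstellensatz does not help (the real Zariski closure of \( W(\bR) \) is not a component of \( W \), and nothing controls the height of its equations a priori). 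In general, when \( W(\bR) \) is not Zariski-dense the relevant real points lie in \( \Sing W \), and to continue one must replace \( W \) by a \emph{proper} algebraic subset containing them whose defining equations still have bounded number and degree and polynomially bounded height. This is genuinely hard: the given \( f_i \) need not generate a radical ideal, so Jacobian minors of the \( f_i \) do not cut out anything usable, and producing such a subset is where the paper needs Chow forms, the arithmetic Bézout theorem and the Bombieri--Masser--Zannier argument (\cref{singular-bound}); your proposal has no mechanism for this descent.

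Second, the hyperplane-slicing claim with coefficients from a bounded set independent of \( H \) is false. The real locus of a positive-dimensional real algebraic set \emph{can} avoid any prescribed finite family of hyperplanes: take the circle \( (X_1-c_1)^2+(X_2-c_2)^2 = H^{-2} \) with \( (c_1,c_2) \) a rational point of height about \( H \) chosen at distance at least \( 1 \) from every hyperplane of the family; its real locus is Zariski-dense, its equation has degree \( 2 \) and height polynomial in \( H \), and it meets none of the hyperplanes. So the pigeonhole principle you invoke is exactly what fails. One could hope to repair this by allowing hyperplanes such as \( X_i = c \) with \( c \) rational of height polynomial in \( H \) (using a root-separation bound to show the projection of \( W(\bR) \) contains an interval of length at least \( H^{-C} \)), but then you must also re-prove that the sliced set is defined by equations of polynomially bounded height, and you still run into the first gap whenever \( W(\bR) \) is not Zariski-dense. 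Your zero-dimensional endgame is fine (an isolated point of a \( \QalgR \)-variety is algebraic, a real such point lies in \( V\QalgRb \), and arithmetic Bézout bounds its height), but the reduction to it is the whole difficulty; the paper instead argues by induction on \( n + \dim V \) via a height-controlled Noether normalisation, following a connected component of \( V(\bR) \) and descending into the double-root locus or a height-controlled substitute for the singular locus when its image downstairs is not a full component.
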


Most of this section is elementary, based on the proof of the Noether normalisation lemma.
However in some cases, in order to prove the \( \QalgR \) version of \cref{variety-bound-intro}, we need to show that if we have bounds for the heights of polynomials defining a variety \( V \), then we can find a proper algebraic subset \( V' \subset V\) such that \( \Sing V \subset V' \) and the heights of polynomials defining \( V' \) are also bounded.
The proof of this uses Philippon's arithmetic Bézout theorem (\cite{philippon}), Nesterenko's study of the Chow form (\cite{N77}) and an idea of Bombieri, Masser and Zannier (\cite{BMZ07}) to use the Chow form in studying the singular locus.

Finally in section~\ref{sec:discriminants} we relate the bound in terms of the discriminant of the centre of the endomorphism ring which appears in \cref{height-disc-r-bound} to the bound in terms of invariants of the Mumford--Tate torus which appears in \cref{main-bound}.
This generalises arguments of Tsimerman for the \( \Ag \) case (see~\cite{T12} section~7.2).

Let us say a little more about the proof of \cref{height-disc-r-bound}.
This proof is inspired by the proof of \cite{pila-tsimerman:ao-surfaces} Theorem~3.1 but turns out to be significantly more difficult, as explained above (and we have found it convenient to write it in a very different way).
We fix a pre-special base point \( x_0 \) for \( X \), and we aim to find an element (which we will call \( g_4 \)) in \( \gG(\QalgR) \) such that
\[ g_4 x_0 = x \text{ and the height of } g_4 \text{ is polynomially bounded}. \]

In order to construct \( g_4 \), we first construct several different elements of \( \gGL(E_\bC) \) which map \( x_0 \) to \( x \) (by conjugation in \( \Hom(\bS_\bC, \gGL(E_\bC)) \) but which do not satisfy all the conditions we want for \( g_4 \): they are not always in \( \gG(\bR) \) and they satisfy weaker bounds than a straightforward height bound.
(Note: in section~\ref{sec:bases}, we talk about bases for \( E_\bC \) rather than elements of \( \gGL(E_\bC) \).
These are equivalent once we have fixed a reference basis.)
These constructions use Minkowski's theorem, the theory of maximal tori and some calculations with Hermitian forms.

We then use \cref{variety-bound-intro} to construct \( g_4 \) itself.
Initially we do not get a height bound for \( g_4 \), only for the matrix relating \( g_3 \) and \( g_4 \).
However up to this point we have not used the fact that \( x \) is in the fundamental set~\( \cF \).
We use the definition of the fundamental set, together with our various other bounds for \( g_4 \), to conclude that the height of \( g_4 \) is bounded.

This postprint differs from the published version in the following ways.
Some explanatory paragraphs of the postprint are omitted in the published version.
The proof of \cref{root-bound} in the published version is incorrect.
It has been corrected in this postprint via the addition of Lemma~\ref{correction:um-roots}.

\section{Height bound in terms of the discriminant of the endomorphism ring} \label{sec:bases}

In this section we show that the heights of pre-special points in suitable fundamental sets are polynomially bounded with respect to the discriminant of the centre of the endomorphism ring of the associated Hodge structure.
In other words, we prove \cref{height-disc-r-bound}.

To prove \cref{height-disc-r-bound}, we construct an element \( g_4 \in \gG(\QalgR) \) such that
\[ g_4 x_0 = x \]
and \( g_4 \) has polynomially bounded height.
(Throughout this section, \defterm{polynomially bounded} means bounded above by an expression of the form \( C \, \abs{\disc R_x}^{C'} \).)
This suffices to prove the theorem because the action of \( \gG(\bR) \) on \( X \) is semialgebraic.

On the way to constructing \( g_4 \), we will need to talk about \( gx_0 \) not just for \( g \in \gG(\bR) \) but also for \( g \in \gGL(E_\bC) \).
In order for this to make sense, we enlarge
\[ X = \text{ a } \gG(\bR) \text{-conjugacy class in } \Hom(\bS, \gG_\bR) \]
to a \( \gGL(E_\bC) \)-conjugacy class in \( \Hom(\bS_\bC, \gGL(E_\bC)) \).
Whenever we write \( gx_0 \) for \( g \in \gGL(E_\bC) \) we always refer to the action of \( \gGL(E_\bC) \) on \( \Hom(\bS_\bC, \gGL(E_\bC)) \) by conjugation.

Rather than talking about elements of \( \gGL(E_\bC) \), we will often talk about bases for \( E_\bC \).
Since \( \gGL(E_\bC) \) acts simply transitively on the set of bases, this is purely a matter of language.
It is more convenient to use the language of bases because, while we are ultimately interested in how a general basis \( \cB \) is related to \( \basis{0} \) (a fixed basis linked to the base point \( x_0 \)), for our computations we shall want to consider the coordinates of \( \cB \) relative to \( \cB_\bZ \) (a fixed basis for the lattice \( \cB_\bZ \)).

We shall define a number of adjectives to describe special types of (ordered) bases for \( E_\bC \).
For now, we omit certain technical complications from these definitions;
the full details will appear in section~\ref{sssec:basis-types}.

\begin{enumerate}
\item A basis \( \cB \) for \( E_\bC \) is a \defterm{Hodge basis} for \( x \in X \) if every vector in \( \cB \) is an eigenvector for the Hodge parameter \( x \) (and \( \cB \) is ordered in a correct way).

The purpose of this definition is to relate bases to elements of \( x \in X \): \( \cB \) will be a Hodge basis for \( x \) if and only if the element \( g \in \gGL(E_\bC) \) mapping \( \basis{0} \) to \( \cB \) satisfies \( gx_0 = x \).

\item A basis \( \cB \) for \( E_\bC \) is a \defterm{diagonal basis} for \( E_x \) if each vector in \( \cB \) is an eigenvector for the centre of \( \End_{\bQ\text{-HS}} E_x \), and \( \cB \) is a Hodge basis.
(If \( x \) is pre-special, then the condition that \( \cB \) is a Hodge basis serves only to control the ordering of \( \cB \)).

\item A basis \( \cB \) for \( E_\bC \) is \defterm{\( \gG \)-admissible} if the element \( g \in \gGL(E_\bC) \) which maps \( \basis{0} \) to \( \cB \) is in \( \gG(\bR) \).

\item A diagonal basis \( \cB \) for \( E_x \) is \defterm{Galois-compatible} if the action of \( \Aut(\bC) \) on coordinates permutes the vectors of \( \cB \) in a suitable way.

The benefit of Galois-compatibility is that, if we have a Galois-compatible basis and we can bound the absolute values and the denominators of its coordinates, then we can deduce a bound for the heights of the coordinates.

\item A basis \( \cB \) for \( E_\bC \) is \defterm{weakly bounded} if all its coordinates are algebraic numbers with polynomially bounded denominators and its covolume is polynomially bounded.
\end{enumerate}

Using these adjectives, we can translate our search for \( g_4 \) into the following: we construct a \( \gG \)-admissible Hodge basis \( \basis{4} \) for the pre-special point \( x \) whose coordinates are algebraic numbers with polynomially bounded height.
In order to do this, we will first construct a series of other bases satisfying some but not all of the properties we want:

\begin{enumerate}
\item \( \basis{1} \), a \( \gG \)-admissible diagonal basis with no bounds at all;

\item \( \basis{2} \), a Galois-compatible weakly bounded diagonal basis which need not be \( \gG \)-admissible;

\item \( \basis{3} \), a Galois-compatible weakly bounded diagonal basis on which the values of a polarisation are bounded;

\item \( \basis{4} \), a weakly bounded \( \gG \)-admissible diagonal basis, such that the coordinates of \( \basis{4} \) with respect to \( \basis{3} \) are algebraic numbers of polynomially bounded height.
\end{enumerate}

Once we have obtained \( \basis{4} \) as above, we will then prove that \( \basis{4} \) has coordinates of polynomially bounded height (relative to \( \cB_\bZ \)).
The point of bounding the height of the coordinates of \( \basis{4} \) relative to \( \basis{3} \) is that it allows us to pass bounds back and forth between \( \basis{3} \) and \( \basis{4} \).
In particular this means that we can exploit the Galois-compatibility of \( \basis{3} \) while bounding \( \basis{4} \) (which is not Galois-compatible).

The constructions of \( \basis{1} \), \( \basis{2} \) and \( \basis{3} \) are fairly straightforward. \( \basis{1} \) is constructed geometrically using maximal tori, while \( \basis{2} \) is constructed arithmetically using the Minkowski bound.
\( \basis{3} \) is obtained from \( \basis{2} \) by some calculations with positive-definite Hermitian forms.

The hardest part of the proof is to obtain \( \basis{4} \) from \( \basis{3} \).
To do this we will use Chevalley's theorem: we can choose a multilinear form \( \Phi \colon E_\bZ^{\otimes k} \to \bZ \) such that
\[ \gG = \{ g \in \gGL(E) \mid \Phi(gv_1, \dotsc, gv_k) = \Phi(v_1, \dotsc, v_k) \text{ for all } v_j \in E \}. \]
Then a basis \( \cB \) is \( \gG \)-admissible if and only if the values of \( \Phi \) on \( \cB \) are the same as its values on \( \basis{0} \).

We can use the bound for the polarisation on \( \basis{3} \) together with the Galois-compatibility of \( \basis{3} \) to show that the values of the multilinear form \( \Phi \) on \( \basis{3} \) are algebraic numbers of polynomially bounded height.
This allows us to conclude that the following algebraic set is defined by equations of polynomially bounded height:
\[ V = \{ h \in \gGL_n(\bC) \mid \basis{3} h \text{ is diagonal and } \gG\text{-admissible} \}. \]
Here we write \( \basis{3} h \) with \( h \) on the right to mean the basis whose coordinates relative to \( \basis{3} \) are given by the columns of \( h \). This is different from \( g\basis{3} \) with a linear map \( g \) on the left, which means the basis obtained by applying \( g \) to each element of \( \basis{3} \).

We can apply \cref{variety-bound} to \( V \) to obtain \( \basis{4} \) (the existence of \( \basis{1} \) ensures that \( V(\bR) \) is non-empty).

Finally we have to bound the height of \( \basis{4} \).
In order to do this we will use the fact that \( x \) is in the fundamental domain \( \cF \) -- this fact has not been used so far in the construction of \( \basis{4} \).
In particular, this means that the element \( g_4 \in \gG(\bR) \) mapping \( \basis{0} \) to \( \basis{4} \) is contained in a Siegel set.
Using the definition of Siegel sets together with bounds for \( \basis{4} \) deduced from the fact that \( \basis{3} \) is weakly bounded and Galois-compatible, we prove that the coordinates of \( \basis{4} \) have polynomially bounded absolute values.
Using again the height bound for the relation between \( \basis{3} \) and \( \basis{4} \) and the fact that \( \basis{3} \) is Galois-compatible, we deduce that the coordinates of \( \basis{4} \) have polynomially bounded height.

\subsection{Notation}

In the first three parts of this section, we define notation for data which depend only on the Shimura datum \( (\gG, X) \) and the representation \( \rho \) (when we say ``choose'' something, we choose it once depending on \( (\gG, X, \rho) \) and thereafter regard it as fixed).  In particular these data do not depend on the special point \( x \).
In section~\ref{sssec:variable-data}, we define notation for data which does depend on \( x \).

Choose a basis \( \cB_\bZ \) for \( E_\bZ \).
Whenever we talk about the coordinates of a basis in \( E_\bC \), we mean with respect to \( \cB_\bZ \).

Let \( n = \dim E \).

If \( \cB \) and \( \cB' \) are two bases of the vector space \( E \), we define
\[ \Vol(\cB : \cB') = \operatorname{covol}(\cB')/\operatorname{covol}(\cB). \]

\subsubsection{Multilinear forms}

By Chevalley's theorem, we can choose a multilinear form \( \Phi \colon E_\bZ^{\otimes k} \to \bZ \) such that
\[ \gG = \{ g \in \gGL(E) \mid \Phi(gv_1, \dotsc, gv_k) = \Phi(v_1, \dotsc, v_k) \text{ for all } v_j \in E \}. \]
(By Deligne's extension of Chevalley's theorem, because \( \gG \) is reductive, we can require that \( g \) exactly preserves \( \Phi \) rather than just up to a scalar.
Because of the hypothesis that \( \rho \) is self-dual, we can say that \( \Phi \) is in \( E_\bZ^{\vee \otimes k} \) rather than a mixture of \( E_\bZ^\vee \) and \( E_\bZ  \).)

Let \( \Psi \colon E_\bQ \times E_\bQ \to \bQ \) denote a \( \gG \)-invariant \( C \)-polarisation as in \cite{deligne:shimura-vars}~1.1.15 and~1.1.18(b).

By the definition of a \( C \)-polarisation, for each \( x \in X \), the bilinear form on \( E_\bC \) defined by
\[ \Theta_x(u, v) = \Psi_\bC(\bar{u}, x(i) v) \]
is Hermitian and positive definite, where \( \Psi_\bC \) denotes the \( \bC \)-bilinear extension of~\( \Psi \).

\subsubsection{Base point}

Choose a pre-special point \( x_0 \in \cF \) such that the fundamental set \( \cF \) is a finite union of \( \gG(\bQ) \)-translates of \( \fS.x_0 \), where \( \fS \) is a Siegel set for \( \gG \) over \( \bQ \).

Let \( \gT_0 \) be a maximal torus of \( \gG \) defined over \( \QalgR \) which contains the image of the Hodge parameter \( x_0 \).

Choose a basis \( \basis{0} \) for \( E_\bC \) which satisfies the following conditions:
\begin{enumerate}[(a)]
\item every vector in \( \basis{0} \) is an eigenvector for \( \gT_0 \);
\item for each \( v \in \basis{0} \), the complex conjugate \( \bar{v} \) is also in \( \basis{0} \);
\item each vector of \( \basis{0} \) has coordinates in \( \Qalg \) (relative to \( \cB_\bZ \)).
\end{enumerate}

Note that each eigenspace of \( \gT_0 \) is contained in one of the Hodge components \( E_{x_0}^{p,-p} \), so condition (a) implies that \( \basis{0} \) consists of eigenvectors for the Hodge parameter \( x_0 \).
In condition (b), \( \bar{v} \) may be equal to \( v \).

To construct such a basis \( \basis{0} \), observe that each eigenspace of \( \gT_0 \) is defined over \( \Qalg \), and that the complex conjugate of an eigenspace of \( \gT_0 \) is also eigenspace of \( \gT_0 \) (perhaps the same one).
So we simply take the union of the following:

\begin{enumerate}[(i)]
\item for each pair of distinct complex conjugate eigenspaces \( E_\chi, E_{\bar\chi} \) of \( \gT_0 \), choose one of these eigenspaces \( E_\chi \) and choose a basis for \( E_\chi \) defined over \( \Qalg \);
\item the complex conjugates of (i);
\item for each real eigenspace of \( \gT_0 \), choose any basis for the eigenspace which is defined over \( \QalgR \).
\end{enumerate}

\subsubsection{Variable data} \label{sssec:variable-data}

Let \( F_x \) denote the centre of \( \End_{\bQ\text{-HS}} E_x \)
and \( R_x \) the centre of \( \End_{\bZ\text{-HS}} E_x \).
We can decompose \( F_x \) as a direct product of fields:
\[ F_x = \prod_{i=1}^s F_i \]
where each \( F_i \) is either a CM field or \( \bQ \) (\( \bQ \) appears if \( E_x^{0,0} \neq \{ 0 \} \)).

Let \( \Sigma = \Hom(F_x, \bC) \), and let \( \Sigma_i = \Hom(F_i, \bC) \) for \( 1 \leq i \leq s \).
Then \( \Sigma \) is the disjoint union of the \( \Sigma_i \).

For each \( \sigma \in \Sigma \), let \( r_\sigma \) denote the dimension of the eigenspace in \( E_x \) on which \( F_x \) acts via \( \sigma \).
Then \( r_\sigma \) is the same for all \( \sigma \) in the same \( \Sigma_i \), and we also denote this value \( r_i \).

\subsubsection{Types of basis} \label{sssec:basis-types}

We will define a number of adjectives which we will use to describe bases of \( E_\bC \).
Implicitly, our bases will be ordered, so that given two bases \( \cB \) and \( \cB' \), the ``linear map \( g \) such that \( g\cB' = \cB \)'' is well-defined.
However we will never explicitly write down the ordering of a basis except during the definition of a Hodge basis -- indeed, most of our bases will be diagonal bases labelled as explained in the definition of a diagonal basis.

Choose and fix an ordering of the basis \( \basis{0} \) which we have already chosen.

\begin{enumerate}
\item An ordered basis \( \cB = \{ v_1, \dotsc, v_n \} \) of \( E_\bC \) is a \defterm{Hodge basis} for \( E_x \) if:
\begin{enumerate}
\item every vector in \( \cB \) is an eigenvector for the Hodge parameter \( x \); and
\item the Hodge type (in \( E_x \)) of each \( v_j \) is the same as the Hodge type (in~\( E_{x_0} \)) of the corresponding \( \bv{0}{j} \in \basis{0} \).
\end{enumerate}

Equivalently, a basis \( \cB \) is a Hodge basis for \( E_x \) if and only if the linear map \( g \in \gGL(E_\bC) \) defined by \( g\basis{0} = \cB \) satisfies
\[ gx_0 = x \text{ in } \Hom(\bS_\bC, \gGL(E_\bC)). \]

\item An ordered basis \( \cB \) of \( E_\bC \) is a \defterm{diagonal basis} for \( V_x \) if it is a Hodge basis and every vector in \( \cB \) is an eigenvector for the centre of \( \End_{\bQ\text{-HS}} E_x \).

If \( x \) is pre-special (which is the only case we care about), every eigenvector of the centre of \( \End_{\bQ\text{-HS}} E_x \) is automatically an eigenvector for \( x \), so requiring that \( \cB \) be a Hodge basis only adds conditions (b), that \( \cB \) is ordered so that its Hodge types match those of \( \basis{0} \).

We shall label the elements of a diagonal basis as
\[ \{ v_{\sigma j} \mid \sigma \in \Sigma, 1 \leq j \leq r_\sigma \} \]
such that \( F_x \) acts on \( v_{\sigma j} \) via the character \( \sigma \).

\item A basis \( \cB \) for \( E_\bC \) is \defterm{\( \gG \)-admissible} if the linear map \( g \in \gGL(E_\bC) \) defined by \( g\basis{0} = \cB \) satisfies
\[ g \in \rho(\gG(\bR)). \]
Note that this imposes two conditions on \( g \): that it is in the image of \( \gG \), and that it is real.

\item A diagonal basis \( \cB = \{ v_{\sigma j} \} \) for \( E_x \) is \defterm{Galois-compatible} if
\[ \tau(v_{\sigma j}) = v_{(\tau\sigma) j} \]
for all \( \tau \in \Aut(\bC) \)
(where the LHS means: apply \( \tau \) to each coordinate of \( v_{\sigma j} \) with respect to \( \cB_\bZ \)).

Equivalently, for each \( i \in \{ 1, \dotsc, s \} \), there are vectors
\[ w_{i1}, \dotsc, w_{ir_i} \in F_i^n \]
\( i \) such that
\[ v_{\sigma j} = \sigma(w_{ij}) \]
for all \( \sigma \in \Sigma_i \) and \( j \in \{ 1, \dotsc, r_i \} \).

We may also apply the term Galois-compatible to other collections indexed by \( \Sigma \), with the obvious meaning.

\item A basis \( \cB \) for \( E_\bC \) is \defterm{weakly bounded} if all its coordinates (with respect to \( \cB_\bZ \)) are algebraic numbers with polynomially bounded denominators and its covolume is polynomially bounded.
\end{enumerate}

\subsubsection{The Siegel set}

The notation we define here for the Siegel set will be used only in section~\ref{ssec:reduction-theory}.

The definition of Siegel set is taken from \cite{borel:arithmetic-groups}~12.3, but we have reversed the order of multiplication in \( \gG \) so that it has a left action on \( X \) instead of a right action.
Consequently we also reverse the inequality in the definition of \( A_t \).

Note also that for Borel, \( \gG_\bR^0 \) means the identity component of \( \gG(\bR) \) in the real topology, which we denote by \( \gG(\bR)^+ \), while Borel uses \( \gG^0 \) to mean the identity component of the algebraic group \( \gG \) in the Zariski topology, which we denote \( \gG^\circ \).

By definition, the Siegel set \( \fS \) has the form
\[ \fS = \omega A_t K \]
where
\begin{enumerate}[(i)]
\item \( K \) is the stabiliser of \( x_0 \) in \( \gG(\bR) \),
\item \( A_t = \{ a \in \gS(\bR)^+ \mid \varphi(a) \geq t \text{ for all simple } \bQ\text{-roots } \varphi \text{ of } \gG \text{ w.r.t.\ } \gU \} \) for some \( t \in \bR_{>0} \),
\item \( \omega \) is a compact neighbourhood of the identity in \( \gU(\bR)\gM(\bR)^+ \),
\item \( \gS \) is a maximal \( \bQ \)-split torus in \( \gG \),
\item \( \gM \) is the maximal \( \bQ \)-anisotropic subgroup of \( Z_\gG(\gS)^\circ \), and
\item \( \gU \) is the unipotent radical of a minimal \( \bQ \)-parabolic subgroup of \( \gG \) containing~\( \gS \).
\end{enumerate}

Without loss of generality, we may assume that \( \rho(\gS) \) is contained in the set of diagonal matrices and \( \rho(\gU) \) in the set of upper triangular matrices of \( \gGL(E) \) with respect to \( \cB_\bZ \) -- this simply requires us to replace \( \cB_\bZ \) by a fixed \( \gGL(E_\bQ) \)-conjugate, which makes a bounded change to \( \abs{\disc R_x} \).

By \cite{borel:arithmetic-groups}~13.1, our fundamental set \( \cF \) has the form
\[ \cF = C.\fS.x_0 \]
for some finite set \( C \subset \gG(\bQ) \).

\subsection{A \hgG-admissible diagonal basis}

We prove that there exists a \( \gG \)-admissible diagonal basis for \( E_x \).
First we construct \( g_1 \in \gG(\bR) \) which maps \( x_0 \) to \( x \) and which conjugates the fixed maximal torus~\( \gT_0 \) to a maximal torus containing the Mumford--Tate group of \( x \).
Then we let \( \basis{1} = g_1 \basis{0} \).
This is automatically a \( \gG \)-admissible Hodge basis which diagonalises the Mumford--Tate group of \( x \), and we prove that this implies that it is a diagonal basis.

\begin{lemma} \label{conjugate-mt-group}
There exists \( g_1 \in \gG(\bR) \) such that \( g_1x_0 = x \) and \( g_1 \gT_0 g_1^{-1} \) contains the Mumford--Tate group of \( E_x \).
\end{lemma}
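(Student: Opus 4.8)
The plan is to reduce the statement to a comparison of two maximal tori containing the image of Hodge parameters, both inside $\gG$, and to use the fact that the real points of $X$ form a single $\gG(\bR)$-orbit together with the conjugacy of maximal tori. First I would let $\gT$ be a maximal torus of $\gG_\bR$ containing the Mumford--Tate group $\gM_x$ of $E_x$; since $x$ factors through $\gM_{x,\bR}$ and $\gM_x$ is abelian, the image of $x \colon \bS \to \gG_\bR$ lies in $\gT$. Similarly $\gT_0$ was chosen to be a maximal torus of $\gG$ containing the image of $x_0$. Because $X$ is a single $\gG(\bR)$-conjugacy class, there is some $h \in \gG(\bR)$ with $h x_0 h^{-1} = x$ (writing the conjugation action multiplicatively). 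Then $h \gT_0 h^{-1}$ is a maximal torus of $\gG_\bR$ whose group of real points contains the image of $x$.

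Now both $h\gT_0 h^{-1}$ and $\gT$ are maximal tori of $\gG_\bR$ containing the image of the single homomorphism $x \colon \bS \to \gG_\bR$. The key point is that any two maximal tori of a connected reductive group over $\bR$ containing a fixed torus (here the image of $x$, or rather a maximal torus of the centraliser of that image) are conjugate by an element of the centraliser. More precisely, let $\gH = Z_{\gG_\bR}(x)$ be the centraliser of the image of $x$ in $\gG_\bR$; this is a connected reductive $\bR$-group, and both $\gT$ and $h\gT_0 h^{-1}$ are maximal tori of $\gH$ (they are maximal tori of $\gG_\bR$ contained in $\gH$, hence maximal in $\gH$). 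By conjugacy of maximal tori in the connected reductive real group $\gH$, there exists $c \in \gH(\bR) \subseteq \gG(\bR)$ with $c\,(h\gT_0 h^{-1})\,c^{-1} = \gT$. Since $c$ centralises the image of $x$, we have $c x c^{-1} = x$. Setting $g_1 = c h$ gives $g_1 \in \gG(\bR)$ with $g_1 x_0 g_1^{-1} = c x c^{-1} = x$ and $g_1 \gT_0 g_1^{-1} = c (h \gT_0 h^{-1}) c^{-1} = \gT \supseteq \gM_x$, as required.

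The main technical obstacle is verifying that the relevant centraliser $\gH = Z_{\gG_\bR}(x)$ is connected so that the classical conjugacy theorem for maximal tori of connected reductive real Lie groups applies cleanly. This holds because the centraliser of a torus in a connected group is connected, and the image of $x(\bS)$ is (Zariski) generated by a torus together with the compact factor coming from $x(U^1)$; one argues that $\gH$ is the centraliser of a torus (the connected component of $x(\bS)$ suffices, since any element commuting with a Zariski-dense subset of that torus commutes with $x(\bS)$), hence connected, and reductive as a centraliser of a torus in a reductive group. An alternative that sidesteps any subtlety about $\gH$ is to work instead with a maximal torus $\gT'$ of $\gH$: pick $\gT' \subseteq \gH$ maximal, note it is then a maximal torus of $\gG_\bR$ containing $x(\bS)$, and apply conjugacy of maximal tori of $\gH$ directly to $\gT'$ and $h\gT_0 h^{-1}$ — but one still wants $\gM_x \subseteq \gT'$, which holds because $\gM_x$ is the smallest $\bQ$-subgroup through which $x$ factors, hence $\gM_{x,\bR} \subseteq Z_{\gG_\bR}(Z_{\gG_\bR}(x(\bS)))$, and this double centraliser lies in every maximal torus of $\gH$ containing $x(\bS)$ when $\gH$ is a torus; in general one simply chooses $\gT$ from the start to contain $\gM_x$, which is possible as $\gM_x$ is a torus and $x(\bS) \subseteq \gM_{x,\bR} \subseteq$ some maximal torus. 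The bookkeeping to confirm $x(\bS) \subseteq \gT$ and $\gM_x \subseteq \gT$ simultaneously is the only place where a little care is needed, and it is routine given that $\gM_x$ is abelian and contains the image of $x$.
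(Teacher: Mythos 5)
Your overall strategy is the same as the paper's (transport $x_0$ to $x$ by some $h\in\gG(\bR)$, then correct by an element of the centraliser of $x$ that conjugates $h\gT_0 h^{-1}$ into a maximal torus containing the Mumford--Tate group), but the central step is not justified and, as stated, is false. You invoke ``conjugacy of maximal tori in the connected reductive real group $\gH$''. Over $\bR$ maximal tori of a connected reductive group are in general \emph{not} conjugate by real points: in $\SL_2(\bR)$ the split diagonal torus and the compact torus are both maximal and non-conjugate. Nor does requiring the tori to contain a fixed torus and passing to its centraliser repair this: for $\gG=\SL_2\times\SL_2$ with $x$ landing in a compact torus of the first factor, the centraliser is $\gT_c\times\SL_2$, and the maximal tori $\gT_c\times(\text{split})$ and $\gT_c\times(\text{compact})$ containing the image of $x$ are not conjugate under it. So the conjugating element $c$ you need does not exist on the strength of the general theorem you cite; connectedness of $\gH$ (the issue you flag as the main technical obstacle) is not the real problem.

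What actually makes the argument work, and what the paper uses, is compactness: because $(\gG,X)$ is a Shimura datum with $\gG$ adjoint, $\ad x(i)$ is a Cartan involution, so the stabiliser $K_x=Z_{\gG(\bR)}(x)$ of $x$ in $\gG(\bR)$ is compact. Both $\gT(\bR)$ and $(h\gT_0 h^{-1})(\bR)$ lie in $K_x$ (they centralise the image of $x$), and in a compact connected Lie group any two maximal tori are conjugate; this produces the element $g'\in K_x$ with $g'(h\gT_0h^{-1})g'^{-1}=\gT$, and $g_1=g'h$ finishes the proof exactly as in your last display. To repair your write-up you must insert this compactness input (it is precisely where the axioms of the Shimura datum and the hypothesis that $\gG$ is adjoint enter), and replace the appeal to conjugacy of maximal tori in real reductive groups by conjugacy of maximal tori in the compact group $K_x$ (checking, as is standard, that the real points of these anisotropic algebraic tori are maximal tori of $K_x$ and that conjugating real points conjugates the algebraic tori, via Zariski density).
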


\begin{proof}
First choose any \( g \in \gG(\bR) \) such that \( gx_0 = x \).
Then \( g \gT_0 g^{-1} \) is a maximal \( \bR \)-torus of \( \gG \) containing the image of \( x \), but it might not contain the Mumford--Tate group of \( E_x \).

Choose any maximal torus \( \gT \subset \gG \) defined over \( \bR \) which contains the Mumford--Tate group of \( E_x \).
Then \( g \gT_0 g^{-1} \) and \( \gT \) are both maximal \( \bR \)-tori in \( K_x \), the stabiliser of \( x \) in \( \gG(\bR) \).
Since \( K_x \) is compact, there is some \( g' \in K_x \) which conjugates \( g \gT_0 g^{-1} \) into \( \gT \).

Taking \( g_1 = g'g \) proves the lemma (\( g_1 x_0 = x \) because \( g' \) stabilises \( x \)).
\end{proof}

\begin{lemma} \label{mt-to-diagonal-basis}
Let \( \cB \) be a Hodge basis for \( E_x \).
If \( \cB \) diagonalises the Mumford--Tate group of \( E_x \),
then \( \cB \) is a diagonal basis for \( E_x \).
\end{lemma}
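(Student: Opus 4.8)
The plan is to translate both ``\(\gG\)-type'' conditions appearing here into linear algebra inside \( \End(E_\bC) \), and then invoke nothing more than the fact that the diagonal subalgebra of a matrix algebra is its own centraliser.

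The starting point is the standard dictionary between the endomorphism algebra of a Hodge structure and its Mumford--Tate group. Since \( \gG \) is adjoint, \( E_x \) is pure of weight \( 0 \), so \( \End(E)_x \) is again a weight-\(0\) Hodge structure, and \( f \in \End_\bQ(E) \) is a morphism of Hodge structures \( E_x \to E_x \) if and only if it is a Hodge class in \( \End(E)_x \), i.e.\ if and only if it is fixed by the Mumford--Tate group \( \gM \) of \( E_x \) acting by conjugation on \( \End(E) \) --- equivalently, if and only if \( f \) commutes with \( \gM \). Thus \( \End_{\bQ\text{-HS}}(E_x) = Z_{\End_\bQ(E)}(\gM) \); this is a \( \bQ \)-subspace cut out by linear equations, and so is its centre \( F_x \), hence both are compatible with the scalar extension \( \bC/\bQ \). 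Therefore \( F_x \otimes_\bQ \bC \) is the centre of \( A := Z_{\End(E_\bC)}(\gM_\bC) = \End_{\gM_\bC}(E_\bC) \).

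Now I would bring in the hypothesis. Let \( \mathfrak{d} \subset \End(E_\bC) \) be the subalgebra of operators diagonalised by \( \cB \) (those having every vector of \( \cB \) as an eigenvector) and \( \gT \subset \gGL(E_\bC) \) its group of units. That \( \cB \) diagonalises \( \gM \) means exactly \( \gM_\bC \subseteq \gT \); since \( \mathfrak{d} \) is commutative, every element of \( \mathfrak{d} \) then commutes with \( \gM_\bC \), so \( \mathfrak{d} \subseteq A \). Consequently \( Z(A) \) is contained in the centraliser of \( \mathfrak{d} \) in \( \End(E_\bC) \), which is \( \mathfrak{d} \) itself: \( \mathfrak{d} \) is a maximal commutative subalgebra of \( \End(E_\bC) \), since any operator commuting with all the rank-one projections onto the lines \( \bC v \) (\( v \in \cB \)) is itself diagonalised by \( \cB \). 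Hence \( F_x \otimes_\bQ \bC = Z(A) \subseteq \mathfrak{d} \), which is to say every element of \( F_x \) has each vector of \( \cB \) as an eigenvector. Together with the hypothesis that \( \cB \) is a Hodge basis, this is precisely the definition of \( \cB \) being a diagonal basis for \( E_x \).

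I do not expect a genuine obstacle: the argument is short once the Mumford--Tate dictionary is in place. The only points needing a little care are the use of weight \( 0 \) (so that ``morphism of Hodge structures'' coincides with ``fixed by \( \gM \)'' rather than with a weight-twisted notion), and the verification that forming the centre commutes with \( \otimes_\bQ \bC \); both are routine. One could instead run the whole argument over \( \QalgR \), or phrase it directly via the characters of \( \gM \) on the lines \( \bC v \), but the \( \bC \)-linear-algebra version above seems the most economical.
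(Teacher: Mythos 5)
Your proof is correct, but it takes a genuinely different route from the paper's. The paper introduces \( \gZ \), the centre of the centraliser of \( \gM \) in \( \gGL(E) \) (so \( \gZ(\bQ)=F_x^\times \)), notes \( \gM \subset \gZ \), and then must rule out that the \( \gZ \)-eigenspaces are strictly finer than the \( \gM \)-eigenspaces: it does this by comparing both with the isotypic decomposition \( E_x=\bigoplus_i E_{x,i} \) and counting dimensions (each eigenspace of \( \gM \) and of \( \gZ \) in \( E_\bC \) has dimension \( r_i \)), so the two eigenspace decompositions coincide and diagonalising \( \gM \) forces diagonalising \( F_x \). You avoid the eigenspace comparison altogether: from \( \End_{\bQ\text{-HS}}E_x=Z_{\End_\bQ(E)}(\gM) \) and base-change compatibility of commutants and centres you get \( F_x\otimes_\bQ\bC=Z(A) \) with \( A=Z_{\End(E_\bC)}(\gM_\bC) \), and since the diagonal algebra \( \mathfrak{d} \) of \( \cB \) is commutative, contains \( \rho(\gM_\bC) \), and is self-centralising (it contains the rank-one projections onto the lines \( \bC v \)), you conclude \( \mathfrak{d}\subseteq A \) and \( F_x\otimes_\bQ\bC=Z(A)\subseteq Z_{\End(E_\bC)}(\mathfrak{d})=\mathfrak{d} \). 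Both arguments hinge on the same standard identification of \( \End_{\bQ\text{-HS}}E_x \) with the commutant of \( \gM \), but yours is a double-centraliser argument that needs neither the isotypic multiplicity structure nor the fact that \( \gM \) is a torus, and so is shorter and slightly more general; the paper's version yields the extra (unneeded for the lemma, but tidy) fact that the \( \gM \)- and \( F_x \)-eigenspace decompositions of \( E_\bC \) actually coincide, which matches the labelling \( \{v_{\sigma j}\} \) with \( 1\leq j\leq r_\sigma \) used later. The two points you flag as routine (weight \( 0 \), and centres/commutants commuting with \( \otimes_\bQ\bC \)) are indeed routine, so there is no gap.
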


\begin{proof}
Let \( \gM \subset \gG \) be the Mumford--Tate group of \( E_x \).

We write \( \End \rho_{|\gM} \) for the endomorphism algebra of the \( \bQ \)-representation \( \rho_{|\gM} \colon \gM \to \gGL(E) \), considered as a subalgebra of \( \End(E_\bQ) \).
It is a standard property of Mumford--Tate groups that
\[ \End \rho_{|\gM} = \End_{\bQ\text{-HS}} E_x \]
and hence the centre of \( \End \rho_{|\gM} \) is \( F_x \).

Let \( \gZ \) denote the centre of the centraliser of \( \gM \) in \( \gGL(E) \).
Since the \( \bQ \)-points of the centraliser of \( \gM \) in \( \gGL(E) \) are the same as the invertible elements of \( \End \rho_{|\gM} \), we deduce that \( \gZ(\bQ) = F_x^\times \) (as subgroups of \( \gGL(E_\bQ) \)).

Since \( \gM \) is commutative, \( \gM \subset \gZ \) and so each eigenspace for \( \gZ \) in \( E_\bC \) is contained in an eigenspace for \( \gM \) in \( E_\bC \).
We need to establish the inclusion of eigenspaces in the opposite direction.

The isotypic decomposition of the \( \bQ \)-Hodge structure \( E_x \) is
\[ E_x = \bigoplus\nolimits_i E_{x,i}  \tag{*} \label{eqn:isotypic-decomposition} \]
where \( E_{x,i} \) is isomorphic to the \( r_i \)-th power of an irreducible \( \bQ \)-Hodge structure with endomorphism algebra \( F_i \).
Because sub-\( \bQ \)-Hodge structures are the same as subrepresentations of the Mumford--Tate group, this is the same as the isotypic decomposition of the \( \bQ \)-representation \( \rho_{|\gM} \).
Since \( \gM \) is a torus, it follows that each eigenspace of \( \gM \) in \( E_\bC \) is contained in one of the \( E_{x,i} \) and has dimension \( r_i \).

By looking at the action of \( \End \rho_{|\gM} \), we see that \eqref{eqn:isotypic-decomposition} is also the isotypic decomposition of the \( \bQ \)-representation \( \rho_{|\gZ} \), and hence that every eigenspace of \( \gZ \) in \( E_\bC \) is contained in one of the \( E_{x,i} \) and has dimension \( r_i \).

Thus each eigenspace of \( \gZ \) in \( E_\bC \) has the same dimension \( r_i \) as the eigenspace of \( \gM \) which contains it, so the eigenspaces of \( \gZ \) are equal to the eigenspaces of \( \gM \).
Thus the fact that \( \cB \) diagonalises \( \gM \) implies that it also diagonalises \( \gZ \) (or in other words, it diagonalises the action of \( F_x \) on \( E_x \)).
\end{proof}

\begin{proposition} \label{admissible-diagonal-basis}
There exists a \( \gG \)-admissible diagonal basis \( \basis{1} = \{ \bv{1}{\sigma j} \}\) for~\( E_x \).
\end{proposition}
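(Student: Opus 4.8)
The plan is to produce \( \basis{1} \) by applying the element \( g_1 \in \gG(\bR) \) furnished by \cref{conjugate-mt-group} to the fixed base-point basis: set \( \basis{1} = g_1 \basis{0} \). I would then verify, one at a time, that this basis has each of the three properties asserted in the statement, unwinding the relevant definitions from \cref{sssec:basis-types}.

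First, \( \gG \)-admissibility is immediate from the construction: the linear map taking \( \basis{0} \) to \( \basis{1} \) is \( g_1 \) itself (viewed inside \( \gGL(E_\bC) \) via the faithful representation \( \rho \)), and \( g_1 \in \rho(\gG(\bR)) \) by \cref{conjugate-mt-group}. Second, \( \basis{1} \) is a Hodge basis for \( E_x \): by the equivalent characterisation of Hodge bases, this amounts to the linear map \( g \) with \( g\basis{0} = \basis{1} \) satisfying \( g x_0 = x \), which holds for \( g = g_1 \) by \cref{conjugate-mt-group}.

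The remaining point is that \( \basis{1} \) is a \emph{diagonal} basis, and here I would invoke \cref{mt-to-diagonal-basis}; to apply it I must check that \( \basis{1} \) diagonalises the Mumford--Tate group \( \gM \) of \( E_x \). This follows from condition (a) in the construction of \( \basis{0} \): every vector of \( \basis{0} \) is a \( \gT_0 \)-eigenvector, hence every vector of \( \basis{1} = g_1 \basis{0} \) is an eigenvector of \( g_1 \gT_0 g_1^{-1} \), and \( g_1 \gT_0 g_1^{-1} \supseteq \gM \) again by \cref{conjugate-mt-group}. So \cref{mt-to-diagonal-basis} gives that \( \basis{1} \) is a diagonal basis for \( E_x \). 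Finally I would relabel its vectors as \( \{ \bv{1}{\sigma j} \} \) according to the convention fixed in the definition of a diagonal basis, using that the \( \sigma \)-eigenspace of \( F_x \) in \( E_x \) has dimension \( r_\sigma \).

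Since the proposition is in essence just the conjunction of \cref{conjugate-mt-group,mt-to-diagonal-basis}, I do not anticipate a genuine obstacle here; the only thing requiring attention is to keep the three notions (Hodge basis, diagonal basis, \( \gG \)-admissibility) cleanly separated and to match each against its definition.
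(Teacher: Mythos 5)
Your proof is correct and is essentially identical to the paper's: both take \( \basis{1} = g_1 \basis{0} \) with \( g_1 \) from \cref{conjugate-mt-group}, note that \( g_1 x_0 = x \) gives a Hodge basis, that \( \basis{1} \) diagonalises \( g_1 \gT_0 g_1^{-1} \supseteq \gM \) so \cref{mt-to-diagonal-basis} applies, and that \( g_1 \in \gG(\bR) \) gives \( \gG \)-admissibility. No issues.
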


\begin{proof}
Choose \( g_1 \) as in \cref{conjugate-mt-group}, and let \( \basis{1} = g_1 \basis{0} \).
Since \( g_1 x_0 = x \), this gives a Hodge basis for \( E_x \).

Since \( \basis{0} \) diagonalises \( \gT_0 \), \( \basis{1} \) diagonalises \( g_1 \gT_0 g_1^{-1} \) and hence a fortiori \( \gM \).
\Cref{mt-to-diagonal-basis} implies that \( \basis{1} \) is a diagonal basis for \( E_x \).

Finally \( \basis{1} \) is \( \gG \)-admissible because \( g_1 \in \gG(\bR) \).
\end{proof}

\subsection{A weakly bounded diagonal basis}

In order to prove the existence of a weakly bounded diagonal basis, we need a generalisation of Minkowski's bound to arbitrary lattices in a module over a product of number fields.
This is essentially Claim~3.1 from \cite{pila-tsimerman:ao-surfaces} (the generalisation of Minkowski's bound from ideals in a ring of integers to lattices), but we have included all the generalisations we will need in a single statement: working in a vector space over a number field rather than in the field itself, and working with  a product of number fields rather than just a single field.

The deduction of \cref{weakly-bounded-basis} from \cref{minkowski} is essentially just bookkeeping.
The construction gives us Galois-compatibility for free.

\begin{lemma} \label{minkowski}
For all positive integers \( d \), \( s \) and \( r_1, \dotsc, r_s \), there are constants \( \newC{minkowski-multiplier} \) and~\( \newC{minkowski-exponent} \) depending on \( d \), \( s \) and \( r_1, \dotsc, r_s \) such that:

For all number fields \( F_1, \dotsc, F_s \) of degree at most \( d \) and all \( \bZ \)-lattices
\[ L \subset \prod_i F_i^{r_i}, \]
if we let \( F = \prod_i F_i \), \( R = \Stab_F L \) and \( L_0 = \prod_i \cO_{F_i}^{r_i} \),
then there exists \( \nu \in \prod_i \gGL_{r_i}(F_i) \) such that
\[ \nu L \subset L_0 \text{ and } [L_0:\nu L] \leq \refC{minkowski-multiplier} \, \abs{\disc R}^{\refC{minkowski-exponent}}. \]
\end{lemma}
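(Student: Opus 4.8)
The plan is to reduce the statement to the classical Minkowski bound for ideal classes in a single number field, applied component by component, and then assemble the pieces. First I would decompose everything along the product structure: since $F = \prod_i F_i$ and $L_0 = \prod_i \cO_{F_i}^{r_i}$, and since the stabiliser ring $R = \Stab_F L$ must itself respect the idempotents of $F$ (because $L$ is an $R$-module inside a product of vector spaces over the $F_i$), we can write $R = \prod_i R_i$ with $R_i$ an order in $F_i$, and $\disc R = \prod_i (\disc R_i)^{?}$ up to a controlled power — in fact $\abs{\disc R} = \prod_i \abs{\disc R_i}$ when the $R_i$ are the stabilisers of the components $L_i := L \cap F_i^{r_i}$ (one must be slightly careful: $L$ need not split as $\prod_i L_i$, but $L$ is commensurable with $\prod_i L_i$ with index bounded in terms of $d$, $s$, $r_i$ alone, so we may replace $L$ by $\prod_i L_i$ at the cost of an extra constant factor in the final index bound).

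With that reduction, it suffices to treat a single number field $F_i$ of degree $\le d$, an order $R_i \subset \cO_{F_i}$, and an $R_i$-lattice $L_i \subset F_i^{r_i}$ with $\Stab_{F_i} L_i = R_i$, and to produce $\nu_i \in \gGL_{r_i}(F_i)$ with $\nu_i L_i \subset \cO_{F_i}^{r_i}$ and $[\cO_{F_i}^{r_i} : \nu_i L_i]$ polynomially bounded in $\abs{\disc R_i}$. Here I would proceed in two stages. First, replace $R_i$ by $\cO_{F_i}$: the conductor $\fc$ of $R_i$ in $\cO_{F_i}$ satisfies $[\cO_{F_i} : R_i]^2 \mid \abs{\disc R_i / \disc \cO_{F_i}}$ (up to the usual discriminant relation $\disc R_i = [\cO_{F_i}:R_i]^2 \disc \cO_{F_i}$), so $[\cO_{F_i}:R_i] \le \abs{\disc R_i}^{1/2}$, and the $\cO_{F_i}$-lattice $\cO_{F_i} \cdot L_i$ contains $L_i$ with index bounded by a power of $[\cO_{F_i}:R_i]$, hence by a power of $\abs{\disc R_i}$. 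Second, for a genuine $\cO_{F_i}$-lattice $M_i \subset F_i^{r_i}$, the structure theory of projective modules over a Dedekind domain gives $M_i \cong \cO_{F_i}^{r_i-1} \oplus \fa_i$ for a fractional ideal $\fa_i$; the ideal class of $\fa_i$ contains an integral representative $\fb_i$ of norm $\le C(d)\,\abs{\disc \cO_{F_i}}^{1/2}$ by Minkowski's bound, and choosing $\nu_i \in \gGL_{r_i}(F_i)$ sending $M_i$ onto $\cO_{F_i}^{r_i-1} \oplus \fb_i$ gives $\nu_i M_i \subset \cO_{F_i}^{r_i}$ with index $\Nm(\fb_i) \le C(d)\,\abs{\disc R_i}^{1/2}$. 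Composing the two stages and then the rescalings of the first paragraph yields $\nu = \prod_i \nu_i$ with the required properties, the exponent $\newC{minkowski-exponent}$ and multiplier $\newC{minkowski-multiplier}$ depending only on $d$, $s$, $r_1, \dotsc, r_s$.

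The main obstacle, and the only point requiring real care, is the bookkeeping in the first paragraph: one must verify that passing from $L$ to the product of its components $\prod_i (L \cap F_i^{r_i})$, and then identifying the stabiliser of the product with the product of the stabilisers, changes the index $[L_0 : \nu L]$ by at most a factor bounded purely in terms of $d$, $s$ and the $r_i$ (and not in terms of the fields), and similarly that the discriminant $\abs{\disc R}$ is comparable to $\prod_i \abs{\disc R_i}$ within such a factor. This is where one uses that $L$ and $\prod_i(L \cap F_i^{r_i})$ are commensurable with index dividing a fixed power of the number of summands times the $r_i$ (since $\prod_i e_i = 1$ in $F$ forces $[L : \prod_i L\cap F_i^{r_i}]$ to divide a product of local contributions of bounded length). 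Everything downstream — the conductor estimate, Minkowski's ideal bound, and the Dedekind module structure — is classical and contributes only constants depending on $d$.
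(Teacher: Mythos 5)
There is a genuine gap, and it sits exactly at the step you flag as the ``main obstacle''. You claim that $L$ and $\prod_i (L \cap F_i^{r_i})$ are commensurable with index bounded purely in terms of $d$, $s$ and the $r_i$, and that $\abs{\disc R}$ is comparable to $\prod_i \abs{\disc R_i}$ within such a factor. Both claims are false. Take $s=2$, $F_1=F_2=\bQ$, $r_1=r_2=1$, and $L = \{(a,b) \in \bZ^2 : a \equiv b \pmod N\}$. Then $L \cap F_i^{r_i} = N\bZ$ for each $i$, so $[L : \prod_i (L\cap F_i^{r_i})] = N$ is unbounded; moreover $R = \Stab_F L = \{(x,y)\in\bZ^2 : x\equiv y \pmod N\}$ has $\abs{\disc R} = N^2$ while each component stabiliser is $R_i = \bZ$, so $\prod_i \abs{\disc R_i} = 1$. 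The discrepancy is not a combinatorial constant: it is controlled only through the conductor index $[\cO_F : R]$, i.e.\ through $\abs{\disc R}$ itself. There is also a direction problem: your replacement lattice $\prod_i(L\cap F_i^{r_i})$ is a \emph{sublattice} of $L$, so producing $\nu$ with $\nu\prod_i(L\cap F_i^{r_i}) \subset L_0$ of small index does not give $\nu L \subset L_0$ without a further rescaling, whose size is again governed by the unbounded index above.

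The fix is to decompose a \emph{superlattice} instead: set $e_R = [\cO_F : R]$ (so $\disc R = e_R^2\prod_i \disc F_i$, whence $e_R \le \abs{\disc R}^{1/2}$) and $L' = \cO_F\cdot L$. Since $e_R\cO_F \subset R$ one gets $e_R L' \subset L \subset L'$, so $[L':L] \le e_R^{\rk_\bZ L}$ is polynomially bounded in $\abs{\disc R}$, and $L'$, being an $\cO_F$-module, splits along the idempotents for free. Your second stage (the Dedekind structure theorem $L'_i \cong \cO_{F_i}^{r_i-1}\oplus J_i$ plus Minkowski's bound on the ideal class, giving $\nu_i L'_i \subset \cO_{F_i}^{r_i}$ with index $\ll \abs{\disc F_i}^{1/2}$) is exactly right and is what the paper does; combined with $\nu L \subset \nu L' \subset L_0$ and $[L_0:\nu L] = [L_0:\nu L'][L':L]$ this closes the argument. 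In short: your single-field analysis is sound, but the passage between $L$ and its components must be routed through $\cO_F\cdot L$ and paid for with a power of $\abs{\disc R}$, not absorbed into a constant.
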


\begin{proof}
Let \( \cO_F = \prod_i \cO_{F_i} \subset F \),
and let \( e_R = [\cO_F:R] \).
Then
\[ \disc R = e_R^2 \prod_i \disc F_i. \]

Let \( L' = \cO_F.L \).
Then \( L' \) is an \( \cO_F \)-module such that
\[ e_R L' \subset L \subset L'. \]
This implies that \( [L':L] \leq e_R^{\rk_\bZ L} \).

Since \( L' \) is an \( \cO_F \)-module, it splits as a direct sum
\[ L' = \bigoplus_i L'_i \]
where each \( L'_i \subset F_i^{r_i} \) is an \( \cO_{F_i} \)-module of rank \( r_i \).
Furthermore since \( \cO_{F_i} \) is a Dedekind domain, \( L'_i \) is isomorphic (as an \( \cO_{F_i} \)-module) to \( \cO_{F_i}^{r_i-1} \oplus J_i \) for some ideal \( J_i \subset \cO_{F_i} \).
By Minkowski's first theorem, we can choose \( J_i \) such that \( [\cO_{F_i}:J_i] \) is bounded above by \( \abs{\disc F_i}^{1/2} \) times a constant depending only on \( d \).

In other words, there exists \( \nu_i \in \gGL_{r_i}(F_i) \) such that \( \nu_i L'_i = \cO_{F_i}^{r_i-1} \oplus J_i \), which is contained in \( \cO_{F_i}^{r_i} \) with index bounded by a constant times \( \abs{\disc F_i}^{1/2} \).

Letting \( \nu = (\nu_1, \dotsc, \nu_s) \in \prod_i \gGL_{r_i}(F_i) \), we get that
\[ \nu L \subset \nu L' \subset L_0 \text{ and } [L_0:\nu L] = [L':L] \prod_i [\cO_{F_i}^{r_i}:\nu L_i'] \]
and these indices are bounded relative to \( \abs{\disc R} \) as required.
\end{proof}

\begin{proposition} \label{weakly-bounded-basis}
There exists a Galois-compatible weakly bounded diagonal basis \( \basis{2} = \{ \bv{2}{\sigma j} \} \) for \( E_x \).
\end{proposition}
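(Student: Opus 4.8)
The plan is to give $E_\bQ$ an explicit $F_x$-module structure, apply \cref{minkowski} to the associated lattice, and read off $\basis{2}$ as the basis of $F_x$-eigenvectors of $E_\bC$ that results; Galois-compatibility will be built into the construction, and weak boundedness will follow by combining the index bound of \cref{minkowski} with the elementary fact that $\prod_i\abs{\disc F_i}$ divides $\abs{\disc R_x}$. In detail: since $F_x=\prod_{i=1}^s F_i$ is the centre of $\End_{\bQ\text{-HS}} E_x$, the space $E_\bQ$ is an $F_x$-module, and it decomposes as $E_\bQ=\bigoplus_i E_{i,\bQ}$ with $F_x$ acting on $E_{i,\bQ}$ through $F_i$; since the complexification of $E_{i,\bQ}$ is $\bigoplus_{\sigma\in\Sigma_i}E_\sigma$ (where $E_\sigma\subset E_\bC$ is the eigenspace on which $F_x$ acts via $\sigma$), the space $E_{i,\bQ}$ is an $F_i$-vector space of dimension $r_i$. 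Choosing an $F_i$-basis of each $E_{i,\bQ}$ gives an $F_x$-linear isomorphism $\phi\colon E_\bQ\to M:=\prod_i F_i^{r_i}$ carrying $E_\bZ$ to a $\bZ$-lattice $L\subset M$; because an element of $F_x$ preserves $E_\bZ$ exactly when it lies in $\End_{\bZ\text{-HS}} E_x$ (where it is automatically central), $\Stab_{F_x}(L)=R_x$. The degrees $[F_i:\bQ]$, the integer $s$ and the $r_i$ are all at most $n=\dim E$, so only finitely many parameter tuples occur and \cref{minkowski} yields, with constants depending only on $n$, an element $\nu\in\prod_i\gGL_{r_i}(F_i)$ with $\nu L\subset L_0:=\prod_i\cO_{F_i}^{r_i}$ and $[L_0:\nu L]$ polynomially bounded. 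Replacing $\phi$ by $\phi':=\nu\circ\phi$ (still $F_x$-linear, and $\bQ$-linear), we may assume $\phi'(E_\bZ)\subset L_0$ with polynomially bounded index.

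Next I would extract $\basis{2}$. Extending scalars, $M\otimes_\bQ\bC=\prod_i\prod_{\sigma\in\Sigma_i}\bC^{r_i}$, and for $\sigma\in\Sigma_i$ and $1\le j\le r_i$ let $\bv{2}{\sigma j}\in E_\bC$ be the preimage under $\phi'\otimes\bC$ of the $j$-th standard basis vector of the factor indexed by $\sigma$. Each $\bv{2}{\sigma j}$ is an eigenvector for $F_x$ with character $\sigma$ (as $\phi'$ is $F_x$-equivariant), so $\basis{2}:=\{\bv{2}{\sigma j}\}$ is a basis of $E_\bC$ diagonalising the centre of $\End_{\bQ\text{-HS}} E_x$. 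Since $x$ is pre-special these vectors are automatically eigenvectors for $x$ — the eigenspaces of $F_x$ coincide with those of the Mumford--Tate torus $\gM$, as in the proof of \cref{mt-to-diagonal-basis}, hence lie inside single Hodge components — and because $E_x$ and $E_{x_0}$, arising from points of the single $\gG(\bR)$-orbit $X$, have the same Hodge numbers, the multiset of Hodge types occurring in $\basis{2}$ equals that occurring in $\basis{0}$; thus $\basis{2}$ may be ordered to be a diagonal basis. Finally, since $\phi'$ is $\bQ$-linear and $\cB_\bZ$ is a rational basis, the action of $\tau\in\Aut(\bC)$ on $\cB_\bZ$-coordinates is intertwined by $\phi'\otimes\bC$ with the Galois action on $M\otimes\bC$, which simply sends the $\sigma$-factor to the $\tau\sigma$-factor by entrywise application of $\tau$; hence $\tau(\bv{2}{\sigma j})=\bv{2}{(\tau\sigma)j}$, so $\basis{2}$ is Galois-compatible.

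It remains to check weak boundedness, which is where the work lies. Fixing a $\bZ$-basis of each $\cO_{F_i}$ produces a $\bZ$-basis of $L_0$, and the transition matrix between it and the standard eigenvector basis of $M\otimes\bC$ is block diagonal, with each block of the form $(\sigma(\omega_{i,k}))_{\sigma\in\Sigma_i,\,k}$ — a matrix with algebraic integer entries whose determinant has absolute value $\abs{\disc F_i}^{1/2}$. Composing with the integral transition matrix between this $\bZ$-basis of $L_0$ and the $\bZ$-basis $\phi'(\cB_\bZ)$ of $\nu L$ (which has index $[L_0:\nu L]$ in $L_0$), one computes that the coordinate matrix of $\basis{2}$ relative to $\cB_\bZ$ has determinant of absolute value $\bigl([L_0:\nu L]\prod_i\abs{\disc F_i}^{r_i/2}\bigr)^{-1}$ and entries whose denominators divide $[L_0:\nu L]\prod_i\abs{\disc F_i}$. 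Since $[L_0:\nu L]$ is polynomially bounded and $\prod_i\abs{\disc F_i}$ divides $\abs{\disc R_x}$ (by the identity $\disc R_x=[\cO_{F_x}:R_x]^2\prod_i\disc F_i$ from the proof of \cref{minkowski}), both the covolume of $\basis{2}$ and the denominators of its coordinates are polynomially bounded; thus $\basis{2}$ is weakly bounded. The main obstacle is precisely this last step: carefully tracking how the covolume and denominators of $\basis{2}$ are assembled, through the two changes of basis (both passing through $L_0$), out of the field discriminants $\disc F_i$ and the Minkowski index $[L_0:\nu L]$, and observing that all of these quantities are controlled by $\abs{\disc R_x}$.
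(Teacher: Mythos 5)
Your proposal is correct and follows essentially the same route as the paper's proof: fix an $F_x$-module isomorphism $E_\bQ \cong \prod_i F_i^{r_i}$, normalise the lattice via \cref{minkowski} using $\Stab_{F_x}(E_\bZ)=R_x$, take the eigenbasis coming from the embeddings $F_i \to \bC$, and bound the transition matrix to $\cB_\bZ$ (whose determinant is $\prod_i\abs{\disc F_i}^{r_i/2}$ times the Minkowski index), with Galois-compatibility built in. The only difference is cosmetic: you estimate the denominators of the inverse transition matrix blockwise, whereas the paper notes that its entries are algebraic integers divided by a determinant whose square is a polynomially bounded rational integer.
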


\begin{proof}
Choose an isomorphism of \( F_x \)-modules \( \iota \colon \prod_i F_i^{r_i} \to E_\bQ \).
Choose \( \nu \) as in \cref{minkowski} applied to \( L = \iota^{-1}(E_\bZ) \), and replace \( \iota \) by \( \iota \circ \nu^{-1} \).
Then
\[ E_\bZ \subset \iota(L_0) \text{ and } [\iota(L_0):E_\bZ] \text{ is polynomially bounded}. \]

There is a natural isomorphism of \( \bC \)-vector spaces
\[ \prod_i F_i^{r_i} \otimes_\bQ \bC \to \bC^n \]
built using all the embeddings \( F_i \to \bC \) on each copy of \( F_i \).
Let \( \cB_F \) be the basis for \( \prod_i F_i^{r_i} \) which maps to the standard basis of \( \bC^n \) under this isomorphism, and let
\[ \basis{2} = \iota(\cB_F). \]
This consists of eigenvectors for the action of \( F_x \) on \( E_x \).
If we order the vectors of~\( \basis{2} \) appropriately, then it will satisfy the condition on Hodge types to be a Hodge basis for \( E_x \), giving us a diagonal basis for \( E_x \).

Let \( P \) denote the matrix giving the coordinates of \( \cB_\bZ \) with respect to \( \basis{2} \), in other words
\[ e_{j'} = \sum_{\sigma \in \Sigma} \sum_{j=1}^{r_\sigma} P_{\sigma jj'} \bv{2}{\sigma j} \]
where the rows of \( P \) are indexed by \( (\sigma, j) \in \bigcup_{i=1}^s \Sigma_i \times \{ 1, \dotsc, r_i \} \) and the columns are indexed by \( j' \in \{ 1, \dotsc, n \} \).

Because \( \cB_\bZ \subset \iota(L_0) \), the entries of \( P \) are algebraic integers and \( \Aut(\bC) \) permutes the rows of \( P \) in the natural way i.e.
\[ \tau(P_{\sigma jj'}) = P_{(\tau\sigma) jj'}. \]
Furthermore
\[ \det P = \Vol(\basis{2}:\cB_\bZ) = \Vol(\cB_F:L_0) \Vol(L_0:E_\bZ) = \bigl(\prod_i \abs{\disc F_i }^{r_i/2} \bigr) [L_0:E_\bZ] \]
so \( (\det P)^2 \) is a polynomially bounded rational integer.

Hence \( P^{-1} \) has entries which are algebraic numbers with polynomially bounded denominators and its determinant is polynomially bounded (indeed \( \det P^{-1} \leq 1 \)).
Since \( P^{-1} \) gives the coordinates of \( \basis{2} \) with respect to \( \cB_\bZ \), this says that \( \basis{2} \) is weakly bounded.

Since \( \Aut(\bC) \) permutes the rows of \( P \), it permutes the columns of \( P^{-1} \) in the natural way, and thus \( \basis{2} \) is Galois-compatible.
\end{proof}

\subsection{A weakly bounded diagonal basis with polynomially bounded polarisation}

In order to construct \( \basis{3} \), we perform a long calculation with Hermitian forms.
The key step is Lemma~3.5 from \cite{pila-tsimerman:ao-surfaces}.
Essentially we want to apply this lemma to the values of the positive definite Hermitian form
\[ \Theta_x(u, v) = \Psi_\bC(\bar{u}, x(i)v) \]
on \( \basis{2} \), but we have to tweak these values slightly (by using \( \zeta \) constructed using \cref{totally-real-signs}) to get a set of values which are Galois-compatible.

\begin{lemma} \label{totally-real-signs}
For all positive integers~\( d \), there exist constants~\( \newC{sign-height-bound-multiplier} \) and~\( \newC{sign-height-bound-exponent} \) such that:

For every totally real number field \( F \) of degree \( d \) and every collection of signs \( s_\sigma \in \{ \pm 1 \} \) indexed by the embeddings \(\sigma \colon F \to \bR \), there exists an element \( \zeta \in F^\times \) such that
\[ \rH(\zeta) \leq \refC{sign-height-bound-multiplier} \abs{\disc F}^{\refC{sign-height-bound-exponent}} \]
and for each \( \sigma \colon F \to \bR \), the sign of \( \sigma(\zeta) \) is \( s_\sigma \).
\end{lemma}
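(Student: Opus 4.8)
\emph{Proof idea.} The plan is to obtain $\zeta$ by reduction theory: view $\cO_F$ as a lattice in $\bR^d$ through the real embeddings of $F$, and find a lattice point lying close to a large positive multiple of the vector of prescribed signs.

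Write $\sigma_1, \dots, \sigma_d$ for the real embeddings of $F$ and let $\iota \colon F \to \bR^d$ be $\alpha \mapsto (\sigma_1(\alpha), \dots, \sigma_d(\alpha))$, so that $\Lambda := \iota(\cO_F)$ is a full lattice in $\bR^d$ of covolume $\abs{\disc F}^{1/2}$. Put $s = (s_{\sigma_1}, \dots, s_{\sigma_d}) \in \{\pm 1\}^d$, and let $\mu$ be the covering radius of $\Lambda$ for the sup-norm. By definition there is $\zeta \in \cO_F$ with $\lVert \iota(\zeta) - 3\mu\, s \rVert_\infty \le \mu$; then for each $j$ the coordinate $\sigma_j(\zeta)$ lies in $[2\mu, 4\mu]$ if $s_{\sigma_j} = 1$ and in $[-4\mu, -2\mu]$ if $s_{\sigma_j} = -1$. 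In particular $\sigma_j(\zeta)$ has sign $s_{\sigma_j}$ for every $j$ — so $\zeta \in F^\times$ has the required signs — and $\abs{\sigma_j(\zeta)} \le 4\mu$.

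It then remains to bound $\mu$, and hence the height of $\zeta$, by a constant depending only on $d$ times $\abs{\disc F}^{1/2}$. One has $\mu \le \mu_2$, the Euclidean covering radius, which is at most a constant depending only on $d$ times $\lambda_d(\Lambda)$, the last Euclidean successive minimum (for instance, by passing to a reduced basis). By Minkowski's second theorem $\lambda_1(\Lambda) \cdots \lambda_d(\Lambda)$ is at most a constant depending on $d$ times $\operatorname{covol}(\Lambda) = \abs{\disc F}^{1/2}$. On the other hand, every nonzero $\beta \in \cO_F$ satisfies $\lVert \iota(\beta) \rVert_2^2 = \sum_j \sigma_j(\beta)^2 \ge d \bigl( \prod_j \sigma_j(\beta)^2 \bigr)^{1/d} = d\, \abs{\Nm_{F/\bQ}(\beta)}^{2/d} \ge d$, using AM--GM and $\abs{\Nm_{F/\bQ}(\beta)} \ge 1$; hence $\lambda_1(\Lambda) \ge \sqrt{d}$. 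Combining these, $\lambda_d(\Lambda) \le \lambda_1(\Lambda) \cdots \lambda_d(\Lambda) \big/ \lambda_1(\Lambda)^{d-1}$ is bounded by a constant depending only on $d$ times $\abs{\disc F}^{1/2}$, hence so is $\mu$. Finally, $\zeta \in \cO_F$ is an algebraic integer whose conjugates over $\bQ$ occur among $\sigma_1(\zeta), \dots, \sigma_d(\zeta)$; its minimal polynomial is monic, so $\rH(\zeta) \le \max_j \max(1, \abs{\sigma_j(\zeta)}) \le \max(1, 4\mu)$, which is again bounded by a constant depending only on $d$ times $\abs{\disc F}^{1/2}$. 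This proves the lemma, with the exponent taken to be $1/2$.

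The only step with genuine content is the covering-radius estimate $\mu \ll_d \lambda_d(\Lambda) \ll_d \abs{\disc F}^{1/2}$, and within it the lower bound $\lambda_1(\Lambda) \ge \sqrt{d}$ is essential: without control of the shortest vectors of $\Lambda$, a lattice of small covolume could be very skew and have an enormous covering radius, so knowing $\operatorname{covol}(\Lambda)$ alone would not suffice. One should also check that each auxiliary constant invoked (in Minkowski's second theorem, in bounding $\mu_2$ by $\lambda_d$, and in the AM--GM step) depends only on $d$ and not on $F$; this is clear.
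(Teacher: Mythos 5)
Your proposal is correct and is essentially the paper's own argument: both find an element of $\cO_F$ within covering-radius distance of a point placed deep inside the prescribed orthant, bound the covering radius polynomially in $\abs{\disc F}$ via Minkowski's second theorem, and bound $\rH(\zeta)$ using that $\zeta$ is an algebraic integer with all archimedean conjugates $O_d(\mu)$. The only difference is cosmetic: you spell out the covering-radius estimate (in particular the lower bound $\lambda_1 \geq \sqrt{d}$ coming from $\abs{\Nm_{F/\bQ}(\beta)} \geq 1$, which the paper leaves implicit in its appeal to Minkowski's second theorem) and thereby record the explicit exponent $1/2$, whereas the paper is content with the crude bound $(3\mu)^d$.
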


\begin{proof}
Consider the lattice of integers \( \cO_F \) in \( F \otimes_\bQ \bR \cong \bR^d \).
Minkowski's second theorem implies that the covering radius \( \mu \) of this lattice is bounded by a polynomial in \( \abs{\disc F} \) (with the polynomial depending only on \( d \)).

Consider a ball \( B \) in \( F \otimes_\bQ \bR \) which has radius greater than \( \mu \) and which is contained in the hypercube
\[ \{ z \in F \otimes_\bQ \bR \mid 0 < s_\sigma \sigma(z) < 3\mu \text{ for all } \sigma \colon F \to \bR \}. \]

By the definition of the covering radius, \( B \) contains an element \( \zeta \) of the lattice~\( \cO_F \).
By our choice of \( B \), \( \sigma(\zeta) \) has the correct sign for each \( \sigma \colon F \to \bR \).
Since \( \zeta \) is an algebraic integer, its height satisfies
\[ \rH(\zeta) \leq \prod_{\sigma \colon F \to \bR} \abs{\sigma(\zeta)} < (3\mu)^d \]
which is bounded by a polynomial in \( \abs{\disc F} \) as required.
\end{proof}

\begin{lemma} \label{det-hermitian-bounded}
The determinant of the Hermitian matrix
\[ \Theta_x(\basis{2}, \basis{2}) \]
is polynomially bounded.
\end{lemma}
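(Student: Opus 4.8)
The plan is to reduce $\det\Theta_x(\basis{2},\basis{2})$ to the product of a fixed constant and the square of the covolume of $\basis{2}$, exploiting the fact that the $x$-dependent conjugating matrix $x(i)$ has determinant of absolute value $1$ and hence disappears once one passes to determinants. So the only facts I need about $\basis{2}$ are that it is weakly bounded (supplied by \cref{weakly-bounded-basis}) -- its diagonality and Galois-compatibility play no role.

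Concretely, I would write $M\in\gGL_n(\bC)$ for the matrix whose columns are the coordinates of the vectors of $\basis{2}$ with respect to $\cB_\bZ$, and $S\in\rM_n(\bQ)$ for the matrix of $\Psi$ with respect to $\cB_\bZ$; since $\Psi$ is a non-degenerate polarisation, $\det S$ is a fixed non-zero rational number. Because every vector of $\cB_\bZ$ is real, $\Theta_x(u,v)=\Psi_\bC(\bar u, x(i)v)$ shows that the Gram matrix of $\Theta_x$ on $\cB_\bZ$ is $S\,\rho(x(i))$, and hence the Gram matrix of $\Theta_x$ on $\basis{2}$ is $\overline{M}^{t}\,S\,\rho(x(i))\,M$. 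Taking determinants,
\[ \det\Theta_x(\basis{2},\basis{2}) = \abs{\det M}^{2}\,\det S\,\det\rho(x(i)). \]
Now $\gG$ is adjoint, hence semisimple, so $\det\circ\rho\colon\gG\to\bG_m$ is the trivial character and $\det\rho(x(i))=1$ (alternatively, $x(i)\in\gG(\bR)$ preserves the non-degenerate form $\Psi$, which already forces $\det\rho(x(i))=\pm1$). Since $\Theta_x$ is positive definite the left-hand side is a positive real, so
\[ \det\Theta_x(\basis{2},\basis{2}) = \abs{\det M}^{2}\,\abs{\det S}. \]
Here $\abs{\det S}$ is a fixed positive constant and $\abs{\det M}$ is the covolume of $\basis{2}$ relative to $\cB_\bZ$, which is polynomially bounded because $\basis{2}$ is weakly bounded; this gives the lemma.

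I do not expect a genuine obstacle: once $\basis{2}$ is in hand, the statement is a one-line determinant computation. The only substantive point is the observation above that the (unbounded, $x$-dependent) matrix $x(i)$ contributes a determinant of absolute value $1$ and therefore drops out -- this is exactly what lets us avoid estimating $\rho(x(i))$ itself, which admits no polynomial bound.
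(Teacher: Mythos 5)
Your proof is correct, and it takes a genuinely different (more direct) route than the paper. The paper never writes down the Gram matrix with respect to \( \cB_\bZ \); instead it compares \( \basis{2} \) with the \( \gG \)-admissible diagonal basis \( \basis{1} \) of \cref{admissible-diagonal-basis}, writing \( \det \Theta_x(\basis{2}, \basis{2}) = \abs{\Vol(\basis{1}:\basis{2})}^2 \det \Theta_x(\basis{1}, \basis{1}) \), then noting that \( \Vol(\basis{0}:\basis{1}) = 1 \) because \( \det \circ \rho \) is trivial on the semisimple group \( \gG \), and that \( \Theta_x(\basis{1}, \basis{1}) = \Theta_{x_0}(\basis{0}, \basis{0}) \) is a constant matrix (here the \( \gG \)-invariance of \( \Psi \) and \( g_1 x_0 = x \) are used). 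Your argument instead keeps everything in \( \cB_\bZ \)-coordinates and kills the \( x \)-dependence by observing \( \det \rho(x(i)) = 1 \); so both proofs ultimately rest on the same fact -- triviality of \( \det \circ \rho \) on \( \gG \) -- but the paper applies it to \( g_1 \) while you apply it to \( x(i) \). What your version buys is independence from \( \basis{1} \) (you only need \cref{weakly-bounded-basis}, not \cref{admissible-diagonal-basis}, for this lemma) and a completely explicit one-line determinant identity; what the paper's version buys is that the constant \( \det \Theta_x(\basis{1}, \basis{1}) = \det \Theta_{x_0}(\basis{0}, \basis{0}) \) and the comparison with \( \basis{1} \) are set up in a form that is reused in the later Hermitian-form computations (e.g.\ in \cref{coords13-bounded}). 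One small point worth making explicit in your write-up: the covolume appearing in the definition of weak boundedness is exactly \( \abs{\det M} \) for the coordinate matrix \( M \) of \( \basis{2} \) relative to \( \cB_\bZ \), as the proof of \cref{weakly-bounded-basis} shows, so your final step is indeed immediate.
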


\begin{proof}
We have
\[ \det \Theta_x(\basis{2}, \basis{2}) = \abs{\Vol(\basis{1}:\basis{2})}^2 \det \Theta_x(\basis{1}, \basis{1}). \]

Now
\[ \Vol(\basis{0}:\basis{1}) = 1 \]
because \( \basis{1} \) is \( \gG \)-admissible and \( \gG \) is semisimple (so \( \det \rho(\gG) \) must be trivial).

Hence
\[ \Vol(\basis{1}:\basis{2}) = \Vol(\basis{0}:\basis{2}) \]
is polynomially bounded because \( \basis{2} \) is weakly bounded.
Also
\[ \Theta_x(\basis{1}, \basis{1}) = \Theta_{x_0}(\basis{0}, \basis{0}) \]
is constant, so the lemma is proved.
\end{proof}

The following lemma is proved in the same way as Lemma~3.5 of \cite{pila-tsimerman:ao-surfaces}, but for convenience we have written it down as a general statement with the precise list of conditions required.

\begin{lemma} \label{hermitian-scale-bound}
For all positive integers \( d \) and \( r \), there are constants \( \newC{hermitian-scale-bound-multiplier} \) and \( \newC{hermitian-scale-bound-exponent} \) depending on \( d \) and \( r \) such that:

For every number field \( F \) of degree at most \( d \) and every matrix \( A \in \rM_r(F) \), if
\begin{enumerate}[(i)]
\item the entries of \( A \) have denominator at most \( D \),
\item \( \Nm_{F/\bQ} \det A \leq D \), and
\item for every \( \sigma \colon F \to \bC \), \( \sigma(A) \) is Hermitian and positive definite,
\end{enumerate}
then there exists \( Q \in \gGL_r(F) \) such that the entries of \( Q \) are algebraic integers and the entries \( x_{ij} \) of \( Q^\dag A Q \) satisfy
\[ \abs{\sigma(x_{ij})} \leq \refC{hermitian-scale-bound-multiplier} (D \abs{\disc F})^{\refC{hermitian-scale-bound-exponent}} \]
for all \( \sigma \colon F \to \bC \).
\end{lemma}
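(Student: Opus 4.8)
The plan is to reduce to a single number field (rather than a product) and to classical reduction theory for positive-definite Hermitian forms. First I would set up the arithmetic. Let $\fa$ be a nonzero ideal of $\cO_F$ such that $\fa A$ has integral entries; by hypothesis~(i) we may take $\Nm_{F/\bQ}\fa \leq D^{r^2}$ (clearing the denominators of all $r^2$ entries). Then $M = \fa A \in \rM_r(\cO_F)$ and $\Nm_{F/\bQ}\det M = (\Nm\fa)^r \cdot \Nm_{F/\bQ}\det A$ is a rational integer bounded by a power of $D$ using~(ii). The idea is now to find $Q$ so that $Q^\dag M Q$ is ``Minkowski-reduced'' simultaneously at every archimedean place, and then observe $Q^\dag A Q = (\Nm\fa)^{-1} Q^\dag M Q$ differs only by a bounded rational scalar.

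For the reduction step I would work in $\cO_F$ viewed as a lattice of rank $rd$ via $\cO_F^r \hookrightarrow \prod_{\sigma\colon F\to\bC} \bC^r$ (pairing each complex-conjugate pair of embeddings into a single real $r$-dimensional block, and halving the real embeddings appropriately), equipped with the block-diagonal positive-definite Hermitian form $\bigoplus_\sigma \sigma(A)$. This is a positive-definite quadratic form on a $\bZ$-lattice of rank $rd$ whose Gram determinant is $\prod_\sigma \det\sigma(A) = \Nm_{F/\bQ}\det A$, polynomially bounded by~(ii). The key input — exactly as in Lemma~3.5 of~\cite{pila-tsimerman:ao-surfaces}, and ultimately Minkowski's successive-minima theorem — gives a $\bZ$-basis change by a matrix in $\gGL_{rd}(\bZ)$ after which all Gram-matrix entries are bounded by a polynomial in the determinant, hence in $D\abs{\disc F}$ (the discriminant enters because the successive minima, and the volume normalisation of $\cO_F$ inside $\prod_\sigma \bC^r$, involve $\abs{\disc F}^{1/2}$). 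The one subtlety is that an arbitrary element of $\gGL_{rd}(\bZ)$ need not come from an $\cO_F$-linear map $\cO_F^r \to \cO_F^r$, i.e.\ from an element of $\gGL_r(\cO_F)$; but since the form is $\cO_F$-block-diagonal, one may instead run the reduction $\cO_F$-equivariantly — diagonalise the $\cO_F$-lattice structure first, or equivalently apply the reduction only through $\cO_F$-module automorphisms — so that the resulting base change is given by some $Q \in \gGL_r(F)$ with integral entries. Concretely: choose an $\cO_F$-basis of $\cO_F^r$ realising short vectors of the form, which is possible with controlled height because the $r$ successive $\cO_F$-``minima'' (in the sense of LLL-type reduction over $\cO_F$) are each bounded by a power of $D\abs{\disc F}$, and collect these into the columns of $Q$.

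Having fixed $Q \in \gGL_r(F)$ with algebraic-integer entries, set $B = Q^\dag M Q = Q^\dag(\fa A)Q$. Each $\sigma(B) = \sigma(Q)^\dag \sigma(A) \sigma(Q)$ is Hermitian positive definite by~(iii), and by the reduction its entries satisfy $\abs{\sigma(b_{ij})} \leq \refC{hermitian-scale-bound-multiplier}' (D\abs{\disc F})^{\refC{hermitian-scale-bound-exponent}'}$ for constants depending only on $d$ and $r$. Finally $Q^\dag A Q = (\Nm\fa)^{-1} B$ — here I am using $\fa A$ with $\fa$ a rational-integer multiple, replacing $\fa$ by $\Nm_{F/\bQ}\fa \in \bZ_{>0}$ if necessary so that the scalar is rational and commutes with everything — and since $1 \leq \Nm_{F/\bQ}\fa \leq D^{r^2}$, dividing changes the bound on the entries only by a further power of $D$. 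Absorbing all these constants, the entries $x_{ij}$ of $Q^\dag A Q$ satisfy $\abs{\sigma(x_{ij})} \leq \refC{hermitian-scale-bound-multiplier}(D\abs{\disc F})^{\refC{hermitian-scale-bound-exponent}}$ for all $\sigma$, as required.

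I expect the main obstacle to be the $\cO_F$-equivariance of the reduction: ordinary Minkowski reduction over $\bZ$ produces a change of basis in $\gGL_{rd}(\bZ)$, which is too large, so one must either carry out the reduction internally to the category of $\cO_F$-modules (a Humbert/LLL-over-$\cO_F$ reduction) or argue that the ideal-class obstruction measured by $\abs{\disc F}$ is itself polynomially bounded — which is precisely the role played by the discriminant in the final estimate and is handled by the same Minkowski-bound argument already used in \cref{minkowski}.
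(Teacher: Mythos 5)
Your proposal is essentially the argument the paper has in mind: the paper gives no written proof of this lemma, saying only that it is proved in the same way as Lemma~3.5 of \cite{pila-tsimerman:ao-surfaces}, and that argument is exactly your one — clear denominators, Minkowski-reduce the positive-definite form $\bigoplus_\sigma \sigma(A)$ on $\cO_F^r$ viewed as a $\bZ$-lattice of rank $rd$ (with $\abs{\disc F}$ entering through the covolume and the integrality lower bound on the first minimum), and take $F$-linearly independent short integral vectors as the columns of $Q$, bounding the entries of $Q^\dag A Q$ at each embedding by positivity and Cauchy--Schwarz. One small remark: you neither need nor should claim an $\cO_F$-\emph{basis} of short vectors — the lemma only asks for $Q \in \gGL_r(F)$ with algebraic-integer entries, so $F$-linearly independent vectors realising the bounded minima suffice, which is what your final step in fact uses.
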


\begin{proposition} \label{polarisation-bound}
There exists a Galois-compatible weakly bounded diagonal basis \( \basis{3} = \{ \bv{3}{\sigma j} \} \) for \( E_x \) such that the complex numbers
\[ \Theta_x(\bv{3}{\sigma j}, \bv{3}{\sigma j'}) \]
are polynomially bounded for all \( \sigma \in \Sigma \), \( 1 \leq j, j' \leq r_\sigma \).
\end{proposition}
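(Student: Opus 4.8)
Starting from the Galois-compatible weakly bounded diagonal basis \( \basis{2} \) of \cref{weakly-bounded-basis}, I would fix each isotypic block \( i \) and work with the matrix of the polarisation form \( \Theta_x \) restricted to the \( r_i \) basis vectors \( \{ \bv{2}{\sigma j} : j = 1, \dots, r_i \} \) for a single \( \sigma \in \Sigma_i \). By Galois-compatibility of \( \basis{2} \), the entries \( \Theta_x(\bv{2}{\sigma j}, \bv{2}{\sigma j'}) \) lie in (the image under \( \sigma \) of) a single field attached to \( F_i \): after taking complex conjugation into account, the natural field of definition is the maximal totally real subfield \( F_i^+ \) when \( F_i \) is CM, or \( \bQ \) when \( F_i = \bQ \). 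The idea is to apply \cref{hermitian-scale-bound} to the resulting matrix \( A_i \in \rM_{r_i}(F_i^+) \), producing an integral matrix \( Q_i \in \gGL_{r_i}(F_i^+) \) with \( Q_i^\dag A_i Q_i \) having bounded archimedean values, and then set \( \basis{3} \) to be the basis obtained from \( \basis{2} \) by applying \( \sigma(Q_i) \) on each block — which is Galois-compatible precisely because \( Q_i \) is defined over the field and we propagate it through all \( \sigma \in \Sigma_i \) simultaneously.

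To invoke \cref{hermitian-scale-bound} I must verify its three hypotheses for \( A_i \) with a polynomially bounded \( D \). Since \( \basis{2} \) is weakly bounded, its coordinates relative to \( \cB_\bZ \) have polynomially bounded denominators; hence the entries of \( \Theta_x(\basis{2}, \basis{2}) \), being \( \bQ \)-bilinear combinations of these coordinates with coefficients coming from the fixed form \( \Psi \) and from \( x(i) \), have polynomially bounded denominators — this gives hypothesis (i). For hypothesis (ii), \( \Nm_{F_i^+/\bQ} \det A_i \) is (up to the fixed bilinear data) a divisor of \( \det \Theta_x(\basis{2}, \basis{2}) \), which is polynomially bounded by \cref{det-hermitian-bounded}. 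Hypothesis (iii) — that \( \sigma(A_i) \) is Hermitian positive definite for every embedding — follows because \( \Theta_x \) is a positive definite Hermitian form on all of \( E_\bC \) (being the Hermitian form attached to the \( C \)-polarisation \( \Psi \)) and its restriction to any subspace spanned by part of a diagonal basis is again positive definite; the Galois conjugates \( \sigma(A_i) \) correspond to the other embeddings \( \sigma' \in \Sigma_i \), where positive-definiteness holds for the same reason.

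There is, however, a subtlety that I expect to be the main obstacle: the entries of \( \Theta_x(\bv{2}{\sigma j}, \bv{2}{\sigma j'}) \) need not literally lie in \( \sigma(F_i^+) \) — a priori they are algebraic numbers on which \( \Aut(\bC) \) acts through its action on the \( \bv{2}{\sigma j} \), and \( \Theta_x \) involves complex conjugation in its first slot. This is exactly why the excerpt mentions needing to "tweak these values slightly (by using \( \zeta \) constructed using \cref{totally-real-signs})". The resolution is that one cannot directly Galois-descend \( \Theta_x(\basis{2}, \basis{2}) \); instead one multiplies the relevant entries by a totally real unit \( \zeta \in F_i^+ \) of polynomially bounded height, chosen (via \cref{totally-real-signs}) so that the sign pattern of \( \zeta \) across the real places matches the sign pattern needed to make the rescaled values genuinely Galois-compatible and still positive definite, so that \cref{hermitian-scale-bound} applies and the bound transfers back to \( \Theta_x \) itself after accounting for the bounded factor \( \zeta \). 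After carrying out this rescaling block-by-block and applying \cref{hermitian-scale-bound}, one obtains the desired \( \basis{3} \); it remains weakly bounded because \( Q_i \) is integral with polynomially bounded norm of determinant (so the covolume changes by a polynomially bounded factor and denominators do not worsen), it is diagonal because each \( Q_i \) acts within a single isotypic block preserving the \( F_x \)-eigenspace structure (and we reorder to keep Hodge types matching \( \basis{0} \)), and it is Galois-compatible by construction.
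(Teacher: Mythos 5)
Your overall strategy is the paper's: twist the values of \( \Psi \) on \( \basis{2} \) by a totally real \( \zeta \) supplied by \cref{totally-real-signs} so that the resulting block matrices are simultaneously Galois-compatible and positive definite, apply \cref{hermitian-scale-bound} block by block, and define \( \basis{3} \) by applying \( \sigma(Q_i) \) on each \( \sigma \)-eigenspace of \( \basis{2} \). One step, however, would fail as written: the matrices you feed into \cref{hermitian-scale-bound} do not lie in \( \rM_{r_i}(F_i^+) \). Setting \( A_{\sigma jj'} = \sigma(\zeta)\, \Psi_\bC(\barbv{2}{\sigma j}, \bv{2}{\sigma j'}) \), the Galois-compatibility of \( \basis{2} \), the fact that \( \Psi \) is defined over \( \bQ \) and the CM property \( \tau\bar\sigma = \overline{\tau\sigma} \) give \( \tau(A_{\sigma jj'}) = A_{(\tau\sigma) jj'} \), i.e.\ \( A_\sigma = \sigma(A_i) \) for a matrix \( A_i \) with entries in \( F_i \); the Hermitian symmetry only forces \( A_i \) to equal its conjugate transpose under the CM involution, so the diagonal entries lie in the totally real subfield but the off-diagonal entries genuinely lie in \( F_i \) in general. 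This is harmless, because \cref{hermitian-scale-bound} is stated for an arbitrary number field and the paper applies it with \( F = F_i \), producing \( Q_i \in \gGL_{r_i}(F_i) \); but your descent to \( F_i^+ \) should simply be dropped. (Also, \( \zeta \) is just a totally real algebraic integer of polynomially bounded height, not a unit; nothing requires or provides a unit.)

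A second, smaller gap is in hypothesis (ii) and in the weak boundedness of \( \basis{3} \). \Cref{det-hermitian-bounded} bounds only the product over \emph{all} \( \sigma \in \Sigma \) of \( \det A_\sigma \) (up to the bounded factors \( \abs{\sigma(\zeta)} \)); your ``divisor'' argument does not by itself bound \( \Nm_{F_i/\bQ} \det A_i \) for each \( i \) separately. You also need the lower bound: since the entries of \( A_i \) have polynomially bounded denominators, the nonzero rational number \( \Nm_{F_i/\bQ} \det A_i \) has polynomially bounded denominator, hence \( (\Nm_{F_i/\bQ} \det A_i)^{-1} \) is polynomially bounded, and then each factor of the product is bounded. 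The same lower bound is what gives the assertion you state without proof, namely that \( \Nm_{F_i/\bQ} \det Q_i \) is polynomially bounded: it follows from \( \abs{\Nm_{F_i/\bQ} \det Q_i}^2 = (\Nm_{F_i/\bQ} \det Q_i^\dag A_i Q_i)(\Nm_{F_i/\bQ} \det A_i)^{-1} \), not directly from the conclusion of \cref{hermitian-scale-bound}. With these two corrections your argument coincides with the paper's proof.
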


\begin{proof}
Since the Hodge structure \( E_x \) has weight \( 0 \), \( \rho x(i) \) has eigenvalues \( \pm 1 \).
By \cref{totally-real-signs}, we can choose an element \( \zeta \in F_x^\times \) which is totally real, such that \( \sigma(\zeta) \) has the same sign as the eigenvalue of \( \rho x(i) \) on \( \bv{2}{\sigma 1} \) for all \( \sigma \in \Sigma \), and whose height is polynomially bounded.

Define
\[ A_{\sigma jj'} = \sigma(\zeta) \, \Psi_\bC(\barbv{2}{\sigma j}, \bv{2}{\sigma j'}) \]
for each \( \sigma \in \Sigma \) and \( 1 \leq j, j' \leq r_\sigma \).

The way we chose the signs of \( \sigma(\zeta) \) implies that
\[ A_{\sigma jj'} = \abs{\sigma(\zeta)} \, \Theta_x(\bv{2}{\sigma j}, \bv{2}{\sigma j'}). \]
Since \( \Theta_x \) is a positive definite Hermitian form, we deduce that the square matrix~\( A_\sigma \) is Hermitian and positive definite for each \( \sigma \in \Sigma \).

Observe that, for every \( \tau \in \Aut(\bC) \) and \( \sigma \in \Sigma \), we have \( \tau\bar\sigma = \overline{\tau\sigma} \) because \( F_x \) is a product of CM fields and \( \bQ \).
Since \( \Psi \) is defined over \( \bQ \), we deduce that
\[ \tau(A_{\sigma jj'}) = A_{(\tau\sigma) jj'} \]
or in other words there are matrices \( A_i \in \rM_{r_i}(F_i) \) such that
\[ A_{\sigma jj'} = \sigma(A_{i jj'}) \]
for all \( 1 \leq i \leq s \), \( \sigma \in \Sigma_i \), \( 1 \leq j, j' \leq r_i \).

Because \( \basis{2} \) has polynomially bounded denominators and \( \zeta \) has polynomially bounded height, the entries of the \( A_i \) have polynomially bounded denominators.

Furthermore
\[ \prod_{i=1}^s \Nm_{F_i/\bQ} \det A_i = \prod_{\sigma \in \Sigma} \det A_\sigma = \left( \prod_{\sigma \in \Sigma} \abs{\sigma(\zeta)} \right) \det \Theta_x(\basis{2}, \basis{2}). \]
(We are using the fact that the eigenspaces of \( F_x \) are orthogonal to each other with respect to \( \Theta_x \), so the matrix \( \Theta_x(\basis{2}, \basis{2}) \) is block diagonal with one block for each \( \sigma \).)
The values \( \abs{\sigma(\zeta)} \) are polynomially bounded because \( \zeta \) has polynomially bounded height.
Using \cref{det-hermitian-bounded},
we conclude that
\[ \prod_{i=1}^s \Nm_{F_i/\bQ} \det A_i \]
is polynomially bounded.

Each value \( (\Nm_{F_i/\bQ} \det A_i)^{-1} \) is polynomially bounded because the denominators of \( A_i \) are polynomially bounded.
Hence
\[ \Nm_{F_i/\bQ} \det A_i \]
is polynomially bounded for each \( i \).

Hence \( A_i \in \rM_{r_i}(F_i) \) satisfies the conditions of \cref{hermitian-scale-bound} (with \( D \) being polynomial in \( \abs{\disc(R)} \)).
So we can choose \( Q_i \in \gGL_{r_i}(F_i) \) as in the conclusion of that lemma.

Let
\[ \bv{3}{\sigma j} = \sum_{j'=1}^{r_\sigma} \sigma(Q_{ij'j}) \bv{2}{\sigma j'} \]
for all \( 1 \leq i \leq s \), \( \sigma \in \Sigma_i \) and \( 1 \leq j \leq r_\sigma \).

Now \( \basis{3} = \{ \bv{3}{\sigma j} \} \) is a diagonal basis because each \( \bv{3}{\sigma j} \) is a linear combination of \( \sigma \)-eigenvectors for \( F_x \).
Clearly it is also Galois-compatible.

Because the archimedean absolute values of entries of \( Q_i^\dag A_i Q_i \) are polynomially bounded,
\[ \Nm_{F_i/\bQ} \det Q_i^\dag A_i Q_i \]
is polynomially bounded.
As remarked above, \( (\Nm_{F_i/\bQ} \det A_i)^{-1} \) is polynomially bounded.
Hence
\[ \abs{\Nm_{F_i/\bQ} \det Q_i}^2 = (\Nm_{F_i/\bQ} \det Q_i^\dag A_i Q_i)(\Nm_{F_i/\bQ} \det A_i)^{-1} \]
is polynomially bounded.
We deduce that the covolume of \( \basis{3} \), namely
\[ \left( \prod_{i=1}^s \Nm_{F_i/\bQ} \det Q_i \right)  \Vol(\cB_\bZ:\basis{2}), \]
is polynomially bounded.

Furthermore \( \basis{3} \) has bounded denominators because \( \basis{2} \) had bounded denominators and the entries of \( Q_i \) are algebraic integers.
Hence \( \basis{3} \) is weakly bounded.

Finally the values
\[ \abs{\sigma(\zeta)} \, \Theta_x(\bv{3}{\sigma j}, \bv{3}{\sigma j'}) \]
are the entries of \( \sigma(Q_i^\dag A_i Q_i) \) and so are polynomially bounded.
Since \( \zeta \) has polynomially bounded height, we conclude that
\[ \Theta_x(\bv{3}{\sigma j}, \bv{3}{\sigma j'}) \]
are polynomially bounded.
\end{proof}

\subsection{A \hgG-admissible diagonal basis whose height is bounded relative to \texorpdfstring{\( \basis{3} \)}{B(3)}} \label{ssec:g-admissible-basis}

In this section we will construct the basis \( \basis{4} \) which is \( \gG \)-admissible, diagonal, and whose coordinates with respect to \( \basis{3} \) have polynomially bounded height.

We begin by considering the coordinates of \( \basis{3} \) with respect to \( \basis{1} \).
Using the fact that \( \Theta_x \) is polynomially bounded on \( \basis{3} \), we show that these coordinates are polynomially bounded, and deduce that the multilinear form \( \Phi \) is polynomially bounded on \( \basis{3} \).
Because \( \basis{3} \) is Galois-compatible and has polynomially bounded denominators, the bound on (the standard absolute value of) the values of \( \Phi \) on~\( \basis{3} \) implies a bound for the heights of these values.

The set of coordinates (with respect to \( \basis{3} \)) of \( \gG \)-admissible diagonal bases forms an affine algebraic set \( V_0 \) defined over \( \QalgR \).
Because \( \gG \) is precisely the stabiliser of \( \Phi \), we can use \( \Phi \) to write down equations for \( V_{0,\Qalg} \), and then describe a suitable \( \QalgRb \)-form of this algebraic set.
The bound for the heights of values of \( \Phi \) on \( \basis{3} \) implies that these equations have coefficients of polynomially bounded height, and the existence of a \( \gG \)-admissible diagonal basis \( \basis{1} \) implies that \( V_0(\bR) \) is non-empty.
We can therefore use \cref{variety-bound} to obtain an element of \( V_0(\QalgR) \) of polynomially bounded height, and thus \( \basis{4} \).

\begin{lemma} \label{coords13-bounded}
For each \( \sigma \in \Sigma \), let \( R_\sigma \) be the matrix in \( \gGL_{r_\sigma}(\bC) \) such that
\[ \bv{3}{\sigma j} = \sum_{j'=1}^{r_\sigma} R_{\sigma j'j} \bv{1}{\sigma j'}. \]
(Such matrices exist because \( \basis{1} \) and \( \basis{3} \) are both diagonal bases for \( E_x \).)

The entries of \( R_\sigma \) are polynomially bounded.
\end{lemma}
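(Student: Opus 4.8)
The plan is to read off the bound from the positive-definite Hermitian form $\Theta_x$ evaluated on the bases $\basis{1}$ and $\basis{3}$ inside each $F_x$-eigenspace. Fix $\sigma \in \Sigma$ and write $H_\sigma = \bigl( \Theta_x(\bv{1}{\sigma j}, \bv{1}{\sigma k}) \bigr)_{1 \le j,k \le r_\sigma}$ and $G_\sigma = \bigl( \Theta_x(\bv{3}{\sigma j}, \bv{3}{\sigma k}) \bigr)_{1 \le j,k \le r_\sigma}$ for the two Gram matrices on this eigenspace. Substituting $\bv{3}{\sigma j} = \sum_{j'} R_{\sigma j'j} \bv{1}{\sigma j'}$ and using that $\Theta_x$ is conjugate-linear in its first variable and linear in its second gives the matrix identity $R_\sigma^\dag H_\sigma R_\sigma = G_\sigma$, with $H_\sigma$ and $G_\sigma$ both positive definite.

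First I would bound $H_\sigma$ from below by a constant independent of $x$. As observed in the proof of \cref{det-hermitian-bounded}, $\Theta_x(\basis{1}, \basis{1}) = \Theta_{x_0}(\basis{0}, \basis{0}) =: M_0$, a fixed positive-definite matrix depending only on $(\gG, X, \rho)$ — this uses $\basis{1} = g_1 \basis{0}$ with $g_1 \in \gG(\bR)$ a real linear map and $g_1 x_0 = x$, together with the $\gG$-invariance of $\Psi$. Since $H_\sigma$ is the principal submatrix of $M_0$ cut out by the indices $(\sigma, j)$, $1 \le j \le r_\sigma$, we have $w^\dag H_\sigma w \ge \lambda_{\min}(M_0) \, \|w\|^2$ for all $w \in \bC^{r_\sigma}$, where $\lambda_{\min}(M_0) > 0$ is the least eigenvalue of $M_0$.

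Next I would invoke \cref{polarisation-bound}, which says the entries of $G_\sigma$ are polynomially bounded; since $r_\sigma \le n$ is bounded, the operator norm $\|G_\sigma\|$ is polynomially bounded too. Applying the lower bound for $H_\sigma$ to the vector $R_\sigma w$ gives, for every $w \in \bC^{r_\sigma}$,
\[ \lambda_{\min}(M_0) \, \|R_\sigma w\|^2 \le (R_\sigma w)^\dag H_\sigma (R_\sigma w) = w^\dag G_\sigma w \le \|G_\sigma\| \, \|w\|^2, \]
so $\|R_\sigma\|^2 \le \|G_\sigma\| / \lambda_{\min}(M_0)$ is polynomially bounded, and hence so is every entry of $R_\sigma$. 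This is entirely elementary once the identity $R_\sigma^\dag H_\sigma R_\sigma = G_\sigma$ and the equality $\Theta_x(\basis{1}, \basis{1}) = \Theta_{x_0}(\basis{0}, \basis{0})$ are in place; there is no substantial obstacle, only the usual care with the linearity conventions for $\Theta_x$ and with index placement.
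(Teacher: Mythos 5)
Your argument is correct, and it rests on the same two inputs as the paper's proof: the identity \( G_\sigma = R_\sigma^\dag H_\sigma R_\sigma \) coming from the sesquilinearity of \( \Theta_x \), the constancy \( \Theta_x(\basis{1},\basis{1}) = \Theta_{x_0}(\basis{0},\basis{0}) \) (valid because \( g_1 \in \gG(\bR) \) is real and \( \Psi \) is \( \gG \)-invariant, so conjugating by \( g_1 \) turns \( x(i) \) into \( x_0(i) \)), and the polynomial bound on \( \Theta_x(\bv{3}{\sigma j},\bv{3}{\sigma j'}) \) from \cref{polarisation-bound}. Where you differ is in how the bound on \( R_\sigma \) is extracted from this identity. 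The paper diagonalises both Gram matrices by unitary matrices, manufactures an auxiliary matrix \( R'_\sigma = U_\sigma^\dag \Lambda_\sigma U'_\sigma \) with polynomially bounded entries satisfying the same congruence \( B'_\sigma = {R'}_\sigma^\dag B_\sigma R'_\sigma \), and then observes that \( R_\sigma {R'}_\sigma^{-1} \) lies in the compact group of \( B_\sigma \)-unitary matrices, which has uniformly bounded entries since \( B_\sigma \) is constant. You instead bound \( \|R_\sigma\| \) directly: since \( H_\sigma \) is a principal submatrix of the fixed positive-definite Gram matrix of \( \Theta_{x_0} \) on \( \basis{0} \), its least eigenvalue is bounded below by a positive constant independent of \( x \) (eigenvalue interlacing, or simply restriction of the quadratic form to a coordinate subspace), and then \( \lambda_{\min}\,\|R_\sigma w\|^2 \leq w^\dag G_\sigma w \leq \|G_\sigma\|\,\|w\|^2 \) gives the operator-norm bound at once. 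Your route is shorter and avoids both the unitary diagonalisation and the compactness argument, at no loss of content; the paper's construction of the explicit \( R'_\sigma \) buys nothing extra here since only a bound on \( R_\sigma \) is needed in the sequel. The only points requiring the care you already note are the conjugate-linearity convention for \( \Theta_x \) (so that the identity really reads \( R_\sigma^\dag H_\sigma R_\sigma = G_\sigma \) with your index placement) and the fact that the relabelling of \( \basis{1} \) by \( (\sigma, j) \) only permutes indices, so \( H_\sigma \) is indeed a principal submatrix of the constant matrix and the lower bound on its least eigenvalue is uniform in \( x \).
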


\begin{proof}
Consider the \( r_\sigma \times r_\sigma \) matrices
\[ B_{\sigma jj'} = \Theta_x(\bv{1}{\sigma j}, \bv{1}{\sigma j'}) \]
and
\[ B'_{\sigma jj'} = \Theta_x(\bv{3}{\sigma j}, \bv{3}{\sigma j'}). \]
These matrices are Hermitian positive definite, so they can be diagonalised by unitary matrices:
\[ B_\sigma = U_\sigma^\dag D_\sigma U_\sigma \text{ and } B'_\sigma = U^{\prime\dag}_\sigma D'_\sigma U'_\sigma \]
where \( U_\sigma, U'_\sigma \) are unitary matrices and \( D_\sigma, D'_\sigma \) are diagonal with positive real entries.
We can then choose further diagonal matrices \( \Lambda_\sigma \) with positive real entries such that
\[ \Lambda_\sigma^2 D_\sigma = D'_\sigma. \]

Observe that
\[ B_{\sigma jj'} = \Theta_{x_0}(\bv{0}{\sigma j}, \bv{0}{\sigma j}) \]
and hence does not depend on \( x \).
Thus \( D_\sigma \) and \( U_\sigma \) are constant.

The entries of \( U'_\sigma \) are bounded by~\( 1 \) because it is a unitary matrix.
The entries of \( B'_\sigma \) are polynomially bounded by the construction of \( \basis{3} \).
Hence the entries of \( D'_\sigma \), and therefore also \( \Lambda_\sigma \), are polynomially bounded.

Let
\[ R'_\sigma = U_\sigma^\dag \Lambda_\sigma U'_\sigma \in \gGL_{r_\sigma}(\bC). \]
This has polynomially bounded entries and satisfies
\[ B'_\sigma = {R'}_\sigma^\dag B_\sigma R'_\sigma. \]
But by the definition of \( R \),
\[ B'_\sigma = {R}_\sigma^\dag B_\sigma R_\sigma. \]

Hence \( R_\sigma {R'}_\sigma^{-1} \) lies in the compact group of \( B_\sigma \)-unitary matrices.
Because \( B_\sigma \) is constant, there is a uniform bound for entries of the entries of \( B_\sigma \)-unitary matrices.

Combining this bound for \( R_\sigma {R'}_\sigma^{-1} \) with the polynomial bound for \( R'_\sigma \), we deduce that \( R_\sigma \) has polynomially bounded entries.
\end{proof}

\begin{lemma} \label{mult-form-bound}
The values
\[ \Phi(\bv{3}{\sigma_1 j_1}, \dotsc, \bv{3}{\sigma_k j_k}) \]
are algebraic numbers of polynomially bounded height for all
\[ ((\sigma_1, j_1), \dotsc, (\sigma_k, j_k)) \in (\bigcup_i \Sigma_i \times \{ 1, \dotsc, r_i \})^k. \]
\end{lemma}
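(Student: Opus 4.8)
The plan is to express the value $\Phi(\bv{3}{\sigma_1 j_1}, \dotsc, \bv{3}{\sigma_k j_k})$ in terms of the coordinates of $\basis{3}$ relative to $\basis{1}$ (which are controlled by \cref{coords13-bounded}) and the values of $\Phi$ on $\basis{1}$ (which are constant), thereby obtaining an archimedean bound, and then to upgrade this to a height bound using Galois-compatibility and the bounded denominators of $\basis{3}$.

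First I would bound the archimedean absolute values. Since both $\basis{1}$ and $\basis{3}$ are diagonal bases for $E_x$, we may write each $\bv{3}{\sigma j}$ as a linear combination of the $\sigma$-eigenvectors $\bv{1}{\sigma j'}$, with coefficients $R_{\sigma j'j}$ as in \cref{coords13-bounded}. Expanding $\Phi$ by multilinearity,
\[
\Phi(\bv{3}{\sigma_1 j_1}, \dotsc, \bv{3}{\sigma_k j_k}) = \sum R_{\sigma_1 j_1' j_1} \dotsm R_{\sigma_k j_k' j_k} \, \Phi(\bv{1}{\sigma_1 j_1'}, \dotsc, \bv{1}{\sigma_k j_k'}),
\]
the sum being over $1 \leq j_\ell' \leq r_{\sigma_\ell}$. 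The values $\Phi(\bv{1}{\sigma_1 j_1'}, \dotsc, \bv{1}{\sigma_k j_k'})$ are constant: indeed $\basis{1} = g_1\basis{0}$ with $g_1 \in \gG(\bR)$, and $\gG$ stabilises $\Phi$, so these values equal $\Phi(\bv{0}{\sigma_1 j_1'}, \dotsc, \bv{0}{\sigma_k j_k'})$, which depend only on the fixed data. Combining this with the polynomial bound on the entries of each $R_\sigma$ from \cref{coords13-bounded}, the number of summands being bounded (it is at most $n^k$), we conclude that the complex numbers $\Phi(\bv{3}{\sigma_1 j_1}, \dotsc, \bv{3}{\sigma_k j_k})$ have polynomially bounded absolute value.

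Next I would control the denominators and the Galois action. Since $\Phi$ is defined over $\bZ$ (it lies in $E_\bZ^{\vee\otimes k}$) and each $\bv{3}{\sigma j}$ has coordinates in $\Qalg$ with polynomially bounded denominator (as $\basis{3}$ is weakly bounded), each value $\Phi(\bv{3}{\sigma_1 j_1}, \dotsc, \bv{3}{\sigma_k j_k})$ is an algebraic number whose denominator is polynomially bounded. Moreover, because $\basis{3}$ is Galois-compatible, for every $\tau \in \Aut(\bC)$ we have $\tau(\bv{3}{\sigma j}) = \bv{3}{(\tau\sigma) j}$, and since $\Phi$ has rational coefficients this gives
\[
\tau\bigl(\Phi(\bv{3}{\sigma_1 j_1}, \dotsc, \bv{3}{\sigma_k j_k})\bigr) = \Phi(\bv{3}{(\tau\sigma_1) j_1}, \dotsc, \bv{3}{(\tau\sigma_k) j_k}).
\]
Thus the full set of Galois conjugates of any such value is again a value of $\Phi$ on $\basis{3}$, hence has polynomially bounded absolute value by the previous paragraph; in particular the degree of each value over $\bQ$ is bounded (by $\abs{\Sigma}^k$, which is bounded in terms of $n$), and all its archimedean absolute values, as well as its denominator, are polynomially bounded. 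The product formula for the Weil height then yields $\rH\bigl(\Phi(\bv{3}{\sigma_1 j_1}, \dotsc, \bv{3}{\sigma_k j_k})\bigr)$ polynomially bounded, as required.

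The main obstacle here is the passage from an archimedean bound to a height bound: a single archimedean estimate on one value does not bound its height, since Galois conjugates could be large. This is precisely what Galois-compatibility of $\basis{3}$ is designed to overcome — it guarantees that \emph{every} Galois conjugate of a value of $\Phi$ on $\basis{3}$ is itself a value of $\Phi$ on $\basis{3}$, so the uniform archimedean bound applies to the whole conjugate set at once. The only care needed is to verify that the embeddings $\tau\sigma_\ell$ still lie in $\Sigma$, which holds because $\Aut(\bC)$ permutes $\Sigma = \Hom(F_x,\bC)$, and to recall that $\Phi$ being rational (indeed integral) is what makes the Galois action commute through $\Phi$.
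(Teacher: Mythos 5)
Your proposal is correct and follows essentially the same route as the paper: expand the values of \( \Phi \) on \( \basis{3} \) via the matrices \( R_\sigma \) of \cref{coords13-bounded}, use \( \gG \)-admissibility of \( \basis{1} \) and \( \gG \)-invariance of \( \Phi \) to see the values on \( \basis{1} \) are constant, and then combine the resulting archimedean bound with Galois-compatibility and the bounded denominators coming from weak boundedness of \( \basis{3} \) to get the height bound. The only (harmless) difference is that you deduce algebraicity directly from the algebraic coordinates of \( \basis{3} \), whereas the paper deduces it from the Galois-compatibility of the values.
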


\begin{proof}
The definition of \( R_\sigma \) implies that we can express the values of \( \Phi \) on \( \basis{3} \) in terms of its values on \( \basis{1} \) as follows:
\[ \Phi(\bv{3}{\sigma_1 j_1}, \dotsc, \bv{3}{\sigma_k j_k})
   = \sum_{j'_1=1}^{r_{\sigma_1}} \dotsb \sum_{j'_k=1}^{r_{\sigma_k}} R_{\sigma_1 j'_1 j_1} \dotsb R_{\sigma_k j'_k j_k} \Phi(\bv{1}{\sigma_1 j'_1}, \dotsc, \bv{1}{\sigma_k j'_k}).
\]
The facts that \( \basis{1} \) is \( \gG \)-admissible and \( \Phi \) is \( \gG \)-invariant imply that the values of \( \Phi \) on \( \basis{1} \) are equal to its values on \( \basis{0} \) and hence constant.
Thus \cref{coords13-bounded} implies that the values
\[ \Phi(\bv{3}{\sigma_1 j_1}, \dotsc, \bv{3}{\sigma_k j_k}) \]
are polynomially bounded.

The facts that \( \basis{3} \) is Galois-compatible and that \( \Phi \) is defined over \( \bQ \) imply that
\[ \tau(\Phi(\bv{3}{\sigma_1 j_1}, \dotsc, \bv{3}{\sigma_k j_k})) = \Phi(\bv{3}{(\tau\sigma_1) j_1}, \dotsc, \bv{3}{(\tau\sigma_k) j_k}) \]
for all \( \tau \in \Aut(\bC) \).
Hence the values of \( \Phi \) on \( \basis{3} \) are algebraic numbers.

Because each \( \Phi(\bv{3}{\sigma_1 j_1}, \dotsc, \bv{3}{\sigma_k j_k}) \) is polynomially bounded and because these values are Galois-compatible, all Galois conjugates of \( \Phi(\bv{3}{\sigma_1 j_1}, \dotsc, \bv{3}{\sigma_k j_k}) \) are polynomially bounded.
Furthermore the denominators of \( \Phi(\bv{3}{\sigma_1 j_1}, \dotsc, \bv{3}{\sigma_k j_k}) \) are polynomially bounded because \( \basis{3} \) is weakly bounded.
The combination of bounded denominators and all Galois conjugates being bounded implies that the heights are polynomially bounded.
\end{proof}

\begin{proposition} \label{b3-to-b4}
There exist matrices \( S_\sigma \in \gGL_{r_i}(\bC) \) whose entries are algebraic numbers of polynomially bounded height and such that the diagonal basis \( \basis{4} = \{ \bv{4}{\sigma j} \} \) defined by
\[ \bv{4}{\sigma j} = \sum_{j'=1}^{r_\sigma} S_{\sigma j'j} \bv{3}{\sigma j'} \]
is \( \gG \)-admissible.
\end{proposition}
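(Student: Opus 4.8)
The plan is to realise \( \basis{4} \) as a \( \QalgR \)-rational point, of polynomially bounded height, of an affine algebraic set \( V_0 \) over \( \QalgR \) cut out by equations of polynomially bounded height, and then invoke \cref{variety-bound}. First I would translate the conditions on \( (S_\sigma) \) into algebraic ones. Given a block-diagonal tuple \( (S_\sigma) \in \prod_\sigma \gGL_{r_\sigma}(\bC) \), define \( \bv{4}{\sigma j} = \sum_{j'} S_{\sigma j'j}\bv{3}{\sigma j'} \). Any such \( \basis{4} \) is automatically a diagonal basis: each \( \bv{4}{\sigma j} \) lies in the \( \sigma \)-eigenspace of \( F_x \) on \( E_x \) because \( \basis{3} \) is diagonal, and that eigenspace is a single eigenspace of the Mumford--Tate torus (as established in the proof of \cref{mt-to-diagonal-basis}), hence lies in a single Hodge component; so \( \basis{4} \) inherits the \( (\sigma,j) \)-labelling and Hodge-type pattern of \( \basis{3} \) and is a Hodge basis. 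Thus the only genuine condition is \( \gG \)-admissibility of \( \basis{4} \), i.e.\ that the linear map \( g \) with \( g\basis{0} = \basis{4} \) lies in \( \rho(\gG(\bR)) \). Since \( \gG \) is the full stabiliser of \( \Phi \) and \( \basis{0} \) is \( \gG \)-admissible, the condition \( g \in \rho(\gG(\bC)) \) is equivalent to \( \Phi \) taking the same (constant) values on \( \basis{4} \) as on \( \basis{0} \); and, since \( \basis{0} \) is stable under complex conjugation (condition (b)) and \( \basis{3} \) is Galois-compatible, so that both carry matching complex-conjugation structures \( (\sigma,j)\mapsto(\bar\sigma,j) \) on the relevant index sets, the condition that \( g \) is real is equivalent to \( \overline{S_\sigma} = S_{\bar\sigma} \) for all \( \sigma \).

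Next I would assemble \( V_0 \). Pick one embedding from each complex-conjugate pair in \( \Sigma \); take as real coordinates the real and imaginary parts of the entries of \( S_\sigma \) for these chosen \( \sigma \), the entries of \( S_\sigma \) for the (self-conjugate) real \( \sigma \), and, to force invertibility, the real and imaginary parts of auxiliary variables \( \zeta_\sigma \) subject to \( \zeta_\sigma \det S_\sigma = 1 \). The relation \( \overline{S_\sigma} = S_{\bar\sigma} \) is built into the parametrisation by declaring \( S_{\bar\sigma} := \overline{S_\sigma} \). Substituting into the equations ``\( \Phi(\basis{4}) = \Phi(\basis{0}) \)'' and separating real and imaginary parts yields finitely many real polynomial equations, of degree at most \( k \), whose number and degree depend only on \( (\gG, X, \rho) \). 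Their coefficients are \( \bZ \)-linear combinations of products of the numbers \( \Phi(\bv{3}{\sigma_1 j_1},\dotsc,\bv{3}{\sigma_k j_k}) \) with their complex conjugates, and of the real and imaginary parts of such expressions; by \cref{mult-form-bound} these numbers are algebraic of polynomially bounded height, so their real and imaginary parts lie in \( \QalgR \) and are polynomially bounded in \( \abs{\disc R_x} \). Hence \( V_0 \subset \bA^N \) is defined over \( \QalgR \) by polynomials of bounded degree and polynomially bounded height.

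Finally, \( V_0(\bR) \) is non-empty. By \cref{coords13-bounded}, the coordinates of \( \basis{1} \) with respect to \( \basis{3} \) form the block-diagonal tuple \( (R_\sigma^{-1}) \), and the corresponding \( \basis{4} \) equals \( \basis{1} \); since \( \basis{1} = g_1\basis{0} \) with \( g_1 \in \gG(\bR) \), it is \( \gG \)-admissible and stable under complex conjugation via \( (\sigma,j)\mapsto(\bar\sigma,j) \), so \( (R_\sigma^{-1}) \) together with \( \zeta_\sigma = \det R_\sigma \) is a real point of \( V_0 \) (we use no height bound for it, and need none). Applying \cref{variety-bound} produces a point of \( V_0(\QalgR) \) of polynomially bounded height; reading off the \( S_\sigma \) from its coordinates gives matrices in \( \gGL_{r_\sigma}(\bC) \) with entries in \( \QalgR + i\QalgR \), hence algebraic of polynomially bounded height, and the resulting \( \basis{4} \) is \( \gG \)-admissible by the equivalence of the first paragraph.

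The main obstacle is the passage to the \( \QalgR \)-form of the variety. One must decompose \( \gG \)-admissibility cleanly into \( \Phi \)-preservation together with conjugation-compatibility of \( (S_\sigma) \) — which requires the bookkeeping that \( \basis{0} \), \( \basis{1} \) and \( \basis{3} \) carry matching complex-conjugation structures and that \( \sigma \)-eigenspaces of \( F_x \) are single Hodge components — and then verify that, after separating the equations \( \Phi(\basis{4}) = \Phi(\basis{0}) \) into real and imaginary parts, the coefficients genuinely lie in \( \QalgR \) with polynomially bounded height; the latter rests entirely on \cref{mult-form-bound}. The remaining points — forcing invertibility of the \( S_\sigma \) via the \( \zeta_\sigma \), and exhibiting a real point of \( V_0 \) via \( \basis{1} \) — are routine once this framework is set up.
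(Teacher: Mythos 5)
Your overall strategy is the same as the paper's: translate \( \gG \)-admissibility of \( \basis{3}S \) into the equations ``\( \Phi \) takes its reference values'', use \cref{mult-form-bound} to bound the heights of the coefficients, exhibit a real point coming from \( \basis{1} \), and apply \cref{variety-bound}. The observation that \( \basis{3}S \) is automatically diagonal, the use of Chevalley's form \( \Phi \), and the bookkeeping for invertibility are all fine.

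There is, however, a genuine gap at exactly the step you flag as ``the main obstacle'': the claim that \( \basis{0} \) (equivalently \( \basis{1} = g_1\basis{0} \)) carries the complex-conjugation structure \( (\sigma,j)\mapsto(\bar\sigma,j) \), so that reality of \( g \) is equivalent to \( \overline{S_\sigma} = S_{\bar\sigma} \). Condition (b) on \( \basis{0} \) only says that complex conjugation permutes the \emph{set} \( \basis{0} \); it says nothing about how that permutation interacts with the \( (\sigma,j) \)-labelling, which is induced from the varying point \( x \) via \( g_1 \) and is not under your control. In general one only has \( \barbv{1}{\sigma j} = \bv{1}{\bar\sigma\,\beta(\sigma,j)} \) for some permutation \( \beta(\sigma,\cdot) \) of \( \{1,\dotsc,r_\sigma\} \) depending on \( x \), and \( \beta \) cannot always be normalised to the identity (for instance, for a real character \( \sigma \) conjugation may act on the corresponding block of \( \basis{1} \) by a nontrivial involution, which no relabelling removes). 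Since \( \basis{3} \) is Galois-compatible, the correct reality condition on \( S \) is \( \overline{S}_{\sigma j' j} = S_{\bar\sigma\, j'\, \beta(\sigma,j)} \), i.e.\ the twisted semilinear involution \( \theta \) of the paper, not \( \overline{S_\sigma}=S_{\bar\sigma} \). With your untwisted condition two things break: a real point of your \( V_0 \) yields a basis whose conjugation pattern is \( (\sigma,j)\mapsto(\bar\sigma,j) \) rather than that of \( \basis{0} \), so the map \( g \) with \( g\basis{0}=\basis{4} \) differs from a real map by the permutation \( \beta \) and need not lie in \( \rho(\gG(\bR)) \); and your candidate real point \( (R_\sigma^{-1}) \) satisfies \( \overline{R_\sigma} = R_{\bar\sigma} \) only up to the same row permutation by \( \beta \), so it need not lie in \( V_0(\bR) \) at all. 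The repair is exactly the paper's: record \( \beta \), define \( \theta(S)_{\sigma j' j} = \overline{S}_{\bar\sigma\, j'\, \beta(\sigma,j)} \), check that \( \theta \) permutes the \( \Phi \)-equations (using that \( \Phi \) is defined over \( \bQ \)), and take the \( \QalgRb \)-form of \( V \) on which complex conjugation acts by \( \theta \); then \( R^{-1} \) is automatically a real point and the rest of your argument goes through unchanged.
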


\begin{proof}
Let \( V \) denote the \( \Qalg \)-algebraic set consisting of those elements
\[ S:=(S_\sigma)_{\sigma \in \Sigma} \in \prod_{\sigma \in \Sigma} \gGL_{r_\sigma} \]
which satisfy the equations
\[ \sum_{j_1=1}^{r_1} \dotsb \sum_{j_k=1}^{r_k} S_{\sigma_1 j'_1 j_1} \dotsb S_{\sigma_k j'_k j_k} \Phi(\bv{3}{\sigma_1 j'_1}, \dotsc, \bv{3}{\sigma_k j'_k}) = \Phi(\bv{1}{\sigma_1 j_1}, \dotsc, \bv{1}{\sigma_k j_k}) \tag{*} \label{eqn:var-definition} \]
for all tuples
\[ ((\sigma_1, j_1), \dotsc, (\sigma_k, j_k)) \in \bigl( \bigcup_i \Sigma_i \times \{ 1, \dotsc, r_i \} \bigr)^k. \]
Observe that the basis
\[ \basis{3}S = \bigl\{ \sum_{j'=1}^{r_\sigma} S_{\sigma j' j} \bv{3}{\sigma j'} \mid \sigma \in \Sigma, 1 \leq j \leq r_\sigma \bigr\} \]
is always diagonal.
The purpose of the algebraic set \( V \) is that the basis \( \basis{3}S \) is of the form \( g\basis{0} \) for some \( g \in \gG(\bC) \) if and only if \( S \in V(\bC) \).

We will now construct a \( \QalgRb \)-form \( V_0 \) of \( V \) such that \( \basis{3}{S} \) is \( \gG \)-admissible if and only if \( S \in V_0(\bR) \).

Recall that, when choosing \( \basis{0} \), we required that for each \( v \in \basis{0} \), its complex conjugate is also in \( \basis{0} \).
Since \( \basis{1} = g_1 \basis{0} \) for some real linear map \( g_1 \), it follows that \( \basis{1} \) has the same property.
Clearly, if \( F_x \) acts on \( v \) via the character \( \sigma \), then it acts on \( \bar{v} \) via the character \( \bar\sigma \).
Hence for each \( \sigma \in \Sigma \), there is a permutation \( \beta(\sigma, -) \) of \( \{ 1, \dotsc, r_\sigma \} \) defined by
\[ \barbv{1}{\sigma j} = \bv{1}{\bar\sigma \beta(\sigma, j)}. \]
(The subscript on the right hand side of that equation might be hard to read: it is \( \bar\sigma \beta(\sigma, j) \).)
On the other hand, because \( \basis{3} \) is Galois-compatible, we have
\[ \barbv{3}{\sigma j} = \bv{3}{\bar\sigma j}. \]

Define a semilinear involution \( \theta \) of \( \prod_{\sigma \in \Sigma} \gGL_{r_\sigma,\Qalg} \) by 
\[ \theta(S)_{\sigma j' j} = \bar{S}_{\bar\sigma j' \beta(\sigma, j)}. \]
By construction, a tuple of matrices \( S \in \prod_{\Sigma} \gGL_{r_\sigma}(\bC) \) satisfies \( \theta(S) = S \) if and only if the basis \( \basis{3}S \) is permuted by complex conjugation in the same fashion as \( \basis{1} \), or in other words, if and only if the linear map transforming \( \basis{1} \) into this basis is defined over \( \bR \).
Thus \( \basis{3}S \) is \( \gG \)-admissible if and only if \( S \in V(\bC) \) and \( \theta(S) = S \).

Because \( \Phi \) is defined over \( \bQ \) (so \textit{a fortiori} over \( \bR \)), composing one of the equations \eqref{eqn:var-definition} with \( \theta \) transforms it into another of the equations \eqref{eqn:var-definition}.
Hence there is a \( \QalgRb \)-form \( V_0 \) of \( V \) on which the action of complex conjugation is given by \( \theta \).
In particular
\[ V_0(\bR) = \{ S \in V(\bC) \mid \theta(S) = S \} \]
and thus \( \basis{3}S \) is \( \gG \)-admissible if and only if \( S \in V_0(\bR) \).

The \( \QalgRb \)-variety \( V_0 \) is defined by the equations
\[ f + \theta(f), \; i(f - \theta(f)) \]
as \( f \) runs over the equations \eqref{eqn:var-definition}.
The number of equations \eqref{eqn:var-definition} depends only on the combinatorial data \( s, r_1, \dotsc, r_s \) (for which there are finitely many possibilities), and their degrees are fixed.
By \cref{mult-form-bound}, their coefficients are algebraic numbers of polynomially bounded height.
Hence \( V_0 \) is defined by a finite set of polynomials such that the number of polynomials in the set is uniformly bounded, the degree of each of the polynomials is uniformly bounded and their coefficients are elements of \( \QalgR \) of polynomially bounded height.

Furthermore \( V_0(\bR) \) is non-empty because \( R^{-1} \in V_0(\bR) \) (recall that \( R \) was defined in \cref{coords13-bounded} as the tuple of matrices such that \( \basis{3} = \basis{1}R \)).

Thus we can apply \cref{variety-bound} to \( V_0 \), to deduce that \( V_0(\QalgR) \) contains a point \( S \) of polynomially bounded height.
This is the \( S \) whose existence is asserted by the proposition.
\end{proof}

\subsection{Bounding the height of \texorpdfstring{\( \basis{4} \)}{B(4)}} \label{ssec:reduction-theory}

The main work in this section is showing that the coordinates of \( \basis{4} \) (with respect to \( \cB_\bZ \)) are polynomially bounded in absolute value.
We will then (by going to \( \basis{3} \) and back) deduce that these coordinates have polynomially bounded height.

Notation-wise, we shall consider \( E_\bC^\vee \) as a space of row vectors and write \( \lVert \cdot \rVert \) for the standard Hermitian norm on \( E_\bC^\vee \) with respect to the basis dual to \( \cB_\bZ \).
For any matrix \( M \in \gGL_n(\bC) \), we shall write \( \lambda_{\min}(M) \) for the minimum of the absolute values of eigenvalues of \( M \).

Let \( g_0, g_3, g_4 \in \gGL(E_\bC) \) be the linear maps such that
\[ \basis{0} = g_0 \cB_\bZ, \; \basis{3} = g_3 \basis{0} \text{ and } \basis{4} = g_4 \basis{0}. \]
We will interpret these linear maps as matrices in \( \gGL_n(\bC) \) with respect to the basis \( \cB_\bZ \).
Then the coordinates of \( \basis{3} \) and \( \basis{4} \) (with respect to \( \cB_\bZ \)) are given by the columns of \( g_3 g_0 \) and \( g_4 g_0 \) respectively.

Let \( S' \) denote the matrix in \( \gGL(E_\bC) \) which expresses the coordinates of \( \basis{4} \) with respect to \( \basis{3} \).
In other words,
\[ g_4 g_0 = g_3 g_0 S'. \]
By the construction of \( \basis{4} \),
\( S' \) can be obtained by taking the block diagonal matrix with blocks \( S_\sigma \) from \cref{b3-to-b4}, and permuting the rows and columns to match the orderings of the bases \( \basis{3} \) and \( \basis{4} \).

Since \( \basis{4} \) is \( \gG \)-admissible, we have \( g_4 \in \gG(\bR) \).
Since \( \basis{4} \) is diagonal for \( E_x \), we have \( g_4 x_0 = x \) in the symmetric space~\( X \).
Since \( x_4 \) is in the fundamental set \( \cF = C.\fS.x_0 \), we have \( g_4 \in C.\fS \).
We can therefore write
\[ g_4 = \gamma \nu \alpha \kappa \]
with \( \gamma \in C \), \( \nu \in \omega \), \( \alpha \in A_t \) and \( \kappa \in K \).

\begin{lemma} \label{g4-lattice-bounded}
For all \( e^\vee \in E_\bZ^\vee - \{ 0 \} \),
\[ \lVert e^\vee g_4 \rVert^{-1} \]
is polynomially bounded.
\end{lemma}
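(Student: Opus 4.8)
The plan is to derive the lower bound for $\lVert e^\vee g_4 \rVert$ directly from the identity $g_4 = (g_3 g_0)\, S'\, g_0^{-1}$ (equivalent to $g_4 g_0 = g_3 g_0 S'$), exploiting that $\basis{3}$ is weakly bounded and Galois-compatible and that $S'$ has polynomially bounded height. The Siegel-set decomposition $g_4 = \gamma\nu\alpha\kappa$ is not needed for this lemma; it will be used afterwards, for the upper bounds on the coordinates of $\basis{4}$.

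First I would dispose of the two ``easy'' matrices $g_0$ and $S'$. Since $g_0$ depends only on $(\gG, X, \rho)$, both $\lVert g_0 \rVert_{\mathrm{op}}$ and $\lVert g_0^{-1} \rVert_{\mathrm{op}}$ are constants. By \cref{b3-to-b4} the entries of $S'$ are algebraic numbers of bounded degree and polynomially bounded height, hence of polynomially bounded absolute value; therefore $\lVert S' \rVert_{\mathrm{op}}$ and the entries of $\operatorname{adj}(S')$ are polynomially bounded. On the other hand $\abs{\det S'} = \operatorname{covol}(\basis{4})/\operatorname{covol}(\basis{3})$, where the numerator equals $\operatorname{covol}(\basis{0})$ --- a fixed positive constant, because $\basis{4}$ is $\gG$-admissible and $\det\rho(\gG)$ is trivial --- while the denominator is polynomially bounded because $\basis{3}$ is weakly bounded; hence $\abs{\det S'}^{-1}$ is polynomially bounded. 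Writing $S'^{-1} = \operatorname{adj}(S')/\det S'$ then shows that $\lVert S'^{-1}\rVert_{\mathrm{op}}$ is polynomially bounded. Using the elementary inequality $\lVert wA \rVert \geq \lVert A^{-1}\rVert_{\mathrm{op}}^{-1}\lVert w \rVert$ for a nonzero row vector $w$, I would reduce the lemma to a lower bound for $\lVert e^\vee g_3 g_0 \rVert$, via
\[
  \lVert e^\vee g_4 \rVert = \lVert (e^\vee g_3 g_0)\, S'\, g_0^{-1} \rVert \geq \lVert g_0 \rVert_{\mathrm{op}}^{-1} \lVert S'^{-1}\rVert_{\mathrm{op}}^{-1}\, \lVert e^\vee g_3 g_0 \rVert ,
\]
noting that $g_3 g_0$ is the matrix whose columns are the coordinates of $\basis{3}$ with respect to $\cB_\bZ$, so that the $(\sigma, j)$-coordinate of the row vector $e^\vee g_3 g_0$ is the pairing $\langle e^\vee, \bv{3}{\sigma j}\rangle$.

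The heart of the matter is then the lower bound for $\lVert e^\vee g_3 g_0 \rVert$, and for this I would use Galois-compatibility of $\basis{3}$. Writing $\bv{3}{\sigma j} = \sigma(w^{(3)}_{ij})$ with $w^{(3)}_{ij} \in F_i^n$ for $\sigma \in \Sigma_i$, and using that $e^\vee$ has rational (indeed integer) coordinates, I get $\langle e^\vee, \bv{3}{\sigma j}\rangle = \sigma(a_{ij})$ where $a_{ij} := \langle e^\vee, w^{(3)}_{ij}\rangle \in F_i$. These $a_{ij}$ are not all zero, since otherwise $e^\vee$ would annihilate every vector of the basis $\basis{3}$ of $E_\bC$, contradicting $e^\vee \neq 0$. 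Fix $(i,j)$ with $a_{ij} \neq 0$. Weak boundedness of $\basis{3}$ gives a polynomially bounded positive integer $D$ with $D a_{ij} \in \cO_{F_i}$, so $\abs{\Nm_{F_i/\bQ}(a_{ij})} \geq D^{-[F_i:\bQ]}$ and hence
\[
  \lVert e^\vee g_3 g_0 \rVert \geq \max_{\sigma \in \Sigma_i} \abs{\sigma(a_{ij})} \geq \abs{\Nm_{F_i/\bQ}(a_{ij})}^{1/[F_i:\bQ]} \geq D^{-1},
\]
which (recalling $[F_i:\bQ] \leq n$) shows $\lVert e^\vee g_3 g_0 \rVert^{-1}$ is polynomially bounded. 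Combining with the display above yields $\lVert e^\vee g_4 \rVert^{-1}$ polynomially bounded, as required.

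The one delicate point I anticipate is the lower bound on $\abs{\det S'}$, equivalently on $\lVert S'^{-1}\rVert_{\mathrm{op}}$: this is the place that genuinely needs both covolume facts together --- that $\operatorname{covol}(\basis{4})$ is literally a fixed constant (via $\gG$-admissibility of $\basis{4}$ and the triviality of $\det\rho$ on the semisimple group $\gG$) and that $\operatorname{covol}(\basis{3})$ is polynomially bounded by weak boundedness. Everything else is routine bookkeeping with operator norms, together with the elementary observation that a nonzero algebraic number with controlled denominator cannot have all its archimedean conjugates small.
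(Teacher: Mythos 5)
Your proof is correct and takes essentially the same route as the paper's: factor \( g_4 = g_3 g_0 S' g_0^{-1} \), bound the distortion of \( S'g_0^{-1} \) from below via the height bound of \cref{b3-to-b4} (the paper through its smallest eigenvalue, you through \( \lVert S'^{-1} \rVert_{\mathrm{op}} \) with the covolume computation of \( \det S' \)), and bound \( \lVert e^\vee g_3 g_0 \rVert \) from below using Galois-compatibility and the denominator bound from weak boundedness of \( \basis{3} \). The only cosmetic difference is in the last step: the paper observes that \( \lVert e^\vee g_3 g_0 \rVert^2 \) is a non-zero rational with polynomially bounded denominator, whereas you apply the norm inequality to a single non-zero entry \( a_{ij} \in F_i \) — both are instances of the same principle that a non-zero algebraic number with controlled denominator cannot have all archimedean conjugates small.
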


\begin{proof}
We have
\[ \lVert e^\vee g_4 \rVert = \lVert e^\vee g_3 \, g_0 S' g_0^{-1} \rVert \geq \lambda_{\min}(S' g_0^{-1}) \lVert e^\vee g_3 g_0 \rVert. \]
By \cref{b3-to-b4}, the entries of \( S' \) are algebraic numbers with polynomially bounded height,
while \( g_0 \) is a constant matrix whose entries are algebraic numbers.
Hence the entries of \( S' g_0^{-1} \) are algebraic numbers with polynomially bounded height,
so the same is true of its eigenvalues.
Hence \( \lambda_{\min}(S' g_0^{-1})^{-1} \) is polynomially bounded.

The row vector \( e^\vee g_3 g_0 \) is a \( \bZ \)-linear combination of rows of \( g_3 g_0 \).
Since \( \basis{3} \) is Galois-compatible and weakly bounded, the coordinates of \( e^\vee g_3 g_0 \) are a set of algebraic numbers which are closed under Galois conjugation and have polynomially bounded denominators.
Hence \( \lVert e^\vee g_3 g_0 \rVert^2 \) is a rational number with polynomially bounded denominator.

Furthermore \( \lVert e^\vee g_3 g_0 \rVert^2 \neq 0 \) because \( g_3 g_0 \) is invertible.
A non-zero rational number with polynomially bounded denominator has polynomially bounded inverse, so \( \lVert e^\vee g_3 g_0 \rVert^{-1} \) is polynomially bounded.
\end{proof}

\begin{lemma} \label{row-lengths-bounded}
For \( 1 \leq j \leq n \),
\[ \lVert e_j^\vee \nu \alpha \rVert^{-1} \]
is polynomially bounded.
\end{lemma}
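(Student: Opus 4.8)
The plan is to deduce this from \cref{g4-lattice-bounded} by using the decomposition \( g_4 = \gamma\nu\alpha\kappa \) to strip off the outer factors \( \gamma \) and \( \kappa \), leaving \( \nu\alpha \) in the middle. From \( g_4 = \gamma\nu\alpha\kappa \) we have \( \nu\alpha = \gamma^{-1} g_4 \kappa^{-1} \), so \( e_j^\vee\nu\alpha = e_j^\vee\gamma^{-1}g_4\kappa^{-1} \), and it suffices to bound \( \lVert e_j^\vee\gamma^{-1}g_4\kappa^{-1}\rVert^{-1} \).

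First I would dispose of \( \kappa \). Since \( \kappa \in K \), the stabiliser of \( x_0 \) in \( \gG(\bR) \), which is compact, \( \rho(K) \) is a compact subgroup of \( \gGL_n(\bC) \); hence the ratio \( \lVert v\kappa^{-1}\rVert/\lVert v\rVert \) is bounded below by a positive constant \( c \) depending only on \( K \), \( \rho \) and \( \cB_\bZ \), uniformly over \( v \in E_\bC^\vee\setminus\{0\} \) and \( \kappa \in \rho(K) \). Applying this with \( v = e_j^\vee\gamma^{-1}g_4 \) gives \( \lVert e_j^\vee\nu\alpha\rVert^{-1} \leq c^{-1}\lVert e_j^\vee\gamma^{-1}g_4\rVert^{-1} \).

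Next I would dispose of \( \gamma \) by clearing denominators. As \( C \) is a finite subset of \( \gG(\bQ) \subset \gGL_n(\bQ) \) (with respect to \( \cB_\bZ \)), there is a single positive integer \( m \), depending only on \( C \), with \( m\gamma^{-1} \in \rM_n(\bZ) \) for every \( \gamma \in C \); since \( e_j^\vee \) lies in the integral dual lattice \( E_\bZ^\vee \), the vector \( u := m\,e_j^\vee\gamma^{-1} \) lies in \( E_\bZ^\vee \), and \( u \neq 0 \) because \( \gamma^{-1} \) is invertible and \( e_j^\vee \neq 0 \). Then \( e_j^\vee\gamma^{-1}g_4 = \tfrac1m\,u g_4 \), so \( \lVert e_j^\vee\gamma^{-1}g_4\rVert^{-1} = m\,\lVert u g_4\rVert^{-1} \), which is polynomially bounded by \cref{g4-lattice-bounded} since \( m \) is a constant. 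Combining the steps, \( \lVert e_j^\vee\nu\alpha\rVert^{-1} \leq (m/c)\,\lVert u g_4\rVert^{-1} \) is polynomially bounded.

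There is no real obstacle here beyond bookkeeping: the essential content is already contained in \cref{g4-lattice-bounded}. The only points requiring care are keeping track of the right action on row vectors (so that \( g_4 = \gamma\nu\alpha\kappa \) is inverted on the correct sides), the harmless compactness of \( \rho(K) \) used to absorb \( \kappa \), and the observation that clearing denominators of \( \gamma^{-1} \) keeps \( u \) inside the lattice \( E_\bZ^\vee \), which is exactly where \cref{g4-lattice-bounded} applies.
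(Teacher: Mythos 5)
Your proof is correct and follows essentially the same route as the paper: write \( \nu\alpha = \gamma^{-1} g_4 \kappa^{-1} \), absorb \( \kappa \) by compactness of \( K \), and absorb \( \gamma \) by noting that \( e_j^\vee \gamma^{-1} \) lies in a fixed rational multiple of \( E_\bZ^\vee \) (your explicit integer \( m \)) so that \cref{g4-lattice-bounded} applies. Your version just spells out the denominator-clearing step that the paper states more briefly.
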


\begin{proof}
By definition, \( \nu \alpha = \gamma^{-1} g_4 \kappa^{-1} \).

Since \( \kappa \) is contained in the constant compact set \( K \), for any \( v^\vee \in E_\bC^\vee \), \( \lVert v^\vee \kappa^{-1} \rVert \) is bounded between constant multiples of \( \lVert v^\vee \rVert \).
Hence it will suffice to prove that \( \lVert e_j^\vee \gamma^{-1} g_4 \rVert^{-1} \) is polynomially bounded.

Since \( \gamma \) lies in the constant finite set \( C \subset \gG(\bQ) \), there is a uniform bound for the denominator of \( \gamma^{-1} \).
Thus \( e_j^\vee \gamma^{-1} \) lies in a constant rational multiple of \( E_\bZ^\vee \), and so \cref{g4-lattice-bounded} implies that \( \lVert e_j^\vee \gamma^{-1} g_4 \rVert^{-1} \) is polynomially bounded.
\end{proof}

\begin{lemma} \label{root-bound}
There exists a positive integer \( \ell \) depending only on \( \gG \) such that:

For any \( \nu \in \gU(\bR)\gM(\bR)^+ \) and \( \alpha \in A_t \),
if \( \nu_{jj'} \neq 0 \), then
\[ \alpha_{jj} \geq \min(1, t^\ell) \, \alpha_{j'j'}. \]
\end{lemma}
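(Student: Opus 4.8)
The plan is to reduce everything to the weight decomposition of $E$ under the maximal $\bQ$-split torus $\gS$. Since $\rho(\gS)$ is diagonal in the basis $\cB_\bZ = \{ e_1, \dots, e_n \}$, each $e_j$ spans a weight line; write $\chi_j$ for the corresponding character of $\gS$ (as usual I identify an element of $\gG(\bR)$ with its image under the faithful $\rho$), so that $\alpha_{jj} = \chi_j(\alpha)$ for $\alpha \in \gS(\bR)^+$, and the elementary matrix $E_{jj'}$ spans an $\Ad(\gS)$-weight line of weight $\chi_j - \chi_{j'}$. The whole statement follows from the combinatorial fact that \emph{if $\nu \in \gU(\bR)\gM(\bR)^+$ and $\nu_{jj'} \ne 0$, then $\chi_j - \chi_{j'}$ is a nonnegative integer combination $\sum_\varphi c_\varphi \varphi$ of the simple $\bQ$-roots $\varphi$ of $\gG$ with respect to $(\gS,\gU)$.} Indeed, granting this, for $\alpha \in A_t$ one has $\varphi(\alpha) \ge t > 0$ for every simple $\varphi$ and $\chi_{j'}(\alpha) = \alpha_{j'j'} > 0$, so
\[ \frac{\alpha_{jj}}{\alpha_{j'j'}} = (\chi_j - \chi_{j'})(\alpha) = \prod_\varphi \varphi(\alpha)^{c_\varphi} \;\ge\; \prod_\varphi t^{c_\varphi} = t^{\sum_\varphi c_\varphi} \;\ge\; \min(1, t^\ell), \]
where the last inequality uses $0 \le \sum_\varphi c_\varphi \le \ell$ (splitting into the cases $t \ge 1$ and $t < 1$), and $\ell := \max \sum_\varphi c_\varphi$, the maximum taken over all pairs of weights $\chi,\chi'$ of $\rho|_{\gS}$ for which $\chi - \chi'$ can be written $\sum_\varphi c_\varphi \varphi$ with integers $c_\varphi \ge 0$ (there are finitely many such differences, so $\ell$ is a fixed integer depending only on $\gG$ and the fixed representation).

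To prove the combinatorial fact, write $\nu = u m$ with $u \in \gU(\bR)$ and $m \in \gM(\bR)^+$. Because $\gM \subset Z_\gG(\gS)^\circ$ centralises $\gS$, the matrix $m$ commutes with $\rho(\gS)$ and hence is block diagonal for the weight decomposition of $E$; so $\nu_{jj'} = \sum_l u_{jl} m_{lj'} \ne 0$ forces an index $l$ with $u_{jl} \ne 0$ and $m_{lj'} \ne 0$, the latter giving $\chi_l = \chi_{j'}$. It therefore suffices to treat $u$, i.e.\ to show $u_{jl} \ne 0 \Rightarrow \chi_j - \chi_l$ is a nonnegative combination of simple roots. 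Since $\gU$ is unipotent, $u = \exp Y$ for a unique $Y \in \Lie \gU$, and $Y$ is strictly upper triangular because $\rho(\gU)$ is upper triangular. Now $\Lie \gU$ is precisely the sum of the root spaces $\fg_\beta \subset \Lie \gG$ (for $\Ad \gS$) over the positive $\bQ$-roots $\beta$ determined by $\gU$, and in particular $\Lie \gU$ meets the zero weight space $\fg_0 = \Lie Z_\gG(\gS)$ trivially; since the $(k,l)$ matrix position carries $\Ad(\gS)$-weight $\chi_k - \chi_l$, non-vanishing of $Y_{kl}$ forces $Y$ to have nonzero $\fg_{\chi_k - \chi_l}$-component, hence $\chi_k - \chi_l$ to be a positive $\bQ$-root. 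Expanding $u = I + Y + \tfrac12 Y^2 + \cdots$ (a finite sum), $u_{jl} \ne 0$ with $j \ne l$ gives $(Y^m)_{jl} \ne 0$ for some $m \ge 1$, hence a chain $j = j_0, j_1, \dots, j_m = l$ with each $Y_{j_{i-1} j_i} \ne 0$; adding the relations, each $\chi_{j_{i-1}} - \chi_{j_i}$ being a positive root, expresses $\chi_j - \chi_l$ as a sum of positive roots, hence as a nonnegative combination of simple roots (and $\chi_j - \chi_l = 0$ when $j = l$). Combined with $\chi_l = \chi_{j'}$, this proves the fact.

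The only structural input I rely on is the standard description $\Lie \gU = \bigoplus_{\beta > 0} \fg_\beta$ of the unipotent radical of the minimal $\bQ$-parabolic in terms of the relative root system, together with the fact that the positive roots lie in the nonnegative span of the simple roots; this is routine reductive-group theory, consistent with the setup of Siegel sets in \cite{borel:arithmetic-groups}. The one point that needs a little care — rather than being a genuine obstacle — is the passage from ``the matrix entry of $Y$ in position $(k,l)$ is nonzero'' to ``the $\fg_{\chi_k - \chi_l}$-component of $Y$ is nonzero'', which relies on the position $(k,l)$ spanning a single $\Ad(\gS)$-weight line and on $\Lie \gU \cap \fg_0 = 0$. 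Everything else — the reduction through $\gM$, the expansion of $\exp Y$, and the final estimate — is elementary.
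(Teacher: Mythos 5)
Your proof is correct, and its overall skeleton is the one the paper uses: isolate the key fact that \( \nu_{jj'} \neq 0 \) forces \( \varepsilon_j - \varepsilon_{j'} \) (your \( \chi_j - \chi_{j'} \)) to be a nonnegative integer combination of positive \( \bQ \)-roots of \( \gG \) relative to \( \gS \), then use the finiteness of the pairs \( (j,j') \) and the defining inequality of \( A_t \) to get \( \alpha_{jj}/\alpha_{j'j'} \geq \min(1,t)^{\ell} \). The genuine difference is in the key fact itself: the paper does not prove it here — it records that the argument in the published version (which rested on the claim that \( \Lie \gU(\bR) \) is spanned by \( \{ u - 1 \mid u \in \gU(\bR) \} \)) is incorrect, and instead cites \cite{orr:siegel-height-bound} Proposition~5.2. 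Your argument supplies that missing step directly and correctly: you split \( \nu = um \), use that \( \gM \) centralises \( \gS \) so \( m_{lj'} \neq 0 \) forces \( \chi_l = \chi_{j'} \), and then write \( u = \exp Y \) with \( Y \in \Lie\gU = \bigoplus_{\beta > 0} \fg_\beta \), so that \( Y_{kl} \neq 0 \) forces \( \chi_k - \chi_l \) to be a positive root (here you rightly use that each matrix position is an \( \Ad\gS \)-weight line and that \( \Lie\gU \) has no zero-weight component), and a nonzero entry of some \( Y^m \) yields a chain whose root relations telescope. This only invokes the exponential isomorphism for unipotent groups and the relative root-space decomposition of \( \Lie\gU \), so it sidesteps the flawed span claim; it is essentially the argument of the cited proposition, made self-contained. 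Two minor remarks: the expression of \( \chi_j - \chi_{j'} \) in simple roots is unique (simple \( \bQ \)-roots are linearly independent), so your \( \ell \) is well defined; and, as you note, \( \ell \) depends on \( \rho \) as well as \( \gG \), which is harmless since \( \rho \) is fixed throughout the section even though the lemma's statement mentions only \( \gG \).
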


\begin{proof}
The essential step in the proof is Lemma~\ref{correction:um-roots} below.

The proof of Lemma~\ref{correction:um-roots} in the published version of this paper is incorrect.
The point where the proof goes wrong is the claim that the Lie algebra of \( \gU(\bR) \) is the vector space spanned by \( \{ u-1 \mid u \in \gU(\bR) \} \).

Using Lemma~\ref{correction:um-roots}, because there are only finitely many possible choices for \( j \) and~\( j' \), there exists a constant \( \ell \) depending only on \( \gG \) (and not on \( j \), \( j' \)) such that \( \varepsilon_j - \varepsilon_{j'} \) can be expressed as a sum of at most \( \ell \) simple \( \bQ \)-roots.
Since \( \alpha \in A_t \), we conclude that
\[ \alpha_{jj} / \alpha_{j'j'} \geq \min(1, t)^{\ell}. \qedhere \]
\end{proof}

In the published version of this paper, the following lemma is not stated as a free-standing lemma but appears implicitly within the proof of \cref{root-bound}.
However the proof given there is incorrect.
Due to the length of the corrected proof, we have broken it out to a separate lemma.

\theoremstyle{theorem}
\newtheorem{auxlemma}{Lemma}
\renewcommand{\theauxlemma}{\thelemma\Alph{auxlemma}}

\begin{auxlemma} \label{correction:um-roots}
For any \( \nu \in \gU(\bR) \gM(\bR)^+ \), if \( \nu_{jj'} \neq 0 \), then \( \varepsilon_j - \varepsilon_{j'} \) is a positive integer combination of positive roots of \( \gG \) with respect to~\( \gS \).

Here \( \varepsilon_j \) denotes the character of \( \gS \) given by \( \varepsilon_j(\alpha) = \alpha_{jj} \).
\end{auxlemma}

\begin{proof}
Let \( \nu = um \) with \( u \in \gU(\bR) \) and \( m \in \gM(\bR)^+ \).
Since \( \nu_{jj'} \neq 0 \), there exists \( i \in \{ 1, \dotsc, n \} \) such that \( u_{ji} m_{ij'} \neq 0 \).

Let \( \gT \) be a maximal \( \Qalg \)-split torus of \( \gG \) which contains \( \gS \).
According to \cite{humphreys:lag} Proposition~28.1, \( \gU \) is directly spanned by the root groups \( \gU_\beta \) as \( \beta \) runs over
the roots in \( \Phi(\gT, \gG_{\Qalg}) \) for which \( \Lie \gU_\beta \subset \Lie \gU \) -- in other words, those \( \beta \in \Phi(\gT, \gG_{\Qalg}) \) for which \( \beta_{|\gS} \) is a positive root in \( \Phi(\gS, \gG) \).
So we can write
\[ u = u_1 u_2 \dotsm u_r \]
where \( u_i \in \gU_{\gT,\beta_i}(\bC) \) with \( \beta_{i|\gS} \) positive.

For each character \( \delta \in X^*(\gT) \) or \( \varepsilon \in X^*(\gS) \), let \( E_{\gT,\delta} \) or \( E_{\gS,\varepsilon} \) respectively denote the corresponding eigenspace in \( E_\bC \).
By \cite{humphreys:lag} Proposition~27.2, since \( u_i \in \gU_{\gT,\beta_i}(\bC) \), we have
\[ u_i.E_{\gT,\delta}  \subset  \sum_{k \in \bZ_{\geq 0}} E_{\gT,\delta + k\beta_i}. \]
Hence
\[ u.E_{\gT,\delta}  \subset  \sum_{k_1 \in \bZ_{\geq 0}} \cdots \sum_{k_r \in \bZ_{\geq 0}} E_{\gT,\delta + k_1 \beta_1 + \dotsb + k_r \beta_r}. \]
Taking the sum over all \( \delta \in X^*(\gT) \) which restrict to a given \( \varepsilon \in X^*(\gS) \), we get
\[ u.E_{\gS,\varepsilon}  \subset  \sum_{k_1 \in \bZ_{\geq 0}} \cdots \sum_{k_r \in \bZ_{\geq 0}} E_{\gS,\varepsilon + k_1 \beta_{1|\gS} + \dotsb + k_r \beta_{r|\gS}}. \]

Since the torus \( \gS \) is diagonal, we can translate this into a statement about the entries of the matrix \( u \).
Since \( u_{ji} \neq 0 \), we conclude that
\[ \varepsilon_j = \varepsilon_i + k_1 \beta_{1|\gS} + \dotsb + k_r \beta_{r|\gS}. \]
for some \( k_1, \dotsc, k_r \in \bZ_{\geq 0} \).
In other words \( \varepsilon_j - \varepsilon_i \) is a positive integer combination of positive roots of \( \gG \) with respect to \( \gS \).

Since \( m \in Z_\gG(\gS) \) and \( m_{ij'} \neq 0 \), we have \( \varepsilon_i = \varepsilon_{j'} \). 
This completes the proof.
\end{proof}

\begin{lemma} \label{diag-lower-bound}
The values \( \alpha_{jj}^{-1} \) are polynomially bounded.
\end{lemma}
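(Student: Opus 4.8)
The plan is to show that $\alpha_{jj}$ is bounded below by a negative power of a polynomial in $\abs{\disc R_x}$, which is exactly the assertion that $\alpha_{jj}^{-1}$ is polynomially bounded. The starting point is \cref{row-lengths-bounded}, which says that $\lVert e_j^\vee \nu\alpha \rVert^{-1}$ is polynomially bounded for every $j$. It therefore suffices to establish an inequality of the form $\lVert e_j^\vee \nu\alpha \rVert \leq C\,\alpha_{jj}$, with $C$ depending only on the fixed data $\gG$, $X$, $\cF$, $\rho$ and the realisation: combining this with the bound on $\lVert e_j^\vee\nu\alpha\rVert^{-1}$ gives $\alpha_{jj} \geq C^{-1}\lVert e_j^\vee\nu\alpha\rVert$ and hence the desired polynomial lower bound on $\alpha_{jj}$.

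To obtain this inequality I would expand the norm, using that $\rho(\gS)$ consists of diagonal matrices so that $\alpha = \diag(\alpha_{11}, \dotsc, \alpha_{nn})$:
\[ \lVert e_j^\vee \nu\alpha \rVert^2 = \sum_{j'=1}^n \abs{\nu_{jj'}}^2 \alpha_{j'j'}^2. \]
The terms with $\nu_{jj'} = 0$ vanish. For each surviving term, \cref{root-bound} applies (since $\nu \in \omega \subset \gU(\bR)\gM(\bR)^+$ and $\alpha \in A_t$) and gives $\alpha_{j'j'} \leq \min(1, t^\ell)^{-1}\alpha_{jj}$, while $\abs{\nu_{jj'}}$ is bounded above by a constant because $\nu$ lies in the fixed compact set $\omega$, so $\rho(\nu)$ has uniformly bounded matrix entries. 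Each surviving term is thus at most a constant multiple of $\alpha_{jj}^2$, and since there are at most $n$ of them we get $\lVert e_j^\vee\nu\alpha\rVert^2 \leq C^2\alpha_{jj}^2$ for a suitable $C$; taking square roots completes the argument.

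I do not expect any genuine obstacle here: the lemma is a short deduction from \cref{row-lengths-bounded} and \cref{root-bound}. The only point worth flagging is conceptual. A priori $\lVert e_j^\vee\nu\alpha\rVert$ involves \emph{all} the diagonal entries $\alpha_{j'j'}$, through the off-diagonal entries of $\nu$, so the lower bound coming from \cref{row-lengths-bounded} does not by itself control $\alpha_{jj}$ — it is precisely \cref{root-bound} (which forbids $\nu_{jj'}$ from being nonzero unless $\alpha_{jj}$ already dominates $\alpha_{j'j'}$ up to a constant) that allows every $\alpha_{j'j'}$ occurring in the sum to be replaced by $\alpha_{jj}$, collapsing the estimate to the required shape.
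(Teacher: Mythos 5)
Your proposal is correct and follows essentially the same route as the paper: expand \( \lVert e_j^\vee \nu\alpha \rVert^2 = \sum_{j'} \abs{\nu_{jj'}}^2 \alpha_{j'j'}^2 \), use \cref{root-bound} to replace each \( \alpha_{j'j'} \) appearing with a nonzero \( \nu_{jj'} \) by a constant multiple of \( \alpha_{jj} \), bound \( \abs{\nu_{jj'}} \) by compactness of \( \omega \), and combine with \cref{row-lengths-bounded}. No gaps.
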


\begin{proof}
We have
\[ \lVert e_j^\vee \nu \alpha \rVert^2 = \sum_{j'=1}^n (\nu_{jj'} \alpha_{j'j'})^2. \]
Using \cref{root-bound}, we deduce that
\[ \lVert e_j^\vee \nu \alpha \rVert^2 \leq \sum_{j'=1}^n \nu_{jj'}^2 \min(1, t^\ell)^{-2} \alpha^2_{jj}. \]

Since \( \nu \) is in the fixed compact set \( \omega \), there is a uniform upper bound for \( \abs{\nu_{jj'}} \).

Hence \( \alpha_{jj} \) is bounded below by a uniform constant multiple of \( \lVert e_j^\vee \nu \alpha \rVert \).
Combining with \cref{row-lengths-bounded}, we deduce that \( \alpha_{jj}^{-1} \) is polynomially bounded.
\end{proof}

\begin{proposition} \label{b4-abs-bound}
The coordinates of \( \basis{4} \) with respect to \( \cB_\bZ \) are polynomially bounded in absolute value.
\end{proposition}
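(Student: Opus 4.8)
The plan is to bound the entries of the matrix $g_4$ directly, since the coordinates of $\basis{4}$ with respect to $\cB_\bZ$ are the columns of $g_4 g_0$ and $g_0$ is a fixed matrix with algebraic entries, so it is enough to show the entries of $g_4$ are polynomially bounded. Using the decomposition $g_4 = \gamma\nu\alpha\kappa$, the factors $\gamma \in C$, $\nu \in \omega$ and $\kappa \in K$ all lie in fixed bounded sets ($C$ is finite, $\omega$ is compact, and $K$ is the compact stabiliser of $x_0$ in $\gG(\bR)$), so their entries are bounded by absolute constants. Everything therefore reduces to bounding the entries of the diagonal matrix $\alpha \in A_t$: since we have arranged for $\rho(\gS)$ to consist of diagonal matrices, $\alpha$ is diagonal with positive real diagonal entries $\alpha_{jj}$, and it suffices to bound each $\alpha_{jj}$ from above.

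\Cref{diag-lower-bound} already gives a polynomial lower bound for each $\alpha_{jj}$ (equivalently, a polynomial upper bound for $\alpha_{jj}^{-1}$). To turn this into the upper bound we want, I would use that $\alpha \in \gS(\bR) \subset \gG(\bR)$ and that $\det\circ\rho$ is a character of the semisimple group $\gG$, hence trivial; this gives $\prod_{j=1}^n \alpha_{jj} = \det\alpha = 1$. Therefore, for each $j$,
\[ \alpha_{jj} = \prod_{j' \neq j} \alpha_{j'j'}^{-1}, \]
and since each factor on the right is polynomially bounded, so is $\alpha_{jj}$.

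Putting this together, all the entries of $\alpha$, and hence of $g_4 = \gamma\nu\alpha\kappa$ and of $g_4 g_0$, are polynomially bounded, which is exactly the statement of the proposition. I do not expect a serious obstacle here: all the substantive analysis — controlling $g_4$ through its Siegel-set decomposition and through the fact that $\basis{3}$ is weakly bounded and Galois-compatible — has already been carried out in \cref{g4-lattice-bounded,row-lengths-bounded,root-bound,diag-lower-bound}, and the only new idea is the use of $\det\rho = 1$ to convert the lower bounds of \cref{diag-lower-bound} into upper bounds. The one point to handle carefully is confirming that $\alpha$ genuinely has positive diagonal entries (so that $\prod_j \alpha_{jj} = 1$ forces each factor to be bounded by the reciprocals of the others), which follows from $\alpha \in \gS(\bR)^+$ together with the choice of $\cB_\bZ$ that makes $\rho(\gS)$ diagonal.
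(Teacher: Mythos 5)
Your proof is correct and takes essentially the same route as the paper: decompose \( g_4 = \gamma\nu\alpha\kappa \), dispose of \( \gamma, \nu, \kappa \) as lying in fixed bounded sets, and convert the upper bounds on \( \alpha_{jj}^{-1} \) from \cref{diag-lower-bound} into upper bounds on \( \alpha_{jj} \) via a bound on \( \prod_j \alpha_{jj} = \det\alpha \). The only divergence is at that last step: the paper bounds \( \abs{\det g_4} \) polynomially using \( S' \) and the weak boundedness of \( \basis{3} \) (together with \( \det\nu = \abs{\det\kappa} = 1 \) and \( \gamma \in C \)), whereas you get \( \det\alpha = 1 \) exactly from the triviality of \( \det\circ\rho \) on the semisimple (adjoint) group \( \gG \) — a fact the paper itself uses in \cref{det-hermitian-bounded} — so your version is a valid, slightly cleaner shortcut.
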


\begin{proof}
Since \( S' \) has determinant of polynomially bounded height and \( \basis{3} \) is weakly bounded, \( g_4 \) has polynomially bounded determinant.
Now
\[ \prod_{j=1}^n \alpha_{jj} = \det \alpha = \abs{\det \gamma}^{-1} \abs{\det g_4} \]
because \( \det \nu = \abs{\det \kappa} = 1 \).
Since \( \gamma \) is in the fixed finite set \( C \), \( \prod \alpha_{jj} \) is polynomially bounded.

Combining this bound for \( \prod \alpha_{jj} \) with the bound for all \( \alpha_{jj}^{-1} \) from \cref{diag-lower-bound}, we deduce that each \( \alpha_{jj} \) is polynomially bounded.

Since each of \( \gamma \), \( \nu \) and \( \kappa \) is contained in a fixed finite set, this implies that the entries of \( g_4 = \gamma \nu \alpha \kappa \) are polynomially bounded.

Since the coordinates of \( \basis{4} \) are given by \( g_4 g_0 \), they are also polynomially bounded.
\end{proof}

\begin{proposition} \label{b4-height-bound}
The coordinates of \( \basis{4} \) with respect to \( \cB_\bZ \) are algebraic numbers of polynomially bounded height.
\end{proposition}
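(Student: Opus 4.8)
The plan is to prove that every coordinate \( z \) of \( \basis{4} \) with respect to \( \cB_\bZ \) is an algebraic number for which (a) some polynomially bounded positive integer \( M \) makes \( Mz \) an algebraic integer, and (b) every \( \Aut(\bC) \)-conjugate of \( z \) is polynomially bounded in absolute value. This will suffice, because \( \rH(z) \leq M\,\rH(Mz) \) and, \( Mz \) being an algebraic integer, \( \rH(Mz) \leq 1 + \max_{\tau \in \Aut(\bC)} \abs{\tau(Mz)} = 1 + M\max_{\tau \in \Aut(\bC)} \abs{\tau(z)} \); note that no bound on the degree of \( z \) is needed in this last implication.

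I begin with the denominators. By \cref{b3-to-b4} each coordinate of \( \basis{4} \) is a \( \bQ \)-linear combination of the coordinates of \( \basis{3} \) with coefficients among the entries of \( S' \). The entries of \( S' \) are algebraic numbers of polynomially bounded height lying in a number field of degree bounded in terms of \( n \) — the coordinates of \( \basis{3} \) lie in the Galois closure of \( F_x \), which has degree bounded in terms of \( n \), and \cref{variety-bound} produces a point of bounded degree — so the entries of \( S' \), together with all of their \( \Aut(\bC) \)-conjugates, have polynomially bounded denominators and are polynomially bounded in absolute value. Since \( \basis{3} \) is weakly bounded, its coordinates have polynomially bounded denominators. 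Hence the coordinates of \( \basis{4} \) are algebraic numbers with polynomially bounded denominators, which gives (a).

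For (b) I would work with the matrices \( g_0, g_3, g_4 \in \gGL_n(\bC) \) fixed above, so that the coordinates of \( \basis{3} \) and \( \basis{4} \) are the columns of \( g_3 g_0 \) and of \( g_4 g_0 = g_3 g_0 S' \) respectively. Because \( \basis{3} \) is Galois-compatible, applying \( \tau \in \Aut(\bC) \) entrywise permutes the columns of \( g_3 g_0 \) by a permutation matrix \( \pi_\tau \) (sending the index \( (\sigma, j) \) to \( (\tau\sigma, j) \)); applying \( \tau \) to \( g_4 g_0 = g_3 g_0 S' \) and then substituting \( g_3 g_0 = (g_4 g_0)(S')^{-1} \) yields
\[ \tau(g_4 g_0) = (g_3 g_0)\,\pi_\tau\,\tau(S') = (g_4 g_0)\,(S')^{-1}\,\pi_\tau\,\tau(S'). \]
On the right, the entries of \( g_4 g_0 \) are polynomially bounded in absolute value by \cref{b4-abs-bound}, and so are the entries of \( \tau(S') \) by the previous paragraph. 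The entries of \( (S')^{-1} \) are cofactors of \( S' \) divided by \( \det S' \); the cofactors are polynomially bounded, while \( \det S' = \det(g_4 g_0)/\det(g_3 g_0) \), where \( \abs{\det(g_4 g_0)} \) is a nonzero constant — since \( \basis{4} \) is \( \gG \)-admissible and \( \gG \) is semisimple, \( \det g_4 = 1 \), exactly as in the proof of \cref{det-hermitian-bounded} — while \( \abs{\det(g_3 g_0)} \), the covolume of \( \basis{3} \), is polynomially bounded above because \( \basis{3} \) is weakly bounded; hence \( \abs{\det S'}^{-1} \) is polynomially bounded. Therefore every entry of \( \tau(g_4 g_0) \) is polynomially bounded in absolute value, uniformly in \( \tau \), which is (b).

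I expect this last paragraph to be the crux. The difficulty is that the coordinates of \( \basis{3} \) are individually unbounded (only its denominators and its covolume are under control), so one cannot bound the conjugates of the coordinates of \( \basis{4} \) simply by expanding them in terms of \( \basis{3} \). The displayed identity resolves this by re-expressing each conjugate \( \tau(g_4 g_0) \) through the already bounded matrix \( g_4 g_0 \) — this is where the hypothesis \( x \in \cF \) has finally been used, via \cref{b4-abs-bound} — the bounded matrices \( (S')^{-1} \) and \( \tau(S') \), and a permutation matrix, so the unbounded matrix \( g_3 g_0 \) drops out. Combining (a) and (b) with the height inequality of the first paragraph then completes the proof.
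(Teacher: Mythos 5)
Your proposal is correct and follows essentially the same route as the paper: both arguments combine the absolute-value bound of \cref{b4-abs-bound} with the polynomially bounded height of \( S' \), the Galois-compatibility of \( \basis{3} \), and its polynomially bounded denominators to control denominators and all Galois conjugates of the coordinates of \( \basis{4} \). The only difference is organisational -- the paper first deduces a height bound for the coordinates of \( \basis{3} \) and then transfers it through \( S' \), whereas you compose these transfers into the single identity \( \tau(g_4 g_0) = (g_4 g_0)(S')^{-1}\pi_\tau\tau(S') \) and bound the conjugates of \( \basis{4} \) directly (your explicit appeal to bounded degree is the same implicit reliance the paper makes, e.g.\ in \cref{g4-lattice-bounded}).
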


\begin{proof}
Since \( S' \) gives the coordinates of \( \basis{4} \) with respect to \( \basis{3} \) and has entries of polynomially bounded height, \cref{b4-abs-bound} implies that the coordinates of \( \basis{3} \) with respect to \( \cB_\bZ \) are polynomially bounded in absolute value.

Since \( \basis{3} \) is Galois-compatible, this tells us that each coordinate of a vector in \( \basis{3} \) in fact has all its Galois conjugates polynomially bounded.
Furthermore its denominator is polynomially bounded, because \( \basis{3} \) is weakly bounded.
Hence the heights of the coordinates of \( \basis{3} \) are polynomially bounded.

Again using the fact that \( S' \) has polynomially bounded height, we conclude that the heights of the coordinates of \( \basis{4} \) are polynomially bounded.
\end{proof}

\begin{proof}[Proof of \cref{height-disc-r-bound}]
\Cref{b4-height-bound} implies that the entries of \( g_4 \) are algebraic numbers of polynomially bounded height.
Since \( x = g_4 x_0 \) and the action of \( G(\bR) \) on \( X \) is semialgebraic over \( \QalgR \), it follows that the coordinates of \( x \in X \) also are algebraic with polynomially bounded height.
\end{proof}

\section{Points of small height on real affine varieties} \label{sec:variety-bound}

Let \( V \) be an affine algebraic set defined over \( \Qalg \).
Suppose that \( V \) can be defined by a fixed number of polynomials in a fixed number of variables and of fixed degrees, such that the coefficients of these polynomials have height at most \( H \).
Then the arithmetic Bézout theorem and Zhang's theorem on the essential minimum imply that \( V \) has a \( \Qalg \)-point whose height is bounded by a polynomial in \( H \).

For our application to pre-special points, we need a variant of this result: we assume that \( V \) is defined over \( \QalgR \) and we need to show that \( V\QalgRb \) contains a point of polynomially bounded height.
(For us, \( \Qalg \) means the set of algebraic numbers in \( \bC \) rather than an abstract algebraic closure of \( \bQ \), so \( \QalgR \) is well-defined.)
To avoid trivial counter-examples, we must also assume that \( V(\bR) \) is non-empty.

\begin{theorem} \label{variety-bound}
For all positive integers \( m \), \( n \), \( D \), there are constants \( \refC{point-multiplier} \) and \( \refC{point-exponent} \) depending on \( m \), \( n \) and \( D \) such that:

For every affine algebraic set \( V \subset \bA^n \) defined over \( \QalgR \) by polynomials \( \seq{f}{m} \in \QalgRb[\seq{X}{n}] \) of degree at most \( D \) and height at most \( H \), if \( V(\bR) \) is non-empty, then \( V\QalgRb \) contains a point of height at most \( \refC{point-multiplier} H^{\refC{point-exponent}} \).
\end{theorem}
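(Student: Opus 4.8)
The plan is to prove \cref{variety-bound} by induction on $d = \dim V$, reducing at each stage either to a variety of strictly smaller dimension or to a zero-dimensional fibre. If $d = n$ then every $f_j$ vanishes and $0 \in V\QalgRb$ has height $1$. If $d = 0$ then $V$ is a finite set of points, and the arithmetic Bézout theorem (\cite{philippon}) --- equivalently, its combination with Zhang's theorem on the essential minimum, which already yields the elementary $\Qalg$-version of the theorem --- bounds the heights of all points of $V$ polynomially in $H$; one of these points lies in $\bR$ by hypothesis. So from now on assume $1 \le d < n$.

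\emph{Reductions.} First I would reduce to the case that $V$ is irreducible over $\QalgR$. The number of irreducible components of $V$ is bounded, and each component is defined over $\QalgR$ by polynomials of bounded degree and height; this last statement is the first of two non-elementary inputs, obtained by bounding the height of the Chow form of $V$ via Philippon's arithmetic Bézout theorem and using Nesterenko's analysis (\cite{N77}) of how the Chow form recovers the defining ideal. Since $V(\bR)$ is covered by the real loci of the components, one of them is non-empty; we pass to that component, concluding by induction if its dimension is $< d$ and otherwise assuming $V$ irreducible of dimension $d$. Running the proof of the Noether normalisation lemma while tracking heights, a linear change of coordinates with $\QalgR$-coefficients of bounded height puts $V$ in Noether position for the projection $\pi \colon \bA^n \to \bA^d$ onto the first $d$ coordinates, so that $\pi|_V \colon V \to \bA^d$ is finite and surjective; as such a coordinate change distorts heights of points by a bounded factor, this is harmless.

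\emph{Peeling off the bad locus.} Let $V' \subsetneq V$ be a proper closed subset, defined over $\QalgR$ by polynomials of bounded degree and height, such that $\Sing V \subseteq V'$ and $V \setminus V'$ is smooth with $\pi$ étale on it, mapping onto the complement of a proper closed $\Delta \subsetneq \bA^d$ which is again defined over $\QalgR$ by polynomials of bounded degree and height. The parts of $\Delta$ coming from the branch and fibre-degeneration loci are controlled elementarily (resultants of the $f_j$), but the height bound for $\Sing V$ is the second and genuinely hard non-elementary input: the description of $\Sing V$ by Jacobian minors does not control heights, so instead I would follow the device of Bombieri, Masser and Zannier (\cite{BMZ07}) of detecting $\Sing V$ through the partial derivatives of the Chow form of $V$, whose height is bounded by the arithmetic Bézout theorem (\cite{philippon}) and whose ideal-theoretic behaviour is governed by Nesterenko's results (\cite{N77}). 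This step is where I expect the main obstacle to lie; everything else is either elementary effective real algebra or a direct application of arithmetic Bézout.

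\emph{The dichotomy.} If $V(\bR) \subseteq V'(\bR)$, then $\dim V' < d$ and the inductive hypothesis applied to $V'$ supplies a point of $V'\QalgRb \subseteq V\QalgRb$ of polynomially bounded height. Otherwise fix $p \in V(\bR) \setminus V'$; then $p$ is a smooth $d$-dimensional point at which $\pi$ is étale, with $\pi(p) \notin \Delta(\bR)$, so the real-analytic implicit function theorem carries a neighbourhood of $p$ in $V(\bR)$ onto a neighbourhood of $\pi(p)$ in $\bR^d$, while $\pi|_V$ is proper, hence closed, on real points. Therefore $\pi(V(\bR)) \cap (\bR^d \setminus \Delta(\bR))$ is a non-empty clopen subset of $\bR^d \setminus \Delta(\bR)$, that is, a non-empty union of its connected components. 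An elementary effective argument --- the real locus of $\Delta$ lies in a ball of polynomially bounded radius, and each bounded component of its complement contains a ball whose radius is bounded below by a fixed negative power of $H$ (the inradius being a non-zero algebraic number of bounded degree and height by effective quantifier elimination), so that a lattice point of suitably bounded denominator lies inside it --- produces a point $a \in \QalgRb$ of polynomially bounded height in this union. The fibre $\pi^{-1}(a) \cap V$ is then zero-dimensional, defined over $\QalgR$ by the polynomials $f_j(a, -)$ of bounded degree and polynomially bounded height, and it has a real point because $a \in \pi(V(\bR))$; applying the already-proved $d = 0$ case gives a point of $\pi^{-1}(a) \cap V \subseteq V\QalgRb$ which is real and of polynomially bounded height, completing the induction.
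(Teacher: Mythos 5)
Your overall architecture (reduce to a $\QalgR$-irreducible $V$, put it in Noether position over $\bA^d$, excise a bad locus $V'\supseteq\Sing V$ with height control via the Chow form, Philippon and the Bombieri--Masser--Zannier device, then either drop dimension or pick a good base point and solve a zero-dimensional fibre) is a genuinely different route from the paper, which instead projects one variable at a time, strengthens the statement to every connected component of $V(\bR)$ (\cref{variety-bound-conn-cpt}), and uses \cref{real-bad-sets} to drop dimension whenever the image of a component is not a full component of the image. The Chow-form inputs you invoke are exactly the paper's \cref{chow-form-bound,chow-polys-bound,singular-bound}, so those parts are sound. The clopen argument also works after a small repair: for a point $p\in V(\bR)\setminus V'$ you cannot assert $\pi(p)\notin\Delta(\bR)$, but since $\pi$ is étale at $p$ the image of $V(\bR)$ contains a neighbourhood of $\pi(p)$, and $\Delta(\bR)$ is nowhere dense, so the union of components is still non-empty.

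The genuine gap is the final ``elementary effective argument'' producing the base point $a$. The supporting claim that ``the real locus of $\Delta$ lies in a ball of polynomially bounded radius'' is simply false once $\dim\Delta\geq 1$ (take $\Delta$ a hyperplane), and the inradius lower bound for components of $\bR^d\setminus\Delta(\bR)$ is asserted only for bounded components and not proved at all; the component of $\bR^d\setminus\Delta(\bR)$ you must hit can be unbounded and thin, and in any case exhibiting a ball of radius $\gg H^{-C}$ centred at a point of norm $\ll H^{C}$ inside a prescribed component is not elementary --- it is essentially an effective sample-point/quantifier-elimination theorem in real algebraic geometry (Grigoriev--Vorobjov, Basu--Pollack--Roy), i.e.\ a black box of comparable depth to the statement being proved, which you would need to cite precisely with its height (bit-size) bounds rather than wave at. So as written the proof does not close; it can very likely be repaired by importing such a sample-point theorem, but note that the paper's own strategy is designed precisely to avoid this step: by inducting on connected components of $V(\bR)$ and using the trichotomy of \cref{real-bad-sets}, it never has to locate a small point inside an open region of the base --- when the image of a component is not a full component it passes to a lower-dimensional $W'$ (repeated-root locus or the \cref{singular-bound} substitute for $\Sing$) and recurses, so the only points ever produced come from monic one-variable equations via \cref{root-height-bound}.
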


We will prove the theorem by an elementary argument, using a quantitative version of the Noether normalisation lemma.
The proof is by induction on \( n \).

Given an algebraic set in \( \bA^n \), the following lemma constructs an algebraic set in \( \bA^{n-1} \) satisfying appropriate bounds, which we can then study inductively.
Part~(i) of the lemma is simply the inductive step of the Noether normalisation lemma, without any height bound.
Parts~(ii) to~(iv) allow us to find a point of small height in \( V \), once we have got such a point in \( \pi\phi(V) \).
Part~(v) ensures that as we carry out the induction, replacing \( V \subset \bA^n \) by \( \pi\phi(V) \subset \bA^{n-1} \), the number, degrees and heights of the defining polynomials remain controlled. 
(Parts~(i) to~(iii) work when \( k \) is any field of characteristic zero.  For (iv) and (v) the field must be contained in \( \Qalg \) so that we have a notion of height.)

\begin{lemma} \label{normalisation-bound}
Let \( V \subset \bA^n \) be an affine algebraic set defined over a field \( k \subset \Qalg \).
Suppose that \( V \) can be defined by \( m \) polynomials with coefficients in \( k \), each of degree at most \( D \) and height at most \( H \).

If \( V \neq \bA^n \), then there exists a linear automorphism \( \phi \) of \( k^n \) such that:
\begin{enumerate}[(i)]
\item \( (\pi\phi)_{|V} \colon V \to \bA^{n-1} \) is a finite morphism (where \( \pi \colon \bA^n \to \bA^{n-1} \) means projection onto the first \( n-1 \) coordinates);
\item the matrix representing \( \phi \) w.r.t.\ the standard basis has entries in \( \bZ \) with absolute values at most~\( D \);
\item there exists a polynomial \( g \in k[X_1, \dotsc, X_{n-1}][X_n] \) which vanishes on \( \phi(V) \) and which is monic of degree at most \( D \) in \( X_n \) (we do not require that \( \phi(V) = \{ x \mid \pi(x) \in \pi\phi(V) \text{ and } g(x) = 0 \} \));
\item the height of \( g \) is bounded by a polynomial in \( H \);
\item the image \( \pi\phi(V) \) can be defined by polynomials in \( k[X_1, \dotsc, X_{n-1}] \) such that the number and degree of these polynomials are bounded by bounds depending on \( m \) and \( D \) only, and their heights are bounded by a polynomial in \( H \).
\end{enumerate}
Here ``bounded by a polynomial in \( H \)'' means bounded by an expression of the form \( \newC{tmp-multiplier} H^{\newC{tmp-exponent}} \) where \( \refC{tmp-multiplier} \) and \( \refC{tmp-exponent} \) are constants which depend on \( n \), \( m \) and \( D \) only.
\end{lemma}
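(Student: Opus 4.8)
The plan is to carry out the inductive step of the Noether normalisation lemma — a linear shear followed by the projection onto the first $n-1$ coordinates — while keeping all heights under control, and to produce the equations of the projected image by effective elimination of $X_n$ using resultants. Throughout, ``polynomially bounded'' has the meaning given at the end of the statement.

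First I would dispose of degenerate cases: if every $f_i$ is a nonzero constant then $V = \emptyset$ and one may take $\phi = \id$, $g = 1$; if every $f_i$ is zero then $V = \bA^n$, which is excluded. So assume some $f_i$, say $f := f_1$ after relabelling, is nonconstant of degree $d$ with $1 \le d \le D$, and write $f = f_d + (\text{lower order})$ with $f_d \neq 0$ its homogeneous part of top degree. For integers $c_1, \dots, c_{n-1}$ let $\psi$ be the linear automorphism acting on coordinates by $X_i \mapsto X_i + c_i X_n$ for $i < n$ and $X_n \mapsto X_n$. The coefficient of the pure power $X_n^d$ in $f \circ \psi$ is exactly $f_d(c_1, \dots, c_{n-1}, 1)$: distinct monomials of the homogeneous polynomial $f_d$ remain distinct after setting $X_n = 1$, so no cancellation occurs and this is a nonzero polynomial of degree $\le d \le D$ in $(c_1, \dots, c_{n-1})$. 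A nonzero polynomial of degree $\le D$ in $n-1$ variables cannot vanish on the whole grid $\{0, 1, \dots, D\}^{n-1}$ (by the Schwartz--Zippel estimate, or a one-variable induction), so I may choose $c_i \in \{0, \dots, D\}$ with $c := f_d(c_1, \dots, c_{n-1}, 1) \neq 0$. Put $\phi := \psi^{-1}$: its matrix is the shear matrix with each $c_i$ replaced by $-c_i$, so it has integer entries of absolute value $\le D$ (assuming $D \ge 1$; the case $D = 0$ is degenerate), giving (ii). Put $g := c^{-1}(f \circ \psi)$: this is monic of degree $d \le D$ in $X_n$ with coefficients in $k[X_1, \dots, X_{n-1}]$, giving (iii), and it vanishes on $\phi(V)$ because $g \circ \phi = g \circ \psi^{-1} = c^{-1} f$ vanishes on $V$.

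Since $g \in I(\phi(V))$ is monic in $X_n$, the coordinate ring $k[\phi(V)]$ is module-finite over $k[X_1, \dots, X_{n-1}]$ (via the projection $\pi$), being spanned by the images of $1, X_n, \dots, X_n^{d-1}$; composing with the isomorphism $\phi\colon V \xrightarrow{\sim} \phi(V)$ shows $(\pi\phi)_{|V}$ is finite — this is (i) — and in particular $\pi\phi(V) = \pi(\phi(V))$ is Zariski closed, so it really is an algebraic set. For the height bounds: $\psi$ is a linear substitution with integer coefficients of absolute value $\le D$, so each coefficient of $\tilde f_j := f_j \circ \psi$ is a $\bZ$-linear combination, with bounded integer coefficients, of a bounded number of coefficients of $f_j$; hence each $\tilde f_j$ has polynomially bounded height (and the $\tilde f_j$ retain the same count and degrees). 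Likewise $c$ is such a combination of coefficients of $f$, so $\rH(c) = \rH(c^{-1})$ is polynomially bounded, whence $g = c^{-1}\tilde f_1$ has polynomially bounded height, giving (iv).

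It remains to prove (v), the bound on equations defining the image $W := \pi\phi(V)$, which I expect to be the crux. A point $y \in \bA^{n-1}$ lies in $W$ if and only if the one-variable polynomials $\tilde f_1(y, X_n), \dots, \tilde f_m(y, X_n)$ have a common root. Introduce auxiliary variables $u_2, \dots, u_m$; since $g(y, X_n) = c^{-1}\tilde f_1(y, X_n)$ is monic of degree $d$, with roots $\rho_1(y), \dots, \rho_d(y)$, one has
\[ \Res_{X_n}\Bigl( g(y, \cdot), \ \sum_{j=2}^m u_j \tilde f_j(y, \cdot) \Bigr) = \prod_{i=1}^d \Bigl( \sum_{j=2}^m u_j \tilde f_j(y, \rho_i(y)) \Bigr), \]
which vanishes identically in the $u_j$ precisely when some $\rho_i(y)$ is a common zero of $\tilde f_2, \dots, \tilde f_m$; such a $\rho_i(y)$ is automatically a root of $g$, hence of $\tilde f_1$, so this happens precisely when $y \in W$. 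Thus $W$ is cut out by the coefficients, regarded as polynomials in $X_1, \dots, X_{n-1}$ over $k$, of the expansion in $u_2, \dots, u_m$ of the left-hand side. This resultant is the determinant of a Sylvester matrix of size $\le 2D$ whose entries are the $X_n$-coefficients of $g$ and of $\sum_{j \ge 2} u_j \tilde f_j$, hence polynomials in $(X_1, \dots, X_{n-1})$ of degree $\le D$, linear in the $u_j$, and of polynomially bounded height; the monicity of $g$ guarantees that the Sylvester determinant genuinely computes the resultant, with no vanishing leading coefficient. Standard determinant expansions and Hadamard-type height estimates then show that this resultant, and hence each of its $u$-coefficients, has degree bounded in terms of $m$ and $D$ and polynomially bounded height, while the number of such coefficients is bounded in terms of $m$ and $D$. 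This gives (v) and completes the proof.

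I expect the main obstacle to lie entirely in part (v): one must correctly formulate the common-root criterion for more than two polynomials — this is why the generic linear combination $\sum_j u_j \tilde f_j$ is introduced and why one genuinely needs $g$ monic (from (iii)) rather than an arbitrary defining polynomial — and then control the height of a determinant whose entries are themselves polynomials, which is routine but a little delicate. Parts (i)--(iv) are the classical Noether normalisation step with heights attached and should cause no real difficulty.
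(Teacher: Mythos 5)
Your proposal is correct, and its treatment of parts (i)--(iv) is essentially the paper's argument: pick a nonconstant defining polynomial $f$, use a grid argument (the paper's \cref{single-poly-non-zero-small-point}) to find a small integer point where the top-degree form is nonzero, apply the corresponding integral linear change of variables to make $f$ monic in $X_n$ after scaling by a unit of controlled height, and deduce finiteness and the height bounds exactly as you do (your unipotent shear $X_i \mapsto X_i + c_iX_n$ versus the paper's matrix with $\lambda_n$'s on the diagonal is an immaterial difference). The one genuine divergence is in part (v). The paper splits the defining polynomials of $\phi(V)$ into those involving $X_n$ and those not, and detects a common root of the former via \cref{multi-resultants}: an induction on the number of polynomials showing that the finitely many resultants $\Res_{d,D}(g_1,\lambda_2 g_2+\dotsb+\lambda_{r-1}g_{r-1}+g_r)$ with $(\lambda_2,\dotsc,\lambda_{r-1})\in\{0,\dotsc,d\}^{r-2}$ suffice; these specialised resultants, together with the $X_n$-free polynomials, define $\pi\phi(V)$ (\cref{image-bound}). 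You instead take a single resultant of the monic $g$ against the generic combination $\sum_{j\geq 2} u_j \tilde f_j$ with indeterminate coefficients and extract its $u$-coefficients, which lie in $k[X_1,\dotsc,X_{n-1}]$; since a product of linear forms in the $u_j$ vanishes identically only if one factor does, this correctly characterises the image, and it absorbs the $X_n$-free polynomials into the same device rather than listing them separately. Both mechanisms rest on the same two points — monicity of $g$ so that the Sylvester determinant computes the resultant under every specialisation, and the fact that the output is a bounded number of polynomials of bounded degree whose heights are polynomially bounded because they are universal polynomials in coefficients of controlled height — so the two proofs are of equal strength; yours avoids the induction and the case split of \cref{multi-resultants}, while the paper's grid of integer specialisations avoids introducing auxiliary variables and expanding a determinant in them. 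Your height bookkeeping for the resultant coefficients is stated at the same level of detail as the paper's and is unobjectionable.
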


Using \cref{normalisation-bound}, we can directly prove the simplified version of \cref{variety-bound} which only concerns \( \Qalg \)-points.
Indeed, by induction on \( n \) we get a point
\[ x = (x_1, \dotsc, x_{n-1}) \in \pi\phi(V)(\Qalg) \]
of polynomially bounded height.
Parts~(iii) and (iv) of the lemma imply that any point in the fibre
\( \pi^{-1}(x) \cap \phi(V)(\Qalg) \)
also has polynomially bounded height, because its \( x_n \)-coordinate must be a root of \( g(x_1, \dotsc, x_{n-1})(X_n) \).
Then part~(ii) allows us to transform this into a point of polynomially bounded height in \( V \).

This argument is not sufficient to obtain a \( \QalgRb \)-point of small height on~\( V \).
This is because the fibre \( \pi^{-1}(x) \cap \phi(V) \) at our chosen \( x \) is not guaranteed to contain any real points.

In order to deal with this, we will strengthen the inductive hypothesis slightly as follows:

\begin{theorem} \label{variety-bound-conn-cpt}
In the setting of \cref{variety-bound},
each connected component \( Z \) of \( V(\bR) \) (in the real topology) contains a \( \QalgRb \)-point of height at most \( \polybound{point} \).
\end{theorem}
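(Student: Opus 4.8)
The plan is to prove \cref{variety-bound-conn-cpt} by induction on the pair $(n, \dim V)$ ordered lexicographically, so that \cref{variety-bound} itself follows by applying the result to any one connected component of $V(\bR)$. (Throughout, ``polynomially bounded'' means bounded by a polynomial in $H$ whose degree and coefficients depend only on $m$, $n$, $D$.) The base case $\dim V = 0$ is immediate: $V(\bR)$ is then finite, and each of its points has coordinates which are real roots of one-variable polynomials over $\QalgR$ of controlled degree and height, hence lie in $\QalgR$ and have polynomially bounded height. The case $V = \bA^n$ is trivial (the origin works), so we assume $V \neq \bA^n$ from now on.

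For the inductive step, apply \cref{normalisation-bound} to obtain the linear automorphism $\phi$. Replacing $V$ by $\phi(V)$ --- which by part~(ii) changes the defining data, and the height of any point we ultimately produce, only by controlled factors --- we may assume that $\pi\colon V \to \bA^{n-1}$, the projection onto the first $n-1$ coordinates, is finite, and that there is a polynomial $g \in \QalgR[X_1,\dotsc,X_{n-1}][X_n]$, monic of positive degree in $X_n$ and of polynomially bounded height, which vanishes on $V$. Monicity makes $\pi\colon V(\bR) \to \bR^{n-1}$ proper with finite fibres. Put $W = \pi(V) \subseteq \bA^{n-1}$, which by part~(v) is defined by polynomials over $\QalgR$ of controlled number and degree and polynomially bounded height. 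The key geometric step is the construction of a closed subset $\Delta \subsetneq \bA^{n-1}$, defined by polynomials over $\QalgR$ of controlled number and degree and polynomially bounded height, with $\dim\bigl(\pi^{-1}(\Delta)\cap V\bigr) < \dim V$ and such that $\pi$ restricts to a finite \'etale morphism $V\setminus\pi^{-1}(\Delta) \to W\setminus\Delta$; this is where the auxiliary lemma (bounding the defining data of a proper closed subset of $V$ containing $\Sing V$) is used, alongside the discriminant of $g$ in $X_n$ and some care when $V$ is reducible, and it is here that Philippon's arithmetic B\'ezout theorem, Nesterenko's study of the Chow form and an idea of Bombieri, Masser and Zannier enter. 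Write $V^\circ = V\setminus\pi^{-1}(\Delta)$; on real points $\pi\colon V^\circ(\bR) \to (W\setminus\Delta)(\bR)$ is then a local homeomorphism.

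Now fix a connected component $Z$ of $V(\bR)$. If $Z \cap V^\circ(\bR) = \emptyset$, then $Z$ is a connected component of $\bigl(\pi^{-1}(\Delta)\cap V\bigr)(\bR)$, and since $\pi^{-1}(\Delta)\cap V$ has strictly smaller dimension and controlled defining data, the inductive hypothesis gives the point we want. Otherwise, pick $p_0 \in Z\cap V^\circ(\bR)$ and let $Z''$ be the connected component of $W(\bR)$ containing the connected set $\pi(Z)$; the inductive hypothesis applied to $W$ (smaller $n$) yields a $\QalgR$-point $v \in Z''$ of polynomially bounded height. Choose a semialgebraic path $\gamma\colon[0,1]\to Z''$ from $\gamma(0)=\pi(p_0)$ to $\gamma(1)=v$, and let $\widetilde\gamma\colon[0,t^*]\to V(\bR)$ be a lift of $\gamma$ through $\pi$ of maximal length with $\widetilde\gamma(0)=p_0$; it extends over the closed interval because, by properness and finiteness of the fibres, every partial lift extends over the closure of its domain, and $\widetilde\gamma$ has image inside $Z$ since $p_0 \in Z$. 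If $t^*=1$, then $\widetilde\gamma(1)$ lies over $v$ and on $V$, so its last coordinate is a real root of $g(v)(X_n) \in \QalgR[X_n]$; thus $\widetilde\gamma(1) \in Z$ is a $\QalgR$-point of polynomially bounded height. If $t^*<1$, then the lift is obstructed at $p^* := \widetilde\gamma(t^*)$; since $\pi$ is a local homeomorphism at every point of $V^\circ(\bR)$, this is only possible if $p^* \notin V^\circ(\bR)$, i.e.\ $p^* \in \bigl(\pi^{-1}(\Delta)\cap V\bigr)(\bR)$. The connected component $Z^*$ of $\bigl(\pi^{-1}(\Delta)\cap V\bigr)(\bR)$ containing $p^*$ satisfies $Z^* \subseteq Z$ because $p^* \in Z$, so the inductive hypothesis applied once more to the lower-dimensional $\pi^{-1}(\Delta)\cap V$ produces a $\QalgR$-point of polynomially bounded height inside $Z^* \subseteq Z$.

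The main obstacle --- and the reason the inductive statement must concern every connected component rather than a single point --- is precisely that the path lift can get stuck: over a ramification point of $\pi$ the fibre may lose all of its real points, so the point $v$ found downstairs need not lie in $\pi(Z)$ and cannot simply be lifted. The strengthened hypothesis lets us absorb this case into a genuinely lower-dimensional recursion on $\pi^{-1}(\Delta)\cap V$, using that the obstruction point $p^*$, although not algebraic, still pins down the correct connected component there. Everything else is arithmetic bookkeeping: building $\Delta$ with controlled defining data, and verifying that the passages from $V$ to $\phi(V)$ to $W$, the passage to $\pi^{-1}(\Delta)\cap V$, and the extraction of a root of $g$ all inflate heights only polynomially.
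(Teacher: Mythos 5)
Your proposal is correct in outline and shares the paper's overall inductive skeleton (bounded Noether normalisation, projection to \( \bA^{n-1} \), and recursion onto a lower-dimensional ``bad'' set with controlled defining data when real points fail to lift), but its key real-geometric mechanism is genuinely different. The paper never constructs a locus over which \( \pi \) is a covering; instead it analyses a boundary point of \( \pi\phi(Z) \) inside \( \pi\phi(V)(\bR) \) and shows (\cref{real-bad-sets}, via the implicit function theorem) that either \( \pi\phi(Z) \) is a whole component downstairs, or \( Z \) meets the repeated-root locus \( \{\partial g/\partial X_n = 0\} \), or \( \pi\phi(Z) \) meets the singular locus, then recurses using \cref{singular-bound}. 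You instead build a bad set \( \Delta \) downstairs, lift a path from a good point of \( Z \) towards the low-height point \( v \), and let the obstruction point certify that \( Z \) meets the lower-dimensional set \( \pi^{-1}(\Delta)\cap V \). Your path-lifting argument is sound (the extension of a maximal partial lift over the closed endpoint follows from properness of \( \pi|_{V(\bR)} \), finiteness and connectedness of the limit set, as you indicate), and it buys a slightly cleaner dichotomy: either the lift reaches the fibre over \( v \), or you recurse. What it costs is that you must push the ramification locus down to the base with controlled equations (an extra use of the resultant trick of \cref{image-bound}), whereas the paper's case (ii) works entirely upstairs; also, literal \'etaleness is more than you need -- a local homeomorphism on real points at good points suffices, and that is exactly what the paper's implicit-function-theorem argument delivers.

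Two points in your black-boxed construction of \( \Delta \) need the paper's specific fixes and are not addressed by your sketch. First, the discriminant of \( g \) in \( X_n \) may vanish identically on \( \pi(V) \) (so that every specialisation of \( g \) has a repeated root); as written, your \( \Delta \) would then contain \( \pi(V) \) and \( \pi^{-1}(\Delta)\cap V = V \), destroying the dimension drop. The paper avoids this by replacing \( g \) with \( \partial^r g/\partial X_n^r \) for a suitable \( r \), scaled to remain monic. Second, both the singular-locus substitute (\cref{singular-bound}) and the local-lifting step require geometric irreducibility of the top variety: for reducible \( V \), a simple root of \( g \) over a smooth point of \( \pi(V) \) does not guarantee that the local analytic branch of \( \{g=0\} \) over \( \pi(V) \) lies in \( V \), so \( \pi \) need not be a local homeomorphism there. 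The paper reduces to this case by passing to a \( \QalgR \)-irreducible component meeting \( Z \) with equations controlled by \cref{chow-form-bound,chow-polys-bound}, and, when that component is not geometrically irreducible, recursing on \( V_2 \cap \overline{V_2} \). With these two repairs, which are exactly the paper's, your argument goes through.
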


We then consider the boundary of \( \pi\phi(Z) \), viewed as a subset of \( \pi\phi(V)(\bR) \).
If \( \pi\phi(Z) \) is an entire connected component of \( \pi\phi(V)(\bR) \), then by the inductive hypothesis it contains a \( \QalgRb \)-point of polynomially bounded height and we can apply the same argument as above to get a \( \QalgRb \)-point of small height in~\( Z \).

So the interesting case is when \( \pi\phi(Z) \) has non-empty boundary in \( \pi\phi(V)(\bR) \).
At a point \( (x_1, \dotsc, x_{n-1}) \) on the boundary of \( \pi\phi(Z) \), an analytic argument implies that either \( g(x_1, \dotsc, x_{n-1})(X_n) \) has a double root, or \( \pi\phi(V) \) is singular.
In either case we can find an algebraic subset \( V' \subset V \), of smaller dimension and defined by polynomials satisfying the appropriate bounds on number, degree and height, such that \( Z \cap V'(\bR) \) is non-empty, so we can replace \( V \) by \( V' \) and repeat the argument.

If \( g(x_1, \dotsc, x_{n-1})(X_n) \) has a double root at some point of \( \pi\phi(Z) \), then it is straightforward to construct \( V' \).
In order to construct \( V' \) when \( \pi\phi(V) \) has a singular point, we use \cref{singular-bound}, which we prove using the Chow form and an idea of Bombieri, Masser and Zannier.

\subsection{Definitions}

Throughout this section, \( k \) denotes a field of characteristic \( 0 \).

We will denote by \( \pi \) the linear map \( \bA^n \to \bA^{n-1} \) given by projection onto the first \( n-1 \) coordinates.

We identify an algebraic set over \( k \) with its set of \( \kalg \)-points.
When we say that an algebraic set \( V \) is defined by polynomials \( f_1, \dotsc, f_m \), we mean simply that
\[ V(\kalg) = \{ (x_1, \dotsc, x_n) \in \bA^n(\kalg) \mid f_i(x_1, \dotsc, x_n) = 0 \text{ for all } i \}. \]
In particular, we do not require the ideal \( (f_1, \dotsc, f_m) \subset k[X_1, \dotsc, X_m] \) to be a radical ideal.

Symbols \( C_i \) will denote constants; the first time we introduce each constant we will specify the parameters on which it depends.
Usually \( C_i \) depends on degrees but not heights.

\vskip 1em


We define the \defterm{height} of a point \( (x_1, \dotsc, x_n) \in \bA^n(\Qalg) \) to be the multiplicative absolute Weil height of \( [x_1 : \dotsb : x_n : 1] \in \bP^n(\Qalg) \), that is
\[ \rH(x_1, \dotsc, x_n) = \prod_{v} \max(\abs{x_1}_v, \dotsc, \abs{x_n}_v, 1)^{d_v/[K:\bQ]} \]
where \( K \) is any number field containing \( x_1, \dotsc, x_n \), the product is over all places of \( K \) and \( d_v = [K_v:\bQ_p] \) or \( [K_v:\bR] \) as appropriate, with the absolute values normalised so that they agree with the standard absolute value when restricted to \( \bQ_p \) or \( \bR \).

The \defterm{height} of a polynomial in \( \Qalg[X_1, \dotsc, X_n] \) means the multiplicative absolute Weil height of the point in projective space whose homogeneous coordinates are the coefficients of the polynomial.

\begin{lemma} \label{root-height-bound}
If \( f \in \Qalg[X] \) is a monic polynomial in one variable of degree~\( d \) and \( x \) is a root of \( f \), then
\[ \rH(x) \leq d\rH(f). \]
\end{lemma}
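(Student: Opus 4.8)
The plan is to work place by place over a number field containing $x$ and the coefficients of $f$, bounding the local contribution to $\rH(x)$ by the local contribution to $\rH(f)$, with an extra factor of $d$ appearing only at the archimedean places.

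Write $f(X) = X^d + a_{d-1}X^{d-1} + \dotsb + a_0$, so that $x^d = -(a_{d-1}x^{d-1} + \dotsb + a_0)$, and fix a number field $K$ containing $x$ and all the $a_i$. For each place $v$ of $K$ I will bound $\max(\abs{x}_v, 1)$ by $\mu_v := \max(\abs{a_0}_v, \dotsc, \abs{a_{d-1}}_v, 1)$, the local factor of $\rH(f)$ (here the leading coefficient $1$ contributes $\abs{1}_v = 1$ to the maximum). If $v$ is non-archimedean, the ultrametric inequality gives $\abs{x}_v^d = \abs{\sum_i a_i x^i}_v \leq \max_i \abs{a_i}_v\,\abs{x}_v^i$; if $\abs{x}_v \leq 1$ then $\max(\abs{x}_v,1)=1\leq\mu_v$, while if $\abs{x}_v>1$ then $\abs{x}_v^d\leq\abs{x}_v^{d-1}\max_i\abs{a_i}_v$, whence $\abs{x}_v\leq\mu_v$. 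If $v$ is archimedean, the triangle inequality gives $\abs{x}_v^d\leq\sum_{i=0}^{d-1}\abs{a_i}_v\,\abs{x}_v^i$; the same case analysis, now using $\sum_{i=0}^{d-1}\abs{a_i}_v\leq d\max_i\abs{a_i}_v$ and $d\geq 1$, yields $\max(\abs{x}_v,1)\leq d\,\mu_v$.

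Finally I will take the product over all places $v$ with the exponents $d_v/[K:\bQ]$. Combining the two bounds gives $\rH(x)\leq \bigl(\prod_{v\mid\infty}d^{d_v/[K:\bQ]}\bigr)\prod_v\mu_v^{d_v/[K:\bQ]} = d\cdot\rH(f)$, since $\sum_{v\mid\infty}d_v=[K:\bQ]$ and $\prod_v \mu_v^{d_v/[K:\bQ]} = \rH(f)$ by the definition of the height of a polynomial recalled above.

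I do not expect any genuine obstacle: the argument is elementary. The only points needing a little care are organising the two cases ($\abs{x}_v$ at most $1$, or greater than $1$) uniformly so that the factor $d$ is introduced exactly once --- namely as $d^{\sum_{v\mid\infty} d_v/[K:\bQ]}$ --- and recalling that the archimedean local degrees of a number field sum to its global degree.
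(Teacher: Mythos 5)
Your proof is correct and follows essentially the same route as the paper: a place-by-place estimate using the equation $x^d=-(a_{d-1}x^{d-1}+\dotsb+a_0)$, with the triangle inequality contributing a factor $d$ only at archimedean places (the paper phrases the case split via $\max(\abs{x}_v,1)$ manipulations rather than explicitly), followed by taking the product over all places.
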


\begin{proof}
Let
\[ f(X) = X^d + a_{d-1} X^{d-1} + \dotsb + a_0. \]
and let \( K \) be a number field containing \( a_0, \dotsc, a_{d-1}, x \).

For each absolute value \( \abs{\cdot} \) of \( K \),
\begin{align*}
    \max(\abs{x}, 1)^d
  &    = \max(\abs{a_{d-1} x^{d-1} + \dotsb + a_0}, 1)
\\& \leq d \max(\abs{a_{d-1} x^{d-1}}, \dotsc, \abs{a_0}, 1)
\\& \leq d \max(\abs{a_{d-1}}, \dotsc, \abs{a_0}, 1) \max(\abs{x}, 1)^{d-1}
\end{align*}
so that
\[ \max(\abs{x}, 1) \leq d \max(\abs{a_{d-1}}, \dotsc, \abs{a_0}, 1) \]
and the factor of \( d \) (coming from the triangle inequality) can be replaced by \( 1 \) for non-archimedean absolute values.

Taking the product over all places of \( K \) gives the result.
\end{proof}

\subsection{Bounded normalisation lemma}

We will now prove parts~(i) to~(iv) of \cref{normalisation-bound}.
Our proof is based on the proof of the Noether normalisation lemma in exercise~5.16 of~\cite{atiyah-macdonald}.

Let us remark that the bound in part~(ii) is stronger than most of our bounds: the height of \( \phi \) is bounded only in terms of the degrees of the defining equations for \( V \), independent of the height of these equations.
We do not need this extra strength.
In order to obtain this bound in part~(ii), we will use the following lemma.

\begin{lemma} \label{single-poly-non-zero-small-point}
Let \( f \in k[X_1, \dotsc, X_n] \) be a non-zero polynomial of degree at most~\( D \) (over any field \( k \) of characteristic zero).
There exist integers \( \lambda_1, \dotsc, \lambda_n \), such that \( \abs{\lambda_i} \leq D \) for all \( i \), \( \lambda_i \neq 0 \) for all \( i \) and
\[ f(\lambda_1, \dotsc, \lambda_n) \neq 0. \]
\end{lemma}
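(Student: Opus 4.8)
The plan is to prove this by induction on the number of variables $n$, with the one-variable case supplying all the actual content. Throughout we may assume $D \geq 1$, which holds in all our applications (if $D = 0$ then $f$ is a non-zero constant and no $\lambda_i$ with $\abs{\lambda_i} \leq 0$, $\lambda_i \neq 0$ exists, so the statement should only be applied with $D \geq 1$).

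For the base case $n = 1$, a non-zero polynomial $f \in k[X_1]$ of degree at most $D$ has at most $D$ roots in $\kalg$. The set of non-zero integers of absolute value at most $D$ has $2D$ elements, and $2D > D$ since $D \geq 1$; hence at least one such integer $\lambda_1$ is not a root of $f$, and $f(\lambda_1) \neq 0$ as required. (Note that it is essential here to draw $\lambda_1$ from the $2D$-element set of non-zero integers rather than from $\{0, 1, \dotsc, D\}$, precisely because we must avoid $\lambda_1 = 0$.)

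For the inductive step, suppose $n \geq 2$ and the result holds for polynomials in $n-1$ variables. Write
\[ f = \sum_{j=0}^{D} g_j(X_1, \dotsc, X_{n-1}) \, X_n^j \]
with each $g_j \in k[X_1, \dotsc, X_{n-1}]$ of degree at most $D$. Since $f \neq 0$, some $g_j$ is non-zero. Applying the inductive hypothesis to this $g_j$ produces non-zero integers $\lambda_1, \dotsc, \lambda_{n-1}$ with $\abs{\lambda_i} \leq D$ and $g_j(\lambda_1, \dotsc, \lambda_{n-1}) \neq 0$. Then $f(\lambda_1, \dotsc, \lambda_{n-1}, X_n) \in k[X_n]$ is a non-zero polynomial of degree at most $D$ in $X_n$, and the base case furnishes a non-zero integer $\lambda_n$ with $\abs{\lambda_n} \leq D$ and $f(\lambda_1, \dotsc, \lambda_n) \neq 0$, completing the induction.

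I do not expect any genuine obstacle: the argument is entirely elementary, and the only subtlety is the bookkeeping constraint $\lambda_i \neq 0$, handled as noted above. As an alternative one-line proof, the statement follows immediately from the Combinatorial Nullstellensatz applied with each coordinate ranging over $\{\pm 1, \dotsc, \pm D\}$, since $\deg_{X_i} f \leq D < 2D = \abs{\{\pm 1, \dotsc, \pm D\}}$; but the inductive proof above is self-contained and keeps the section elementary.
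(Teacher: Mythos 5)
Your proof is correct and follows essentially the same route as the paper: induction on \( n \), writing \( f \) as a polynomial in \( X_n \) with coefficients in \( k[X_1,\dotsc,X_{n-1}] \), applying the inductive hypothesis to a non-zero coefficient, and then finishing with the one-variable root-counting argument. The only cosmetic difference is that you use the \( 2D \)-element set of non-zero integers of absolute value at most \( D \) (and an explicit base case plus the remark that \( D \geq 1 \) is needed), whereas the paper uses the \( D+1 \)-element set \( \{-1,1,2,\dotsc,D\} \); both suffice since a one-variable polynomial of degree at most \( D \) has at most \( D \) roots.
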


\begin{proof}
The proof is by induction on \( n \).
Write
\[ f(X_1, \dotsc, X_n) = \sum_{i=0}^D g_i(X_1, \dotsc, X_{n-1}) X_n^i. \]
The \( g_i \) are polynomials in \( k[\seq{X}{n-1}] \) of degree at most \( D \).

Choose some \( i \) such that \( g_i \neq 0 \).
By induction, there are non-zero integers \( \seq{\lambda}{n-1} \), with absolute value at most \( D \), such that
\[ g_i(\seq{\lambda}{n-1}) \neq 0. \]

Then \( f(\seq{\lambda}{n-1}, X_n) \) is a non-zero polynomial in one variable \( X_n \).
It has degree at most \( D \), so it has \t most \( D \) roots.

Hence the set of \( D + 1 \) values \( \{ -1, 1, 2, \dotsc, D \} \) contains at least one non-root of \( f(\seq{\lambda}{n-1}, X_n) \), as required.
\end{proof}

\begin{proof}[Proof of \cref{normalisation-bound}~(i)--(iv)]
Let \( f = f_1 \).
This is a polynomial in \( k[\seq{X}{n}] \) of degree at most \( D \) which vanishes on \( V \),
and without loss of generality \( f \neq 0 \).

Let \( F \) denote the homogeneous polynomial consisting of the terms of \( f \) with highest total degree.
By \cref{single-poly-non-zero-small-point}, we can find non-zero integers \( \seq{\lambda}{n} \), with absolute value at most \( D \), such that
\[ F(\seq{\lambda}{n}) \neq 0. \]

Let \( \phi \colon \bA^n \to \bA^n \) be the \( k \)-linear map with matrix
\[ \begin{pmatrix}
   \lambda_n &        &           & -\lambda_1
\\           & \ddots &           & \vdots
\\           &        & \lambda_n & -\lambda_{n-1}
\\           &        &           & 1
\end{pmatrix} \]
This is invertible because \( \lambda_n \neq 0 \).
It satisfies (ii) by construction.

Let \( g \in k[\seq{X}{n}] \) be the polynomial
\[ g(\seq{X}{n}) = f(\phi^{-1}(\seq{X}{n})). \]
Observe that \( g \) vanishes on \( \phi(V) \).

Let \( G \) be the homogeneous polynomial consisting of the terms of \( g \) with highest total degree.
Then
\[ G(0, \dotsc, 0, \lambda_n) = F(\phi^{-1}(0, \dotsc, 0, \lambda_n)) = F(\seq{\lambda}{n}) \neq 0. \]
Since \( G \) is homogeneous and \( \lambda_n \neq 0 \), we deduce that
\[ G(0, \dotsc, 0, 1) \neq 0. \]

Let \( d \) denote the total degree of \( G \).
Then \( G(0, \dotsc, 0, 1) \) is the coefficient of the \( X_n^d \) term of \( g \), and all other terms in \( g \) have a lower power of \( X_n \).
Hence after scaling by \(  G(0, \dotsc, 0, 1)^{-1} \), \( g(\seq{X}{n}) \) is monic of degree \( d \leq D \) as a polynomial in the one variable \( X_n \) with coefficients in \( k[\seq{X}{n-1}] \).
Thus (iii) is satisfied.

The coordinate ring \( k[\phi(V)] \) is generated as a \( k[\seq{X}{n-1}] \)-algebra by \( X_n \).
Furthermore, since \( g \) is monic in \( X_n \) and vanishes on \( \phi(V) \),
we deduce that \( X_n \) (viewed as an element of \( k[\phi(V)] \)) is integral over \( k[\seq{X}{n-1}] \).
Hence \( k[\phi(V)] \) is finite as a \( k[\seq{X}{n-1}] \)-algebra.
In other words, the morphism \( \pi_{|\phi(V)} \) is finite.
Since \( \phi \) is invertible, (i) is satisfied.

Since \( f \), \( \phi \) and \( G(0, \dotsc, 0, 1) \) all have height bounded by a polynomial in \( H \), the same is true of \( g \), so (iv) holds.
\end{proof}

\subsection{Bounding the image of normalisation}

We wish to prove part (v) of \cref{normalisation-bound}, i.e.\ that the image \( \pi\phi(V) \) has polynomially bounded height--complexity.
Note first that \( \pi\phi(V) \) is indeed Zariski-closed because \( \pi\phi_{|V} \) is finite.

Take the polynomials \( f_1 \circ \phi^{-1}, \dotsc, f_m \circ \phi^{-1} \) which define \( V' = \phi(V) \), and label them \( g_1, \dotsc, g_r \), \( h_1, \dotsc, h_s \) such that the \( g_i \) all contain \( X_n \) while the \( h_i \) do not contain \( X_n \).
A point \( (\seq{x}{n-1}) \in \bA^{n-1}(\kalg) \) is in \( \pi\phi(V) \) if and only if all the \( h_i \) vanish at \( (\seq{x}{n-1}) \) and all the one-variable polynomials \( g_i(\seq{x}{n-1})(X_n) \) have a common root \( x_n \).
So the problem is to write down algebraic conditions on the coefficients which recognise when a set of one-variable polynomials with varying coefficients has a common root.

Given two one-variable polynomials, we can recognise when they have a common root using the resultant.
More precisely: for each pair of integers \( (d, e) \), there is a universal polynomial \( \Res_{d,e} \in \bZ[a_0, \dotsc, a_d, b_0, \dotsc, b_e] \) such that for all one-variable polynomials \( u \) of degree \( d \) and \( v \) of degree \( e \), \( \Res_{d,e} \) vanishes on the coefficients of \( u \) and \( v \) if and only if \( u \) and \( v \) have a common root in \( \kalg \).

We must take care about what happens when we plug polynomials into \( \Res_{d,e} \) whose degrees are smaller than the expected degrees \( d \) and \( e \).
It turns out that, if both \( \deg u < d \) and \( \deg v < e \), then \( \Res_{d,e}(u, v) \) is always zero.
If \( \deg u = d \) and \( \deg v < e \) (or vice versa), then as usual, \( \Res_{d,e}(u, v) = 0 \) if and only if \( u \) and \( v \) have a common root.
Essentially, in the case  where both \( \deg u < d \) and \( \deg v < e \), the fact that \( \Res_{d,e}(u, v) = 0 \) is telling us that the degree-\( d \) homogenisation of \( u \) and the degree-\( e \) homogenisation of \( v \) have a common root at \( \infty \in \bP^1(\kalg) \).

To determine whether a set of more than two polynomials have a common root, we just have to look at the resultants of sufficiently many linear combinations of them.

\begin{lemma} \label{multi-resultants}
Let \( \seq{g}{r} \in k[X] \) be polynomials in one variable, with degree at most~\( D \).
Let \( d = \deg g_1 \).

The polynomials \( \seq{g}{r} \) share a common root in \( \kalg \) if and only if
\[ \Res_{d, D}(g_1, \, \lambda_2 g_2 + \lambda_3 g_3 + \dotsb + \lambda_{r-1} g_{r-1} + g_r) = 0 \]
for all \( (\lambda_2, \dotsc, \lambda_{r-1}) \in \{ 0, \dotsc, d \}^{r-2} \).
\end{lemma}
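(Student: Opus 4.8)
The plan is to prove the two implications of the biconditional separately; the forward one is immediate, and the converse rests on an elementary ``avoiding hyperplanes'' argument. The key preliminary observation is that, because \( \deg g_1 = d \) is \emph{exactly} the degree slot appearing in \( \Res_{d,D} \), the discussion preceding the lemma applies with \( u = g_1 \) and \( v \) any polynomial of degree at most \( D \), giving
\[ \Res_{d,D}(g_1, v) = 0 \iff g_1 \text{ and } v \text{ share a common root in } \kalg . \]
(In the degenerate case \( v = 0 \) with \( d \ge 1 \) this still holds: \( \Res_{d,D}(g_1, 0) = 0 \), and every root of \( g_1 \) is trivially a common root.) Granting this, the forward direction is trivial: if \( \alpha \) is a common root of \( g_1, \dots, g_r \), then \( \alpha \) is a common root of \( g_1 \) and of \( h_\lambda := \lambda_2 g_2 + \dots + \lambda_{r-1} g_{r-1} + g_r \) for every tuple \( (\lambda_2, \dots, \lambda_{r-1}) \), whence all the displayed resultants vanish.

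For the converse I would assume every \( \Res_{d,D}(g_1, h_\lambda) \) with \( (\lambda_2, \dots, \lambda_{r-1}) \in \{0, \dots, d\}^{r-2} \) vanishes, and let \( \alpha_1, \dots, \alpha_m \) be the distinct roots of \( g_1 \) in \( \kalg \), so \( m \le d \). By the observation above, for each \( \lambda \) the polynomial \( h_\lambda \) has a root in common with \( g_1 \), necessarily one of the \( \alpha_i \). Suppose, for contradiction, that no \( \alpha_i \) is a common root of \( g_2, \dots, g_r \); since each \( \alpha_i \) is already a root of \( g_1 \), this is the same as saying \( g_1, \dots, g_r \) have no common root. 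For each \( i \) form the polynomial \( \ell_i(T_2, \dots, T_{r-1}) = g_2(\alpha_i) T_2 + \dots + g_{r-1}(\alpha_i) T_{r-1} + g_r(\alpha_i) \), which has degree at most \( 1 \) in each variable; the assumption forces \( \ell_i \ne 0 \) for every \( i \), so \( P := \prod_{i=1}^m \ell_i \) is a nonzero polynomial with \( \deg_{T_j} P \le m \le d \) for each \( j \). Because \( \{0, 1, \dots, d\} \) has \( d + 1 > d \) elements, a straightforward induction on the number of variables shows that \( P \) cannot vanish identically on \( \{0, \dots, d\}^{r-2} \) (the base case \( r = 2 \) being a nonzero constant evaluated at the empty tuple, with \( h_\lambda = g_2 \)). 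Choosing \( \lambda^\ast \in \{0, \dots, d\}^{r-2} \) with \( P(\lambda^\ast) \ne 0 \) gives \( h_{\lambda^\ast}(\alpha_i) = \ell_i(\lambda^\ast) \ne 0 \) for every \( i \), so \( g_1 \) and \( h_{\lambda^\ast} \) share no root and hence \( \Res_{d,D}(g_1, h_{\lambda^\ast}) \ne 0 \), contradicting the hypothesis. Therefore some \( \alpha_i \) is a common root of \( g_2, \dots, g_r \), and being a root of \( g_1 \) as well it is a common root of all the \( g_j \).

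I do not anticipate a genuine obstacle here. The combinatorial heart is just the standard fact that a nonzero polynomial whose degree in each variable is smaller than the size of the corresponding evaluation set cannot vanish on the whole grid, applied to \( \{0, \dots, d\}^{r-2} \); everything else is bookkeeping (note \( k \supseteq \bQ \), so \( \{0, \dots, d\} \subset k \)). The single point that really needs care is the resultant convention: one must invoke the fact, recorded immediately before the lemma, that with first argument of exact degree \( d \) the resultant \( \Res_{d,D} \) correctly detects common roots even when the second argument fails to attain degree \( D \) — otherwise one could not conclude from \( \Res_{d,D}(g_1, h_\lambda) = 0 \) that \( g_1 \) and \( h_\lambda \) genuinely meet.
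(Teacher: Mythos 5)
Your proof is correct, but it takes a genuinely different route from the paper's. You argue by contraposition in one shot: assuming \( g_1, \dotsc, g_r \) have no common root, you form the product \( P = \prod_i \ell_i \) of the affine-linear forms \( \ell_i(T_2, \dotsc, T_{r-1}) = g_2(\alpha_i) T_2 + \dotsb + g_{r-1}(\alpha_i) T_{r-1} + g_r(\alpha_i) \) over the distinct roots \( \alpha_i \) of \( g_1 \), and use the standard grid non-vanishing fact (degree at most \( d \) in each variable against the \( (d+1) \)-element set \( \{0, \dotsc, d\} \), essentially the same statement as \cref{single-poly-non-zero-small-point}) to exhibit a single tuple \( \lambda^* \) with \( h_{\lambda^*}(\alpha_i) \neq 0 \) for all \( i \), hence a nonvanishing resultant. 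The paper instead proves the ``if'' direction by induction on \( r \): fixing \( \lambda_{r-1} = \lambda \) and invoking the induction hypothesis yields, for each of the \( d+1 \) values \( \lambda \in \{0, \dotsc, d\} \), a common root \( \alpha_\lambda \) of \( g_1, \dotsc, g_{r-2}, \lambda g_{r-1} + g_r \); since \( g_1 \) has at most \( d \) roots, pigeonhole gives \( \alpha_\lambda = \alpha_\mu \) for two distinct values, forcing \( g_{r-1} \) and \( g_r \) to vanish there. Your version avoids the nested induction, produces an explicit witness \( \lambda^* \), and recycles a lemma the paper already proved; the paper's version needs nothing beyond the pigeonhole principle and confines the degree bookkeeping to the base case \( r = 2 \). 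Both arguments hinge on the same convention point you rightly single out: because \( \deg g_1 \) is exactly \( d \), the resultant \( \Res_{d,D}(g_1, \cdot) \) detects common roots even when the second argument has degree strictly less than \( D \) (including the zero polynomial).
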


\begin{proof}
The ``only if'' direction is clear: if \( g_1, \dotsc, g_r \) all share a common root, then every linear combination of them also vanishes at that point.

For the ``if'' direction, the proof is by induction on \( r \).
For \( r = 2 \), this is just the definition of the resultant.
We know that \( g_1 \) has exactly degree \( d \), so even if \( \deg g_2 < D \) the resultant still tells us whether they have a common root.

For \( r > 2 \):
Suppose that the given resultants are all zero.
By induction, for every \( \lambda \in \{ 0, \dotsc, d \} \), the polynomials
\[ g_1, g_2, \dotsc, g_{r-2}, \, \lambda g_{r-1} + g_r \]
share a common root \( \alpha_\lambda \).
Since each \( \alpha_\lambda \) is a root of \( g_1 \), and \( g_1 \) has at most \( d \) roots, there must be two distinct values \( \lambda, \mu \in \{ 0, \dotsc, d \} \) such that
\[ \alpha_\lambda = \alpha_\mu. \]
Then by construction
\[ \lambda g_{r-1}(\alpha_\lambda) + g_r(\alpha_\lambda) = \mu g_{r-1}(\alpha_\lambda) + g_r(\alpha_\lambda) = 0 \]
which implies that
\[ g_{r-1}(\alpha_\lambda) = g_r(\alpha_\lambda) = 0. \]

Thus \( \alpha_\lambda \) is the required common root of \( g_1, \dotsc, g_r \).
\end{proof}

The following lemma establishes part~(v) of \cref{normalisation-bound}.
For defining equations of \( V' = \phi(V) \), we take \( f_1 \circ \phi^{-1}, \dotsc, f_m \circ \phi^{-1} \) as well as \( g \) from part (iii) of \cref{normalisation-bound}.
Since \( g \) vanishes on \( V' \) by construction, adding it to the set of defining polynomials does not change \( V' \); \( g \) ensures that the condition that one of the polynomials should be monic in \( X_n \) is satisfied.

\begin{lemma} \label{image-bound}
For all positive integers \( m \), \( n \), \( D \), there are constants \( \newC{image-complexity} \), \( \newC{image-multiplier} \) and \( \newC{image-exponent} \) depending on \( m \), \( n \) and \( D \) such that:

For every affine algebraic set \( V' \subset \bA^n \) defined over \( k \subset \Qalg \) by at most \( m \) polynomials of degree at most \( D \) and height at most \( H \), such that one of these polynomials is monic in \( X_n \), the image under projection
\[ \pi(V') \subset \bA^{n-1} \]
is defined by at most \( \refC{image-complexity} \) polynomials over \( k \), each of degree at most \( \refC{image-complexity} \) and height at most \( \polybound{image} \).
\end{lemma}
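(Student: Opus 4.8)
The plan is to make precise the informal description preceding the lemma, turning "the $X_n$-polynomials have a common root" into explicit polynomial equations via \cref{multi-resultants}. First I would relabel the at most $m$ defining polynomials of $V'$ as $g_1, \dotsc, g_r, h_1, \dotsc, h_s$ with $r + s \le m$, where the $g_i$ involve $X_n$ and the $h_j$ do not, and where $g_1$ is the distinguished polynomial which is monic in $X_n$; set $d_0 = \deg_{X_n} g_1$. If $d_0 = 0$ then $g_1$ is a non-zero constant, hence $V' = \emptyset$ and $\pi(V')$ is cut out by the single polynomial $1$; so assume $d_0 \ge 1$, and in particular $r \ge 1$. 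If $r = 1$, then for every $y = (x_1, \dotsc, x_{n-1})$ the monic polynomial $g_1(y)(X_n)$ has positive degree, hence a root in $\kalg$, so $\pi(V') = \{ y : h_j(y) = 0 \text{ for all } j \}$ and we are done; thus the interesting case is $r \ge 2$. The crucial consequence of monicity is that $g_1(y)(X_n)$ has $X_n$-degree \emph{exactly} $d_0$ for every $y$, which is precisely the hypothesis making \cref{multi-resultants} apply cleanly (no degree drop in the first argument of the resultant).

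Next I would record the set-theoretic description. Since $g_1$ monic makes $\pi_{|V'}$ finite, hence closed, $\pi(V')$ is Zariski-closed, and for $y \in \bA^{n-1}(\kalg)$ one has $y \in \pi(V')$ if and only if $h_j(y) = 0$ for all $j$ and the one-variable polynomials $g_1(y), \dotsc, g_r(y)$ have a common root in $\kalg$. Applying \cref{multi-resultants} with the fixed degree $d_0$ for $g_1$ and the uniform bound $D$, the common-root condition is equivalent to
\[ \Res_{d_0, D}\bigl( g_1(y),\ \lambda_2 g_2(y) + \dotsb + \lambda_{r-1} g_{r-1}(y) + g_r(y) \bigr) = 0 \quad \text{for all } (\lambda_2, \dotsc, \lambda_{r-1}) \in \{ 0, \dotsc, d_0 \}^{r-2}. \]
Viewing the $X_n$-coefficients of each $g_i$ as polynomials in $X_1, \dotsc, X_{n-1}$ of total degree at most $D$ and height at most $H$ (their coefficients form a subset of those of $g_i$), and substituting the $X_n$-coefficients of $g_1$ and of the displayed linear combination into the universal resultant $\Res_{d_0, D}$ — which has integer coefficients and total degree $d_0 + D \le 2D$, hence complexity depending only on $D$ — produces, for each $\lambda$-tuple, a polynomial $P_\lambda \in k[X_1, \dotsc, X_{n-1}]$. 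The $h_j$ together with the $P_\lambda$ then cut out $\pi(V')$, over $k$.

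Finally I would bound the three quantities. The number of defining polynomials is $s + (d_0 + 1)^{r-2} \le m + (D+1)^{m}$, a constant depending only on $m$ and $D$. The degree of each $P_\lambda$ is at most $(d_0 + D) \cdot D \le 2D^2$, since $\Res_{d_0, D}$ is homogeneous of degree $D$ in the coefficients of its first argument and of degree $d_0$ in those of its second, each of which is here a polynomial of degree at most $D$ in $X_1, \dotsc, X_{n-1}$; the $h_j$ have degree at most $D$, so all degrees are at most $2D^2$. The height of each $P_\lambda$ is bounded by a polynomial in $H$ by the standard estimates for heights of compositions of polynomials of bounded degree: a monomial of $\Res_{d_0, D}$ becomes a product of at most $2D$ polynomials each of degree at most $D$ and height bounded by a constant times $H$ (the constant absorbing the integers $\lambda_i \le d_0 \le D$ and the at most $m$ summands), hence of height at most $C H^{2D}$, and there are boundedly many such monomials. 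Taking $\refC{image-complexity}$ to be the larger of the resulting number and degree bounds and $\polybound{image}$ to be the resulting height bound gives the statement. The argument contains nothing deep; the only point demanding care is the bookkeeping around degenerate specialisations of the resultant, which is exactly what \cref{multi-resultants} was designed to absorb, so the main (mild) obstacle is simply to ensure the monic hypothesis is invoked at every place where a degree drop would otherwise break the resultant criterion.
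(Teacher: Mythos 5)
Your proposal is correct and follows essentially the same route as the paper's proof: split the defining polynomials into those involving \( X_n \) and those that do not, use the monicity of \( g_1 \) to guarantee no degree drop under specialisation, and apply \cref{multi-resultants} to convert the common-root condition into the vanishing of the universal resultants \( \Res_{d,D} \) of \( g_1 \) against the linear combinations \( \lambda_2 g_2 + \dotsb + g_r \), whose bounded number and degree and polynomially bounded height give the constants. Your extra handling of the degenerate cases (\( d_0 = 0 \), \( r \leq 1 \)) and the explicit degree/height bookkeeping only make explicit what the paper leaves implicit.
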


\begin{proof}
Take the polynomials defining \( V' \), and label them
\[ g_1, \dotsc, g_r, h_1, \dotsc, h_s \]
such that the \( g_i \) all contain \( X_n \) while the \( h_i \) do not contain \( X_n \).
Choose the labelling such that \( g_1 \) is monic in \( X_n \). 

A point \( (\seq{x}{n-1}) \in \bA^{n-1}(\kalg) \) is in \( \pi(V') \) if and only if all the \( h_i \) vanish at \( (\seq{x}{n-1}) \) and the one-variable polynomials \( g_i(\seq{x}{n-1})(X_n) \) have a common root.

Let \( d \) be the degree of \( g_1 \) with respect to \( X_n \).
Because \( g_1 \) is monic in \( X_n \), \( d \) is equal to \( \deg g_1(x_1, \dotsc, x_{n-1})(X_n) \) for every \( (x_1, \dotsc, x_{n-1}) \in \bA^{n-1}(\kalg) \).

Hence we can apply \cref{multi-resultants} at each point of \( \bA^{n-1}(\kalg) \), the polynomials \( g_i(\seq{x}{n-1})(X_n) \) have a common root if and only if
\[ \Res_{d, D}(g_1, \, \lambda_2 g_2 + \lambda_3 g_3 + \dotsb + \lambda_{r-1} g_{r-1} + g_r) = 0 \]
at \( (\seq{x}{n-1}) \) for all \( (\lambda_2, \dotsc, \lambda_{m-1}) \in \{ 0, \dotsc, d \}^{r-2} \).

These resultants give us a set of \( (d+1)^{r-2} \) polynomials in \( k[\seq{X}{n-1}] \) which, together with \( \seq{h}{s} \), define \( \pi(V') \).
Because these polynomials are constructed as universal polynomials in the coefficients of \( g_1, \dotsc, g_r \),
their degrees are bounded by a constant depending on \( n \) and \( D \) and their heights by a polynomial in \( H \).
\end{proof}

We have completed the proof of \cref{normalisation-bound} and, as remarked below the statement of \cref{normalisation-bound}, it is straightforward to deduce a version of \cref{variety-bound} which only concerns \( \Qalg \)-points.

\subsection{Behaviour of real points under projection}

In order to obtain \( \QalgRb \)-points of small height, and not just \( \Qalg \)-points, we look at the image of a single connected component of \( V(\bR) \) under \( \pi\phi \).
As explained in the introduction to the section the difficult case is when this image is strictly contained in a connected component of \( \pi\phi(V)(\bR) \).

The principle we use to deal with this is that, in a smoothly varying family of monic one-variable polynomials such as \( g(\seq{x}{n-1})(X_n) \), real roots cannot disappear spontaneously: whenever a real root disappears, there must be either a repeated root or a singular point in the base.
The condition that the polynomials are monic ensures that a root cannot disappear off to infinity (such as occurs in \( x_1 X_2 - 1 = 0 \) when \( x_1 \to 0 \)).

The set of points of \( \pi\phi(V) \) at which \( g(\seq{x}{n-1})(X_n) \) has a repeated root is itself an algebraic set, which is (usually) of smaller dimension than \( \pi\phi(V) \) itself, defined by adding \( \partial g/\partial X_n \) to the set of polynomials defining \( \pi\phi(V) \).
Thus clearly it is still defined by polynomials of bounded height, and we can apply the induction hypothesis to this set.

The singular locus of \( \pi\phi(V) \) is likewise an algebraic set of smaller dimension.
Controlling the height of polynomials defining the singular locus is more difficult.
Indeed, instead of using the singular locus directly, we use \cref{singular-bound} to obtain an algebraic set which sits in between \( \pi\phi(V) \) and its singular locus, and whose height we can control.

It is convenient to restrict attention to irreducible algebraic sets in the following lemma.
\Cref{chow-form-bound,chow-polys-bound} ensure that we can take irreducible components of \( V \) while retaining control of the heights of defining polynomials.

\begin{proposition} \label{real-bad-sets}
Let \( W \subset \bA^n \) be an irreducible affine algebraic set defined over \( \bR \) such that \( \pi_{|W} \) is finite.
Let \( g \in \bR[\seq{X}{n-1}][X_n] \) be a polynomial which is monic in \( X_n \) and which vanishes on \( W \).

Let \( Z \) be a connected component of \( W(\bR) \) (in the real topology).

Then at least one of the following holds:
\begin{enumerate}[(i)]
\item \( \pi(Z) \) is a connected component of \( \pi(W)(\bR) \);
\item there is a point \( (\seq{x}{n}) \in Z \) such that \( x_n \) is a repeated root of the one-variable polynomial \( g(\seq{x}{n-1})(X_n) \);
\item \( \pi(Z) \) contains a point at which \( \pi(W) \) is singular.
\end{enumerate}
\end{proposition}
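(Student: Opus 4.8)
The plan is to show that $\pi(Z)$ is a closed, connected, nonempty subset of $\pi(W)(\bR)$: then if it is moreover open it is automatically a connected component and (i) holds, while if it is not open we extract a boundary point and run a local analysis that produces (ii) or (iii).

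First I would observe that $\pi$ restricts to a \emph{proper} map $W(\bR) \to \bR^{n-1}$; this is where monicity of $g$ enters. For $(x_1,\dots,x_n) \in W(\bR)$ the last coordinate $x_n$ is a root of the monic one-variable polynomial $g(x_1,\dots,x_{n-1})(X_n)$, so $\abs{x_n}$ is bounded above by a continuous function of $(x_1,\dots,x_{n-1})$; hence the preimage of a compact set is bounded and closed, so compact. A continuous proper map to $\bR^{n-1}$ is closed, and $Z$ is closed in $W(\bR)$, so $\pi(Z)$ is closed in $\bR^{n-1}$, hence in $\pi(W)(\bR)$; it is clearly connected and nonempty. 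If $\pi(Z)$ is open in $\pi(W)(\bR)$ then, being nonempty, connected and clopen, it is a connected component, so (i) holds. Assume henceforth that $\pi(Z)$ is not open in $\pi(W)(\bR)$, so there is $y_0 \in \pi(Z)$ and a sequence $y_k \in \pi(W)(\bR) \setminus \pi(Z)$ with $y_k \to y_0$; fix a point $P_0 = (y_0,a_0) \in Z$ above $y_0$.

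If $a_0$ is a repeated root of $g(y_0)(X_n)$ then (ii) holds, so assume $\partial g/\partial X_n(P_0) \neq 0$. By the implicit function theorem (real, and over $\bC$ holomorphic), near $P_0$ the zero set of $g$ is the graph of an analytic function $s$ defined on a neighbourhood $U_0$ of $y_0$ with $s(y_0) = a_0$. Since $g$ vanishes on $W$, the germ of $W$ at $P_0$ lies inside this graph; and since $W(\bR)$ is locally connected (it admits a semialgebraic triangulation), $Z$ is open in $W(\bR)$, so after shrinking we may assume $W(\bR) \cap (U_0 \times I_0) \subseteq Z$ for a suitable box $U_0 \times I_0 \ni P_0$. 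I then claim $\pi(W)$ must be singular at $y_0$, which gives (iii) since $y_0 \in \pi(Z)$. Indeed, suppose $\pi(W)$ is smooth at $y_0$, of dimension $e = \dim W = \dim \pi(W)$. Over $\bC$, near $P_0$ the hypersurface $\{g = 0\}$ is the smooth graph of a holomorphic extension $s_\bC$ of $s$, on which $\pi$ is a biholomorphism onto a neighbourhood of $y_0$; the germ of $W$ at $P_0$ is a closed analytic subset of this graph of pure dimension $e$, so it maps isomorphically onto a closed analytic subgerm of $\pi(W)(\bC)$ at $y_0$ of dimension $e$. As $\pi(W)$ is smooth at $y_0$, its germ there is irreducible of dimension $e$, so this subgerm is the whole germ; equivalently $(y, s_\bC(y)) \in W$ for all $y$ near $y_0$ in $\pi(W)$. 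Restricting to real points, $(y, s(y)) \in W(\bR) \cap (U_0 \times I_0) \subseteq Z$ for all $y$ near $y_0$ in $\pi(W)(\bR)$, so $\pi(Z)$ contains a neighbourhood of $y_0$ in $\pi(W)(\bR)$, contradicting the existence of the $y_k$.

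I expect the crux to be this last, complex-analytic, step: one really has to compare $\pi(W(\bR))$ with the real locus $\pi(W)(\bR)$ of the variety near $y_0$, and a purely real argument is insufficient, because $W(\bR)$ near $P_0$ could have real dimension less than $e$ — passing to $\bC$, where the germ of $W$ is always of pure dimension $e$, is exactly what makes the dimension count go through. The other point requiring care is the properness of $\pi$ on $W(\bR)$, which is what lets us treat the fibre over $y_0$ as finite and localise the picture at $P_0$, and is where monicity of $g$ is used.
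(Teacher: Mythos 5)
Your proof is correct and takes essentially the same route as the paper's: show \( \pi(Z) \) is closed and connected in \( \pi(W)(\bR) \), reduce to a boundary point, and use the negations of (ii) and (iii) together with the implicit function theorem to produce a real-valued local analytic section of \( \pi \) through a point of \( Z \), contradicting that the point is a boundary point of \( \pi(Z) \). The only minor variations are that you identify the germ of \( W \) with the graph of \( s_\bC \) over \( \pi(W) \) by a dimension count on analytic germs, whereas the paper argues via the auxiliary set \( \Sigma = \{ z \mid \pi(z) \in \pi(W), \ g(z) = 0 \} \) (of which \( W \) is an irreducible component) and smoothness of \( \Sigma \) at the point, and you obtain reality of the section from the real implicit function theorem where the paper uses uniqueness to conclude \( \alpha = \bar{\alpha} \).
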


\begin{proof}
Assume for contradiction that none of the statements of the lemma hold.

Since \( \pi_{|W} \) is proper and \( Z \) is a closed subset of \( W(\bR) \), \( \pi(Z) \) is a closed subset of \( \pi(W)(\bR) \).
Furthermore \( \pi(Z) \) is connected because \( Z \) is connected.
Because (i) does not hold, \( \pi(Z) \) is not open in \( \pi(W)(\bR) \).
Hence there exists some point \( (\seq{x}{n-1}) \) which is contained in the intersection of \( \pi(Z) \) with the closure of \( \pi(W)(\bR) - \pi(Z) \).

Consider the algebraic set
\[ \Sigma = \{ (\seq{z}{n}) \in \bA^n \mid (\seq{z}{n-1}) \in \pi(W) \text{ and } g(\seq{z}{n}) = 0 \}. \]
\( W \) is an irreducible Zariski closed subset of \( \Sigma \) of the same dimension, so is an irreducible component of \( \Sigma \).

Since (iii) does not hold, \( \pi(W) \) is smooth at \( (\seq{x}{n-1}) \).
Hence there is an open neighbourhood \( U \) of \( (\seq{x}{n-1}) \) in \( \pi(W)(\bC) \) which is isomorphic (as a complex manifold) to a complex ball.

Since \( (\seq{x}{n-1}) \in \pi(Z) \) we can find some \( x_n \in \bR \) such that
\[ (\seq{x}{n}) \in Z. \]
Since (ii) does not hold, \( x_n \) is a simple root of \( g(\seq{x}{n-1})(X_n) \).
Thus the partial derivative \( \partial g/\partial X_n \) is non-zero at \( (\seq{x}{n}) \).
We can therefore apply the implicit function theorem to get a holomorphic function \( \alpha \colon U \to \Sigma(\bC) \) which induces a bijection between \( U \) and some open neighbourhood \( \tilde{U} \) of \( (\seq{x}{n}) \) in \( \Sigma(\bC) \), such that \( \alpha \) is inverse to \( \pi_{|\tilde{U}} \).

It follows that \( \Sigma \) is smooth at \( (\seq{x}{n}) \) and so \( (\seq{x}{n}) \) is contained in only one irreducible component of \( \Sigma \), namely \( W \).
Thus \( \tilde{U} \subset W(\bC) \).

Furthermore, \( \alpha \) is the only continously differentiable function which induces a bijection \( U \to \tilde{U} \) with inverse \( \pi \).
But since \( \Sigma \) and \( (\seq{x}{n}) \) are defined over \( \bR \), the continuously differentiable function \( \bar{\alpha} = c \circ \alpha \circ c \) (where \( c \) denotes complex conjugation) also induces a bijection \( U \to \tilde{U} \) with inverse \( \pi \).
So the uniqueness of \( \alpha \) implies that
\[ \alpha = \bar\alpha. \]

Hence \( \alpha \) maps \( U \cap \pi(W)(\bR) \) into \( W(\bR) \), and indeed induces a homemomorphism
\[ U \cap \pi(W)(\bR) \to \tilde{U} \cap W(\bR). \]

Since \( W(\bR) \) is a real algebraic set, it is locally connected and \( Z \) is open in \( W(\bR) \).
Hence \( \tilde{U} \cap Z \) is open in \( W(\bR) \).
Thanks to the above homeomorphism, \( \pi(\tilde{U} \cap Z) \) is open in \( \pi(W)(\bR) \).

Hence \( \pi(\tilde{U} \cap Z) \) is an open neighbourhood of \( (\seq{x}{n-1}) \) in \( \pi(W)(\bR) \) which is contained in \( \pi(Z) \).
But the existence of such a neighbourhood contradicts the fact that \( (\seq{x}{n-1}) \) is in the closure of \( \pi(W)(\bR) - \pi(Z) \).
\end{proof}

We are now ready to finish the proof of \cref{variety-bound-conn-cpt}, and hence that of \cref{variety-bound}.

\begin{proof}[Proof of \cref{variety-bound-conn-cpt}]
We prove \cref{variety-bound-conn-cpt} by induction on \( n + \dim V \).

Choose a \( \QalgRb \)-irreducible component \( V_1 \) of \( V \) whose real points intersect~\( Z \).

There are two cases, depending on whether \( V_1 \) is geometrically irreducible or not.
First consider the case in which it is not geometrically irreducible.
Then \( V_1 = V_2 \cup \overline{V_2} \) for some geometrically irreducible algebraic set \( V_2 \) defined over \( \Qalg \),
and \( V_1(\bR) \) is contained in the intersection \( V_2 \cap \overline{V_2} \).
By \cref{chow-form-bound,chow-polys-bound}, \( V_2 \) can be defined by polynomials \( P_1, \dotsc, P_t \) with coefficients in \( \Qalg \) whose number and degree are bounded and whose heights are polynomially bounded.
Then \( V_2 \cap \overline{V_2} \) is defined by the real and imaginary parts
\[ \re P_1, \im P_1, \dotsc, \re P_t, \im P_t \in \QalgRb[\seq{X}{n}] \]
which again are bounded in number and degree and polynomially bounded in height.
Since \( \dim(V_2 \cap \overline{V_2}) < \dim V \), we can apply the induction hypothesis to \( V_2 \cap \overline{V_2} \) and a connected component of \( (V_2 \cap \overline{V_2})(\bR) \cap Z \), proving the theorem in this case.

Otherwise \( V_1 \) is geometrically irreducible.
Acording to \cref{chow-form-bound,chow-polys-bound}, \( V_1 \) can be defined by polynomials with coefficients in \( \QalgR \) whose number and degree are bounded and whose heights are polynomially bounded, so we can replace \( V \) by \( V_1 \) and \( Z \) by a connected component of \( V_1(\bR) \cap Z \).

Choose \( \phi \) and \( g \) as in \cref{normalisation-bound} (with \( k = \QalgR \)) and let \( W = \phi(V) \).
We will need to assume that \( \partial g/\partial X_n \) does not vanish identically on \( W \): to achieve this, simply replace \( g \) by \( \partial^r g/\partial X_n^r \) for the largest \( r < \deg g \) such that this vanishes identically on \( W \) (scaled to remain monic).

If we are in case (i) of \cref{real-bad-sets}, then \( \pi\phi(Z) \) is equal to a connected component \( Z' \) of \( \pi(W)(\bR) \).
By part (v) of \cref{normalisation-bound}, \( \pi(W) \) is defined by polynomials over \( \QalgRb \) which are bounded in number and degree and polynomially bounded in height.
Hence by induction \( Z' \) contains a \( \QalgR \)-point \( (\seq{x}{n-1}) \) of polynomially-bounded height.
Choose \( x_n \) such that \( (\seq{x}{n}) \in Z \); then \( x_n \) must also have bounded height because it is a root of \( g(\seq{x}{n-1})(X_n) \) and the theorem is proved for this case.

If we are in case (ii) of \cref{real-bad-sets}, let
\[ W' = \{ x \in W \mid (\partial g/\partial X_n)(x) = 0 \}. \]
Because we have assumed that \( \partial g/\partial X_n \) does not vanish identically on \( W \), and because \( W \) is irreducible, \( W' \) is a subset of \( W \) of smaller dimension.
Clearly \( W' \) is defined by equations of polynomially bounded height.
Because we are in case (ii) of \cref{real-bad-sets}, \( W'(\bR) \cap Z \) is non-empty.
Hence by induction \( W'(\bR) \cap Z \) contains a \( \QalgRb \)-point of small height.

If we are in case (iii) of \cref{real-bad-sets}, choose \( W' \subset W \) as in \cref{singular-bound}.
By construction, \( W' \) is defined by polynomials of bounded height and has smaller dimension than \( W \).
By case (iii) of \cref{real-bad-sets}, \( W'(\bR) \cap Z \) is non-empty.
Hence by induction \( W'(\bR) \cap Z \) contains a \( \QalgRb \)-point of small height.
\end{proof}

\subsection{Height bounds for varieties and the singular locus}

We will use Chow polynomials, together with an idea of Bombieri, Masser and Zannier, to prove that the singular locus of an irreducible affine algebraic variety is contained in an algebraic subset of positive codimension and which is defined by equations of bounded height.
Along the way, we also get for free a bound for the height of polynomials defining an irreducible component of an affine algebraic set, a necessary reduction in the proof of \cref{variety-bound}.

Let \( W \) be an irreducible algebraic variety defined over \( \Qalg \).
We define a certain polynomial associated with \( W \), the \defterm{Chow form} of \( W \), and define the \defterm{height} \( \rH(W) \) of the variety \( W \) to be the height of its Chow form.
(Because we are working with multiplicative heights, \( \rH(W) \) is \( \exp \mathrm{h}(W) \) where \( \mathrm{h}(W) \) is the height defined by Philippon.)
From the Chow form, we can obtain a special set of polynomials defining \( W \), which we call the \defterm{Chow polynomials} of \( W \).

Our first two lemmas show that, given any set of equations defining \( W \), the height of the Chow form is polynomially bounded with respect to the height of these equations, and in the other direction, the heights of the Chow polynomials are polynomially bounded with respect to the height of the Chow form.
Combining these two lemmas gives us a bound for the heights of equations defining an irreducible component of an affine algebraic set.
Note that some care is required when we consider irreducible components and when we allow arbitrary affine algebraic sets because Chow forms are only defined for irreducible varieties, and the arithmetic Bézout theorem only talks about geometrically irreducible components.

The Chow polynomials do not necessarily generate a radical ideal, and so the algebraic set \( W' \) defined by applying the Jacobian criterion to the Chow polynomials may be bigger than \( \Sing W \).
Following an idea of Bombieri, Masser and Zannier, we show that this \( W' \) is strictly contained in \( W \), and this is enough for us to deal with case (iii) of \cref{real-bad-sets}.

We believe that it should be possible to show that \( \Sing W \) itself can be defined by equations of bounded number and degree and polynomially bounded height by using the theory of Gröbner bases.

Philippon and Nesterenko work with projective rather than affine algebraic sets, but we can turn our affine sets into projective ones simply by embedding \( \bA^n \hookrightarrow \bP^n \) and replacing \( W \) by its Zariski closure (i.e.\ homogenising the defining polynomials).

\begin{lemma} \label{chow-form-bound}
For all positive integers \( m \), \( n \), \( D \), there are constants \( \newC{chow-form-degree} \), \( \newC{chow-form-height-multiplier} \) and \( \newC{chow-form-height-exponent} \) depending on \( m \), \( n \) and \( D \) such that:

For every affine algebraic set \( V \subset \bA^n \) defined over \( \Qalg \) by at most~\( m \) polynomials of degree at most ~\( D \) and height at most~\( H \), each irreducible component of~\( V \) has degree at most~\( \refC{chow-form-degree} \) and height at most
\( \polybound{chow-form-height} \).
\end{lemma}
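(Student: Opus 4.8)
The plan is to realise each irreducible component of $V$ as a component of a set-theoretic intersection of hypersurfaces in projective space and then invoke Philippon's arithmetic Bézout theorem (\cite{philippon}). First I would pass to the projective setting: homogenise each $f_i$ (which has degree $\le D$) to a homogeneous polynomial $\tilde f_i \in \Qalg[X_0, \dots, X_n]$ of the same degree, so that $\rH(\tilde f_i) = \rH(f_i) \le H$ (the coefficient vector is unchanged up to relabelling of monomials) and $\deg \tilde f_i \le D$. Setting $\tilde V = \bigcap_{i=1}^m \{\tilde f_i = 0\} \subset \bP^n$, we have $\tilde V \cap \bA^n = V$, and a straightforward point-set argument shows that the closure in $\bP^n$ of each irreducible component of $V$ is an irreducible component of $\tilde V$ ($\tilde V$ may acquire extra components inside the hyperplane at infinity, but bounding those as well does no harm). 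Since we have defined the height of an affine variety to be the height of its projective closure, it suffices to bound the degree and height of every irreducible component $W$ of $\tilde V$.

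Next I would apply Bézout in both its geometric and arithmetic forms. By the refined form of Bézout's theorem, every irreducible component $W$ of the set-theoretic intersection $\bigcap_i \{\tilde f_i = 0\}$ is a distinguished variety of that intersection, and the degrees of the distinguished varieties sum to at most $\prod_i \deg \tilde f_i \le D^m$; in particular $\deg W \le D^m$, which provides the constant $\refC{chow-form-degree}$. For the height, Philippon's arithmetic Bézout theorem bounds the height of the intersection cycle $\{\tilde f_1 = 0\} \cdots \{\tilde f_m = 0\}$ --- hence of each of its components, since the height of a cycle dominates that of each of its irreducible components --- in terms of the degrees and heights of the hypersurfaces $\{\tilde f_i = 0\}$. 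Using that the Philippon height of the hypersurface $\{\tilde f_i = 0\}$ differs from $\rH(\tilde f_i)$ by a factor depending only on $n$ and $D$, and converting Philippon's logarithmic height $\mathrm{h}$ of cycles into the multiplicative convention in force here, this yields $\rH(W) \le \polybound{chow-form-height}$ and hence the constants $\refC{chow-form-height-multiplier}$ and $\refC{chow-form-height-exponent}$.

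The work in this proof is in the bookkeeping, not in any single idea. The two points requiring care are, first, reconciling the normalisations --- the multiplicative Weil height used throughout versus Philippon's logarithmic height of cycles, together with the Chow-form normalisation implicit in the height of a variety --- and second, if one prefers to iterate the two-operand arithmetic Bézout inequality $\mathrm{h}(Z \cdot H) \le (\deg H)\,\mathrm{h}(Z) + (\deg Z)\,\mathrm{h}(H) + c(n)(\deg Z)(\deg H)$ over the $m$ hypersurfaces one at a time rather than quoting a simultaneous multi-hypersurface version, checking that the resulting exponent of $H$ stays bounded purely in terms of $m$, $n$ and $D$. The one subtlety flagged in the surrounding text, namely that the arithmetic Bézout theorem is stated for geometrically irreducible components, does not bite here: $V$ is defined over the algebraically closed field $\Qalg$, so its irreducible and geometrically irreducible components coincide.
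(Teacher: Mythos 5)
Your proposal is correct and follows essentially the same route as the paper: both view each irreducible component of \( V \) as a component of the intersection of the hypersurfaces cut out by the defining polynomials (after passing to projective space) and bound its degree by the classical B\'ezout theorem and its height by the arithmetic B\'ezout theorem of Philippon/Habegger, modulo normalisation bookkeeping. The only cosmetic difference is that the paper first replaces each \( f_i \) by an irreducible factor \( g_i \) (using \cite{H08} \S2 to bound the height of the corresponding hypersurface) and iterates the two-variety arithmetic B\'ezout inequality pairwise, whereas you work with the possibly reducible hypersurfaces and the intersection cycle directly; both come down to the same estimates.
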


\begin{proof}
Note that \( V \) is equal to the intersection \( V(f_1)\cap\cdots\cap V(f_m) \), where \( V(f_i) \) is the variety defined by
\[ V(f_i)=\{(x_1,\dotsc,x_n)\in\bA^n \mid f_i(x_1,\dotsc,x_n)=0\}. \]
For an irreducible component \( W \) of \( V \) we may choose an irreducible factor \( g_i \) of each \( f_i \) such that \( W \) is contained in \( V(g_1)\cap\cdots\cap V(g_m) \).

By~\cite{H08} \S2, there exists a constant \( \newC{irred-factor-multiplier}(D) \) such that the height of \( V(g_i) \) is at most \( \refC{irred-factor-multiplier} H \). Of course, the degree of \( V(g_i) \) is at most \( D \). Therefore, by the arithmetic B\'ezout theorem (see~\cite{H08} Theorem 3), there exist constants \( \newC{pairwise-intersection-height-multiplier}(D) \) and \( \newC{pairwise-intersection-height-exponent}(D) \) such that the multiplicative height of any irreducible component of the intersection of any two of the \( V(g_i) \) is at most \( \polybound{pairwise-intersection-height} \). Similarly, the standard B\'ezout theorem (see~\cite{RU09} Lemme 3.4) implies that there exists a constant \( \newC{pairwise-intersection-degree}(D) \) which bounds the degree of such a component.

Reiterating this procedure we eventually reach \( W \) itself (after at most \( m \) steps).
\end{proof}

\begin{lemma} \label{chow-polys-bound}
For all positive integers \( n \) and \( D \), there are constants \( \newC{chow-polys-complexity} \), \( \newC{chow-polys-height-multiplier} \) and \( \newC{chow-polys-height-exponent} \) depending on \( n \) and \( D \) such that:

For every irreducible affine variety \( W \subset \bA^n \) defined over a field \( k \subset \Qalg \) such that \( W \) has degree at most \( D \) and height at most \( H \), the Chow polynomials of \( W \) are a set of at most \( \refC{chow-polys-complexity} \) polynomials in \( k[X_1, \dotsc, X_n] \) defining \( W \), each of degree at most \( \refC{chow-polys-complexity} \) and height at most \( \polybound{chow-polys-height} \).
\end{lemma}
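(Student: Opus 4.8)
The plan is to treat the purely \emph{geometric} content of Nesterenko's theory of the Chow form (\cite{N77}) as a black box — namely that one can read off from the Chow form $F_W$ a finite set of polynomials cutting out $W$ set-theoretically, whose number and degrees are bounded in terms of $n$ and $D$ alone — and to superimpose on that construction a bookkeeping of heights. First I would pass to the projective setting, working with the closure $\overline{W}\subset\bP^n$; by the conventions of this section, $\dim W$, $\deg W$ and $\rH(W)$ are the dimension, degree and Chow-form height of $\overline{W}$. Write $d=\dim W\le n-1$ and $\delta=\deg W\le D$, and let $F_W$ be the Chow form of $\overline{W}$: a polynomial in $d+1$ blocks of $n+1$ variables $u^{(0)},\dots,u^{(d)}$, multihomogeneous of degree $\delta$ in each block, hence of total degree at most $(n+1)D$, with $\rH(F_W)=\rH(W)\le H$. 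Since $\overline{W}$ is defined over $k$, its Chow form is defined over $k$ (up to the scalar inherent in a projective height), and every operation below is $k$-rational, so the resulting polynomials lie in $k[X_1,\dots,X_n]$; dehomogenising at the end selects a sub-vector of the coefficient vector and so does not increase heights.

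The core observation is that, in Nesterenko's recipe, each Chow polynomial is produced from $F_W$ by a bounded sequence of elementary operations, the length of the sequence depending only on $n$ and $D$: substituting, for all but one of the blocks $u^{(i)}$, vectors whose coordinates lie in a fixed finite set of integers depending only on $n$ and $D$ (geometrically, specialising most of the auxiliary hyperplanes to coordinate hyperplanes); differentiating with respect to, or extracting coefficients with respect to, the coordinates of the remaining block; applying bounded-height linear substitutions; and, when $W$ has codimension greater than one, a bounded number of resultant/elimination steps in a bounded number of variables and of bounded degree. Substituting a point whose integer coordinates have absolute value at most $C$ multiplies the height of a polynomial of degree $e$ by at most $C^{e}$, i.e.\ by a factor depending only on $n$ and $D$; differentiating or extracting a coefficient multiplies the height by at most a binomial factor, again depending only on $n$ and $D$; and each resultant of bounded degree in boundedly many variables raises the height to a power bounded in terms of $n$ and $D$. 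Composing a bounded number of such estimates yields $\rH(P)\le\polybound{chow-polys-height}$ for every Chow polynomial $P$, while the fact that the number of these polynomials and their degrees are bounded by $\refC{chow-polys-complexity}$, and that they cut out $W$, are quoted directly from \cite{N77}.

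I expect the main obstacle to be expository rather than substantive: one must recall Nesterenko's construction precisely enough to see that in the higher-codimension case the elimination steps genuinely involve only polynomials of bounded size (so that the classical estimate for the height of a resultant applies), and that the whole construction can be carried out over $k$ without passing to a field extension. Everything else — the passage $\bA^n\rightsquigarrow\bP^n\rightsquigarrow\bA^n$, the normalisation of $F_W$ over $k$, and the composition of the elementary height estimates — is routine once that point is settled, so that the only genuinely new ingredient beyond \cite{N77} is the height bound itself.
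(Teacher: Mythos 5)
Your proposal is correct in strategy and is essentially the route the paper takes: quote Nesterenko \cite{N77} for the fact that the Chow form yields a set of defining polynomials of bounded number and degree, and then track heights through the explicit construction. The one place where your sketch diverges from the actual construction is in what that construction is: there are no specialisations of blocks of variables to integer points and no resultant or elimination steps in higher codimension. Nesterenko's recipe, as the paper recalls it, is uniform in the codimension: one introduces skew-symmetric auxiliary variables \( S^{(i)}_{j,k} \), applies the single linear substitution \( \theta \colon U_{i,j} \mapsto \sum_k S^{(i)}_{j,k} X_k \) to the Chow form \( F \), and takes the Chow polynomials to be the coefficients of \( \theta(F) \) viewed as a polynomial in the \( S^{(i)}_{j,k} \) with \( j<k \); Nesterenko's Lemma~11 says these cut out the projective closure of \( W \), and setting \( X_0=1 \) gives the affine statement. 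So the obstacle you flagged (whether the elimination steps involve only polynomials of bounded size) does not arise, and the height bookkeeping reduces to the trivial estimate for a single explicit linear substitution followed by coefficient extraction — which is exactly how the paper concludes. One further difference of detail: the paper obtains the degree bound on \( F \) (and hence on the Chow polynomials) not from multihomogeneity but by a geometric argument, exhibiting the hypersurface defined by \( F \) as the image of the incidence variety \( \Gamma \subset \bP^n \times (\bP^n)^r \) and applying the projection formula together with the classical Bézout theorem; your multihomogeneity bound would serve the same purpose. Rationality over \( k \) is handled as you suggest, since the generator \( F \) of the principal ideal \( \bar{I} \) can be taken in \( k[\mathbf{U}] \) and \( \theta \) is defined over the prime field.
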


\begin{proof}
We follow~\cite{N77} \S1.
Let \( A = k[\mathbf{X}] \) be the ring of polynomials over \( k \) in the variables \( X_0, \dotsc, X_n \),
let \( r = \dim(W)+1 \) and let \( U_{i,j} \) for \( i\in\{1,\dotsc,r\} \) and \( j\in\{0,\dotsc,n\} \) be algebraically independent variables over \( A \).
We introduce the linear forms
\[ L_i(\mathbf{X}) = \sum_{j=0}^n U_{i,j}X_j \]
and let \( I \) be the homogeneous prime ideal of \( A \) corresponding to the Zariski closure \( W' \) of \( W \) in \( \bP^n \).
We let \( \chi \) denote the ideal in \( A[\mathbf{U}] = A[U_{1,0},\dotsc,U_{r,n}] \) generated by \( X_0,\dotsc, X_n \) and define the ideals
\begin{align*}
   \tilde{I} & =\bigcup_{k\geq 0}((I,L_1,\dotsc,L_r):\chi^k) \subset A[\mathbf{U}],
\\ \bar{I}   & =\tilde{I}\cap k[\mathbf{U}].
\end{align*}

By~\cite{N77} Lemma 5 (3), \( \bar{I} \) is a principal ideal and we choose a generator \( F\in k[\mathbf{U}] \) for \( \bar{I} \).
We refer to this polynomial as a \defterm{Chow form} of \( W' \) (it is unique up to scalar).
The height of \( W \) is, by definition, \( \rH(F) \). In particular, this does not depend on the choice of \( F \).

We consider the variables 
\[ S^{(i)}_{j,k},\ j,k\in\{0,\dotsc,n\},\ i\in\{1,\dotsc,r\}, \]
algebraically independent over \( A \) except for the skew symmetry 
\[ S^{(i)}_{j,k}+S^{(i)}_{k,j}=0. \]
We define an \( A \)-algebra homomorphism
\[ \theta \colon A[\mathbf{U}] \to A[\mathbf{S}] : =A[S^{(1)}_{0,0},\dotsc,S^{(r)}_{n,n}] \]
by sending \( U_{i,j} \) to \( \sum_{k=0}^n S^{(i)}_{j,k}X_k \).
The \defterm{Chow ideal} \( J \) of \( W' \) is the ideal of \( k[\mathbf{X}] \) generated by the coefficients \( P'_1,\dotsc,P'_m \) of \( \theta(F)\) as a polynomial in the \( S^{(i)}_{jk}\ (j<k) \).
We refer to these coefficients as Chow polynomials of \( W' \) and, by~\cite{N77} Lemma 11, they define \( W' \).
Therefore, if we set \( X_0=1 \), we obtain polynomials \( P_1,\dotsc,P_M \) defining~\( W \) and we refer to these as \defterm{Chow polynomials}.
The number \( M \) of Chow polynomials is clearly bounded by a constant depending only on \( n \) and \( D \).

Consider the diagram 
\[ \Gamma \subset \bP^n\times(\bP^n)^r \xrightarrow{\varphi} (\bP^n)^r, \]
where \( \Gamma \) is the variety defined by the ideal generated by \( I \) and the \( L_i \).
By definition, this is a projective and hence proper morphism.
By the proof of~\cite{N77} Lemma 4, the image \( \varphi(\Gamma) \) is precisely the hypersurface \( \mathcal{H} \) defined by \( F \). Therefore, we have a proper, surjective morphism
\[ \Gamma \subset \bP^n\times(\bP^n)^r \xrightarrow{\varphi} \mathcal{H}, \]
which is clearly generically finite.
By the projection formula, the degree of \( \mathcal{H} \) is at most the degree of \( \Gamma \) and, therefore, so is the degree of the Chow form \( F \).

Note that the degree of the subvariety \( \Gamma_W \) of \( \bP^n\times(\bP^n)^r \) defined by the ideal generated by \( I \) in \( A[\mathbf{U}] \) is precisely the degree of \( W \).
Therefore, we can calculate the degree of \( \Gamma \) by applying the standard B\'ezout theorem to the intersection of \( \Gamma_W \) with the subvarieties defined by the \( L_i \).
We conclude that the degree of \( F \) and, hence, the degrees of the Chow polynomials are bounded by a constant \( \refC{chow-polys-complexity}(n, D) \).
Furthermore, given the explicit nature of $\theta$, there exist constants \( \refC{chow-polys-height-multiplier}(n,D) \) and \( \refC{chow-polys-height-exponent}(n,D) \) such that the heights of the Chow polynomials are at most \( \polybound{chow-polys-height} \).
\end{proof}

\begin{proposition} \label{singular-bound}
For all positive integers \( m \), \( n \), \( D \), there are constants \( \newC{singular-complexity} \), \( \newC{singular-multiplier} \) and \( \newC{singular-exponent} \) depending on \( m \), \( n \) and \( D \) such that:

For every geometrically irreducible affine algebraic set \( W \subset \bA^n \) defined over a field \( k \subset \Qalg \) by at most~\( m \) polynomials of degree at most \( D \) and height at most~\( H \), if \( \Sing W(\Qalg) \neq \emptyset \), then there exists an algebraic set \( W' \subset \bA^n \) satisfying:
\begin{enumerate}[(i)]
\item \( \Sing W \subset W' \subset W \);
\item \( W' \neq W \);
\item \( W' \) can be defined by at most \( \refC{singular-complexity} \) polynomials, each of degree at most \( \refC{singular-complexity} \) and having coefficients in \( k \) of height at most \( \polybound{singular} \).
\end{enumerate}
\end{proposition}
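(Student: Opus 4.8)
\emph{Proof plan.}
The plan is to take for $W'$ the locus in $W$ where the \emph{Chow polynomials} of $W$ fail to be transverse. Put $c=n-\dim W$, and let $P_1,\dots,P_M\in k[X_1,\dots,X_n]$ be the Chow polynomials of $W$ produced by \cref{chow-polys-bound}; define
\[ W' = \{\, x\in W \mid \text{every $c\times c$ minor of the Jacobian $(\partial P_i/\partial X_j)_{i,j}$ vanishes at $x$} \,\}. \]
Condition (iii) is then immediate: by \cref{chow-form-bound,chow-polys-bound} the $P_i$ are bounded in number and degree and have polynomially bounded height (in $H$), and the $c\times c$ minors of their Jacobian, together with the $P_i$ themselves, are obtained from the $P_i$ by a bounded number of differentiations, multiplications and sums, so they too are bounded in number and degree and polynomially bounded in height; since $c$ ranges over the finite set $\{1,\dots,n\}$, the constants may be chosen uniformly. (If $\dim W\in\{0,n\}$ then $\Sing W=\emptyset$ and one may take $W'=\emptyset$, so from now on $0<\dim W<n$.)

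For (i), write the prime ideal $\fq=I(W)\subset k[X_1,\dots,X_n]$ as $(h_1,\dots,h_N)$; no control of the $h_\ell$ is needed. Since $P_i\in\fq$ we have $P_i=\sum_\ell a_{i\ell}h_\ell$, and differentiating and evaluating at $x\in W$, where the $h_\ell$ vanish, shows that the Jacobian matrix of $(P_i)$ at $x$ is a left multiple of that of $(h_\ell)$ at $x$, so $\rk(\partial P_i/\partial X_j)(x)\le\rk(\partial h_\ell/\partial X_j)(x)$. If $x\in\Sing W$ the right-hand side is $<c$ by the Jacobian criterion, hence $x\in W'$; thus $\Sing W\subseteq W'\subseteq W$.

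The substance is (ii), that $W'\ne W$; since $W$ is irreducible and $W'$ closed it suffices to find a smooth point $p$ of $W$ at which the Jacobian of the Chow polynomials has rank exactly $c$. Here I follow the idea of Bombieri--Masser--Zannier (\cite{BMZ07}). Recall from the proof of \cref{chow-polys-bound} the Chow form $F\in k[\mathbf U]$ and the homomorphism $\theta$, so that $\theta(F)=\sum_\alpha P'_\alpha(\mathbf X)\,\mathbf S^\alpha$ with the $P'_\alpha$ the homogenisations of the $P_i$; set $r=\dim W+1$. Fix a smooth point $[p]\in\overline W$ of the projective closure lying in $\bA^n$. The chain rule gives
\[ \frac{\partial\,\theta(F)}{\partial X_j}\Big|_{\mathbf X=p} = \sum_{i=1}^{r}\sum_{j'}\frac{\partial F}{\partial U_{i,j'}}\bigl(S^{(1)}p,\dots,S^{(r)}p\bigr)\,S^{(i)}_{j',j}, \]
so, by skew-symmetry of the $S^{(i)}$, the $X$-gradient of $\theta(F)$ at $p$ is $-\sum_i S^{(i)}\nabla_{U_i}F(S^{(1)}p,\dots,S^{(r)}p)$. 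A routine genericity argument (using $r>\dim W$ and smoothness of $[p]$) shows that for generic skew matrices $S^{(i)}$ the $r$ hyperplanes $\{\langle S^{(i)}p,\cdot\rangle=0\}$ all pass through $[p]$ and have $[p]$ as a simple point of their intersection with $\overline W$; by the classical description of the gradient of a Chow form at such a point, $\nabla_{U_i}F(S^{(1)}p,\dots,S^{(r)}p)=\mu_i\,p$ with $\mu_i\in k^\times$. Thus for generic $S$ the gradient is $\nabla_X\theta(F)(p,S)=-\sum_i\mu_i\,S^{(i)}p$, which in any case lies in the $c$-dimensional conormal space $N_p$ of the affine cone over $\overline W$ at $p$. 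Fixing $S^{(1)},\dots,S^{(r-1)}$ generic and varying $S^{(r)}$, the vector $S^{(r)}p$ runs over all of $p^\perp$, so modulo the fixed $(r-1)$-dimensional span of $S^{(1)}p,\dots,S^{(r-1)}p$ these gradients fill a $c$-dimensional quotient of $p^\perp$; as $N_p$ surjects onto this quotient and has the same dimension $c$, the gradients $\nabla_X\theta(F)(p,S)$ already span $N_p$. Since the $k$-span over all $S$ of $\nabla_X\theta(F)(p,S)$ equals the span of the rows $\nabla_X P'_\alpha(p)$ (the $\mathbf S^\alpha$ being linearly independent), the Jacobian of the $P'_\alpha$ — hence, via the Euler relation, that of the affine $P_i$ — has rank $c$ at $p$, so $p\notin W'$.

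The decisive difficulty is step (ii): the Chow polynomials need not generate a radical ideal, so the Jacobian criterion alone only yields the inclusion $\Sing W\subseteq W'$ and says nothing about properness; one really does need the Chow-form computation above, resting on the classical fact that the gradient of a Chow form at a transverse intersection point is proportional to that point, in order to see that the Chow polynomials cut $W$ out transversally at its generic point.
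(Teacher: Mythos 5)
Your construction of \( W' \) and your arguments for (i) and (iii) coincide with the paper's: both take \( W' \) to be the locus in \( W \) where all \( c\times c \) minors of the Jacobian of the Chow polynomials vanish (with \( c=n-\dim W \)), obtain (iii) from \cref{chow-form-bound,chow-polys-bound}, and obtain \( \Sing W\subset W' \) by comparing with the Jacobian of generators of the prime ideal via the Jacobian criterion. Where you genuinely diverge is the key step (ii), properness. The paper argues purely algebraically: by Nesterenko's Lemma~11 the prime ideal \( I \) of \( W \) is an isolated primary component of the Chow ideal \( J \), so one may choose \( f\notin I \) with \( fI\subset J \); then \( \Jac(fQ_1,\dotsc,fQ_{M'})=f\,\Jac(Q_1,\dotsc,Q_{M'}) \) in \( \rM_{M',n}(k[W]) \) squeezes the rank of \( \Jac(P_1,\dotsc,P_M) \) at the generic point between the ranks of \( \Jac(fQ) \) and \( \Jac(Q) \), both equal to \( n-\dim W \) there because \( f \) is invertible in \( k(W) \). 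You instead prove the stronger pointwise statement that at any smooth point \( p\in W \) the Chow polynomials already span the conormal space, using the classical formula \( \nabla_{U_i}F=\mu_i\,p \) for the gradient of the Chow form at a plane meeting \( \overline{W} \) transversally in the single point \( p \), together with a genericity argument over skew matrices. This more geometric route is legitimate and yields extra information (transversality at every smooth point, not just at the generic one), but its real content lies in two facts you only sketch: that for generic skew \( S^{(1)},\dotsc,S^{(r)} \) the hyperplanes \( \langle S^{(i)}p,\cdot\rangle=0 \) meet \( \overline{W} \) in \emph{exactly} the one reduced point \( [p] \) (your phrase ``\( [p] \) is a simple point of their intersection'' is weaker than what the gradient formula requires — extra intersection points would destroy it), and that the scalars \( \mu_i \), in particular \( \mu_r \), remain nonzero while \( S^{(r)} \) still varies enough to sweep out \( p^\perp \) modulo the span of the \( S^{(i)}p \), \( i<r \). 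Both claims are true and follow from dimension counts (for instance \( \mu_r\neq 0 \) once the tangent space at \( p \) of the cone over \( \overline{W} \) meets the intersection of the first \( r-1 \) hyperplanes only in the line through \( p \)), so your proof can be completed; by contrast, the paper's argument needs only the isolated-component statement quoted from Nesterenko and elementary manipulation of Jacobians, at the cost of giving no pointwise transversality.
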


\begin{proof}
We follow the argument of~\cite{BMZ07} pp.~9--10.

The hypothesis that \( W \) is geometrically irreducible allows us to apply \cref{chow-form-bound}.
We can then also apply \cref{chow-polys-bound} to deduce that the number and degree of the Chow polynomials of \( W \) are bounded by constants depending only on \( m \), \( n \) and \( D \), and that the heights of the Chow polynomials are polynomially bounded with respect to \( H \).

Let \( J \) be the Chow ideal of \( W \), which is generated by the Chow polynomials \( P_1, \dotsc, P_M \).
Let \( I \) denote the prime ideal of \( W \), and let \( Q_1, \dotsc, Q_{M'} \) be generators for \( I \).
We shall study the rank of the Jacobian matrix
\[ \Jac(P_1, \dotsc, P_M) = \left(\frac{\partial P_i}{\partial X_j}\right)_{i,j} \in \rM_{M,n}(k[W]) \]
and compare it with the rank of the analogous matrix \( \Jac(Q_1, \dotsc, Q_{M'}) \).

According to~\cite{N77} Lemma~11, \( I \) is an isolated primary component of \( J \) (in fact it is the only isolated primary component).
Hence we can find a minimal primary decomposition
\[ J = I \cap I_2 \cap \dotsb \cap I_t. \]
Because this decomposition is minimal, \( I_2 \cap \dotsb \cap I_t \not\subset I \) so we can choose some \( f \in I_2 \cap \dotsb \cap I_t \) which is not in \( I \).
We thus have \( f \not\in I \) but \( fI \subset J \).

Because \( fI \subset J \subset I \), and because all polynomials in \( I \) vanish on \( W \), a calculation shows that
\[ \rk \Jac(fQ_1, \dotsc, fQ_{M'}) \leq \rk \Jac(P_1, \dotsc, P_M) \leq \rk \Jac(Q_1, \dotsc, Q_{M'}) \]
at each point of \( W \).

A similar calculation shows that
\[ \Jac(fQ_1, \dotsc, fQ_{M'}) = f \Jac(Q_1, \dotsc, Q_{M'})  \text{ in } \rM_{M',n}(k[W]). \]
Since \( f \not\in I \), it is invertible in the function field \( k(W) \).
We conclude that
\[ \rk \Jac(fQ_1, \dotsc, fQ_{M'}) = \rk \Jac(Q_1, \dotsc, Q_{M'}) \]
at the generic point of \( W \).
It is well-known that
\[ \rk \Jac(Q_1, \dotsc, Q_{M'}) = n - \dim(W) \]
at the generic point, and we deduce that the same is true of \( \rk \Jac(P_1, \dotsc, P_M) \).

It follows that the inequality
\[ \rk \Jac(P_1, \dotsc, P_M) < n - \dim(W) \]
defines a proper subset \( W' \) of \( W \).
This subset can be defined as the subset on which all \( (n-\dim(W)) \times (n-\dim(W)) \) minors of \( \Jac(P_1, \dotsc, P_M) \) vanish, and so it is an algebraic subset defined by polynomials satisfying the bounds of condition~(iii).

On the other hand, at any point in \( \Sing W \), the Jacobian criterion for singular points tells us that
\[ \rk \Jac(P_1, \dotsc, P_M) \leq \rk \Jac(Q_1, \dotsc, Q_{M'}) < n - \dim(W) \]
and so \( \Sing W \subset W' \).
\end{proof}

\section{Discriminants and tori} \label{sec:discriminants}

The bound for the height of a pre-special point \( x \) in \cref{height-disc-r-bound} is relative to the discriminant of the centre of the endomorphism ring of the Hodge structure \( E_x \), while the bound in \cref{main-bound} (as required to obtain \cref{andre-oort-conditional}) is relative to certain invariants of the Mumford--Tate torus of \( x \).
In this section, we prove the following theorem which links these bounds together.

\begin{theorem}\label{mainindex}
Let \( (\gG, X) \) be a Shimura datum, \( \rho \colon \gG \to \gGL(E) \) a faithful \( \bQ \)-representation and \( E_\bZ \) a lattice in \( E_\bQ \).
Let \( K \) be a compact open subgroup of~\( \gG(\bA_f) \).

There exist constants \( \newC{index-torus-exponent} \), \( \newC{index-disc-exponent} > 0 \) such that for all \( \newC{index-badprime-base} > 0 \), there exists \( \newC{index-multiplier} > 0 \) (where \( \refC{index-torus-exponent} \) and \( \refC{index-disc-exponent} \) depend only on \( \gG \), \( X \), \( \cF \) and the realisation of \( X \), while \( \refC{index-multiplier} \) depends on these data and also \( \refC{index-badprime-base} \)) such that:

For every pre-special point \( x \in X \):

\begin{enumerate}[(a)]
\item Let \( \gM \) denote the Mumford--Tate group of \( x \) (which is a torus because \( x \) is pre-special).
\item Let \( K_{\gM} = K \cap \gM(\bA_f) \) and let \( K_{\gM}^m \) be the maximal compact subgroup of \( \gM(\bA_f) \).
\item Let \( i(\gM) \) be the number of primes \( p \) for which \( K_{\gM,p}:=K \cap \gM(\bQ_p) \) is strictly contained in the maximal compact subgroup \( K^m_{\gM,p} \) of \( \gM(\bQ_p) \).
\item Let \( L \) be the splitting field of \( \gM \).
\item Let \( F_x \) be the centre of \( \End_{\bQ\text{-HS}}E_x \), let \( R_x \) the centre of \( \End_{\bZ\text{-HS}}E_x \), and let \( \cO_x \) be the maximal order in \( F_x \).
\end{enumerate}

Then
\[ [\cO_x:R_x]  \leq  \refC{index-multiplier} \, \refC{index-badprime-base}^{i(\gM)} [K^m_{\gM}:K_{\gM}]^{\refC{index-torus-exponent}} \abs{\disc L}^{\refC{index-disc-exponent}}. \]
\end{theorem}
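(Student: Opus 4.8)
The plan is to bound $[\cO_x:R_x]$ prime by prime: one writes $[\cO_x:R_x]$ (up to a fixed power) as the product over $p$ of the local unit indices $[(\cO_x\otimes\bZ_p)^\times : (R_x\otimes\bZ_p)^\times]$, estimates each factor in terms of local invariants of the Mumford--Tate torus $\gM$ and of the lattice $E_\bZ$, and multiplies. To set this up, let $\gT$ be the connected centre of the centraliser of $\gM$ in $\gGL(E)$; since $\gM$ is a torus, $\gT$ is a torus with $\gT(\bQ)=F_x^\times$ inside $\gGL(E_\bQ)$ and $\gT\cong\prod_i\Res_{F_i/\bQ}\bG_m$, and $\gM\subseteq\gT$. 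After replacing $E_\bZ$ by a fixed $\gGL(E_\bQ)$-translate --- which alters $[\cO_x:R_x]$, $\abs{\disc L}$ and the invariants of $K_\gM$ by bounded amounts --- I may assume $\rho(K)\subseteq\gGL(E_{\widehat{\bZ}})$, so that $R_x\otimes\widehat{\bZ}$ is exactly the order of $E_{\widehat{\bZ}}$ in $F_x\otimes\widehat{\bZ}$. Then $K_{\gM,p}=\gM(\bQ_p)\cap(R_x\otimes\bZ_p)^\times$ while $K^m_{\gM,p}=\gM(\bQ_p)\cap(\cO_x\otimes\bZ_p)^\times$.

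Next I would isolate the primes that contribute. I claim there is a finite set $S_0$ of primes, depending only on $\gG$, $X$, $\rho$, $E_\bZ$ and $K$, such that whenever $p\notin S_0$, $p$ is unramified in $L$, and $K_{\gM,p}=K^m_{\gM,p}$, the local index is trivial: for such $p$ the torus $\gM$ has good reduction, its schematic closure in a smooth $\bZ_p$-model of $\gG$ is a $\bZ_p$-torus whose $\bZ_p$-points are $K^m_{\gM,p}$ and whose image under $\rho$ preserves $E_{\bZ_p}$, and the centraliser of that torus in $\gGL(E_{\bZ_p})$ is then reductive over $\bZ_p$ with connected centre a $\bZ_p$-torus, which forces $R_x\otimes\bZ_p$ to be the maximal order. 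Hence the primes at which the local index is nontrivial all lie in $S_0\cup\{p:p\mid\disc L\}\cup\{p:K_{\gM,p}\neq K^m_{\gM,p}\}$.

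It then remains to estimate the local index at these three kinds of prime and to assemble. For $p\in S_0$ I would use $\rho(K_{\gM,p})\subseteq(R_x\otimes\bZ_p)^\times$ together with a crude comparison of $E_{\bZ_p}$ and $(\cO_x\otimes\bZ_p)E_{\bZ_p}$ to get $[(\cO_x\otimes\bZ_p)^\times:(R_x\otimes\bZ_p)^\times]\le C_p[K^m_{\gM,p}:K_{\gM,p}]^{c}$. For $p\notin S_0$ dividing $\disc L$, the cokernel of $E_{\bZ_p}\hookrightarrow(\cO_x\otimes\bZ_p)E_{\bZ_p}$ is killed by a power of $p$ controlled by the ramification of $L$ at $p$, so the local index is at most $p^{c\,v_p(\disc L)}$. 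For $p\notin S_0$ with $p\nmid\disc L$ but $K_{\gM,p}\neq K^m_{\gM,p}$, the schematic closure of $\gM$ is again a $\bZ_p$-torus, now with $\bZ_p$-points $K_{\gM,p}$, and comparing it with the maximal $\bZ_p$-model of $\gM$ bounds the local index by $[K^m_{\gM,p}:K_{\gM,p}]^{c}$. Multiplying, and using $\prod_p p^{v_p(\disc L)}=\abs{\disc L}$ and $\prod_p[K^m_{\gM,p}:K_{\gM,p}]=[K^m_\gM:K_\gM]$, I would obtain $[\cO_x:R_x]\le C_0\,[K^m_\gM:K_\gM]^{c}\abs{\disc L}^{c'}$, with constants depending only on $\gG$, $X$, $\rho$, $E_\bZ$ and $K$. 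To reach the exact form of the theorem I still have to accommodate the factor $\refC{index-badprime-base}^{i(\gM)}$: because the $i(\gM)$ bad primes for $K_\gM$ are distinct and, outside a fixed finite set, are ramified in $L$, a short estimate shows that $\refC{index-badprime-base}^{i(\gM)}[K^m_\gM:K_\gM]^{\refC{index-torus-exponent}}\abs{\disc L}^{\refC{index-disc-exponent}}$ is at least a positive constant (depending on $\refC{index-badprime-base}$) times $[K^m_\gM:K_\gM]^{c}\abs{\disc L}^{c'}$ once $\refC{index-torus-exponent}$ and $\refC{index-disc-exponent}$ are chosen somewhat larger than $c$ and $c'$; this yields the stated bound with $\refC{index-multiplier}$ depending on $\refC{index-badprime-base}$.

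The hard part will be the local analysis behind the second and third paragraphs. The subtlety is that $\gM$ is usually a \emph{proper} subtorus of $\gT$ (for CM points it has much smaller dimension than $\prod_i\Res_{F_i/\bQ}\bG_m$), so the non-maximality of $R_x$ is not read off directly from $K_{\gM,p}$; one must follow carefully how an $R_x$-stable lattice fails to be $\cO_x$-stable, separating the part of the discrepancy caused by ramification of $L$ from the part caused by $K_{\gM,p}$ being smaller than the maximal compact, and verifying that the exceptional set $S_0$ can be taken independent of $x$. This is the step that generalises Tsimerman's treatment of the $\Ag$ case.
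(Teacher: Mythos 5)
Your top-level plan (bound $[\cO_x:R_x]$ prime by prime, treat separately a fixed bad set, primes ramified in $L$, and primes where $K_{\gM,p}\neq K^m_{\gM,p}$, then multiply) matches the shape of the paper's argument, but the proposal is missing the quantitative mechanism that makes each local estimate work, and several of the local claims you assert are either unjustified or false as stated. The central difficulty is that $R_{x,p}$ does not depend on $K$ at all, so the only way information about $K_{\gM,p}$ can force $R_{x,p}$ to be close to maximal is through the inclusion $K_{\gM,p}\subset R_{x,p}^\times$ together with a proof that elements of $\gM(\bQ_p)$ generate a subring of $\cO_{x,p}$ of \emph{uniformly} bounded defect. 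The paper supplies exactly this: Goursat's lemma applied to the image of the reciprocity morphism shows $F_x$ is generated by $\gM(\bQ)$ (\cref{goursat}, \cref{Sgen}), the linearised reciprocity map $S$ produces an element $S(z)$ with $[\cO_{x,p}:\bZ_p[S(z)]]\le p^{C}$ uniformly (\cref{elementindex}), and then the $p$-adic exponentials $y_j=r(\exp jz)^{p^N}$, which lie in $K_{\gM,p}\subset R_{x,p}$ once $N$ is a suitable multiple of $v_p([K^m_{\gM,p}:K_{\gM,p}])$, span a bounded-index sublattice of $\cO_{x,p}$ by a Vandermonde determinant argument (\cref{indexexp}). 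In your sketch this entire step is replaced by phrases like ``a crude comparison of $E_{\bZ_p}$ and $(\cO_x\otimes\bZ_p)E_{\bZ_p}$'' and ``comparing it with the maximal $\bZ_p$-model of $\gM$ bounds the local index by $[K^m_{\gM,p}:K_{\gM,p}]^c$''; no such comparison of models of the torus $\gM$ alone can do this, because the order $R_{x,p}$ lives in the much larger algebra $F_{x,p}$ (the centre $\gZ$), and $\gM$ is in general a proper subtorus of $\gZ$ --- which is precisely the subtlety you flag in your last paragraph without resolving it. Likewise your claim that at $p\mid\disc L$ the local index is at most $p^{c\,v_p(\disc L)}$ is not justified: the skewness of $E_{\bZ_p}$ relative to $\cO_x$ is not controlled by ramification alone (e.g.\ when $K_p$ is a deep level subgroup), and the paper instead gets a \emph{uniformly} bounded power of $p$ at such primes from \cref{indexexp} with $n=0$ and only then absorbs it into $\abs{\disc L}$.

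Two further specific problems. First, your good-prime argument rests on the assertion that the schematic closure of $\gM$ in $\gGL(E_{\bZ_p})$ is a $\bZ_p$-torus with $\bZ_p$-points $K^m_{\gM,p}$; this does not follow from $K_{\gM,p}=K^m_{\gM,p}$ and $p$ unramified in $L$ (a lattice can be stable under the maximal compact subgroup of an unramified torus without decomposing into weight sublattices over the unramified splitting ring, in which case the closure is not even smooth), and also $K_{\gM,p}$ need not equal $\gM(\bQ_p)\cap(R_x\otimes\bZ_p)^\times$, only be contained in it. The paper proves triviality of the local index at such primes differently, by showing that $K^m_{\gM,p}$ generates the ring $\cO_{x,p}$ (\cref{generates}, using the canonical integral models of $\gM$ and $\Res_{F_i/\bQ}\bG_m$, reduction mod $p$, rigidity of multiplicative-type groups and Goursat again), combined with $K_{\gM,p}\subset R_{x,p}$ (\cref{inclusion}); this yields \cref{useUY}. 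Second, your assembly of the factor $C^{i(\gM)}$ (for the universally quantified base $C>0$) does not work as described: the primes with $K_{\gM,p}\neq K^m_{\gM,p}$ are typically level primes and need not be ramified in $L$, and when the base is less than $1$ you cannot compensate by taking the exponents ``somewhat larger'', since the exponents must be chosen before the base. The paper handles this by invoking \cite{uy:andre-oort} Proposition~3.15, which gives $[K^m_{\gM,p}:K_{\gM,p}]\gg p$ at such primes (outside a fixed set and ramification), so that a fixed exponent beats the factor $C$ for all sufficiently large $p$, while the finitely many remaining primes are absorbed into the constant that is allowed to depend on $C$. Without these ingredients the proposal does not yield the theorem.
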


For an explanation of the quantifiers associated with the constants in the above theorem, see section~\ref{sec:introduction}.
Throughout this section, constants which depend only on \( \gG \), \( X \), \( \rho \) and \( K \) will be referred to as \defterm{uniform}.

Our proof is based on Tsimerman's proof of this theorem for the special case of \( \Ag \) (\cite{T12} Lemma 7.2).
Our only changes to Tsimerman's argument are that in section~\ref{ssec:smallindex}, we consider general CM Hodge structures instead of CM Hodge structures of type \( (-1,0)+(0,-1) \) (which is only a minor change), and that we give a proof for \cref{useUY} which is used tacitly by Tsimerman but which appears to us to be a non-trivial statement.

We will reduce to the case in which the compact open subgroup \( K \subset \gG(\bA_f) \) is a direct product of compact open subgroups \( K_p \subset \gG(\bQ_p) \), so that
\[ [K^m_{\gM}:K_{\gM}] = \prod_p [K^m_{\gM,p}:K_{\gM,p}]. \]

We then study the \( p \)-part of \( [\cO_x:R_x] \), one prime at a time.
We use the fact that tori splitting over unramified local fields have nice integral models to show that, outside a fixed finite set of bad primes, if \( p \) divides \( [\cO_x:R_x] \) then either \( p \) is ramified in \( L \) (i.e.\ \( p \) divides \( \disc L \)) or \( K^m_{\gM,p} \neq K_{\gM,p} \).

Note that, even if \( K^m_{\gM,p} \neq K_{\gM,p} \), it need not be true that \( p \) divides \( [K^m_{\gM,p} : K_{\gM,p}] \).
We will deal with this by using \cite{uy:andre-oort} Proposition~3.15 which says that, outside the bad primes and those ramified in $L$, if \( K^m_{\gM,p} \neq K_{\gM,p} \) then \( [K^m_{\gM,p} : K_{\gM,p}] \gg p \).

The hardest parts of the proof (section~\ref{ssec:smallindex} and \cref{indexexp}) control which power of \( p \) divides \( [\cO_x:R_x] \), relative to the power of \( p \) which divides \( [K^m_{\gM,p} : K_{\gM,p}] \).

\subsection{Generating \texorpdfstring{$\bQ$}{Q}-algebras}

Let $x \in X$ be a pre-special point, $\gM$ its Mumford--Tate group and $L$ the splitting field of $\gM$.
We can naturally embed $\gM(\bQ)$ as a subgroup of $F_x^\times$.
Hence the reciprocity morphism
\[ r \colon \Res_{L/\bQ} \bG_m \to \gM \]
induces a homomorphism
\[ L^\times = \Res_{L/\bQ} \bG_m(\bQ) \to F_x^\times. \]
We will use Goursat's lemma for algebras to show that the image of \( L^\times \) generates \( F_x^\times \) as a \( \bQ \)-algebra.

We will reuse the notation from the proof of \cref{mt-to-diagonal-basis}:
the isotypic decomposition of the \( \bQ \)-Hodge structure \( E \) is
\begin{align*}
E_x = \bigoplus_{i=1}^s E_{x,i}
\end{align*}
where $E_{x,i}$ is isomorphic to a power of an irreducible $\bQ$-Hodge structure with endomorphism algebra $F_i$, the centre of $\End_{\bQ\text{-HS}} E_x$ is
\begin{align*}
F_x = \prod_{i=1}^s F_i,
\end{align*}
and the centre of the centraliser of the Mumford--Tate group $\gM$ in $\gGL(E)$ is denoted~$\gZ$.
We recall that $\gZ(\bQ)$ is the image of $F_x^\times \hookrightarrow \gGL(E_\bQ)$, and so
\begin{align*}
\gZ \cong \prod_{i=1}^s \Res_{F_i/\bQ} \bG_m,
\end{align*}
and that $\gM \subset \gZ$ because $\gM$ is commutative.
Hence we get an injective homomorphism $\gM(\bQ) \to F_x^\times$.

Now the image of $\gM(\bQ)$ generates $F_x$ as a $\bQ$-algebra.
This is an instance of the general fact that for any semisimple representation of a commutative group, the group generates the centre of its endomorphism algebra, but we will give more details of the proof below.

\begin{lemma}\label{factor}
Suppose that the image of $\gM$ under the projection
\begin{align*}
\gZ \to \Res_{F_i/\bQ} \bG_m
\end{align*}
is contained in a subtorus $\Res_{F'/\bQ}\bG_m \subset \Res_{F_i/\bQ}\bG_m$ for some subfield $F'$ of $F_i$. Then $F'=F_i$.
\end{lemma}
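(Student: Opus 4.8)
The plan is to derive $F'=F_i$ from the defining property of the Mumford--Tate group, $\End_{\bQ\text{-HS}} E_x = \End \rho_{|\gM}$ (recalled in the proof of \cref{mt-to-diagonal-basis}), by a dimension count on endomorphism algebras.

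First I would unwind the hypothesis. Since $\gZ(\bQ)$ is the image of $F_x^\times = \prod_j F_j^\times$ in $\gGL(E_\bQ)$, the factor $\Res_{F_i/\bQ}\bG_m$ of $\gZ$ acts on the isotypic component $E_{x,i}$ by $F_i$-scalar multiplication and trivially on $E_{x,j}$ for $j\neq i$. Writing $\gM_i$ for the image of $\gM$ in $\Res_{F_i/\bQ}\bG_m$, the hypothesis $\gM_i \subset \Res_{F'/\bQ}\bG_m$ says precisely that the representation $\gM \to \gGL(E_{x,i})$ factors through scalar multiplication by $\Res_{F'/\bQ}\bG_m$; that is, $\gM$ acts on $E_{x,i}$ through $F'$-scalars.

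The main step is then the following chain of $\bQ$-subalgebras of $\End_\bQ E_{x,i}$:
\[ \End_{F_i} E_{x,i} \subseteq \End_{F'} E_{x,i} \subseteq \End_{\bQ\text{-HS}} E_{x,i} \subseteq \End_{F_i} E_{x,i}. \]
The first inclusion holds because $F' \subseteq F_i$. For the second: an $F'$-linear endomorphism of $E_{x,i}$ commutes with the $\gM$-action just described; extended by zero on the other isotypic components it lies in $\End \rho_{|\gM} = \End_{\bQ\text{-HS}} E_x$, and being supported on the single isotypic block $E_{x,i}$ it restricts to an element of $\End_{\bQ\text{-HS}} E_{x,i}$. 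The third holds because $\End_{\bQ\text{-HS}} E_{x,i} \cong M_{r_i}(F_i)$ has centre $F_i$, so its elements commute with scalar multiplication by $F_i$ and are in particular $F_i$-linear. Hence all four algebras coincide; comparing $\bQ$-dimensions of $\End_{F'} E_{x,i}$ and $\End_{F_i} E_{x,i}$, namely $(\dim_\bQ E_{x,i})^2/[F':\bQ]$ and $(\dim_\bQ E_{x,i})^2/[F_i:\bQ]$, and using that $E_{x,i}\neq 0$, gives $[F':\bQ]=[F_i:\bQ]$, whence $F'=F_i$.

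The only point requiring care is the second inclusion: one must check that extending an $F'$-linear endomorphism of $E_{x,i}$ by zero really does produce an $\gM$-equivariant endomorphism of the whole of $E_x$ (using that $\gM$ preserves each summand $E_{x,j}$ and acts on $E_{x,i}$ through $F'$-scalars), and that a Hodge endomorphism of $E_x$ supported on one isotypic block is a Hodge endomorphism of that block — which follows from the fact that Hodge substructures coincide with $\gM$-subrepresentations, so the isotypic decompositions of $E_x$ as a Hodge structure and as a $\gM$-representation agree. Everything else is routine linear algebra.
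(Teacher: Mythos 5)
Your proof is correct, and it takes a somewhat different route from the paper's. The paper argues via submodules: it picks an irreducible \( \gM \)-subrepresentation \( E' \subset E_{x,i} \), which is a one-dimensional \( F_i \)-vector space, notes that since \( \gM(\bQ) \) acts through \( F' \)-scalars every \( F' \)-subspace of \( E' \) is \( \gM \)-stable, and concludes \( [F_i:F'] = 1 \) from irreducibility. You dualise this to endomorphisms: the hypothesis makes every \( F' \)-linear endomorphism of \( E_{x,i} \) a Hodge endomorphism, and comparing \( \bQ \)-dimensions of \( \End_{F'} E_{x,i} \) and \( \End_{F_i} E_{x,i} \) forces \( [F':\bQ] = [F_i:\bQ] \). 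Both arguments rest on the same two inputs — the identity \( \End_{\bQ\text{-HS}} E_x = \End \rho_{|\gM} \) and the observation that the hypothesis makes \( \gM \) act on \( E_{x,i} \) through \( F' \)-scalars (which, since the containment is one of algebraic subgroups, holds functorially and not just on \( \bQ \)-points) — but the paper's version is a two-line submodule argument needing no bookkeeping, whereas yours requires the extension-by-zero/restriction step and the identification \( \End_{\bQ\text{-HS}} E_{x,i} \cong \rM_{r_i}(F_i) \), both of which the paper's setup does supply. In return your argument avoids choosing an irreducible constituent and yields the degree equality directly by a dimension count. The care points you flag (equivariance of the extension by zero, and that a Hodge endomorphism supported on one isotypic block restricts to one of that block) are handled correctly, so there is no gap.
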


\begin{proof}
Consider an irreducible $\gM$-subrepresentation $E' \subset E_{x,i}$.
This is an $F_i$-vector space of dimension~$1$, and hence an $F'$-vector space of dimension~$[F_i:F']$.
Since $\gM(\bQ) \subset F'$, every $F'$-subspace of $E'$ is an $\gM$-subrepresentation.
But since $E'$ is irreducible, we must have $[F_i:F'] = 1$.
\end{proof}

\begin{lemma}\label{factors}
Suppose the map from $\gM$ to $\Res_{F_i/\bQ}\bG_m\times \Res_{F_j/\bQ}\bG_m$ factors through the map
\begin{align*}
\Res_{F_i/\bQ}\bG_m\rightarrow\Res_{F_i/\bQ}\bG_m\times \Res_{F_j/\bQ}\bG_m:x\mapsto(x,\varphi(x)),
\end{align*}
induced by a field isomorphism $\varphi:F_i \rightarrow F_j$. Then $i = j$.
\end{lemma}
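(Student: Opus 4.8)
The plan is to mimic the proof of \cref{factor}, transporting the action of $\gM$ across the field isomorphism $\varphi$ and invoking the defining property of the Mumford--Tate group: for sub-$\bQ$-Hodge-structures of $E_x$, the $\bQ$-linear $\gM$-equivariant maps are exactly the morphisms of $\bQ$-Hodge structures.

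First I would unwind the hypothesis. Write $\mu_i \colon \gM \to \Res_{F_i/\bQ}\bG_m$ and $\mu_j \colon \gM \to \Res_{F_j/\bQ}\bG_m$ for the two projections of $\gM \subset \gZ \cong \prod_k \Res_{F_k/\bQ}\bG_m$. Recalling that $\gZ(\bQ) = F_x^\times$ acts on the isotypic component $E_{x,i}$ by multiplication via its natural $F_i$-module structure, the morphism $\mu_i$ describes exactly how $\gM$ acts on $E_{x,i}$, and similarly $\mu_j$ on $E_{x,j}$. The hypothesis is that $\mu_j = \varphi \circ \mu_i$ as morphisms of $\bQ$-groups.

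Next, exactly as in the proof of \cref{factor}, I would choose an irreducible $\gM$-subrepresentation $W_i \subset E_{x,i}$; then $W_i$ is a $1$-dimensional $F_i$-vector space on which $\gM$ acts through $\mu_i$ followed by $F_i$-scalar multiplication. Likewise take $W_j \subset E_{x,j}$, a $1$-dimensional $F_j$-vector space. Using $\varphi$ to regard $W_j$ as an $F_i$-vector space (via $a \cdot w := \varphi(a)w$), both $W_i$ and $W_j$ become $1$-dimensional $F_i$-vector spaces on each of which, by the hypothesis $\mu_j = \varphi\circ\mu_i$, the group $\gM$ acts through $\mu_i$ followed by $F_i$-scalar multiplication. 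Hence any $F_i$-linear isomorphism $\psi \colon W_i \to W_j$ is automatically $\gM$-equivariant, and since $W_i$ and $W_j$ are sub-$\bQ$-Hodge-structures of $E_x$, it follows that $\psi$ is an isomorphism of $\bQ$-Hodge structures between the irreducible Hodge structures underlying $E_{x,i}$ and $E_{x,j}$.

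Finally, because $E_x = \bigoplus_k E_{x,k}$ is the isotypic decomposition, distinct components $E_{x,i}$ and $E_{x,j}$ have non-isomorphic irreducible constituents, so $W_i \cong W_j$ forces $i = j$. I do not anticipate a genuine obstacle here — this is a companion to \cref{factor} rather than a new difficulty — the only points needing a little care being that $W_i$ is one-dimensional over $F_i$ (which, as in \cref{factor}, uses that $\gM$ is commutative so that $\gM(\bQ)$ lies in the endomorphism field of the irreducible $W_i$) and that $F_i$-linearity of $\psi$ upgrades to $\gM$-equivariance, which holds because the $\gM$-action on each of $W_i$, $W_j$ factors through $\mu_i$ and then $F_i$-scalar multiplication.
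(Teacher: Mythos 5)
Your proposal is correct and is essentially the paper's own argument: the paper also picks irreducible $\gM$-subrepresentations $E'_{x,i} \subset E_{x,i}$ and $E'_{x,j} \subset E_{x,j}$ and observes that the hypothesis $\mu_j = \varphi \circ \mu_i$ makes $\varphi$-transport of the $F_i$-structure produce an isomorphism of $\gM$-representations $E'_{x,i} \to E'_{x,j}$, which forces them into the same isotypic component, i.e.\ $i = j$. You have merely spelled out the transport-of-structure step (and the reduction to $1$-dimensionality over $F_i$) that the paper leaves implicit, so there is nothing to add.
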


\begin{proof}
Let $E_{x,i}'$ and $E_{x,j}'$ denote irreducible $\gM$-subrepresentations of $E_{x,i}$ and $E_{x,j}$ respectively.
The field isomorphism $\varphi$ induces an isomorphism of $\gM$-representations $E'_{x,i} \to E'_{x,j}$ and hence these are both in the same isotypic component i.e.\ $i=j$.
\end{proof}

These properties yield the following conclusion:

\begin{proposition}
The image of $\gM(\bQ)$ generates the $\bQ$-algebra $F_x = \prod_i F_i$.
\end{proposition}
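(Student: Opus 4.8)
Write $B\subseteq F_x$ for the $\bQ$-subalgebra generated by the image of the embedding $\gM(\bQ)\hookrightarrow F_x^\times$; we must show $B=F_x=\prod_i F_i$. The strategy is: (Step 1) show that the projection $B\to F_i$ is surjective for every $i$; (Step 2) show that $B$ surjects onto $F_i\times F_j$ for every $i\neq j$, using \cref{factor,factors} and Goursat's lemma; (Step 3) deduce $B=F_x$ by the Chinese remainder theorem.

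For Step 1, fix $i$ and let $\pi_i\colon\gZ\to\Res_{F_i/\bQ}\bG_m$ be the projection. Identifying $\Res_{F_i/\bQ}\bG_m(\bQ)$ with $F_i^\times\subset F_i$, the set $\pi_i(\gM(\bQ))$ is closed under multiplication and contains $1$, so its $\bQ$-linear span in $F_i$ is a finite-dimensional $\bQ$-subalgebra of the field $F_i$, hence a subfield $F_i'$. As $\pi_i(\gM(\bQ))\subseteq F_i'^\times$ and $\gM(\bQ)$ is Zariski-dense in the torus $\gM$ (tori over infinite fields have dense rational points), it follows that $\pi_i(\gM)$ is contained in $\Res_{F_i'/\bQ}\bG_m$. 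Now \cref{factor} forces $F_i'=F_i$; since the image of $B$ in $F_i$ is exactly the $\bQ$-span $F_i'$, this is the surjectivity claim. The analogous argument applied to the pair projection $\gZ\to\Res_{F_i/\bQ}\bG_m\times\Res_{F_j/\bQ}\bG_m$ shows that the image $B_{ij}\subseteq F_i\times F_j$ of $B$ is the $\bQ$-algebra generated by $\pi_{ij}(\gM(\bQ))$ and surjects onto each factor.

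For Step 2, apply Goursat's lemma to the subdirect product $B_{ij}\subseteq F_i\times F_j$ with $i\neq j$: the ideal $\ker(B_{ij}\to F_i)$ has image in $F_j$ an ideal of the field $F_j$, hence $0$ or $F_j$ (and symmetrically for $\ker(B_{ij}\to F_j)$). If either of these images is the whole field one gets $B_{ij}=F_i\times F_j$. Otherwise $B_{ij}\hookrightarrow F_i$ via $\pi_i$, and since this map is also surjective it is an isomorphism; composing with $\pi_j$ yields a field isomorphism $\varphi\colon F_i\to F_j$ with $\gamma_j=\varphi(\gamma_i)$ for all $\gamma\in\gM(\bQ)$. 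By Zariski-density this means the map $\gM\to\Res_{F_i/\bQ}\bG_m\times\Res_{F_j/\bQ}\bG_m$ factors through the graph of $\varphi$, and \cref{factors} gives $i=j$, contradicting $i\neq j$. Hence $B_{ij}=F_i\times F_j$ in all cases. Finally, for Step 3, set $I_i=\ker(B\to F_i)$; surjectivity of $B\to F_i\times F_j$ for $i\neq j$ is equivalent to $I_i+I_j=B$, so the $I_i$ are pairwise comaximal, and the Chinese remainder theorem gives $B/\bigcap_i I_i\cong\prod_i F_i$. Since $\bigcap_i I_i=\ker(B\hookrightarrow F_x)=0$, the inclusion $B\hookrightarrow F_x$ is an isomorphism, i.e.\ $B=F_x$.

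The only genuinely delicate point is Step 1: recognising that the $\bQ$-algebra generated by the rational points of the sub\-torus $\pi_i(\gM)$ is a subfield $F_i'$ of $F_i$ to which that sub\-torus descends, so that \cref{factor} (and, in the pair case, \cref{factors}) becomes directly applicable. Once that translation is in place, the remainder is the formal combination of Goursat's lemma, Zariski-density, and the Chinese remainder theorem.
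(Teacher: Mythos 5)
Your proof is correct and follows essentially the same route as the paper: surjectivity of the projection to each factor via \cref{factor}, exclusion of graphs of isomorphisms via \cref{factors} (both applied through Zariski density of $\gM(\bQ)$ in $\gM$), and a Goursat-type criterion to conclude. The only differences are expository: you make explicit the Zariski-density step that the paper leaves implicit, and you reprove the relevant direction of the paper's \cref{goursat} inline via the ideal argument and the Chinese remainder theorem rather than citing it.
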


\begin{proof}
Consider the image of $\gM(\bQ)$ in $\Res_{F_i/\bQ}\bG_m(\bQ)$ for each $i$. The $\bQ$-algebra generated by these elements constitutes a subfield $F'$ of $F_i$. However, by Lemma~\ref{factor}, $F'=F_i$. Therefore, the image of $\gM(\bQ)$ in $F_x$ generates a $\bQ$-algebra that surjects on to each factor $F_i$. Furthermore, however, by Lemma~\ref{factors} the projection of $\gM(\bQ)$ to $F_i \times F_j$ for any distinct $i,j$ is not contained in the graph of an isomorphism $F_i \cong F_j$. Therefore, the proposition follows from the following lemma. 
\end{proof}

\begin{lemma}\label{goursat}
Let $A=\bigoplus_{i=1}^r K_i$ be a direct sum of finite field extensions $K_i$ of a field $k$. Let $\Theta$ be a subset of $A$. Then $\Theta$ generates $A$ as a $k$-algebra if and only if
\begin{enumerate}
\item[(1)] for each projection $\pi_i:A\rightarrow K_i$, the image $\pi_i(\Theta)$ is not contained in a proper subfield of $K_i$ and
\item[(2)] for each projection $\pi_{i,j}:A\rightarrow K_i\oplus K_j$, the image $\pi_{i,j}(\Theta)$ is not contained in the graph of a $k$-isomorphism $K_i\cong K_j$.
\end{enumerate}
\end{lemma}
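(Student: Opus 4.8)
The plan is to prove the two implications separately; the forward direction is immediate, while the converse rests on the structure theory of reduced finite-dimensional commutative algebras. For ``only if'': if $\Theta$ generates $A$ as a $k$-algebra, then since each $\pi_i$ (resp.\ $\pi_{i,j}$) is surjective, $\pi_i(\Theta)$ generates $K_i$ (resp.\ $\pi_{i,j}(\Theta)$ generates $K_i \oplus K_j$). A set generating $K_i$ over $k$ cannot lie in a proper subfield, which is (1). For (2), the graph $\Gamma_\varphi = \{(x,\varphi(x)) : x \in K_i\}$ of a $k$-isomorphism $\varphi \colon K_i \to K_j$ is a subring isomorphic to the field $K_i$, hence has no zero divisors, whereas $K_i \oplus K_j$ does (for instance $(1,0)\cdot(0,1)=0$); thus $\Gamma_\varphi$ is a proper $k$-subalgebra of $K_i \oplus K_j$ and cannot be generated by $\pi_{i,j}(\Theta)$.

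For the converse, assume (1) and (2) and let $B$ be the $k$-subalgebra of $A$ generated by $\Theta$. Then $B$ is a finite-dimensional commutative $k$-algebra which is reduced, being a subring of the product of fields $A$; hence $B = \bigoplus_{l=1}^m L_l$ for some finite field extensions $L_l$ of $k$, with primitive idempotents $e_1,\dotsc,e_m$ satisfying $L_l = e_l B$. Since the idempotents of $A = \prod_{i=1}^r K_i$ are exactly the $\{0,1\}$-vectors, the $e_l \in B \subset A$ correspond to a partition $\{1,\dotsc,r\} = S_1 \sqcup \dotsb \sqcup S_m$ of the index set, with $e_l = \sum_{i \in S_l}\varepsilon_i$, where $\varepsilon_i$ denotes the $i$-th standard idempotent of $A$. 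I claim this partition is trivial, i.e.\ $m = r$ and each $S_l$ is a singleton.

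To prove the claim, first observe that condition (1) forces each projection $\pi_i \colon B \to K_i$ to be surjective: its image is a subring of $K_i$ finite over $k$, hence a subfield, and it contains $\pi_i(\Theta)$, so by (1) it equals $K_i$. If $l = l(i)$ is the block with $i \in S_l$, then $\pi_i(e_{l'}) = 0$ for $l' \neq l$, so $\pi_i(b) = \pi_i(b e_l)$ for every $b \in B$; hence $\pi_i$ restricts to a surjective homomorphism of fields $\pi_i|_{L_l} \colon L_l \to K_i$, which is therefore an isomorphism. Now suppose some block $S_l$ contained distinct indices $i \neq j$. Then $\varphi := (\pi_j|_{L_l}) \circ (\pi_i|_{L_l})^{-1}$ is a $k$-isomorphism $K_i \cong K_j$, and for every $b \in B$ one has $\pi_j(b) = \varphi(\pi_i(b))$ (since $\pi_i(b) = \pi_i(be_l)$ and $\pi_j(b) = \pi_j(be_l)$ with $be_l \in L_l$); thus $\pi_{i,j}(B)$, and \emph{a fortiori} $\pi_{i,j}(\Theta)$, lies in the graph of $\varphi$, contradicting (2). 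Hence every $S_l$ is a singleton, so $m = r$ and $B$ contains every standard idempotent $\varepsilon_i$; combining this with the surjectivity of each $\pi_i$ shows that for any $(a_1,\dotsc,a_r) \in A$ and any $b_i \in B$ with $\pi_i(b_i) = a_i$, the element $\sum_i \varepsilon_i b_i \in B$ equals $(a_1,\dotsc,a_r)$, so $B = A$. The only point needing care is the identification of the primitive idempotents of the subalgebra $B$ with subsets of $\{1,\dotsc,r\}$, together with the verification that $\pi_i$ factors through the component $L_{l(i)}$; everything else is formal, and I expect no serious obstacle.
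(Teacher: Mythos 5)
Your proof is correct. It does, however, proceed differently from the paper, which disposes of this lemma in one line by citing ``Goursat's lemma for algebras'' without further detail; what you have written is, in effect, a complete and self-contained proof of the relevant instance of that lemma. Your route is the structure theory of reduced Artinian algebras: the subalgebra \( B \) generated by \( \Theta \) is a finite product of fields, its primitive idempotents are \( \{0,1\} \)-vectors in \( A \) and hence encode a partition of the index set, condition (1) forces each \( \pi_i|_B \) to be onto \( K_i \), and condition (2) rules out a block containing two indices, since such a block would force \( \pi_{i,j}(B) \) into the graph of the \( k \)-isomorphism \( (\pi_j|_{L_l}) \circ (\pi_i|_{L_l})^{-1} \); the final reconstruction of \( A \) from the standard idempotents and surjectivity is exactly right. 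This makes explicit the dichotomy that the Goursat-style citation hides: a proper subalgebra surjecting onto every factor must either miss a factor (violating (1)) or identify two factors diagonally (violating (2)), and your idempotent bookkeeping handles all \( r \) factors at once rather than reducing to pairs. The only point worth flagging is the convention, which you use implicitly, that the \( k \)-subalgebra generated by \( \Theta \) contains \( 1_A \); this is what guarantees that the primitive idempotents of \( B \) sum to \( 1_A \), so that the sets \( S_l \) genuinely partition \( \{1, \dotsc, r\} \). Since this is the standard meaning of ``generates as a \( k \)-algebra'' and is how the lemma is applied in the paper, there is no gap.
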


\begin{proof}
This is an immediate consequence of Goursat's lemma for algebras.
\end{proof}

We will now consider the reciprocity morphism $r\colon\Res_{L/\bQ}\bG_m\rightarrow\gM$.
Let us briefly recall its definition.
Extending $x \colon \bS \to \gG_\bR$ to a morphism over $\bC$ yields
\begin{align*}
\bG^2_{m,\bC}\cong\bS_{\bC}\xrightarrow{h_{\bC}}\gM_{\bC}
\end{align*}
and we denote by $\mu$ the cocharacter obtained by pre-composing with the embedding $\bG_{m,\bC}\hookrightarrow\bG^2_{m,\bC}$ sending $z$ to $(z,1)$. As a cocharacter of $\gM$, $\mu$ is necessarily defined over $L$ i.e.\ $\mu:\bG_{m,L}\rightarrow\gM_L$ and restricting scalars to $\bQ$ yields $\Res_{L/\bQ}\bG_m\rightarrow\Res_{L/\bQ}\gM_L$. The morphism $r$ is obtained by composing with the norm map $\Res_{L/\bQ}\gM_L\rightarrow\gM$. Since $r$ is surjective (as a morphism of algebraic groups) we deduce the following corollary:

\begin{corollary}\label{reciprocity}
The image of $\Res_{L/\bQ}\bG_m(\bQ)$ generates the $\bQ$-algebra $F_x$. 
\end{corollary}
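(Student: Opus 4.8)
The plan is to deduce the corollary from the preceding Proposition (the image of $\gM(\bQ)$ generates $F_x$ as a $\bQ$-algebra) together with the surjectivity of $r$. The one point that needs care is that $r \colon \Res_{L/\bQ}\bG_m \to \gM$ is surjective only as a morphism of algebraic groups, and this does not force $r$ to be surjective on $\bQ$-points: the cokernel of $\Res_{L/\bQ}\bG_m(\bQ) \to \gM(\bQ)$ embeds into $H^1(\bQ, \ker r)$, which may be nonzero. So one cannot simply write $r(L^\times) = \gM(\bQ)$, and I will instead argue through Zariski density.

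First I would record that $\Res_{L/\bQ}\bG_m(\bQ) = L^\times$ is Zariski dense in $\Res_{L/\bQ}\bG_m$: this torus is a $\bQ$-rational variety -- it is the non-vanishing locus of the norm form inside the affine space $\Res_{L/\bQ}\bA^1$ -- and $\bQ$ is infinite. Since $r$ is surjective, hence dominant, the image $\Gamma := r(L^\times)$ is a Zariski-dense subgroup of $\gM$; using the injection $\gM(\bQ) \hookrightarrow F_x^\times$ fixed above, we may view $\Gamma \subseteq \gM(\bQ) \subseteq F_x^\times \subseteq \End(E_\bQ)$.

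Next I would show that $\Gamma$ generates the same $\bQ$-subalgebra of $F_x$ as $\gM(\bQ)$ does. Let $B \subseteq F_x$ be the $\bQ$-subalgebra generated by $\Gamma$. Since $B$ contains $1$ and $\Gamma$ and is closed under multiplication, we have $\gamma B \subseteq B$ for every $\gamma \in \Gamma$. But the set of $\bQ$-linear subspaces of $\End(E_\bQ)$ preserved under left multiplication by a given element is Zariski closed in $\gGL(E)$, so $B$ is preserved under left multiplication by the entire Zariski closure of the subgroup generated by $\Gamma$, which is $\gM$. In particular $\gM(\bQ) = \gM(\bQ) \cdot 1 \subseteq B$, so $B$ contains the $\bQ$-algebra generated by $\gM(\bQ)$, which by the Proposition is all of $F_x$. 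Hence $B = F_x$, which is exactly the assertion to be proved.

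I do not expect a genuine obstacle here. The only steps that require a moment's attention are the Zariski density of $L^\times$ in $\Res_{L/\bQ}\bG_m$ (routine, since $\bQ$ is infinite and the torus is rational) and the compatibility between ``generating as a $\bQ$-algebra'' and passing to Zariski closures, which is handled by the fact that stabilisers of linear subspaces are Zariski closed.
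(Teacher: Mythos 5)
Your proof is correct, and it correctly isolates the one subtle point: surjectivity of \( r \) as a morphism of algebraic groups does not give surjectivity on \( \bQ \)-points, so one must pass through Zariski density of \( L^\times \) in \( \Res_{L/\bQ}\bG_m \). The paper handles this the same way, but the two arguments then diverge in structure. The paper argues by contradiction through the Goursat criterion: if the algebra generated by \( r(L^\times) \) were proper, Lemma~\ref{goursat} would force one of its projections into a proper subfield or into the graph of an isomorphism; since these are Zariski-closed conditions, density of \( L^\times \) and surjectivity of \( r \) would make the image of \( \gM \) factor in the same way, contradicting Lemmas~\ref{factor} and~\ref{factors}. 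You instead bypass Goursat entirely and bootstrap directly off the preceding Proposition: you transfer a different closed condition (the stabiliser of the \( \bQ \)-subspace \( B \) under left multiplication) from the dense subgroup \( \Gamma = r(L^\times) \) to all of \( \gM \), conclude \( \gM(\bQ) \subseteq B \), and invoke the Proposition to get \( B = F_x \). Your route is slightly more self-contained and proves a cleaner general fact (any Zariski-dense subgroup of \( \gM(\bQ) \) generates the same \( \bQ \)-algebra as \( \gM(\bQ) \)); the paper's route is shorter given that the Goursat machinery and the two structural lemmas are already in place. You also supply a justification for the density of \( L^\times \) (rationality of the norm torus over the infinite field \( \bQ \)), which the paper asserts without comment. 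No gaps.
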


\begin{proof}
If the corollary did not hold then the algebra generated by the image of $\Res_{L/\bQ}\bG_m(\bQ)$ in $F_x$ would violate one of the conditions of Lemma~\ref{goursat}. This would imply that the image of $\Res_{L/\bQ}\bG_m$ in the product of the $\Res_{F_i/\bQ}\bG_m$ would factor non-trivially in one of the two ways referred to in Lemmas~\ref{factor} and~\ref{factors}. However, since $r$ is surjective (as a morphism of algebraic groups) and the $\bQ$-points of $\Res_{L/\bQ}\bG_m(\bQ)$ are Zariski dense in $\Res_{L/\bQ}\bG_m$, this would be a contradiction.   
\end{proof}

\subsection{Generators of small index}\label{ssec:smallindex}

We have seen that the reciprocity map induces a homomorphism \( L^\times \to F_x^\times \) whose image generates \( F_x \) as a \( \bQ \)-algebra.
We will construct a \( \bQ \)-linear map \( S \colon L \to F_x \) which can be interpreted as the logarithm of this homomorphism (this interpretation can be made precise \( p \)-adically).
We then show that, for each prime \( p \), there is an element \( z_p \in \cO_L \otimes \bZ_p \) such that \( S(z_p) \) generates a \( \bZ_p \)-algebra whose index in the maximal order of \( F_x \otimes \bQ_p \) is a bounded power of \( p \).
We will later use \( S(z_p) \) to bound the power of \( p \) dividing \( [\cO_x:R_x] \) relative to the power of \( p \) dividing \( [K^m_{\gM,p}:K_{\gM,p}] \).

The morphism $L^{\times}\rightarrow\gM(\bQ)$ defined by $r$ is given explicitly by
\begin{align*}
z\mapsto\prod_{\sigma\in\Gal(L/\bQ)}\mu^{\sigma}(\sigma(z))=\prod_{\sigma\in\Gal(L/\bQ)}\sigma(\mu(z)).
\end{align*}
For $i=1,\dotsc,s$, let $\mu_i$ denote the composition of $\mu \colon \bG_{m,L} \to \gM_L$ with the projection to $(\Res_{F_i/\bQ} \bG_m) \times_\bQ L$.
Since $\gM$ is split over $L$, the fields $F_i$ are all isomorphic to subfields of the Galois field $L$ and so
\begin{align*}
(\Res_{F_i/\bQ} \bG_m) \times_\bQ L \cong \prod_{\tau \colon F_i \to L} \bG_{m,L}.
\end{align*}

Let $(-n_{i,\tau}, -n'_{i,\tau})$ denote the Hodge type of the $F_i$-eigenspace in $E_{x,L}$ on which $F_i$ acts via $\tau$.
Then the definition of the cocharacter $\mu$ of $\gM$ implies that
\begin{align*}
\mu_i(z) = (z^{n_{i,\tau}})_{\tau \colon F_i \to L} \text{ in } \prod_{\tau \colon F_i \to L} \bG_{m,L}.
\end{align*}

Therefore, the projection $r_i$ of $r$ to $\Res_{F_i/\bQ}\bG_m$ sends $z\in L^{\times}$ to the element 
\begin{align*}
\prod_{\sigma\in\Gal(L/\bQ)}\sigma(\mu_i(z))=
\left( \prod_{\sigma\in\Gal(L/\bQ)} \sigma(z)^{n_{i, \sigma^{-1}\tau}} \right)_{\tau \colon F_i \to L}
\end{align*}
in $\Res_{F_i/\bQ} \bG_m(L) \cong \prod_{\tau \colon F_i \to L} L^\times$,
for uniform integers $n_{i,\tau}$.
Observe that this value is $\Gal(L/\bQ)$-invariant, so in fact it is an element of $\Res_{F_i/\bQ} \bG_m(\bQ) = F_i^\times$.

We define a $\bQ$-linear map $S \colon L \to F_x$ by
\begin{align*}
S(z) \otimes 1 =
\left( \sum_{\sigma\in\Gal(L/\bQ)} n_{i,\sigma^{-1}\tau} \, \sigma(z) \right)_{i, \tau}
\text{ in }
F_x \otimes_\bQ L \cong \prod_{i=1}^s \prod_{\tau \colon F_i \to L} L^\times.
\end{align*}
Observe that the right hand side of the above expression is invariant under the action of $\Gal(L/\bQ)$ on $F_x \otimes_\bQ L$, so it does indeed give us values in $F_x \otimes 1$.
This map $S$ is the same as the map from the tangent space of $\Res_{L/\bQ} \bG_m$ to the tangent space of $\gZ$ induced by $r$.

\begin{lemma}\label{Sgen}
The image of $S$ generates the full $\bQ$-algebra $F_x$.
\end{lemma}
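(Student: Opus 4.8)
The plan is to identify $S$ with the differential of the reciprocity morphism and then transfer the algebra-generation statement from the group $\gM(\bQ)$ (already established above) to the Lie algebra $\Lie\gM$. As remarked just above the statement of the lemma, $S$ is precisely the map induced on tangent spaces at the identity by $r \colon \Res_{L/\bQ}\bG_m \to \gM \subset \gZ$, under the identifications $\Lie(\Res_{L/\bQ}\bG_m) = L$ and $\Lie\gZ = F_x$, with $\Lie\gM$ regarded as a $\bQ$-subspace of $\Lie\gZ$. Since $r$ is surjective onto $\gM$ and the characteristic is $0$, its differential is surjective onto $\Lie\gM$; hence $\im S = \Lie\gM$ as a subspace of $F_x$, and the lemma is reduced to the assertion that $\Lie\gM$ generates $F_x$ as a $\bQ$-algebra.

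First I would introduce $A$, the $\bQ$-subalgebra of $F_x$ generated by $\Lie\gM$. Being a finite-dimensional commutative subalgebra of the product of fields $F_x = \prod_i F_i$, the algebra $A$ is reduced, hence is itself a finite product of number fields. Therefore $\Res_{A/\bQ}\bG_m$ is a well-defined torus over $\bQ$; the inclusion $A \hookrightarrow F_x$ exhibits it as a subtorus of $\Res_{F_x/\bQ}\bG_m \cong \gZ$, and this inclusion of tori induces exactly the inclusion $A \hookrightarrow F_x = \Lie\gZ$ on Lie algebras.

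The heart of the argument is the comparison $\gM \subseteq \Res_{A/\bQ}\bG_m$ inside $\gZ$. Both are connected, and $\Lie\gM \subseteq A = \Lie(\Res_{A/\bQ}\bG_m)$; since in characteristic $0$ a connected algebraic subgroup is recovered from its Lie subalgebra — concretely, $(\gM \cap \Res_{A/\bQ}\bG_m)^\circ$ has Lie algebra $\Lie\gM \cap \Lie(\Res_{A/\bQ}\bG_m) = \Lie\gM$, hence the same dimension as $\gM$, hence equals $\gM$ — the inclusion follows. Passing to $\bQ$-points gives $\gM(\bQ) \subseteq (\Res_{A/\bQ}\bG_m)(\bQ) = A^\times \subseteq A$, and in particular $r(L^\times) \subseteq \gM(\bQ) \subseteq A$. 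But by \cref{reciprocity} the image $r(L^\times)$ generates $F_x$ as a $\bQ$-algebra, so $F_x \subseteq A$, that is, $A = F_x$, which is what we want.

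I do not anticipate a genuine obstacle here: the only inputs requiring care are two standard characteristic-$0$ facts — that a surjective morphism of algebraic groups induces a surjection on Lie algebras, and that a connected algebraic subgroup is determined by its Lie subalgebra — while the rest is bookkeeping with products of number fields. The content of the lemma is really conceptual: $\Lie\gM$ and $\gM(\bQ)$ generate the same $\bQ$-subalgebra of $F_x$, and it is this that allows us to replace the reciprocity image by the single element $S(z_p)$ in the local index estimates of the following subsection.
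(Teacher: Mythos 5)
Your argument is correct, but it takes a genuinely different route from the paper's. The paper proves the lemma by transferring the two Goursat conditions from the group level to the Lie algebra level: since each projection $r_i$ does not factor through $\Res_{F'/\bQ}\bG_m$ for a proper subfield $F' \subset F_i$ (nor through the graph of an isomorphism $F_i \cong F_j$), and since in characteristic zero a homomorphism from the connected group $\Res_{L/\bQ}\bG_m$ factors through a connected subgroup if and only if its differential factors through the corresponding Lie subalgebra, the same non-factoring statements hold for $S$; then Lemma~\ref{goursat} is applied directly to the image of $S$. You instead avoid re-verifying the Goursat conditions for $S$: identifying $\im S = \Lie\gM$ (via surjectivity of $r$, which the paper asserts just before \cref{reciprocity}, and characteristic zero), you let $A$ be the subalgebra generated by $\Lie\gM$, note that $A$ is an étale $\bQ$-algebra so that $\Res_{A/\bQ}\bG_m$ is a subtorus of $\gZ$ with Lie algebra $A$, deduce $\gM \subseteq \Res_{A/\bQ}\bG_m$ from the Lie-algebra inclusion and connectedness, and then conclude $A = F_x$ from the already-proved fact that $\gM(\bQ)$ (equivalently $r(L^\times)$) generates $F_x$. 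Both proofs rest on the same two standard characteristic-zero facts, but yours buys a single clean reduction to the group-level generation statement (Goursat is only used through \cref{reciprocity}), whereas the paper's runs structurally parallel to Lemmas~\ref{factor} and~\ref{factors} and checks the hypotheses of Lemma~\ref{goursat} once more at the tangent-space level; either is acceptable, and your intermediate observation that $\Lie\gM$ and $\gM(\bQ)$ generate the same subalgebra of $F_x$ is a nice conceptual summary of what the lemma says.
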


\begin{proof}
By \cref{reciprocity}, we know that $r_i \colon \Res_{L/\bQ} \bG_m \to \Res_{F_i/\bQ} \bG_m$ does not factor through $\Res_{F'/\bQ} \bG_m$ for any proper subfield $F' \subset F_i$.
Since $r_i$ is a morphism of algebraic groups over a field of characteristic zero, it follows that the induced map of tangent spaces does not factor through the tangent space of the algebraic subgroup $\Res_{F'/\bQ} \bG_m$.
In other words, the projection of $S$ onto $F_i$ does not factor through any proper subfield $F' \subset F_i$.

A similar argument shows that the projection of $S$ onto $F_i \times F_j$ does not factor through the graph of any isomorphism $F_i \to F_j$ (when $i \neq j$).

By Lemma \ref{goursat}, these properties imply the statement of the lemma.
\end{proof} 

For each prime~$p$, the elements of $\Gal(L/\bQ)$ extend $\bQ_p$-linearly to the $\bQ_p$-algebra $L\otimes_\bQ\bQ_p$ and so the map $S$ also extends $\bQ_p$-linearly to  $L\otimes\bQ_p$. 
Provided $n>1$ (to ensure convergence), we have 
\begin{align*}
r(\exp p^nz)= \exp p^nS(z)
\end{align*}
as maps $\cO_L \otimes \bZ_p \to (\cO_x \otimes \bZ_p)^\times$.

\begin{proposition}\label{elementindex}
There is a uniform constant $\newC{elementindex-exponent}$ such that, for every prime $p$, there exists $z\in\cO_L\otimes\bZ_p$ such that the index of $\bZ_p[S(z)]$ in 
\begin{align*}
\cO_{x,p}:=\cO_x \otimes_\bZ \bZ_p = \prod_{i=1}^s \cO_{F_i} \otimes_\bZ \bZ_p
\end{align*}
is at most $p^{\refC{elementindex-exponent}}$.
\end{proposition}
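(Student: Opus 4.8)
The plan is to reduce the claim to a bound on the $p$-adic valuation of a discriminant, to make that discriminant explicit as a product of linear forms in $z$, and then to conclude by a counting argument that uses only the uniformly bounded ``local data at $p$''. For $z\in\cO_L$ the $\tau$-conjugate of the $i$-th component of $S(z)$ is the algebraic integer $\sum_{\sigma\in\Gal(L/\bQ)}n_{i,\sigma^{-1}\tau}\sigma(z)$, so $S(\cO_L)\subseteq\cO_x$, and for $z\in\cO_L\otimes\bZ_p$ the characteristic polynomial $P_z\in\bZ_p[X]$ of $S(z)$ acting on $F_x\otimes\bQ_p$ is monic of degree $n':=\dim_\bQ F_x$. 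If $\Delta(z):=\disc_X P_z\neq0$ then $P_z$ is separable; since $F_x\otimes\bQ_p$ is reduced, $S(z)$ is semisimple with minimal polynomial $P_z$, and monicity of $P_z$ gives $\bZ_p[S(z)]=\bZ_p[X]/(P_z)$, a free $\bZ_p$-module of rank $n'$ with discriminant ideal $(\Delta(z))$. From the identity $\disc(\bZ_p[S(z)])=[\cO_{x,p}:\bZ_p[S(z)]]^2\,\disc(\cO_{x,p})$ it therefore suffices to produce, for each $p$, some $z\in\cO_L\otimes\bZ_p$ with $\Delta(z)\neq0$ and $v_p(\Delta(z))-v_p(\disc\cO_{x,p})\le 2\refC{elementindex-exponent}$, where $\refC{elementindex-exponent}$ is uniform.

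Next I would make $\Delta$ explicit. Writing $P_z=\prod_i P_{z,i}$ with $P_{z,i}$ the characteristic polynomial of the component $\beta_i(z)\in F_i$, the roots of $P_z$ in $\overline{\bQ}_p$ are, under a fixed embedding $\Qalg\hookrightarrow\overline{\bQ}_p$, the values $\ell_{i,\tau}(z):=\sum_{\sigma\in\Gal(L/\bQ)}n_{i,\sigma^{-1}\tau}\sigma(z)$ for $i=1,\dotsc,s$ and $\tau\colon F_i\hookrightarrow L$, so that $\Delta(z)=\pm\prod_{(i,\tau)\neq(i',\tau')}\bigl(\ell_{i,\tau}(z)-\ell_{i',\tau'}(z)\bigr)$. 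Each factor $\ell_{i,\tau}-\ell_{i',\tau'}$ is a $\bZ$-linear form on $\cO_L$ whose coefficients in the basis $\{\sigma\}_{\sigma\in\Gal(L/\bQ)}$ are differences of the uniform Hodge integers $n_{i,\tau}$; it is not identically zero, for otherwise the coordinates indexed by $(i,\tau)$ and $(i',\tau')$ would agree on all of $S(\cO_L)$, hence on the $\bQ$-algebra it generates, while they disagree on $F_x$ itself, contradicting \cref{Sgen}. Thus $\Delta$ is a nonzero polynomial of uniformly bounded degree, a product of boundedly many nonzero $\bZ$-linear forms of uniformly bounded complexity.

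The conclusion then rests on the observation that $v_p(\disc\cO_{x,p})$ and the set $\{v_p(\Delta(z)):z\in\cO_L\otimes\bZ_p\}$ depend only on the completions at $p$: the étale $\bQ_p$-algebras $L\otimes\bQ_p$ and $F_x\otimes\bQ_p$, the induced $\Gal(L/\bQ)$-action with its decomposition and inertia subgroups, the subgroups $\Gal(L/F_i)$, and the integers $n_{i,\tau}$. Since $[L:\bQ]$ is uniformly bounded (Minkowski's theorem, as in the introduction), so are $\dim_\bQ F_x$, the $n_{i,\tau}$, and the ramification of $p$ in $L$; in particular $v_p(\disc\cO_{x,p})$ is uniformly bounded. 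For all primes $p$ above a uniform threshold (depending on the $n_{i,\tau}$, on $n'$ and on $[L:\bQ]$), every prime of $L$ over $p$ is at worst tamely ramified and the reductions modulo $p$ of the forms $\ell_{i,\tau}-\ell_{i',\tau'}$ remain nonzero $\bF_p$-linear maps on $\cO_L/p\cO_L$; a Schwartz--Zippel count over $\cO_L/p\cO_L$ (which has $p^{[L:\bQ]}$ elements) then yields $z$ outside all of their kernels, making $v_p(\Delta(z))$ exceed $v_p(\disc\cO_{x,p})$ by at most a quantity controlled by the bounded ramification. For the remaining finitely many primes there are only finitely many possible local configurations — Krasner's lemma bounds the number of extensions of $\bQ_p$ of bounded degree — and for each the minimum of $v_p(\Delta(z))-v_p(\disc\cO_{x,p})$ over $z$ with $\Delta(z)\neq0$ is finite, because a nonzero polynomial over $\bZ_p$ does not vanish on all of $\bZ_p^{[L:\bQ]}$. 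Taking $\refC{elementindex-exponent}$ to be half the maximum of these finitely many values completes the argument.

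The main obstacle is precisely this final step: obtaining a constant that works uniformly for all primes simultaneously. The structural fact that makes it possible is that $[L:\bQ]$ is bounded independently of the special point $x$, which bounds the ramification of every $p$ in $L$ and hence bounds both $v_p(\disc\cO_{x,p})$ and the unavoidable part of $v_p(\Delta(z))$; without it the defect $v_p(\Delta(z))-v_p(\disc\cO_{x,p})$ could grow with $\disc L$ and no uniform exponent would exist. The rest — carrying out the Schwartz--Zippel count, separating tame from wild ramification, and verifying the reduction to local data — is elementary but needs care.
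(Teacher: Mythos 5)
Your reduction is the same as the paper's: both arguments identify the index of \( \bZ_p[S(z)] \) in \( \cO_{x,p} \) with the \( p \)-part of \( \disc(\bZ_p[S(z)])/\disc(\cO_{x,p}) \), write that discriminant as a Vandermonde-type product of differences of the images of \( S(z) \) under the embeddings of \( F_x\otimes\bQ_p \) into a fixed completion \( L_{v_0} \), use \cref{Sgen} to see that this expression does not vanish identically in \( z \), and rely on the uniform bound for \( [L:\bQ] \) (hence for the ramification of \( p \) in \( L \) and for \( v_p(\disc\cO_{x,p}) \)). Where you genuinely diverge is the mechanism that produces a uniform valuation bound: the paper chooses a \( \bZ_p \)-basis of \( \cO_L\otimes\bZ_p \) controlled by the uniformly bounded discriminant of \( L\otimes\bQ_p \) and applies the elementary \cref{single-poly-non-zero-small-point} to the single polynomial \( \iota \) in the coordinates, uniformly in \( p \); you instead factor the discriminant into the linear forms \( \ell_{i,\tau}-\ell_{i',\tau'} \), handle large \( p \) by a counting argument modulo \( p \), and dispose of the finitely many small \( p \) by a finiteness-of-local-configurations (Krasner) argument. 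The small-\( p \) step is correct, though non-effective, and has no counterpart in the paper.

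The large-\( p \) step, however, has a gap exactly where the work lies. First, a necessary precision: since \( v_p(\Delta(z)) \) is the sum over pairs of the valuations of the differences computed in the \emph{one} fixed completion \( L_{v_0} \), the Schwartz--Zippel count must be applied to the forms composed with projection to that single place, i.e.\ to maps \( \cO_L/p\cO_L\to\cO_{L_{v_0}}/p\cO_{L_{v_0}} \); knowing only that each difference is nonzero in \( \cO_L/p\cO_L \) allows different pairs to be small at different places above \( p \) and gives no bound on \( v_p(\Delta(z)) \). Second, and more seriously, the assertion that these projected forms ``remain nonzero mod \( p \)'' for all \( p \) above a uniform threshold does not follow from linear independence of characters once \( p \) ramifies in \( L \): two elements of \( \Gal(L/\bQ) \) in the same inertia coset at \( v_0 \) induce the same map on residue fields, so the reduction of \( \sum_\sigma c_\sigma\sigma \) modulo the maximal ideal can vanish identically even when the nonzero \( c_\sigma \) are units mod \( p \) (take \( c_\sigma=1 \), \( c_{\sigma'}=-1 \) with \( \sigma'\sigma^{-1} \) in inertia). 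The claim is nevertheless true in the tame range: for instance, evaluate the form at elements \( \omega\pi^j \) with \( \pi \) a uniformizer and \( \omega \) in the unramified part; combining Dedekind independence on the residue field with the non-vanishing of the resulting ``Fourier'' coefficients, which are nonzero algebraic integers of uniformly bounded norm and hence prime to \( p \) for \( p \) large, one gets a value of valuation strictly less than \( 1 \), whence \( v_p(\Delta(z)) \) is bounded by the number of pairs. But this lemma is the crux of your large-\( p \) case and is precisely what your sketch leaves unproved; with it supplied, and the projection fixed as above, your argument does go through and gives a uniform exponent comparable to the paper's.
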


\begin{proof}
Enumerate the complete set of ring homomorphisms $\psi_i$ from $\cO_{x,p}$ to a fixed $p$-adic completion $L_{v}$ of $L$. Let $v_p$ be the $p$-adic valuation on $\bQ$ and its extension to $L_v$. For $z\in\cO_L\otimes\bZ_p$ the index of $\bZ_p[S(z)]$ in $\cO_{x,p}$ is given by $p$ raised to the value of $v_p$ on the Vandermonde determinant
\begin{align*}
\iota(z):=\prod_{i\neq j}(\psi_i(S(z))-\psi_j(S(z))),
\end{align*}
divided by the discriminant of $\cO_{x,p}$.

To find a suitable $z$ we first choose a $\bZ_p$-basis $\{\beta_j\}$ of $\cO_L\otimes\bZ_p$ such that $v_p(\beta_j)$ is bounded by a uniform positive integer. To see that this is possible, first choose any $\bZ_p$-basis $\{\beta_j\}$ of $\cO_L\otimes\bZ_p$ and recall that, if we enumerate the automorphisms $\sigma_i$ of $L\otimes\bQ_p$, then the discriminant of $L\otimes\bQ_p$ is $p$ raised to the value of $v_p$ on the determinant of the matrix $(\sigma_i(\beta_j))$. This discriminant is uniformly bounded and so we conclude that $v_p(\beta_j)$ is uniformly bounded for one of the $\beta_j$. However, by the properties of $v_p$ we can therefore obtain our desired basis by, if necessary, taking sums of the form $\beta_j+\beta_{j'}$. Since the value of $v_p$ is preserved under automorphism, $v_p(\iota(\beta_j))$ is uniformly bounded for any $\beta_j$.

Writing
\[ z = \sum_j a_j \beta_j \text{ (for } a_j \in \bZ_p \text{)}, \]
we can express $\iota$ as a polynomial over $\bZ_p$ of uniformly bounded degree in the $a_j$. By Lemma \ref{Sgen}, this polynomial is not identically zero.
Hence by Lemma \ref{single-poly-non-zero-small-point}, we can find a linear combination $z$ of the $\beta_j$ whose coefficients have uniformly bounded $p$-adic valuation such that $\iota(z)\neq 0$.
\end{proof}

\subsection{Generating the maximal order}

In this section we show that, for all primes $p$ outside of a uniformly fixed finite set $\Sigma$, if $p$ divides $[\cO_x:R_x]$ then either $p$ is ramified in $L$ (and hence divides $\disc L$) or $K_{\gM,p}$ is strictly contained in $K_{\gM,p}^m$.
Note that, once we know that $K_{\gM,p} \neq K_{\gM,p}^m$, \cite{uy:andre-oort} Proposition~3.15 implies that
\[ [K^m_{\gM,p}:K_{\gM,p}] \gg p. \]

The uniform finite set which we have to omit is the following: let $\Sigma$ be the set of primes $p$ for which the compact open subgroup $K_p \subset \gG(\bQ_p)$ is not contained in $\gGL(E_{\bZ_p})$.

\begin{lemma}\label{generates}
If $p$ is unramified in $L$, the group $K^m_{\gM,p}$ generates the ring $\cO_{x,p}$.
\end{lemma}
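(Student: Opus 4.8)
\textbf{Proof proposal for \cref{generates}.}

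The plan is to compare two integral structures on the torus $\gM_{\bQ_p}$: the one coming from the fixed lattice $E_{\bZ_p}$ via $\rho$, and the canonical one coming from the fact that $\gM$ splits over the unramified extension $L_p = L \otimes_\bQ \bQ_p$. First I would observe that, since $p \notin \Sigma$, the maximal compact subgroup $K^m_{\gM,p}$ of $\gM(\bQ_p)$ contains the subgroup $\gM(\bQ_p) \cap \gGL(E_{\bZ_p})$, and in fact equals it because $\gGL(E_{\bZ_p})$ is a maximal compact subgroup of $\gGL(E_{\bQ_p})$ and $\gM(\bQ_p) \cap \gGL(E_{\bZ_p})$ is already compact and open in $\gM(\bQ_p)$ (so its closure, which is $K^m_{\gM,p}$, equals it). Recall also from the previous subsection that $\gZ_{\bQ_p} \cong \prod_i (\Res_{F_i/\bQ} \bG_m)_{\bQ_p}$, so $\cO_{x,p} = \prod_i \cO_{F_i} \otimes \bZ_p$ is precisely $\End_{\gM}(E_{\bZ_p}) \cap \gZ(\bQ_p)$ intersected appropriately — more precisely, $\cO_{x,p}^\times$ is the maximal compact subgroup $K^m_{\gZ,p}$ of $\gZ(\bQ_p)$, by the same maximality argument applied to $\gZ$.

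Next I would use the unramifiedness hypothesis. Since $L_p/\bQ_p$ is unramified, the torus $\gM_{\bQ_p}$ extends to a torus $\mathcal{M}$ over $\bZ_p$ which becomes split over $\bZ_{p^f} = \cO_{L_p}$ (the standard fact that tori splitting over unramified extensions have smooth affine $\bZ_p$-models with connected fibres); consequently $\mathcal{M}(\bZ_p)$ is \emph{the} maximal compact subgroup $K^m_{\gM,p}$ of $\gM(\bQ_p)$, and similarly for $\gZ$. The key point is then to show that the $\bZ_p$-subalgebra of $\cO_{x,p}$ generated by $K^m_{\gM,p}$ is everything. For this I would pass to the unramified splitting field: over $\cO_{L_p}$, both $\gM$ and $\gZ$ become products of copies of $\bG_m$, and the inclusion $\gM \hookrightarrow \gZ$ becomes a monomorphism of split tori whose cocharacter lattice map is the one encoding the integers $n_{i,\tau}$ from the reciprocity computation. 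Because $\mathcal{M}(\bZ_p) \otimes_{\bZ_p} \cO_{L_p}$ surjects onto $\mathcal{M}_{\cO_{L_p}}(\cO_{L_p}) = (\cO_{L_p}^\times)^{\dim \gM}$ up to finite index prime to $p$ (again using that $\cO_{L_p}/\bZ_p$ is unramified and étale), and because $\gM \to \gZ$ is surjective at the level of the generic fibre, one checks that $K^m_{\gM,p}$ generates a $\bZ_p$-order in $\cO_{x,p}$ of index prime to $p$; but since $\cO_{x,p}$ is a product of rings of integers of unramified local fields and the generated order is a full-rank $\bZ_p$-subalgebra, any $p'$-index sub-order containing $1$ and closed under the relevant Galois/Frobenius action must be the whole ring. (Here I am using that $\cO_{x,p}$ is unramified over $\bZ_p$, which holds outside $\Sigma$ after enlarging it to also discard primes ramified in the $F_i$ — though in fact these are among the primes ramified in $L$.)

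The main obstacle I anticipate is the last step: ruling out a proper sub-order of $p$-power index. Over $\bZ_p$ this is exactly where unramifiedness is essential — if $\cO_{x,p}$ had a ramified factor, a proper sub-order of $p$-power index could arise and the statement would fail, which is precisely why primes dividing $\disc L$ must be excluded in the surrounding argument. The cleanest way to close this gap is probably to argue mod $p$: reduce the inclusion $\langle K^m_{\gM,p}\rangle \subseteq \cO_{x,p}$ modulo $p$ and show that the image of $K^m_{\gM,p}$ in the (étale, hence separable) $\bF_p$-algebra $\cO_{x,p}/p\cO_{x,p}$ already generates it, using that over an unramified base the reduction of $\mathcal{M}(\bZ_p)$ surjects onto $\mathcal{M}(\bF_p)$ and invoking a version of the "$\bF_p$-points of a torus generate the centre of its endomorphism algebra" fact (the finite-field analogue of the proposition proved earlier in this section, which is where one uses that $\gM \to \gZ$ stays surjective on $\bF_p$-points, valid for $p$ large, i.e. outside a uniform finite set). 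Once the reduction mod $p$ is surjective, Nakayama over the local ring $\bZ_p$ (or over $\cO_{x,p}$, which is semilocal) finishes the proof.
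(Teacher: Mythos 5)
Your closing sketch does land on the paper's strategy (a smooth \( \bZ_p \)-model \( \mathcal{T} \) of \( \gM \) with torus special fibre, which exists precisely because \( p \) is unramified in \( L \); reduction mod \( p \); generation of the residue algebra by \( \mathcal{T}(\bF_p) \); Nakayama), but the step you leave as an invocation — ``a finite-field analogue of the generation statement, valid for \( p \) large, outside a uniform finite set'' — is the entire content of the lemma, which is asserted for \emph{every} prime unramified in \( L \), with no further exclusions. What is missing is the mechanism that carries the characteristic-zero generation statement to the special fibre: over \( \bQ_p \) the embedding \( \gM_{\bQ_p}\hookrightarrow\prod_i\prod_{v\mid p}\Res_{F_{i,v}/\bQ_p}\bG_m \) satisfies the two non-factoring conditions of \cref{goursat} (because \( \gM(\bQ) \) generates \( F_x \)), and by rigidity of groups of multiplicative type over \( \bZ_p \) (\cite{DG63} Exp.~X Lemme~4.1) these conditions transfer to the embedding of special fibres \( \mathcal{T}_{\bF_p}\hookrightarrow\prod\Res_{k_{i,v}/\bF_p}\bG_m \); then \cref{goursat} over \( \bF_p \) shows that \( \mathcal{T}(\bF_p) \) generates \( \cO_x\otimes\bF_p \), smoothness gives \( \mathcal{T}(\bZ_p)=K^m_{\gM,p}\twoheadrightarrow\mathcal{T}(\bF_p) \), and Nakayama concludes. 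Without this transfer (or a substitute valid for all unramified \( p \)), the proof is not complete. Note also that your intermediate claim that \( K^m_{\gM,p} \) generates an order ``of index prime to \( p \)'' is vacuous: the index of any full \( \bZ_p \)-subalgebra of \( \cO_{x,p} \) is automatically a power of \( p \), so that assertion is just a restatement of the lemma.

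Two auxiliary claims are also wrong. The identity \( K^m_{\gM,p}=\gM(\bQ_p)\cap\gGL(E_{\bZ_p}) \) does not follow from your argument — a compact open subgroup is already closed, and being open does not make it maximal — and it is false in general: \( \gM(\bQ_p)\cap\gGL(E_{\bZ_p}) \) lies in \( R_{x,p}^\times \) (the computation of \cref{inclusion}), and if your identity held, the present lemma together with \cref{inclusion} would force \( R_{x,p}=\cO_{x,p} \) at every unramified \( p\notin\Sigma \) regardless of the level structure, emptying \cref{useUY} of content; the distinction between \( K_{\gM,p} \) and \( K^m_{\gM,p} \) is the whole point of this part of the paper. (That \( \cO_{x,p}^\times \) is the maximal compact subgroup of \( \gZ(\bQ_p)=(F_x\otimes\bQ_p)^\times \) is true, but not for the reason you give.) Likewise ``\( \gM\to\gZ \) is surjective'' is not correct — \( \gM \) is in general a proper subtorus of \( \gZ \); the surjection that matters is \( r\colon\Res_{L/\bQ}\bG_m\to\gM \), and in any case surjectivity plays no role in the generation step, the non-factoring (Goursat) conditions do.
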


\begin{proof}

Recall that $\Res_{L/\bQ}\bG_m$, $\gM$ and $\Res_{F_i/\bQ}\bG_m$ possess smooth models $\mathcal{R}_L$, $\mathcal{T}$ and $\mathcal{R}_{F_i}$ over $\bZ_p$ whose generic fibres are $(\Res_{L/\bQ}\bG_m)_{\bQ_p}$, $\gM_{\bQ_p}$ and $(\Res_{F_i/\bQ}\bG_m)_{\bQ_p}$, respectively. Furthermore, since $p$ is unramified in $L$, we may insist that the special fibres of these models are tori over $\bF_p$ and that their $\bZ_p$-points are the respective unique hyperspecial (maximal) compact open subgroups. The extension of $r$ over $\bQ_p$ then extends uniquely to a morphism
\begin{align*}
\mathcal{R}_L\rightarrow\mathcal{T}\rightarrow\prod_{i=1}^s\mathcal{R}_{F_i}
\end{align*}
over $\bZ_p$ and, reducing modulo $p$, we obtain a map
\begin{align*}
\mathcal{R}_{L,\bF_p}\rightarrow\mathcal{T}_{\bF_p}\rightarrow \prod_{i=1}^s\mathcal{R}_{F_i,\bF_p}.
\end{align*}

Since $\gM(\bQ)$ generates $F_x$, $\gM(\bQ_p)$ generates 
\begin{align*}
F_x\otimes\bQ_p=\prod_{i=1}^s\prod_{v|p}F_{i,v}
\end{align*}
where the second product is over places of $F_i$ dividing $p$.

Therefore, by Lemma \ref{goursat}, the embedding of $\gM_{\bQ_p}$ in 
\begin{align*}
\prod_{i=1}^s(\Res_{F_i/\bQ}\bG_m)_{\bQ_p}=\prod_{i=1}^s\prod_{v|p}\Res_{F_{i,v}/\bQ_p}\bG_m
\end{align*}
has the analogous properties to the embedding over $\bQ$, namely that the projection to any factor $\Res_{F_{i,v}/\bQ_p}\bG_m$ does not factor through $\Res_{F'/\bQ_p}\bG_m$ for any proper $p$-adic subfield $F'$ of $F_{i,v}$ and the projection to the product of any two factors $\Res_{F_{i,v}/\bQ_p}\bG_m$ and $\Res_{F_{j,w}/\bQ_p}\bG_m$ does not factor through the graph of an isomorphism. By~\cite{DG63} Exp. X Lemme 4.1, these two properties immediately transfer to the analogous conditions on the embedding of $\mathcal{T}_{\bF_p}$ in
\begin{align*}
\prod_{i=1}^s\mathcal{R}_{F_i,\bF_p} = \prod_{i=1}^s\prod_{v|p}\Res_{k_{i,v}/\bF_p}\bG_m,
\end{align*}
where $k_{i,v}$ denotes the residue field of $F_{i,v}$. By Lemma \ref{goursat}, we conclude that $\mathcal{T}_{\bF_p}(\bF_p)$ generates the full $\bF_p$-algebra 
\begin{align*}
\cO_x \otimes_\bZ \bF_p = \prod_{i=1}^s \prod_{v|p} k_{i,v}
\end{align*}
The lemma is now a direct consequence of Nakayama's lemma.
\end{proof}

\begin{lemma}\label{inclusion}
If $p\notin\Sigma$, then $K_{\gM,p}$ is contained in $R_{x,p}$.
\end{lemma}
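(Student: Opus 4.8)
The plan is to unwind the definitions: membership of an element of $K_{\gM,p}$ in $R_{x,p}$ reduces to two separate facts, one identifying the subalgebra in which the element sits and one identifying the lattice it preserves.

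First I would record an algebraic description of $R_x$. The ring $\End_{\bZ\text{-HS}}E_x$ is an order in the semisimple $\bQ$-algebra $\End_{\bQ\text{-HS}}E_x$ (semisimplicity because $E_x$ is semisimple for its Mumford--Tate group), and for any order $\Lambda$ in a semisimple $\bQ$-algebra $A$ with centre $Z$ one has $Z(\Lambda) = \Lambda \cap Z$; applied here this gives $R_x = F_x \cap \End(E_\bZ)$, the intersection being taken inside $\End(E_\bQ) = \rM_n(\bQ)$ (relative to the basis $\cB_\bZ$, so that $\End(E_\bZ) = \rM_n(\bZ)$ and $F_x$ sits in $\rM_n(\bQ)$ via the module structure). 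Since $\bZ \to \bZ_p$ is flat, intersection commutes with this base change, so
\[ R_{x,p} = R_x \otimes_\bZ \bZ_p = (F_x \otimes_\bQ \bQ_p) \cap \End(E_{\bZ_p}) \quad\text{inside } \End(E_{\bQ_p}). \]

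Next I would use the structure of $\gZ$. Recall from the proof of \cref{mt-to-diagonal-basis} that $\gZ(\bQ) = F_x^\times$ as subgroups of $\gGL(E_\bQ)$, and that $\gM \subseteq \gZ$ because $\gM$ is commutative; base-changing to $\bQ_p$ gives $\gZ(\bQ_p) = (F_x \otimes_\bQ \bQ_p)^\times$ inside $\gGL(E_{\bQ_p})$. Now take any $g \in K_{\gM,p} = K_p \cap \gM(\bQ_p)$. Then $g \in \gM(\bQ_p) \subseteq \gZ(\bQ_p)$, so, viewed inside $\End(E_{\bQ_p})$, the element $g$ lies in $F_x \otimes_\bQ \bQ_p$. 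On the other hand $p \notin \Sigma$ means by definition $K_p \subseteq \gGL(E_{\bZ_p})$, so $g$ is an automorphism of the lattice $E_{\bZ_p}$ and in particular $g \in \End(E_{\bZ_p})$. Combining the two containments with the displayed description of $R_{x,p}$ yields $g \in R_{x,p}$, as required.

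There is no genuine obstacle here beyond bookkeeping: the only points needing care are the identity $R_x = F_x \cap \End(E_\bZ)$ (equivalently, that $R_{x,p}$ is exactly the intersection of the étale algebra $F_x \otimes_\bQ \bQ_p$ with the matrix ring $\rM_n(\bZ_p)$), where flatness of $\bZ \to \bZ_p$ is invoked, and using the hypothesis "$p \notin \Sigma$" precisely in the form "every $g \in K_p$ preserves the lattice $E_{\bZ_p}$."
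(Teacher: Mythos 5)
Your proposal is correct and follows essentially the same route as the paper: use $p\notin\Sigma$ to get $K_{\gM,p}\subset\gM(\bQ_p)\cap\gGL(E_{\bZ_p})$, use $\gM(\bQ_p)\subset\gZ(\bQ_p)=(F_x\otimes_\bQ\bQ_p)^\times$, and conclude via $R_{x,p}=(F_x\otimes_\bQ\bQ_p)\cap\End_{\bZ_p}E_{\bZ_p}$. The only difference is that you spell out the justification of this last identity (centre of an order, flatness of $\bZ\to\bZ_p$), which the paper asserts without comment.
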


\begin{proof}
Since $p \notin \Sigma$,
\begin{align*}
K_{\gM,p} \subset \gM(\bQ_p)\cap\gGL(E_{\bZ_p}).
\end{align*}
On the other hand, we have already seen that
\begin{align*}
\gM(\bQ_p) \subset \gZ(\bQ_p) = (F_x \otimes_\bQ \bQ_p)^\times.
\end{align*}
Hence
\[ K_{\gM,p} \subset (F_x \otimes_\bQ \bQ_p) \cap \End_{\bZ_p} E_{\bZ_p} = R_{x,p}.
\qedhere \]
\end{proof}

\begin{proposition}\label{useUY}
If $p\notin\Sigma$ is unramified in $L$ and $p$ divides $[\cO_x:R_x]$ then $K_{\gM,p}$ is strictly contained in $K^m_{\gM,p}$. 
\end{proposition}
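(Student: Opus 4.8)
The plan is a short proof by contradiction, combining \cref{generates} and \cref{inclusion}. Assume $p \notin \Sigma$ is unramified in $L$, that $p$ divides $[\cO_x:R_x]$, and --- for contradiction --- that $K_{\gM,p} = K^m_{\gM,p}$.

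First I would record the purely ring-theoretic facts. Since $R_x$ is an order in $F_x$ and $\cO_x$ is its maximal order, $R_{x,p} := R_x \otimes_\bZ \bZ_p$ is a $\bZ_p$-subalgebra of $\cO_{x,p} := \cO_x \otimes_\bZ \bZ_p$, and $[\cO_{x,p}:R_{x,p}]$ equals the $p$-part of $[\cO_x:R_x]$; in particular the hypothesis $p \mid [\cO_x:R_x]$ gives $R_{x,p} \neq \cO_{x,p}$.

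Next, by \cref{inclusion} (which applies since $p \notin \Sigma$) we have $K_{\gM,p} \subset R_{x,p}$. As $R_{x,p}$ is closed under addition and multiplication, the $\bZ_p$-subalgebra of $\cO_{x,p}$ generated by $K_{\gM,p}$ is contained in $R_{x,p}$. On the other hand, the assumption $K_{\gM,p} = K^m_{\gM,p}$ together with \cref{generates} (which applies because $p$ is unramified in $L$) shows that this same subalgebra is all of $\cO_{x,p}$. Hence $\cO_{x,p} \subseteq R_{x,p}$, so $R_{x,p} = \cO_{x,p}$, contradicting the previous paragraph. This forces $K_{\gM,p} \subsetneq K^m_{\gM,p}$.

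There is essentially no obstacle here beyond bookkeeping: the only things to check are that $R_{x,p}$ is genuinely a subring of $F_x \otimes_\bQ \bQ_p$ (immediate, since $R_x$ is a subring of $F_x$) and that localisation at $p$ detects the $p$-part of the index $[\cO_x:R_x]$. All the substantive content is already contained in \cref{generates,inclusion}.
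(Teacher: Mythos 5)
Your proposal is correct and follows the paper's own argument exactly: assume $K_{\gM,p}=K^m_{\gM,p}$, use \cref{generates} to see that $K^m_{\gM,p}$ generates $\cO_{x,p}$, use \cref{inclusion} to see it lies in $R_{x,p}$, and conclude $R_{x,p}=\cO_{x,p}$, contradicting $p \mid [\cO_x:R_x]$. The extra bookkeeping about the $p$-part of the index being detected by localisation is fine and only makes explicit what the paper leaves implicit.
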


\begin{proof}
Suppose $K_{\gM,p}=K^m_{\gM,p}$. By Lemma \ref{generates}, $K^m_{\gM,p}$ generates the ring $\cO_{x,p}$. However, by Lemma \ref{inclusion}, $K^m_{\gM,p}$ is contained in the ring $R_{x,p}$. Hence $R_{x,p}=\cO_{x,p}$.
\end{proof}

\subsection{Proof of Theorem \ref{mainindex}}

The following proposition is the final ingredient in the proof of \cref{mainindex}.
We use it in two different ways: if $p$ divides $[K^m_{\gM,p}:K_{\gM,p}]$, then the proposition tells us that the $p$-part of $[\cO_x:R_x]$ is bounded by a suitable power of the $p$-part of $[K^m_{\gM,p}:K_{\gM,p}]$, while if $p$ does not divide $[K^m_{\gM,p}:K_{\gM,p}]$, then the proposition tells us that the power of $p$ dividing $[\cO_x:R_x]$ is uniformly bounded.

\begin{proposition}\label{indexexp}
There is a uniform integer $\newC{indexexp}$ such that, if $p^n$ is the maximum power of $p$ dividing $[K^m_{\gM,p}:K_{\gM,p}]$, then
\begin{align*}
p^{\refC{indexexp}(n+1)}\nmid[\cO_x:R_x].
\end{align*}
\end{proposition}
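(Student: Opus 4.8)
The plan is to bound, for each prime $p$ separately, the $p$-part $[\cO_{x,p}:R_{x,p}]$ of $[\cO_x:R_x]$ by $p^{\refC{indexexp}(n+1)}$ with $\refC{indexexp}$ uniform. For $p\notin\Sigma$ one has $K_{\gM,p}\subseteq R_{x,p}$ by \cref{inclusion}; for the finitely many $p\in\Sigma$ I would instead use the $\bZ_p$-order $\tilde R_{x,p}=(F_x\otimes\bQ_p)\cap\End_{\bZ_p}\Lambda_p$ attached to a fixed $K_p$-stable lattice $\Lambda_p\subset E\otimes\bQ_p$, which still contains $K_{\gM,p}$ and is commensurable with $R_{x,p}$ by a uniformly bounded index. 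Thus, after enlarging $\refC{indexexp}$ to absorb these finitely many corrections, I may assume $K_{\gM,p}\subseteq R_{x,p}$ throughout.

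\emph{A pro-$p$ argument.} Take $N=2$ and put $G_p=r\bigl(1+p^N(\cO_L\otimes\bZ_p)\bigr)$. Since $1+p^N(\cO_L\otimes\bZ_p)=\exp\bigl(p^N(\cO_L\otimes\bZ_p)\bigr)$ is pro-$p$, so is its continuous image $G_p\subseteq\gM(\bQ_p)$, and the identity $r(\exp p^N z)=\exp p^N S(z)$ (valid for $N\ge2$), recalled above, gives $G_p=\exp\bigl(p^N S(\cO_L\otimes\bZ_p)\bigr)$. As $\gM$ is a torus, $G_p$ is contained in the unique maximal compact subgroup $K^m_{\gM,p}$; as $\gM$ is commutative, $G_p/(G_p\cap K_{\gM,p})$ injects into the finite group $K^m_{\gM,p}/K_{\gM,p}$, and being a quotient of a pro-$p$ group it is a $p$-group, so its order divides $[K^m_{\gM,p}:K_{\gM,p}]$ and is therefore at most $p^n$. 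Hence $g^{p^n}\in K_{\gM,p}\subseteq R_{x,p}$ for every $g\in G_p$, and applying this to $g=\exp(p^N S(z))$ yields $\exp\bigl(p^{N+n}S(z)\bigr)\in R_{x,p}$ for every $z\in\cO_L\otimes\bZ_p$.

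\emph{A sublattice of $R_{x,p}$ of controlled index.} Choosing $z$ as in \cref{elementindex} makes $[\cO_{x,p}:\bZ_p[S(z)]]\le p^{\refC{elementindex-exponent}}$; write $\xi=S(z)$, $A=\bZ_p[\xi]$, $q=p^{N+n}$, and $d=\dim_\bQ F_x\le\dim_\bQ E$, so that $1,\xi,\dots,\xi^{d-1}$ is a $\bZ_p$-basis of $A$. Set $\eta=\exp(q\xi)\in R_{x,p}$; then $\eta-1=q\xi v$ where $v=1+\sum_{k\ge1}\tfrac{q^k\xi^k}{(k+1)!}$, and $N+n\ge2$ forces every coefficient $q^k/(k+1)!$ into $p\bZ_p$, so $v\in1+pA\subseteq A^\times$ (this is where $p=2$ requires $N\ge2$). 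Since $\eta\in R_{x,p}$, the powers $(q\xi v)^k=(\eta-1)^k$ all lie in $R_{x,p}$; writing $(q\xi v)^k=q^k\xi^k v^k$ with $v^k\in1+pA$ shows that the matrix expressing $(q\xi v)^0,\dots,(q\xi v)^{d-1}$ in the basis $1,\xi,\dots,\xi^{d-1}$ is $\diag(1,q,\dots,q^{d-1})\cdot(I+pC)$ for an integral matrix $C$, hence has determinant of $p$-valuation exactly $(N+n)\binom d2$. Thus the $\bZ_p$-span $\Lambda'$ of these $d$ elements is a full-rank sublattice of $R_{x,p}$ with $[A:\Lambda']=p^{(N+n)\binom d2}$, so that
\[ [\cO_{x,p}:R_{x,p}]\ \big|\ [\cO_{x,p}:\Lambda']\ =\ [\cO_{x,p}:A]\cdot[A:\Lambda']\ \le\ p^{\,\refC{elementindex-exponent}+(n+2)\binom d2}. \]
Since $n+2\le2(n+1)$ and $d\le\dim E$ is uniform, a suitable uniform $\refC{indexexp}$ (also absorbing the $p\in\Sigma$ corrections) gives $[\cO_{x,p}:R_{x,p}]<p^{\refC{indexexp}(n+1)}$ for every $p$, which is the assertion.

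I expect the main obstacle to be the determinant computation in the third step: showing that replacing the topological generator $q\xi$ of the power-basis order $A$ by $q\xi v$ — where the unit $v$ carries the factorial denominators coming from $\exp$ — perturbs the relevant transition determinant only by a $p$-adic unit, together with organising the associated convergence estimates (which is what pins down the choice $N=2$, in particular at $p=2$). By comparison, the pro-$p$ argument of the first step, the final lattice arithmetic, and the reduction to the case $K_{\gM,p}\subseteq R_{x,p}$ are all short and routine.
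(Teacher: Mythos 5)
Your proof is correct, and it rests on the same two pillars as the paper's: the element \( z \) from \cref{elementindex} (so that \( \bZ_p[S(z)] \) has index at most \( p^{\refC{elementindex-exponent}} \) in \( \cO_{x,p} \)) together with the identity \( r(\exp p^N z)=\exp p^N S(z) \), and the pro-\( p \) argument inside the unique maximal compact subgroup of \( \gM(\bQ_p) \) showing that a \( p^n \)-th power of \( \exp(p^N S(z)) \) lies in \( K_{\gM,p}\subseteq R_{x,p} \) (\cref{inclusion}). Where you genuinely differ is the final index computation. The paper takes \( N \) to be a uniformly bounded multiple of \( n \) that is also large (bigger than \( h+\refC{elementindex-exponent} \) plus the valuation of a fixed auxiliary determinant, where \( h \) is your \( d \)), and spans with the \( h \) elements \( y_j=\exp(jp^N S(z)) \), \( j=0,\dotsc,h-1 \): it truncates the exponential series modulo \( p^{Nh-h} \), bounds the index of the span of the \( y_j \) via the determinant of the matrix \( (j^k/k!)_{j,k} \), and then invokes Nakayama's lemma to conclude that the \( y_j \) span a sublattice of \( R_{x,p} \) containing \( p^{Nh-h-1}\cO_{x,p} \), giving \( [\cO_{x,p}:R_{x,p}]\le p^{Nh^2-h^2-h} \). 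You instead fix \( N=2 \), use the single element \( \eta=\exp(p^{n+2}S(z))\in R_{x,p} \) and the powers \( (\eta-1)^k \) for \( 0\le k\le d-1 \), and the factorisation \( \eta-1=q\xi v \) with \( v \) a unit in \( 1+p\bZ_p[\xi] \) lets you compute the index of their span in \( \bZ_p[\xi] \) exactly, namely \( p^{(n+2)\binom{d}{2}} \) --- no truncation, no auxiliary determinant, no Nakayama, and a somewhat sharper exponent. Your treatment of \( p\in\Sigma \) (replacing \( R_{x,p} \) by the order cut out by a fixed \( K_p \)-stable lattice, commensurable with \( R_{x,p} \) with index bounded independently of \( x \)) also differs from the paper's, which instead raises to a further uniformly bounded power (enlarging \( N \)); both work. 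In a write-up, just record explicitly that \( 1,\xi,\dotsc,\xi^{d-1} \) is a \( \bZ_p \)-basis of \( \bZ_p[\xi] \) (it spans by Cayley--Hamilton since \( \xi\in\cO_{x,p} \) is integral over \( \bZ_p \), and has rank \( d \) because the index in \( \cO_{x,p} \) is finite by \cref{elementindex}), and that \( d=\dim_\bQ F_x \) depends on \( x \) but is at most \( \dim E \), so the final constant is uniform.
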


\begin{proof}
Choose $z\in\cO_L\otimes\bZ_p$ as in Proposition \ref{elementindex} so that the index of $\bZ_p[S(z)]$ in $\cO_{x,p}$ is at most $p^{\refC{elementindex-exponent}}$. Let $h$ be the dimension of $\cO_{x,p}$ as a free module over $\bZ_p$ and define $y_j:=\exp jp^NS(z)$ for $j\in\{0,...,h-1\}$ and $N>1$ an integer. Note that $y_j=r(\exp jz)^{p^N}$ is an element of $K^m_{\gM,p}$. However, it also lies in the image $I$ of the $p$-adic exponential on $p^2\cO_{x,p}$, which is a pro-$p$ group. Therefore, the quotient of $K^m_{\gM,p}\cap I$ by $K_{\gM,p}\cap I$ is a $p$-group contained in the finite abelian group $K^m_{\gM,p}/K_{\gM,p}$. In particular, its order divides $p^n$ and, if $n$ divides $N$, we conclude that $y_j\in K_{\gM,p}$. If $p\notin\Sigma$ we conclude that $y_j\in R_{x,p}$. If $p\in\Sigma$, we can replace $N$ by a uniformly bounded multiple of $n$ to obtain the same result.

By definition, modulo $p^{Nh-h}$,
\begin{align*}
y_j\equiv 1+jp^NS(z)+\frac{j^2p^{2N}S(z)^2}{2!}+\cdots+\frac{j^{h-1}p^{N(h-1)}S(z)^{h-1}}{(h-1)!}.
\end{align*}
Therefore, the span $\Lambda$ of the elements $y_j$ in $\bZ_p[p^NS(z)]$ modulo $p^{Nh-h}$ is of index bounded by the maximum power $p^{m_h}$ of $p$ dividing the (non-zero) determinant of the matrix 
\begin{align*}
\left(\frac{j^k}{k!}\right)_{j,k\in\{0,...,h-1\}}.
\end{align*}
On the other hand, $\bZ_p[p^NS(z)]$ certainly contains $p^{N(h-1)+\refC{elementindex-exponent}}\cO_{x,p}$, so we conclude that $\Lambda$ contains 
\begin{align*}
p^{N(h-1)+\refC{elementindex-exponent}+m_h}\cO_{x,p}/p^{Nh-h}\cO_{x,p}, 
\end{align*}
which is non-trivial if $N>h+\refC{elementindex-exponent}+m_h$.
In particular, if all of the above conditions on $N$ hold then $\Lambda$ contains the quotient 
\begin{align*}
p^{Nh-h-1}\cO_{x,p}/p^{Nh-h}\cO_{x,p}
\end{align*}
and so, by Nakayama's lemma, the span of the $y_j$ in $\cO_{x,p}$ contain $p^{Nh-h-1}\cO_{x,p}$. Since the $y_j\in R_{x,p}$, we conclude
\begin{align*}
[\cO_{x,p}:R_{x,p}]\leq p^{Nh^2-h^2-h},
\end{align*}
which finishes the proof. 
\end{proof}

Now we are ready to finish the proofs of \cref{mainindex,main-bound}.

\begin{proof}[Proof of \cref{mainindex}]
We begin by reducing to the case in which $K \subset \gG(\bA_f)$ is a direct product of compact open subgroups $K_p \subset \gG(\bQ_p)$.
To see that this is sufficient, first note that an arbitrary compact open subgroup $K \subset \gG(\bA_f)$ always contains such a subgroup $K' = \prod_p K'_p$.
Replacing $K$ by $K'$ changes $i(\gM)$ by at most the (finite) number of primes for which
\[ K \cap \gG(\bQ_p) \neq K' \cap \gG(\bQ_p), \]
and multiplies $[K_\gM^m:K_\gM]$ by $[K_\gM:K'_\gM]$, which is at most $[K:K']$.
Hence replacing $K$ by $K'$ changes the right hand side of the bound in \cref{mainindex} by a uniformly bounded quantity, so it suffices to prove the theorem with $K'$ in place of $K$.

We may therefore assume that $K = \prod_p K_p$, so
\[ [K_\gM^m:K_\gM] = \prod_p [K^m_{\gM,p}:K_{\gM,p}]. \]

For any integer $x$ and any prime $p$, let $[x]_p$ denote the largest power of $p$ which divides $x$ i.e.
\[ [x]_p = p^{v_p(x)}. \]

We will show that we can choose constants $\refC{index-torus-exponent}$ and $\refC{index-disc-exponent}$ uniformly and a constant $\newC{index-small-bound}$ depending on $\gG$, $X$, $\rho$, $K$, $\refC{index-badprime-base}$ such that for every prime $p$, either
\[ p \text{ and } [\cO_x:R_x]_p \text{ are both less than } \refC{index-small-bound} \]
or
\[ [\cO_x:R_x]_p \leq \refC{index-badprime-base}^{i(\gM,p)} [K^m_{\gM,p}:K_{\gM,p}]^{\refC{index-torus-exponent}} [\disc L]_p^{\refC{index-disc-exponent}} \tag{*} \label{eqn:per-prime-inequality} \]
where $i(\gM,p) = 1$ if $K^m_{\gM,p} \neq K_{\gM,p}$ and $0$ if $K^m_{\gM,p} = K_{\gM,p}$.
The bound in \cref{mainindex} follows by taking the product over all primes $p$ (the first case leads to the constant $\refC{index-multiplier}$).

\begin{itemize}
\item If $p$ divides $[K^m_{\gM,p}:K_{\gM,p}]$ then \cref{indexexp} implies that
\[ [\cO_x:R_x]_p \leq [K^m_{\gM,p}:K_{\gM,p}]^{2\refC{indexexp}}. \]

\begin{itemize}
\item If we suppose further that $p^{\refC{indexexp}} \geq \refC{index-badprime-base}^{-1}$, then
\[ [\cO_x:R_x]_p \leq \refC{index-badprime-base} [K^m_{\gM,p}:K_{\gM,p}]^{3\refC{indexexp}} \]
and so taking  $\refC{index-torus-exponent} \geq 3\refC{indexexp}$ suffices.
\end{itemize}

\item If $p$ does not divide $[K^m_{\gM,p}:K_{\gM,p}]$ then by \cref{indexexp},
\[ [\cO_x:R_x]_p \leq p^{\refC{indexexp}}. \]

\begin{itemize}
\item If $[K^m_{\gM,p}:K_{\gM,p}] > 1$, then by~\cite{uy:andre-oort} Proposition 3.15 there exists a uniform constant $\newC{torus-index-multiplier} \leq 1$ such that
\[ \refC{torus-index-multiplier}p \leq [K^m_{\gM,p}:K_{\gM,p}]. \]
If also $p$ is large enough that $p^{\refC{indexexp}} \geq \refC{torus-index-multiplier}^{-2\refC{indexexp}} \refC{index-badprime-base}^{-1}$, we deduce that
\[ [\cO_x:R_x]_p \leq \refC{index-badprime-base} [K^m_{\gM,p}:K_{\gM,p}]^{2\refC{indexexp}}. \]
Thus in this case, $\refC{index-torus-exponent} \geq 2\refC{indexexp}$ suffices.

\item If $[K^m_{\gM,p}:K_{\gM,p}] = 1$ and $p$ is ramified in $L$ then $p$ divides $\disc L$ and $i(\gM,p) = 0$.
Thus
\[ [\cO_x:R_x]_p \leq \refC{index-badprime-base}^{i(\gM,p)} [\disc L]_p^{\refC{indexexp}} \]
and so $\refC{index-disc-exponent} \geq \refC{indexexp}$ suffices.

\item If $[K^m_{\gM,p}:K_{\gM,p}] = 1$, $p$ is unramified in $L$ and $p\notin\Sigma$, then \cref{useUY} implies that
$[\cO_x:R_x] = 1$.
Again $i(\gM, p) = 0$ so both sides of \eqref{eqn:per-prime-inequality} are equal to \( 1 \).
\end{itemize}

\item We are left with the cases
\[ p\in\Sigma \text{ or } p^{\refC{indexexp}} < \refC{torus-index-multiplier}^{-2\refC{indexexp}} \refC{index-badprime-base}^{-1} \]
(note that this includes $p^{\refC{indexexp}} < \refC{index-badprime-base}^{-1}$ because $\refC{torus-index-multiplier} \leq 1$).
In these cases we have an upper bound for $p$ depending on uniform data and on \( \refC{index-badprime-base} \) (recall that \( \Sigma \) is finite and uniform).
The fact that $[\cO_x:R_x]_p \leq p^{\refC{indexexp}}$ implies a similar bound for $[\cO_x:R_x]_p$.
\qedhere
\end{itemize}
\end{proof}

\begin{proof}[Proof of \cref{main-bound}]
By Theorem \ref{height-disc-r-bound}, there are constants $\refC{height-disc-r-multiplier}$ and $\refC{height-disc-r-exponent}$ depending only on \( \gG \), \( X \), \( \cF \) and \( \rho \) such that
\[ H(x) \leq \refC{height-disc-r-multiplier} \abs{\disc R_x}^{\refC{height-disc-r-exponent}}. \]
and
\[ \disc R_x=[\cO_x:R_x]^2\disc\cO_x=[\cO_x:R_x]^2\prod_{i=1}^s\disc F_i. \]
Since each $F_i$ is isomorphic to a subfield of $L$, we know that $\abs{\disc F_i}$ is at most $\abs{\disc L}$. Clearly, the number of isotypic components $s$ is at most the dimension of~$E$. Therefore, \cref{main-bound} follows from \cref{mainindex}.
\end{proof}

\section*{Acknowledgements}
Both authors would like to thank Emmanuel Ullmo and Andrei Yafaev for suggesting that they work together on this problem and for numerous conversations regarding the subject of this paper. The authors also owe a special thank you to Philipp Habegger who suggested the use of Chow polynomials to prove Proposition~\ref{singular-bound}.
They are grateful to Ziyang Gao for pointing out the issue with quantifiers in \cref{main-bound} which is needed for it to imply \cref{andre-oort-conditional}.
Both authors would like to thank the referee for their reading of the manuscript and their helpful comments.

This paper was published in \textit{Mathematische Annalen}, 2016, volume~365, pages 1305--1357.
The published version is available at Springer via \url{http://dx.doi.org/10.1007/s00208-015-1328-3}.
The \texttt{arXiv} version contains a minor correction to the proof of \cref{root-bound}, by means of the additional Lemma~\ref{correction:um-roots}.
This proof is incorrect in the published version.

The first author is indebted to the Engineering and Physical Sciences Research Council and the Institut des Hautes Études Scientifiques for their financial support.
The second author was funded by European Research Council grant 307364 ``Some problems in Geometry of Shimura varieties.''

\bibliographystyle{alpha}
\bibliography{heights5}

\end{document}